\newcommand{\Z}{\mathbb{Z}}
\newcommand{\bq }{\begin{equation}}
\newcommand{\eq }{\end{equation}}
\theoremstyle{plain}
\newtheorem{thm}{Theorem}
\newtheorem{lem}[thm]{Lemma}
\newtheorem{prop}[thm]{Proposition}
\newtheorem{cor}[thm]{Corollary}
\newtheorem{rem}[thm]{Remark}
\newtheorem{preg}[thm]{Question}
\theoremstyle{definition}
\newtheorem{defn}{Definition}[section]
\theoremstyle{example}
\newtheorem{claim}{Claim}
\title{On the automorphism group of non-singular plane curves fixing the degree}
\author[E. Badr] {Eslam Badr$^{1,2}$ and Francesc Bars$^{1}$}
\address{\begin{enumerate}
           \item Departament Matem\`atiques, Edif. C, Universitat Aut\`onoma de Barcelona
08193 Bellaterra, Catalonia
\item Department of Mathematics,
Faculty of Science, Cairo University, Giza-Egypt
         \end{enumerate} }
\email{eslam@mat.uab.cat\,(Badr E.) and francesc@mat.uab.cat (Bars F.)}
\thanks{E. Badr and F. Bars are supported by MTM2013-40680-P}
\keywords{plane curves; automorphism groups}
\subjclass[2010]{14H37, 14H50, 14H45}
\begin{document}

\begin{abstract} This note is devoted, after the result of Harui
\cite{Harui}, to solve the following questions for non-singular
plane curve of degree $d$ over an algebraically closed field $K$ of
zero characteristic.
\begin{enumerate}
\item Given a finite non-trivial group $G$ that appears as an automorphism group of a plane non-singular curve $C$ inside
$\mathbb{P}^2$ of a fixed degree $d\geq4$. Is the defining equation
of such $C$ unique? i.e.\, is there, up to $K$-equivalence, a unique
homogenous polynomial $F_G(X;Y;Z)$ of degree $d$ that depends on a
set of parameters, as its monomials' coefficients with some
restrictions on these parameters, such that any $C$ as above is
given by the equation $F_G(X;Y;Z)=0$ in $\mathbb{P}^2(K)$ via a
certain specialization of the parameters and viceversa?
\par By the work of Henn \cite{He} and Komiya-Kuribayashi \cite{KK},
we conclude that that the answer of this question is positive for
$d=4$. That is, all non-singular plane quartics $C$ with
$Aut(C)\simeq G$ satisfy a unique equation (endowed with a set of
restrictions on the parameters), up to change of variables.

In Chapter 1, we show that for $d>4$ the answer is no more positive and counter examples are provided.
More precisely, if $G$ be a finite group that appears as the full automorphism group of a non-singular
 projective plane curve $C$ of degree $d>4$ over $K$,
 then it may happen that there are non isomorphic plane homogenious equations of degree $d$ (each of them given by a set of parameters which
 some restrictions in the parameters)
 such that their automorphism groups (when we specialize the parameters) are isomorphic to $G$.
 Moreover, the number of the different equations (up to $K$-isomorphism) is bounded above by the number of different
  conjugacy classes of the injective representations of $G$
inside $PGL_3(K)$, but this bound is usually far to be sharp.
Furthermore, we prove the same results when $K$ has a positive characteristic $p$ with $p> (d-1)(d-2)+1$ where $d$ is the degree of the curve.

\item In Chapter 2, we are concerned with the following classical problem. Fix a degree $d\geq 4$, which kind of groups could appear as the full automorphism groups of non-singular projective plane curves and what are the defining equations that are associated to each such group?

The results of Harui in \cite{Harui} simplify the problem to a certain list of finite groups that one should work with. These groups can be classified up to conjugation as; cyclic groups, an element of $Ext^1(-,-)$ of a cyclic group by a dihedral group, an alternating group $A_4$ (resp. $A_5$), an octahedral group $S_4$, a subgroup of $Aut(F_d)$ (the automorphism group of the Fermat curve), a subgroup of $Aut(K_d)$ (the automorphism group of the Klein curve) or a finite primitive subgroup of $PGL_3(K)$ (for the classification of such groups, we refer to Mitchell \cite{Mit}).

Now, we present in this section the list of the cyclic groups $G$ that appears as an
effective action of plane projective non-singular curves of degree $d$
over an algebraic closed field of zero characteristic and in particular, for any cyclic subgroup of
$Aut(C)$, its order should divide one of the following numbers:
$d-1$,$d$,$d^2-3d+3$, $(d-1)^2$, $d(d-2)$ or $d(d-1)$. Moreover, we attach to each cyclic group an equation $C$ (unique up to
$K$-isomorphism) that admits the cyclic group $G$ as a subgroup of the full
automorphism group. Also, we provide an algorithm of computation of the full list once the degree is fixed together with the implementation of this algorithm in SAGE program.

Furthermore, we determine the full automorphism group and a
characterization of the unique equation for the curve $C$ that admits an automorphism of order $d^2-3d+3$, $(d-1)^2$, $d(d-2)$ or $d(d-1)$ in
$Aut(C)$. We also consider the cases when $Aut(C)$ has a cyclic subgroup of
order $\ell d$ or $\ell (d-1)$ with $1\leq \ell< d-2$.

\item In Chapter 3, we give the analogy of the results of Henn in \cite{He} and Komiya-Kuribayashi in \cite{KK} concerning the full automorphism groups of non-singular plane curves of degree 4. Thus, we determine the automorphism groups that appear for projective non-singular, degree 5 plane curves which are non-hyperelliptic and the list of the defining equations of these curves. Similar arguments can be
applied to deal with higher degrees.
%\end{abstract}
\end{enumerate}

\vspace{.05cm}

It should be noted that, we present the above three chapters as three independent notes in
the context. Hence, they might be read separately without any inconvenience.

The subject of this note is very classical, and we wrote them for a
lack of a precise reference, as far as we know. Therefore, any
further information, missing references, comments or
suggestions are welcomed.

Moreover, the above results can be reformulated in the more precise language of the moduli space (which
will be a forthcoming updated version of chapters 2 and 3), as follows.

Let $M_g$ be the moduli space of smooth, genus $g$ curves over an
algebraically closed field $K$ of zero characteristic. Denote by
${M_g(G)}$ the subset of $M_g$ of curves $\delta$ such that $G$ (as
a finite non-trivial group) is isomorphic to a subgroup of
$Aut(\delta)$ and let $\widetilde{M_g(G)}$ be the subset of curves
$\delta$ such that $G\cong Aut(\delta)$. Now, for an integer $d\geq
4$, let $M_g^{Pl}$ be the subset of $M_g$ representing smooth, genus
$g$ plane curves of degree $d$ (in such case, $g=(d-1)(d-2)/2$) and
consider the sets $M_g^{Pl}(G):=M_g^{Pl}\cap M_g(G)$ and
$\widetilde{M_g^{Pl}(G)}:=\widetilde{M_g(G)}\cap M_g^{Pl}$. We
denote by $\rho(M_g^{Pl}(G))$ the elements of $\delta\in
M_g^{Pl}(G)$ such that $G$ acts on a plane model associated to
$\delta$ by $P\rho(G)P^{-1}$ for certain $P$. We have,
$M_g^{Pl}(G)=\cup_{[\rho]\in A}\rho(M_g^{Pl}(G))$ where
$A:=\{\rho|\rho:G\hookrightarrow PGL_3(K)\}/\sim$ and
$\rho_a\sim\rho_b$ if and only if $\rho_a(G)=P\rho_b(G)P^{-1}$ for
some $P\in PGL_3(K)$. A similar decomposition follows for
$\widetilde{M_g^{Pl}(G)}$.

Henn and Komiya-Kuribayashi proved that $\widetilde{M_3^{Pl}(G)}$ is
always equal to $\rho(\widetilde{M_3^{Pl}(G)})$ and in chapter 1, we prove that this does not happen for higher degrees $d\geq 5$ in
particular, we prove that the loci
$\widetilde{M_g^{Pl}(\Z/(d-1))}$ where $d\geq5$ is an odd integer and
also the locus $\widetilde{M_{10}^{Pl}(\Z/3)}$ are not ES-Irreducible hence they are not
irreducible.
In chapter 2, we give an algorithm (once $g$ is fixed) to decide the
list of values that $m$ assumes so that $M_g^{Pl}(\Z/m)$ is not empty, and how many
elements are in the set $A$. Moreover, we prove that if the group
$G$ has an element of order $(d-1)^2$, $d(d-1)$, $d(d-2)$ or
$d^2-3d+3$ then $M_g^{Pl}(G)$ is exactly a set with a unique element
therefore, is irreducible. Also, we study the situations when $G$ has
an element of order $\ell d$ or $\ell (d-1)$.
In chapter 3, we determine the exact locus where
$\rho(M_{6}^{Pl}(G))$ is not trivial, which covers the finite groups
that appear for non-singular plane curves of degree $5$ in $\mathbb{P}^2(K)$.
\end{abstract}

\maketitle

%25-4

\chapter{On the locus of smooth plane curves with a fixed automorphism group}

\section{Abstract}

Let $M_g$ be the moduli space of smooth, genus $g$ curves over an
algebraically closed field $K$ of zero characteristic. Denote by
${M_g(G)}$ the subset of $M_g$ of curves $\delta$ such that $G$ (as
a finite non-trivial group) is isomorphic to a subgroup of
$Aut(\delta)$ and let $\widetilde{M_g(G)}$ be the subset of curves
$\delta$ such that $G\cong Aut(\delta)$ where
$Aut(\delta)$ is the full automorphism group of $\delta$. Now, for
an integer $d\geq 4$, let $M_g^{Pl}$ be the subset of $M_g$
representing smooth, genus $g$ plane curves of degree $d$ (in such
case, $g=(d-1)(d-2)/2$) and consider the sets
$M_g^{Pl}(G):=M_g^{Pl}\cap M_g(G)$ and
$\widetilde{M_g^{Pl}(G)}:=\widetilde{M_g(G)}\cap M_g^{Pl}$.
\par In this paper, we study some aspects of the irreducibility of
$\widetilde{M_g^{Pl}(G)}$ and the interrelations with the uniqueness
of plane non-singular equations (depending on a set of parameters)
for the elements inside $\widetilde{M_g^{Pl}(G)}$. Also, we
introduce the concept of being equation strongly irreducible
(ES-Irreducible) for which the locus $\widetilde{M_g^{Pl}(G)}$ is
represented by a unique plane equation associated with certain
parameters. Henn, in \cite{He}, and Komiya-Kuribayashi, in
\cite{KK}, observed that $\widetilde{M_3^{Pl}(G)}$ is ES-Irreducible
and in this note we prove that this phenomena does not occur for any
odd $d>4$. More precisely, let $\Z/m\Z$ be the cyclic group of order
$m$, we prove that $\widetilde{M_g^{Pl}(\Z/(d-1)\Z)}$ is not ES-Irreducible for any odd integer $d\geq5$ and the
number of the irreducible components of such loci is at least two.
Furthermore, we conclude the previous result when $d=6$ for the
locus $\widetilde{M_{10}^{Pl}(\Z/3\Z)}$.
\par Lastly, we prove an analogy of these statements when $K$ is any algebraically closed field of positive
characteristic $p$ such that $p>(d-1)(d-2)+1$.

%\maketitle

\section{Introduction}
%\par There is a vast literature on automorphism groups of compact Riemann surfaces, beginning in the 19th century
%with Schwartz, Klein \cite{C4}, H\"{u}rwitz \cite{H4}, Wiman \cite{W1}, \cite{W2} and others.

Let $K$ be an algebraically closed field of zero characteristic and
fix an integer $d\geq 4$. We consider, up to $K$-isomorphism, a
projective non-singular curve $\delta$ of genus $g=(d-1)(d-2)/2$ and
assume that $\delta$ has a non-singular plane model, say
$C\subset\mathbb{P}^2$ of degree $d$ whose defining equation is
given by a homogenous polynomial $F(X;Y;Z)=0$. Recall that, all such
plane models are $K$-equivalents that is, we can obtain all the
others by applying a change of variables $P\in PGL_3(K)$ on the
defining equation $F(X;Y;Z)=0$. Hence, where is no abuse of
notations, we may replace $\delta$ by $C$ also, the full
automorphism group of $\delta$ as a finite subgroup of $PGL_3(K)$
can be identified with $Aut(C)$, the set of all elements $\sigma\in
PGL_3(K)$ that retain the defining equation of $C$ (i.e.\,
$F(\sigma(X;Y;Z))=\lambda F(X;Y;Z)$ for some $\lambda\in K^*$).
\par A classical question is to determine,
up to $K$-equivalence, the different plane equations of non-singular
plane curves over $K$ of degree $d$ with automorphism group
isomorphic to $Aut(C)$. Another question, is to classify the groups
that appear as exact automorphism groups of algebraic curves of a
certain degree $d$. For $d=4$, Henn in \cite{He} and
Komiya-Kuribayashi \cite{KK}, answered the above natural questions
(see also Lorenzo's thesis \cite{Lo} \S\,2.1 and \S\,2.2,  in order
to fix some minor details). It appears for $d=4$ the following
phenomena: consider a finite group $G$ where $G\cong Aut(\delta)$
for certain $\delta$, then there is exactly a unique homogenous
polynomial of degree $4$ in $X,Y$ and $Z$, where the coefficients
depend on a set of parameters (with certain algebraic restrictions
on these parameters). Moreover, any specialization of the parameters
in $K$ corresponds to a unique $\delta\in \widetilde{M_3^{Pl}(G)}$
and any $\delta\in \widetilde{M_3^{Pl}(G)}$ has at least one
non-singular plane model with a certain specialization of the
parameters in $K$.
%which specifying the parameters it covers all the possible plane
%equations of all plane non-singular curves of degree $d$ with exact
%automorphism group isomorphic to $G$ (modulo $K$-isomorphism), in
%More concretely for any $\alpha\in \widetilde{M_3^P(G)}$ we can
%associate some value to all the free parameters to obtain a plane
%model associated to $\alpha$ (these values may not be unique), and
%any specification of the parameters for the homogenious equation
%will determine an element of $\widetilde{M_3^P(G)}$.
If this phenomena appears for some $g$, we say that the locus
$\widetilde{M_g^{Pl}(G)}$ is ES-Irreducible (see \S2 for the precise
definition) which is a weaker condition than the irreducibility of
this locus in the moduli space $M_g$. In particular, it follows by
Henn \cite{He} and Komiya-Kuribayashi \cite{KK}, that
$\widetilde{M_3^{Pl}(G)}$ is always ES-Irreducible.

The motivation of this work is that we did not expect
$\widetilde{M_g^{Pl}(G)}$ to be ES-Irreducible in general. In order
to construct counter examples where $\widetilde{M_g^{Pl}(G)}$ is not
ES-Irreducible: we need first, a group $G$ such that there exist at
least two injective representations $\rho_i:G\hookrightarrow
PGL_3(K)$ with $i=1,2$ which are not conjugate (i.e\,\,there is no
transformation $P\in PGL_3(K)$ with $P\rho_1(G)P^{-1}=\rho_2(G)$,
more details are included in \S2) and for the zoo of groups that
could appear for non-singular plane curves \cite{Harui}, we consider
$G$ a cyclic group of order $m$. Secondly, one needs to prove the
existence of non-singular plane curves with automorphism group
conjugate to $\rho_i(G)$ for each $i=1,2$.
\par The main result of the paper is that for any odd degree $d(\geq 5)$, the locus
$\widetilde{M_g^{Pl}(\Z/(d-1)\Z)}$ is not ES-irreducible and it has
at least two irreducible components moreover, for degree $d=5$ with
$m\neq 4$, we observed that the locus
$\widetilde{M_{6}^{Pl}(\Z/m\Z)}$ is ES-Irreducible whenever it is
not empty. Furthermore, by \cite{BaBaaut}, we know that the only
group $G$ for which $\widetilde{M_{6}^{Pl}(G)}$ is not
ES-Irreducible is that when $G\cong\Z/4\Z$, and by \cite{BaBacyc} we
suspect that the only case for which $\widetilde{M_g(\Z/m\Z)}$ is
not ES-Irreducible is that when $m$ divides $d$ or $d-1$ (this is
true until degree $9$ and the tables for possible cyclic groups are
listed in \cite{BaBacyc}). In section \S\,5, we give an example when
$d$ is even, concretely that $\widetilde{M_{10}^{Pl}(\Z/3\Z)}$ is
not ES-irreducible. At the end (\S\,6) of this paper we prove that
the above examples of non-irreducible loci are also valid when $K$
is an algebraically closed of positive characteristic $p$, provided
that the characteristic $p$ is big enough once we fixed the degree
$d$.

%%%%%%%%%%%%%%%FAlta CONTINUAR.

%We recall that, there is a topological invariant which computes the
%irreducible components of $M_g(G)$ (see \cite[\S2]{CLP}).

To conclude the introduction, we relate the above locus with the
classical one of the moduli space of curves and the problem of
irreducibility versus the problem of being ES-Irreducible.

%
%and by \cite{BaBacyc} we know that if $\widetilde{M_g^{P\ell}(G)}$ the
%group $G$ has an element of order $(d-1)^2$, $d(d-2)$, $d(d-1)$ or
%$d^2-3d+3$ then always $\widetilde{M_g^{P\ell}(G)}$ is irreducible (given
%by a fix curve) in particular strongly equation irreducible.
%
%A main problem on any locus of moduli spaces are concerning
%irreducibility or not, in particular for the locus $M_g^{P\ell}(G)$ or
%$\tilde{M_g^{P\ell}(G)}$.

We denote by $\rho(M_g^{Pl}(G))$ the set of all elements $\delta\in
M_g^{Pl}(G)$ such that $G$ acts on a plane model associated to
$\delta$ by $P\rho(G)P^{-1}$ for certain $P$ (because the linear
systems $g^2_d$ are unique, the same property follows for any plane
model varying the conjugation matrix $P$). Therefore,
$M_g^{Pl}(G)=\cup_{[\rho]\in A}\rho(M_g^{Pl}(G))$ where
$$A:=\{\rho\,\,|\,\,\rho:G\hookrightarrow PGL_3(K)\}/\sim$$ such
that $\rho_a\sim\rho_b$ if and only if $\rho_a(G)=P\rho_b(G)P^{-1}$
for some $P\in PGL_3(K)$. A similar decomposition follows for
$\widetilde{M_g^{Pl}(G)}$.

Now, for being not ES-Irreducible, we are interested in the
situations for which the locus $\widetilde{\rho(M_g^{Pl}(G))}$ is
non-empty for at least two elements of $A$. Also, a much deeper
question is when does it happen that the locus
$\widetilde{\rho(M_g^{Pl}(G))}$ or $\rho(M_g^{Pl}(G))$ is
irreducible? The locus ${\rho(M_g^{Pl}(G))}$ is related, as a subset
of the locus of genus $g$ curves which have a Galois subcover of
Galois group $G$, with some prescribed ramification from the group
$\rho(G)$ (see Remark \ref{rem2}, for some examples) and by the
works of Cornalba \cite{Cor} and Catanese \cite{Ca}, we know that it
is irreducible when $G$ is cyclic. Therefore,
$\rho(M_g^{Pl}(\Z/m\Z)$ (resp. $\widetilde{\rho(M_g^{Pl}(\Z/m\Z))}$)
are subsets in an irreducible locus and moreover, there is a
topological invariant which computes the irreducible components of
$M_g(G)$ (see \cite[\S2]{CLP}). In particular, we can decide when
$M_g(G)$ is irreducible or not. Unfortunately, we do not have any
insight for the irreducibility of the locus $\rho(M_g^{Pl}(G))$
except possibly the computations that is given in \S2 for
$\rho(M_6(\Z/8\Z))$ and the result: $M_g^{Pl}(G)$ is irreducible
when $G$ has an element of order $(d-1)^2$, $d(d-1)$, $d(d-2)$ or
$d^2-3d+3$ since this locus has only one element by \cite{BaBacyc}.

\subsection*{Acknowledgments} It is our pleasure to express our sincere gratitude to Xavier
Xarles for his suggestions and comments. We also thank Massimo
Giulietti and Elisa Lorenzo for noticing us about some bibliography
on automorphism of curves.

\section{On the locus $M_g^{Pl}(G)$ and its relation with a set
of equations via certain parameters}

Consider a projective non-singular curve $\delta$ of genus
$g:=\frac{(d-1)(d-2)}{2}\geq 2$ over $K$ with $G$, a finite
non-trivial group, inside $Aut(\delta)$. We always assume that
$\delta$ admits a non-singular plane equation, and we consider
$\delta$ up to $K$-isomorphism, as a point in $M_g^{Pl}(G)$. A set
of non-singular plane equations can be associated to $\delta$ in
$\mathbb{P}^2(K)$, let $C:\,F(X;Y;Z)=0$ be a particular equation
where $F(X;Y;Z)$ is a homogenous polynomial of degree $d$ and $G$
acts as $\rho(G)$ (retaining invariant the equation $C$) for some
$\rho:G\hookrightarrow PGL_3(K)$ (a fixed representation of $G$
inside $PGL_3(K)$). If $\tilde{C}:\,\tilde{F}(X;Y;Z)=0$ is any other
plane equation of $\delta$ then it is projectively equivalent to $C$
through a change of variables $P\in PGL_3(K)$ applied to $\{X,Y,Z\}$
and the automorphism group $G$. In other words, $F(P(X;Y;Z))=\lambda
F_2(X;Y;Z)$ and $P\rho(G)P^{-1}$ is a subgroup of $Aut(\tilde{C})$.
Now, fix $\rho:G\hookrightarrow PGL_3(K)$ and let us denote by
$\rho(M_g^{Pl}(G))$ the subset of $M_g^{Pl}(G)$ such that each
$\delta\in \rho(M_g^{Pl}(G))$ has a non-singular plane equation with
an effective action by $P^{-1}\rho(G)P$ on its automorphism group
for certain $P\in PGL_3(K)$ or equivalently, $\delta$ has a plane
non-singular equation with $\rho(G)$ preserves such equation in
particular, $\rho(G)$ is a subgroup of its full automorphism group.

\begin{lem} Consider the set $M_g^{Pl}(G)$ for non-trivial $G$ and the
set $A=\{\rho:G\hookrightarrow PGL_3(K)\}/\sim$ where
$\rho_1\sim\rho_2$ if and only if $\exists P\in PGL_3(K)$ such that
$\rho_1(G)=P\rho_2(G)P^{-1}$. Write $A=\{[\rho_1],\ldots,[\rho_n]\}$
then $M_g(G)$ is the disjoint union of $\rho_i(M_g(G))$ where
$i=1,\ldots,n$ and $\rho_i$ is a representative of the class
$[\rho_i]$.
\end{lem}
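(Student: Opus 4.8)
The plan is to establish two things: that the sets $\rho_i(M_g^{Pl}(G))$ cover $M_g^{Pl}(G)$, and that they are pairwise disjoint (here I read the loci $M_g(G)$, $\rho_i(M_g(G))$ of the statement as the plane-curve loci $M_g^{Pl}(G)$, $\rho_i(M_g^{Pl}(G))$, since the representations $\rho$ into $PGL_3(K)$ and the associated sets are only meaningful on the plane locus). The covering is essentially the decomposition already recorded before the lemma, so the real content is the disjointness, and both parts rest on the same geometric input: the uniqueness, up to $PGL_3(K)$, of the non-singular plane model of a given $\delta$.

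First I would spell out the covering. Take $\delta\in M_g^{Pl}(G)$. By hypothesis $\delta$ admits a non-singular plane model $C:\,F(X;Y;Z)=0$ of degree $d$, and the inclusion $G\hookrightarrow Aut(\delta)=Aut(C)$ realizes $G$ as a finite subgroup of $PGL_3(K)$ preserving $C$; call this injective homomorphism $\rho$. Then $[\rho]\in A$ and $\delta\in\rho(M_g^{Pl}(G))$ directly from the definition of that set. Hence $\delta$ lies in $\rho_i(M_g^{Pl}(G))$ for the representative $\rho_i$ with $[\rho_i]=[\rho]$, which gives $M_g^{Pl}(G)=\bigcup_{i=1}^n\rho_i(M_g^{Pl}(G))$.

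For disjointness, suppose $\delta\in\rho_i(M_g^{Pl}(G))\cap\rho_j(M_g^{Pl}(G))$. Then $\delta$ has a plane model $C_i$ on which $G$ acts as $\rho_i(G)\subseteq Aut(C_i)$ and a plane model $C_j$ on which $G$ acts as $\rho_j(G)\subseteq Aut(C_j)$. The key step is to invoke that the linear system $g^2_d$ embedding $\delta$ as a plane curve of degree $d$ is unique; consequently $C_i$ and $C_j$ are projectively equivalent, i.e. there is $P\in PGL_3(K)$ with $F_i(P(X;Y;Z))=\lambda F_j(X;Y;Z)$ for some $\lambda\in K^*$. This same $P$ conjugates the action of $G$ on $C_i$ to its action on $C_j$, so that $\rho_j(G)=P\rho_i(G)P^{-1}$, whence $\rho_i\sim\rho_j$ and $[\rho_i]=[\rho_j]$, forcing $i=j$. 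Therefore the sets carrying distinct indices are disjoint, and the union above is a disjoint union.

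The main obstacle I anticipate is justifying cleanly the uniqueness of the plane model up to $PGL_3(K)$ — equivalently, that any two degree-$d$ plane embeddings of $\delta$ differ by a projective change of coordinates. For $d\geq 4$ this follows from the fact that a smooth plane curve of degree $d$ carries a unique $g^2_d$ cut out by lines, which I would recall or cite; once this is in hand, the conjugacy of the two $G$-actions, and hence the disjointness, is formal. A minor point to handle carefully is that each $\rho$ is genuinely injective (the $G$-action on a plane model of a curve of degree $\geq 4$ is faithful and linear), so that $[\rho]$ is a well-defined class in $A$.
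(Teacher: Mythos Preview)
The paper states this lemma without proof, treating it as essentially definitional once the uniqueness of the $g^2_d$ (mentioned just before) is in hand. Your covering argument is exactly the one implicit in the paper's setup and is correct.

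Your disjointness argument, however, has a gap. From the uniqueness of the plane model you correctly deduce that $C_j=P(C_i)$ for some $P\in PGL_3(K)$, hence $Aut(C_j)=P\,Aut(C_i)\,P^{-1}$. But this only tells you that $P\rho_i(G)P^{-1}$ and $\rho_j(G)$ are both subgroups of $Aut(C_j)$ isomorphic to $G$; it does not force them to be equal, nor even $PGL_3(K)$-conjugate. The definition of $\rho(M_g^{Pl}(G))$ in the paper only asks that \emph{some} copy of $G$ inside $Aut(C)$ be conjugate to $\rho(G)$, so if $Aut(\delta)$ happens to contain two subgroups isomorphic to $G$ lying in distinct $PGL_3(K)$-conjugacy classes, the curve $\delta$ will sit in two different pieces $\rho_i(M_g^{Pl}(G))$ and $\rho_j(M_g^{Pl}(G))$. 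Your step ``This same $P$ conjugates the action of $G$ on $C_i$ to its action on $C_j$'' silently assumes the two embeddings of $G$ are intertwined by $P$, which is not given.

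In short: the union is immediate, but the word ``disjoint'' in the lemma is not fully justified by your argument (and the paper itself simply asserts it). If you want a correct statement, either weaken to a union, or add the hypothesis that all isomorphic copies of $G$ inside $Aut(\delta)$ are $PGL_3(K)$-conjugate; alternatively, work with the locus of curves \emph{together with} a marked $G$-action, where disjointness is tautological.
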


Fix $[\rho]\in A$ then for $\delta \in \rho(M_g^{Pl}(G))$, we can
associate infinitely many non-singular plane curves which are
$K$-isomorphic through a change of variables $P\in PGL_3(K)$, but we
can consider only the family of such curves with $G$ acts exactly as
$\rho(G)$ for some fixed $\rho$ in $[\rho]$. Under this restriction,
$\delta$ can be associated with a non-empty family of non-singular
plane curves such that any two models are isomorphic, through a
projective transformation $P$ satisfying $P^{-1}\rho(G)P=\rho(G)$.
Recall that, it is a necessary condition for a projective plane
curve of degree $d$ to be non-singular that the defining equation
has degree $\geq d-1$ in each variable, and by a diagonal change of
variables $P$, we can assume that the monomials with the maximal
exponent have coefficients equal to $1$. Consequently, we reduce the
case to the set of $K$-isomorphic non-singular plane curves
$F(X;Y;Z)=0$ associated to $\delta$ with $\rho(G)$ fixes the
equation and each term of the core of $F(X;Y;Z)$ (i.e\, the sum of
all monomials with the highest exponent) is monic.

\begin{lem} Let $G$ be a non-trivial finite group and consider $\rho:G\hookrightarrow
PGL_3(K)$ such that $\rho(M_g^{Pl}(G))$ is non-empty. There exists a
unique homogenous polynomial $F(X;Y;Z)=0$ of degree $d$ of the above
prescribed form and is endowed with certain parameters as the
coefficients of the lower order terms (some restrictions should be
imposed so that $F(X;Y;Z)$ is non-singular). Moreover, every element
of $\rho(M_g^{Pl}(G))$ corresponds to a set of equations obtained by
assigning certain values to the parameters in the given equation
$F(X;Y;Z)=0$ and vice versa, that is, every concrete values of the
parameters corresponds to a point of $\rho(M_g^{Pl}(G))$.
\end{lem}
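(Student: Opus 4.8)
The plan is to translate the geometric condition ``$\rho(G)$ preserves the curve $\{F=0\}$'' into the linear-algebraic statement that $F$ is a common eigenvector (a semi-invariant) for the induced action on the space of degree-$d$ forms, and then to read off the parametric family from a single eigenspace, using the monic-core normalization described just before the statement.

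First I would fix a lift $\widetilde{G}\subset GL_3(K)$ of $\rho(G)$ and let it act on the $\binom{d+2}{2}$-dimensional space $V_d$ of homogeneous forms of degree $d$ in $X,Y,Z$. A plane model $C:\,F=0$ of $\delta$ on which $\rho(G)$ acts while retaining the equation is exactly one for which $\widetilde{\sigma}\cdot F=\chi(\widetilde{\sigma})\,F$ for every $\widetilde\sigma$ and a single character $\chi:G\to K^*$; that is, $F$ lies in the semi-invariant subspace $V_d^{\chi}:=\{F\in V_d:\widetilde\sigma\cdot F=\chi(\widetilde\sigma)F\ \forall \widetilde\sigma\}$. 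Since $\rho(M_g^{Pl}(G))$ is non-empty, I would pick a smooth $F_0$ in it; because $K$ has characteristic $0$ (or $p>(d-1)(d-2)+1$, so that $p\nmid|G|$ and Maschke applies) the action is semisimple, $V_d$ splits into eigenlines and $V_d^{\chi}$ is a well-behaved linear subspace containing every invariant model of the given character.

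Next I would pin down the \emph{core}. Non-singularity forces every variable to occur with exponent at least $d-1$, so a fixed set $\mathcal{C}$ of monomials (those realizing the maximal exponent) must appear with non-zero coefficient in $F_0$; as these monomials all belong to the single eigenvector $F_0$, they share the character $\chi$, and a diagonal change of variables lying in the centralizer torus of $\rho(G)$ rescales them to be monic. This simultaneously fixes $\chi$ and normalizes the core. Declaring the coefficients of $\mathcal{C}$ equal to $1$ and attaching indeterminate coefficients to the remaining monomials of a basis of $V_d^{\chi}$ produces the desired parametric form $F(X;Y;Z)$, the smoothness restrictions being the non-vanishing of the discriminant, an open condition. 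By construction each specialization with non-vanishing discriminant is a smooth curve fixed by $\rho(G)$, hence a point of $\rho(M_g^{Pl}(G))$, and conversely every $\delta\in\rho(M_g^{Pl}(G))$ has, after normalizing by a $P$ with $P^{-1}\rho(G)P=\rho(G)$, such a model; this yields the asserted correspondence, where a single moduli point may meet several parameter values differing by the residual finite symmetry while every parameter value gives exactly one point.

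The main obstacle I anticipate is the \emph{uniqueness} of $F$, which reduces to controlling this residual projective symmetry. Concretely I would analyze the normalizer $N:=N_{PGL_3(K)}(\rho(G))$ modulo scalars and its action on $V_d^{\chi}$: I must check that fixing the core $\mathcal{C}$ to be monic exhausts the continuous (torus) part of $N$, so no diagonal freedom remains to rescale parameters and collapse distinct forms, and that any alternative character $\chi'$ admitting smooth models is carried back to $\chi$ by the finite part of $N$ (for instance coordinate permutations), so that it does not produce a genuinely different family. Establishing that this normalization is rigid, and that the complement of $\mathcal{C}$ inside $V_d^{\chi}$ is canonically the parameter space, is where the real work lies; the remaining verifications (that specializations are smooth and $\rho(G)$-invariant, and that the two passages between parameter values and moduli points are mutually inverse up to the finite symmetry) are routine.
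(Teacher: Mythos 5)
Your proposal is correct in substance but organizes the argument differently from the paper. The paper's proof proceeds in two stages: it first picks $\sigma\in G$ of maximal order, diagonalizes $\rho(\sigma)$ as $diag(1,\xi_m^a,\xi_m^b)$, and invokes the explicit monomial-by-monomial classification (Dolgachev's method, carried out in general in the paper's Chapter 2) to produce the parametric family $F_{m,(a,b)}$ attached to $\rho(M_g^{Pl}(\langle\sigma\rangle))$; it then imposes invariance under the remaining elements of $\rho(G)$, which cuts the parameters down by further algebraic relations, and declares the rest straightforward. You instead treat the whole group at once: $F$ must be a semi-invariant for a lift of $\rho(G)$ acting on $V_d$, so the candidate equations form the eigenspace $V_d^{\chi}$ singled out by a smooth $F_0$, and the parameters are free coordinates on that linear subspace after the core is normalized to be monic. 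The two are equivalent (the relations the paper imposes in its second stage are linear, being coefficient comparisons under a linear action, so its constrained parameter set is exactly your $V_d^{\chi}$), but your framing makes the linearity of the parameter space transparent and avoids privileging a cyclic generator, at the cost of being less explicit about which monomials actually occur. Two small caveats: a lift $\widetilde{G}\subset GL_3(K)$ isomorphic to $G$ need not exist, so your $\chi$ should be read as a character of the preimage of $\rho(G)$ in $GL_3(K)$ (a central extension) rather than of $G$ itself --- this changes nothing in the argument; and the uniqueness/rigidity of the normalization, which you explicitly flag as the remaining work (the action of the normalizer of $\rho(G)$ and the possibility of a second admissible character), is precisely the part the paper also leaves unaddressed under the phrase ``the remaining part of the lemma is straightforward,'' so you are not behind the paper on this point, merely more candid about it.
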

\begin{proof} Let $\sigma\in G$ be an automorphism of maximal order $m>1$ and choose an element $\rho$ in $[\rho]\in A$ such that, $\rho(\sigma)$ is diagonal of the form $diag(1,\xi_m^a,\xi_m^b)$ where $\xi_m$ a primitive $m$-th root of
unity in $K$. Following the same technique in \cite{Dol} or
\cite{BaBacyc} (for a general discussion), we can associate to the
set $\rho(M_g^{Pl}(<\sigma>)$ a non-singular plane equation $F_{m,
(a,b)}(X;Y;Z)$ with a certain set of parameters (which may have some
restrictions in order to ensure the non-singularity). Now imposing
that $\rho(G)$ should fix the equation $F_{m, (a,b)}$, we obtain
some specific algebraic relations between the parameters that should
satisfy in general the final equation $F_{m, (a,b)}$ which we want
to reach. The remaining part of the lemma is straightforward.
\end{proof}

It is difficult to determine the groups $G$ and $\rho$ such that
$\rho(M_g^{Pl}(G))$ is non-empty for some fixed $g$. Henn \cite{He}
obtained this determination for $g=3$, Badr-Bars \cite{BaBaaut} for
$g=6$ and for a general implementation of any degree, we refer to
\cite{BaBacyc} in which we formulate an algorithm to determine the
$\rho$'s when $G$ is cyclic.

\begin{defn} Write $M_g^{Pl}(G)$ as $\cup_{[\rho]\in A}\rho(M_g^{Pl}(G))$, we define the number
of the equation components of $M_g^{Pl}(G)$ to be the number of
elements $[\rho]\in A$ such that $\rho(M_g^{Pl}(G))$ is not empty.
We say that $M_g^{Pl}(G)$ is equation irreducible if
$M_g^{Pl}(G)=\rho(M_g^{Pl}(G))$ for a certain $\rho:G\hookrightarrow
PGL_3(K)$.
\end{defn}

Of course, if $M_g^{Pl}(G)$ is not equation irreducible then it is
not irreducible and the number of the irreducible equation
components of $M_g^{Pl}(G)$ is a lower bound for the number of
irreducible components.

Here, we state some questions that appear naturally concerning the
locus $\rho(M_g^{Pl}(G))$:
\begin{preg} Is $\rho(M_g^{Pl}(G))$ a subset of the locus of
smooth projective, genus $g$ curves with a Galois subcover of group
isomorphic to $G$ with a prescribed ramification?
\end{preg}
We believe that the answer to this question for $K=\mathbb{C}$ (i.e.
Riemann surfaces) should be always true from the work of Breuer
\cite{Bre}.

\begin{preg} Is $\rho(M_g^{Pl}(G))$ an irreducible set when $G$ is
a cyclic group?
\end{preg}
 It is to be noted that when $K=\mathbb{C}$, Cornalba \cite{Cor}, with $G$ cyclic of prime order,
 and Catanese \cite{Ca}, for general order, obtained that the locus
 of smooth projective curves of genus $g$ with a cyclic Galois
 subcover of group isomorphic to $G$ with a prescribed ramification is
 irreducible.

Concerning the irreducibility question, we prove in \cite{BaBacyc}
that if $G$ has an element of large order $(d-1)^2$, $d(d-1)$,
$d(d-2)$ or $d^2-3d+3$ then $\rho(M_g^{Pl}(G))$ has at most one
element therefore, is irreducible.

At the end of this section, we give positive answers to the above
questions for certain $\rho$ for $\rho(M_6^P(\Z/8))$. In this
example, we obtain that $\rho(M_6^P(\Z/8))=M_6^P(\Z/8)$ (See also
Remark \ref{rem2} for the explicit Galois subcover and the
ramification data for the locus $\rho{(M_6(\Z/4\Z))}$).

\begin{defn} Given $M_g^{Pl}(G)$, we define $\widetilde{M_g^{Pl}(G)}$ to be the
subset of the elements inside $M_g^{Pl}(G)$ whose automorphism group
is isomorphic to $G$. Similarly for $\widetilde{\rho(M_g^{Pl}(G))}$
and we have $\widetilde{M_g^{Pl}(G)}=\cup_{[\rho]\in
A}\widetilde{\rho(M_g^{Pl}(G))}.$ On the other hand, the number of
strongly equation irreducible components of
$\widetilde{M_g^{Pl}(G)}$ is defined to be the number of the
elements $[\rho]\in A$ such that $\widetilde{\rho(M_g^{Pl}(G))}$ is
not empty.

 We say that
$\widetilde{M_g^{Pl}(G)}$ is equation strongly irreducible (or
simply, ES-irreducible) if it is not empty and
$\widetilde{M_g^{Pl}(G)}=\widetilde{\rho(M_g^{Pl}(G))}$ for some
$\rho:G\hookrightarrow PGL_3(K)$
\end{defn}
It is a direct consequence to ask the same questions above for the
locus $\widetilde{\rho(M_g^{Pl}(G))}$ instead of
$\rho(M_g^{P\l}(G))$. Furthermore, in this language we can formulate
the main result in \cite{He} as follows
\begin{thm}[Henn, Komiya-Kuribayashi] If $G$ is a non-trivial group that appears as the full
automorphism group of a non-singular plane  curve of degree $4$,
then $\widetilde{M_3^P(G)}$ is ES-Irreducible.
\end{thm}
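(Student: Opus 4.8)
The plan is to reduce the statement to the classification of Henn and Komiya-Kuribayashi of finite automorphism groups of smooth plane quartics, and then to verify directly that for each group $G$ on their list there is a single normal-form equation. Recall that for $d=4$ the genus is $g=3$, and every non-hyperelliptic genus-$3$ curve admits a smooth plane quartic model; the relevant loci are therefore the $\widetilde{M_3^{Pl}(G)}$ for $G$ ranging over the finite list of groups that occur. First I would recall from \cite{He} and \cite{KK} the explicit table: for each such $G$ the authors exhibit a homogeneous quartic $F_G(X;Y;Z)$ depending on a set of free parameters (with open conditions guaranteeing smoothness and closed conditions excluding larger automorphism groups), together with a fixed representation $\rho_G\colon G\hookrightarrow PGL_3(K)$ under which $\rho_G(G)$ preserves $F_G$. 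By Lemma~2.3 (the uniqueness-of-normal-form lemma proved above) each such pair $(\rho_G,F_G)$ is exactly the canonical equation attached to the component $\rho_G(M_3^{Pl}(G))$.

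The heart of the argument is then to show that for $d=4$ the index set $A=\{\rho\colon G\hookrightarrow PGL_3(K)\}/\sim$ contributes only \emph{one} non-empty strongly-equation component, i.e.\ $\widetilde{M_3^{Pl}(G)}=\widetilde{\rho_G(M_3^{Pl}(G))}$. The strategy is: take $\delta\in\widetilde{M_3^{Pl}(G)}$, so $Aut(\delta)\cong G$ acts on \emph{some} smooth plane model $C$ via an embedding $\rho$. One must prove $\rho\sim\rho_G$. Since the linear system $g^2_4=|K_\delta|$ is canonical and hence $Aut(\delta)$-stable, the action of $Aut(\delta)$ on the plane is intrinsic: the embedding $Aut(\delta)\hookrightarrow PGL_3(K)$ is determined up to conjugacy by the induced action on $H^0(\delta,K_\delta)\cong K^3$. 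Thus two plane models of the same $\delta$ give conjugate representations, which forces every $\rho$ arising from an element of $\widetilde{M_3^{Pl}(G)}$ to lie in a single class, namely $[\rho_G]$. Concretely I would invoke that for $d=4$ the canonical map is the plane embedding itself, so there is no freedom in choosing the $g^2_d$ and the representation is rigid.

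The main obstacle will be justifying that the representation is genuinely unique up to conjugacy for \emph{every} $G$ in the list, rather than merely for a generic member. A priori nothing forbids a group $G$ from admitting two non-conjugate embeddings into $PGL_3(K)$, both of which are realized by smooth quartics with full automorphism group exactly $G$; this is precisely the phenomenon that \emph{does} occur for $d\geq5$, as the paper goes on to show. What saves the quartic case is that the canonical model is forced, so I would make the argument hinge on the coincidence (special to $g=3$) of the canonical embedding with the plane quartic embedding, and then inspect the Henn--Komiya-Kuribayashi table group-by-group to confirm that each listed $G$ indeed appears with a single representation class. The cleanest route is to cite \cite[\S2.1--\S2.2]{Lo}, where these tables are reproduced with the minor corrections, and to observe that each entry lists exactly one equation; combined with the intrinsic-canonical argument this yields $|\{[\rho]\in A:\widetilde{\rho(M_3^{Pl}(G))}\neq\emptyset\}|=1$, which is the definition of ES-Irreducibility.
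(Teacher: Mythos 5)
The paper offers no proof of this theorem: it is stated as a reformulation, in the language of \S 2, of the classification results of Henn and Komiya--Kuribayashi, and the only ``proof'' is the exhaustive table in those references (one equation, hence one conjugacy class of representations, per group $G$). Your final step --- inspect the tables of \cite{He}, \cite{KK} (or \cite{Lo}) group by group --- is therefore exactly what the paper relies on, and that part of your proposal is fine.

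However, the ``conceptual'' argument in your second paragraph is wrong, and you should not lean on it. The uniqueness of the $g^2_4$ (equivalently, the fact that the plane embedding is canonical) only shows that for a \emph{fixed} curve $\delta$ the conjugacy class $[\rho]$ of the representation $Aut(\delta)\hookrightarrow PGL_3(K)$ is well defined; it says nothing about two \emph{different} curves $\delta_1,\delta_2\in\widetilde{M_3^{Pl}(G)}$, which could perfectly well realize $G$ through non-conjugate embeddings. Indeed the paper uses precisely this well-definedness, for every $d\geq 4$, to write $\widetilde{M_g^{Pl}(G)}=\cup_{[\rho]\in A}\widetilde{\rho(M_g^{Pl}(G))}$; if your argument forced all curves into a single class it would prove ES-irreducibility for all degrees, contradicting the main theorem of Chapter 1 (the $g^2_d$ is unique for every smooth plane curve of degree $d\geq4$ by \cite[Lemma 11.28]{Book}, so nothing about the coincidence with the canonical embedding is ``special to $g=3$'' in the way you need). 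The remark immediately following the theorem in the paper makes the failure concrete already in degree $4$: $M_3^{Pl}(\Z/3)$ has \emph{two} equation components, i.e.\ $\Z/3$ admits two non-conjugate representations both realized by smooth quartics; the tilde locus is ES-irreducible only because every curve in one of the two components has full automorphism group $S_3$ rather than $\Z/3$. No intrinsic argument about the canonical system can detect this --- it is a fact about which specializations of which normal forms acquire extra automorphisms, and it is established only by the case-by-case analysis. So your proof should discard the middle paragraph entirely and rest on the table verification alone.
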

\begin{rem} Henn in \cite{He}, observed that $M_3^{Pl}(\Z/3)$ already has two
irreducible equation components, and therefore has at least two
irreducible components, but one of such components has a bigger
automorphism group namely, $S_3$ the symmetry group of of order 3.
\end{rem}

\subsection{The loci $M_6^{Pl}(\Z/8)$ and
$\widetilde{M_6^{Pl}(\Z/8\Z)}$} \mbox{}\\

Consider in $M_6$ an element $\delta$ which has a smooth
non-singular plane model with an effective action of the cyclic
group of order 8 in particular, $\delta\in M_6^P(\Z/8\Z)$. Following
\cite{BaBacyc}, \cite{Dol} or the table \S4 in this note, we can
associate to $\delta$ an equation of the form $X^5+Y^4Z+XZ^4+\beta
X^3Z^2=0$ for certain(/s) $\beta$ which depend/(s) on $\delta$, and
there is only an element in $A$ in the case of degree 5 with cyclic
group of order 8. In particular, we fix an equation for $\delta$
such that $\rho(\Z/8\Z)=<diag(1,\xi_8,\xi_8^4)>$ where $\xi_8$ is a
$8$-th primitive root of unity in $K$, and because $\delta$ is
non-singular then we should have $\beta\neq \pm2$. In this locus we
have $\rho(M_6^P(\Z/8\Z))=M_6^P(\Z/8\Z).$

\par Now, let us compute all the equations of the form
$X^5+Y^4Z+XZ^4+\beta X^3Z^2=0$ that can be associated to the fixed
curve $\delta$. This corresponds to equations obtained by a change
of variables through a transformation $P\in PGL_3(K)$ such that
$P<(diag(1,\xi_8,\xi_8^4)>P^{-1}=<diag(1,\xi_8,\xi_8^4)>$ and the
new eqnution has a similar form $X^5+Y^4Z+XZ^4+\beta'X^3Z^2=0$.
\par Without any loss of generality, we can suppose that
$Pdiag(1,\xi_8,\xi_8^4)P^{-1}=diag(1,\xi_8,\xi_8^4)$ hence in order
to have the same eigenvalues which are pairwise distinct, we may
assume that $P$ is a diagonal matrix, say
$P=diag(1,\lambda_2,\lambda_3)$. Therefore, we get an equation of
the form: $X^5+\lambda_2^4\lambda_3 Y^4Z+\lambda_3^4
XZ^4+\beta\lambda_3^2X^3Z^2=0$. From which we must have
$\lambda_2^4\lambda_3=\lambda_3^4=1$, thus $\lambda_3^2$ is 1 or -1.
Hence, we obtain a bijection map
$$\varphi:M_6^P(\Z/8\Z)\rightarrow \mathbb{A}^1(K)\setminus\{-2,2\}/\sim$$
$$\alpha\mapsto [\beta]=\{\beta,-\beta\}$$
where $a\sim b\Leftrightarrow b=a\ or\ a=-b$. Moreover, by the work
that we did in \cite{BaBaaut}, we know that $X^5+Y^4Z+XZ^4+\beta
X^3Z^2=0$ has a bigger automorphism group than $\Z/8\Z$ if and only
if $\beta=0$, therefore, we have a bijection map
$$\tilde{\varphi}:\widetilde{M_6^P(\Z/8\Z)}\rightarrow \mathbb{A}^1(K)\setminus\{-2,0,2\}/\sim$$
$$\alpha\mapsto [\beta]=\{\beta,-\beta\}$$
and observe that $0\in \mathbb{A}^1(K)$ is the only point which had
no identification by the relation rule $\sim$. The above sets, when
$K$ is the complex field, are irreducible.

Moreover, if we consider the Galois cyclic cover of degree 8 given
by the action of the automorphism of order 8 on $X^5+Y^4Z+XZ^4+\beta
X^3Z^2=0$, we obtain that it ramifies at the points
$(0:1:0),(0:0:1)$ with ramification index 8 as well as the four
points $(1:0:a)$ where $1+a^4+\beta a^2=0$ with ramification index 2
if $\beta\neq \pm 2$. That is, $M_6^P(\Z/8\Z)$ is inside the locus
of curves in $M_6$ which have a cyclic Galois cover of degree 8 of a
genus zero curve and which ramifies at 6 points, 2 points with
ramification index 8 and the other 4 points are with ramification
index 4.

\section{Preliminaries on automorphism on plane curves}
We recall that, given a non-singular plane curve $C$ of degree
$d\geq 4$ over an algebraic closed field $K$ of genus $g\geq2$ then
$Aut(C)$ is a finite subgroup of $PGL_3(K)$ and it satisfies one of
the following situations (for more details, see Mitchell
\cite{Mit}):
\begin{enumerate}
\item fixes a point $P$ and a line $L$ with $P\notin L$ in
$\mathbb{P}^2(K)$,
\item fixes a triangle, i.e. exists 3 points $S:=\{P_1,P_2,P_3\}$ of
$\mathbb{P}^2(K)$, such that is fixed as a set,
\item $Aut(C)$ is conjugate of a representation inside $PGL_3(K)$ of
one of the finite primitive group namely, the Klein group
$PSL(2,7)$, the icosahedral group $A_5$, the alternating group
$A_6$, the Hessian groups $Hess_{216}$, $Hess_{72}$ or $Hess_{36}$.
\end{enumerate}

For a non-zero monomial $cX^iY^jZ^k$ we define its exponent as
$max\{i,j,k\}$. For a homogeneous polynomial $F$, the core of $F$ is
defined as the sum of all terms of $F$ with the greatest exponent.
Let $C_0$ be a smooth plane curve, a pair $(C,G)$ with $G\leq
Aut(C)$ is said to be a descendant of $C_0$ if $C$ is defined by a
homogeneous polynomial whose core is a defining polynomial of $C_0$
and $G$ acts on $C_0$ under a suitable coordinate system.

\begin{thm}[Harui] (see \cite{Harui} $\S 2$) \label{teo1}
Let $G$ be a subgroup of $Aut(C)$. Then $G$ satisfies one of the
following statements:
\begin{enumerate}
  \item $G$ fixes a point on $C$ and then it is cyclic.
  \item $G$ fixes a point not lying on $C$ and it satisfies a short exact sequence of the form
  $$1\rightarrow N\rightarrow G\rightarrow G'\rightarrow 1,$$
with $N$ a cyclic group of order dividing $d$ and $G''$ is isomorph
to a cyclic group $C_m$ of order $m$, a Diedral group $D_{2m}$,
$A_4$ , $A_5$ or $S_4$, where $m$ is an integer $\leq d-1$.
Moreover, if $G'\cong D_{2m}$, then $m|(d-2)$ or $N$ is trivial.
\item $G$ is conjugate (by certain $P\in PGL_3(K)$) to a subgroup of $Aut(F_d)$ where $F_d$ is the Fermat curve $X^d+Y^d+Z^d$
and $(G,C)$ is a descendant of $F_d$. In particular, $|G|\,|\,6d^2$.
\item $G$ is conjugate to a subgroup of $Aut(K_d)$
where $K_d$ is the Klein curve curve $X^{d-1}Y+Y^{d-1}Z+Z^{d-1}X$
and $(G,C)$ is a descendant of $K_d$. Therefore
$|G|\,|\,3(d^2-3d+3)$.
\item $G$ is conjugate to a finite primitive subgroup$PGL_3(K)$ namely, the Klein group $PSL(2,7)$, the icosahedral group $A_5$, the alternating group $A_6$, or the Hessian groups $Hess_{216}$, $Hess_{72}$, $Hess_{36}$.

\end{enumerate}
\end{thm}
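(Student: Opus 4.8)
The plan is to deduce Harui's five cases from the trichotomy for finite subgroups of $PGL_3(K)$ recalled just before the statement (Mitchell's classification): any finite subgroup of $PGL_3(K)$ either (i) fixes a point $P$ and a line $L$ with $P\notin L$, (ii) stabilizes a triangle as a set, or (iii) is conjugate to one of the finite primitive groups. Since $G\leq Aut(C)\leq PGL_3(K)$ is finite, I would apply this trichotomy to $G$ directly. The primitive alternative (iii) is verbatim conclusion (5), so nothing further is needed there, and the whole content of the proof lies in analyzing the two remaining geometric configurations.

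For the reducible case (i), I would place $P=(1:0:0)$ and $L=\{X=0\}$ and split according to whether a $G$-fixed point lies on $C$. If the fixed point $P$ lies on $C$, then since $C$ is smooth at $P$ the group $G$ acts faithfully on the one-dimensional cotangent space at $P$, so $G$ injects into $K^{\ast}$ and is therefore cyclic; this is conclusion (1). If $P\notin C$, I would let $N\trianglelefteq G$ be the kernel of the restriction action $G\to Aut(L)=PGL_2(K)$. Every element of $N$ fixes $P$ and fixes $L$ pointwise, hence is a homology $diag(\zeta,1,1)$, so $N$ is cyclic. The divisibility $|N|\,\mid\,d$ I would obtain from the invariance $F(\zeta X,Y,Z)=\lambda F(X,Y,Z)$: the coefficient of $X^d$ is nonzero because $P\notin C$ (forcing $\lambda=\zeta^d$), while $F(0,Y,Z)\neq 0$ by smoothness, i.e. $X\nmid F$ (forcing $\lambda=1$), whence $\zeta^d=1$. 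The quotient $G'=G/N$ embeds in $PGL_2(K)$ and so is cyclic, dihedral, $A_4$, $A_5$ or $S_4$ by Dickson's theorem; the bound $m\leq d-1$ and the dihedral refinement ($m\mid(d-2)$ or $N$ trivial) I would extract by intersecting $C$ with the invariant lines of $G'$ and tracking the induced action on the monomials of $F$.

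For the imprimitive case (ii), I would take the triangle to have vertices at the coordinate points $(1:0:0),(0:1:0),(0:0:1)$ and consider the permutation homomorphism $G\to S_3$, whose kernel $G_0$ consists of diagonal projective transformations. Analyzing which degree-$d$ monomials can occur in a $G_0$-invariant smooth equation, and combining this with the permutation action of the image of $G$ in $S_3$, I would show that the core of $F$ is, after a suitable coordinate change, either the Fermat polynomial $X^d+Y^d+Z^d$ or the Klein polynomial $X^{d-1}Y+Y^{d-1}Z+Z^{d-1}X$. This exhibits $(C,G)$ as a descendant of $F_d$ or $K_d$ and realizes $G$ as a subgroup of $Aut(F_d)$ or $Aut(K_d)$; the order bounds $|G|\,\mid\,6d^2$ and $|G|\,\mid\,3(d^2-3d+3)$ then follow from $|Aut(F_d)|=6d^2$ and $|Aut(K_d)|=3(d^2-3d+3)$, giving conclusions (3) and (4).

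The main obstacle is the imprimitive case: classifying the $G_0$-invariant smooth cores and proving the clean Fermat-versus-Klein dichotomy requires a careful monomial bookkeeping governed by the diagonal weights together with the $S_3$-symmetry, and a correct matching of each sub-case to the right descendant curve. The secondary delicate point is the dihedral refinement in case (2), where showing that $m\mid(d-2)$ whenever $N$ is nontrivial needs a precise computation of how a reflection in $G'$ interacts with the invariant equation of $C$.
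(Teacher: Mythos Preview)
The paper does not prove this theorem: it is stated as a quotation from Harui \cite{Harui}, \S2, and is used throughout as a black box. There is therefore no ``paper's own proof'' to compare your proposal against.

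That said, your outline is the correct architecture for Harui's argument: one starts from Mitchell's trichotomy, dispatches the primitive case immediately, and then analyzes the reducible and triangle-fixing configurations separately. Your treatment of case (i) is essentially right, including the computation that $|N|\mid d$ from the two coefficient constraints. The point you flag as the main obstacle---the Fermat/Klein dichotomy in the imprimitive case---is indeed where the real work in \cite{Harui} lies, and your sketch (diagonal kernel $G_0$, permutation image in $S_3$, monomial analysis of the core) is the right shape but is not yet a proof: one must also handle the situation where $G$ fixes a triangle but the image in $S_3$ is trivial or of order $2$, in which case $G$ actually fixes a vertex or an edge and one is thrown back into case (i) rather than obtaining a Fermat or Klein descendant. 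Harui's statement is organized so that cases (3) and (4) only arise when neither a point nor a line is globally fixed, and making that reduction precise is part of the argument you would need to supply.
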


\textbf{The Hessian group:} A representation of the Hessian group of
order $216$ inside $PGL_3(K)$ is given by $Hess_{216}=<S,T,U,V>$
with,
$$S=\left(
\begin{array}{ccc}
1&0&0\\
0&\omega&0\\
0&0&\omega^2\\
\end{array}
\right);\ \ U=\left(
\begin{array}{ccc}
1&0&0\\
0&1&0\\
0&0&\omega\\
\end{array}
\right);\ V=\frac{1}{\omega-\omega^2}\left(
\begin{array}{ccc}
1&1&1\\
1&\omega&\omega^2\\
1&\omega^2&\omega\\
\end{array}
\right);\ \ T=\left(
\begin{array}{ccc}
0&1&0\\
0&0&1\\
1&0&0\\
\end{array}
\right);$$ always $\omega$ means a primitive 3rd root of unity.
Also, we consider the primitive subgroups $Hess_{36}=<S,T,V>$ and
$Hess_{72}=<S,T,V,UVU^{-1}>$. It should be noted that,
representations of $Hess_{216}$ inside $PGL_3(K)$ forms a unique set
up to conjugation (see Mitchell \cite{Mit} page $217$), and because
$Hess_{72}$ and $Hess_{36}$ are defined geometrically as subgroups
of $Hess_{216}$ in $PGL_3(K)$ see \cite[\S 23,p.25]{Grove} their
representations inside $PGL_3(K)$ are also unique up to conjugation.
\begin{rem} \label{rem1} In particular, for the Hessian groups $Hess_{216}$, $Hess_{72}$
and $Hess_{36}$, the locus $\widetilde{M_g^{Pl}(Hess_*)}$ is
ES-Irreducible as long as is not empty (where $*\in\{36,72,216\}$)
because the set $A$ is trivial (with the notation of \S2).
\end{rem}
Now, we may ask the following.
\begin{preg} Consider $G$, a non-trivial group, where the set $A$ is
giving by one element (see the notion of $A$ in previous section).
 Is it true that the
topological invariant in \cite[\S2]{CLP} is trivial for $M_g(G)$ in
order to be irreducible? Is it true that $M_g^{Pl}(G)$ are
irreducible?
\end{preg}
From the above and the interplay with the notion of being
ES-irreducible or not of the locus $\widetilde{M_g^{Pl}(G)}$, one
also could ask the following question in group theory,
\begin{preg} Fix a finite non-cyclic group $G$ of $PGL_3(K)$. How many elements are inside the set
$$A=\{\rho| \rho:G\hookrightarrow PGL_3(K)\}/ \sim$$ where
$\rho_1\sim\rho_2$ if $\rho_1(G)=P^{-1}\rho_2(G)P$ for some $P\in
PGL_3(K)$?
\end{preg}

\section{Cyclic groups in smooth plane curves of degree 5 and $\widetilde{M_6^{Pl}(\Z/m\Z)}$.}
Note that, given a smooth plane curve $C:\,F_C(X;Y;Z)=0$ of degree
$d$ such that $Aut(C)$ is non-trivial, we study it up to
$K$-isomorphism, that is, two of them are $K$-isomorphic if one
transforms to the other by a change of variables $P\in PGL_3(K)$.

By a change of variables, we can suppose that the cyclic group of
order $m$ acting on a smooth plane curve of degree $5$ is given in
$PGL_3(K)$ by a diagonal matrix $diag(1;\xi_m^a;\xi_m^b)$ ( where
$\xi_m$ an $m$-th primitive root of unity and $a\leq b$ are positive
integers) and we call this element by type $m,(a,b)$. Following the
same proof of \cite[\S 6.5]{Dol} (or see \cite{BaBacyc} for general
treatment with an algorithm of computation for any $d$), we have the
following table with the corresponding equations (depending of
certain parameters) that have an effective cyclic group of certain
orders. Such groups are the possible situations for which
$\rho(M_6^{Pl}(\Z/m\Z))$ is non-empty:
\begin{center}
\begin{tabular}{|c|c|}
  \hline
  % after \\: \hline or \cline{col1-col2} \cline{col3-col4} ...
  Type: $m, (a,\,b)$ & $F(X;Y;Z)$ \\\hline\hline
  $20,(4,5)$& $X^5+Y^5+ XZ^4$ \\\hline
  $16,(1,12)$& $X^5+Y^4Z+ XZ^4$\\\hline
  $15,(1,11)$& $X^5+Y^4Z+ YZ^4$    \\\hline
$13,(1,10)$& $X^4Y+Y^4Z+ Z^4X$    \\\hline
 $10,(2,5)$& $X^5+Y^5+ XZ^4+\beta_{2,0}X^3Z^2$ \\\hline
  $8,(1,4)$& $X^5+Y^4Z+ XZ^4+\beta_{2,0}X^3Z^2$ \\\hline
 $5,(1,2)$& $X^5+Y^5+Z^5+\beta_{3,1}X^2YZ^2+\beta_{4,3}XY^3Z$    \\\hline
  $5,(0,1)$& $Z^5+L_{5,Z}$    \\\hline

  % after \\: \hline or \cline{col1-col2} \cline{col3-col4} ...
  $4,(1,3)$& $X^5+X\big(Z^4+ Y^4+\beta_{4,2}Y^2Z^2\big)+\beta_{2,1}X^3YZ$\\\hline
 $4,(1,2)$& $X^5+X\big(Z^4+ Y^4\big)+\beta_{2,0}X^3Z^2+\beta_{3,2}X^2Y^2Z+\beta_{5,2}Y^2Z^3$\\\hline
  $4,(0,1)$& $Z^4L_{1,Z}+L_{5,Z}$    \\\hline
  $3,(1,2)$& $X^5+Y^4Z+ YZ^4+\beta_{2,1}X^3YZ+X^2\big(\beta_{3,0}Z^3+\beta_{3,3}Y^3\big)+\beta_{4,2}XY^2Z^2$ \\\hline
 $2,(0,1)$& $Z^4L_{1,Z}+Z^2L_{3,Z}+L_{5,Z}$    \\\hline
  \end{tabular}
\end{center}
where $L_{i,U}$ means a homogeneous polynomial of degree $i$ which
does not contain the variable $U$ and $\beta_{i,j},\in K$, and
remains in the table above to introduce restriction in the
parameters $\beta_{i,j}$ so that $F(X,Y,Z)=0$ is non-singular (which
will be omitted).

By the above table we find that $M_6^{Pl}(\Z/m\Z)$ is not empty,
only for the values $m$ that are included in the previous list
moreover, for $m\neq 4,5$, we have
$M_6^{Pl}(\Z/m\Z)=\rho(M_6^{Pl}(\Z/m\Z))$  where $\rho$ is obtained
such that $\rho(\Z/m\Z)=<diag(1,\xi_m^a,\xi_m^b)>$. Thus, the
corresponding loci $\widetilde{M_6^{Pl}(\Z/m\Z)}$ where $m\neq4,5$
are ES-Irreducible if they are non-empty.

Now, we consider the remaining cases $\widetilde{M_6^{Pl}(\Z/m\Z)}$
with $m=4$ or 5.

Clearly, the equations of type $5, (1,2)$ always have a bigger
automorphism group by permuting $X$ and $Z$. Therefore, there is at
most one equation that define curves of degree $5$ with the full
automorphism group is isomorphic to $C_5$ (Observe that the number
of
  conjugacy classes of representations of $C_5$ inside $PGL_3(K)$ is
  three). In particular, $\widetilde{M_6^P(\Z/5\Z)}=\rho(M_6^P(\Z/5\Z))$
where $\rho(\Z/5\Z)=<diag(1,1,\xi_5)>$ and hence
$\widetilde{M_6^P(\Z/5\Z)}$ is ES-Irreducible if it is non-empty.

  Now, we work for the cyclic groups of order 4. The
  type $4, (1,3)$ has always a bigger automorphism group by permuting
  $Y$ and $Z$ conveniently (after a change of variable on $Y$ to make $\alpha=1$).
  In fact, this type is not irreducible (being of the form $X.G$) hence is singular and will be out of our scope in this note.

  Therefore, we have $M_6^{Pl}(\Z/4\Z)=\rho_1(M_6^{Pl}(\Z/4\Z))\cup
  \rho_2(M_6^{Pl}(\Z/4\Z))$ where $\rho_1$ corresponds to type
  $4, (0,1)$ and $\rho_2$ to type $4, (1,2)$.
%
%  We show next that in type $4(1,2)$ and $4,(1,0)$
%  contain curves with full automorphism group the cyclic group $C_4$.
\subsection{On type $4,(0,1)$} Consider the non-singular plane curve defined
by the equation $\tilde{C}:\,\,X^5+Y^5+Z^4X+\beta X^3Y^2$ where
$\beta\neq0.$ This curve admits a cyclic element of order $4$
namely, $\sigma:=[X;Y;\xi_4Z]$ with axis $Z=0$ and center $(0;0;1)$
(we say that is an homology when cyclic elements fix such geometric
construction). It follows by Mitchell \cite{Mit} $\S 5$, that
$Aut(\tilde{C})$ should fix a point, a line or a triangle. If
$Aut(\tilde{C})$ fixes a triangle and neither a line nor a point is
leaved invariant then, by Harui \cite{Harui} $\S 5$, $\tilde{C}$ is
a descendant of the Fermat curve $F_5$ or the Klein curve $K_5$ but
this is impossible because $4\nmid|Aut(F_5)|(=150)$ and
$4\nmid|Aut(K_5)|(=39)$. Therefore, $Aut(\tilde{C})$ should fix a
line and a point off that line. It follows by \cite{Harui} [Lemma
$3.7$] that the point $(0;0;1)$ is an inner Galois point of
$\tilde{C}$ (and it is unique by Yoshihara \cite{Yoshihara} $\S 2$
[Theorem $4$]). %If the fixed point by $Aut(\tilde{C})$ not belongs
%to the curve, then by \cite{Harui} we have a cyclic group of order
%$d=5$ dividing $Aut(\tilde{C})$ and therefore we have a cyclic group
%of order 20 dividing $Aut(\tilde{C})$. Therefore from the above
%table our curve $\tilde{C}$ should be projectively equivalent to
%Type $20(4,6)$ with equation $X^5+Y^5+XZ^4$ but this is impossible
%under the assumption that $\beta\neq 0$.
Therefore, the unique inner Galois point is fixed by the full
$Aut(\tilde{C})$ in particular, $Aut(\tilde{C})$ is cyclic
(\cite[Lemma 11.44]{Book}) and automorphisms of $\tilde{C}$ are of
the form
$[X+\alpha_2Y;\beta_1X+\beta_2Y;\gamma_1X+\gamma_2Y+\gamma_3Z]$ From
the coefficient of $YZ^4$ we must have $\alpha_2=0$ consequently,
$\gamma_1=\gamma_2=0$ (Coefficients of $Z^3X^2$ and $XZ^3Y$) then
$\beta_1=0$ (Coefficient of $XY^4$). That is, $Aut(\tilde{C})$ is
cyclic and is generated by a diagonal element $[X;vY;tZ]$ which in
turns implies that $v^5=v^2=t^4=1$ hence $v=1$ and $t$ is a $4$-th
root of unity and the result follows.

Therefore, with the above argument we coclude the following result.
\begin{prop} The locus set $\widetilde{\rho_1(M_6^{Pl}(\Z/4\Z)}$ is
non-empty.
\end{prop}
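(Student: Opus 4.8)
The plan is to prove non-emptiness by exhibiting a single explicit point of the locus, since $\widetilde{\rho_1(M_6^{Pl}(\Z/4\Z))}$ consists of classes of non-singular plane quintics whose \emph{full} automorphism group is isomorphic to $\Z/4\Z$ and acts through the representation $\rho_1$ of type $4,(0,1)$, namely $\langle\mathrm{diag}(1,1,\xi_4)\rangle$. First I would take the curve $\tilde{C}:\,X^5+Y^5+Z^4X+\bt X^3Y^2=0$ with $\bt\neq 0$, check that for a suitable $\bt$ the Jacobian criterion gives non-singularity, and verify directly that $\sigma:=[X;Y;\xi_4Z]$ preserves the equation. Since $\sigma$ is the homology $\mathrm{diag}(1,1,\xi_4)$ with center $(0;0;1)$ and axis $Z=0$, this already places $\tilde{C}$ in $\rho_1(M_6^{Pl}(\Z/4\Z))$; the substantive work is then to show that no larger automorphism group occurs, i.e. $Aut(\tilde{C})=\langle\sigma\rangle$.

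For the inclusion $Aut(\tilde{C})\subseteq\langle\sigma\rangle$, I would invoke Mitchell's trichotomy: $Aut(\tilde{C})$ fixes a point, a line, or a triangle. The case in which only a triangle is preserved, with neither a point nor a line left invariant, forces by Theorem \ref{teo1} that $\tilde{C}$ is a descendant of the Fermat curve $F_5$ or of the Klein curve $K_5$; this is impossible because $4\mid|Aut(\tilde{C})|$ while $|Aut(F_5)|=150$ and $|Aut(K_5)|=39$ are both prime to $4$. Hence $Aut(\tilde{C})$ must fix a line together with a point off that line.

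With a fixed point off a fixed line in hand, I would appeal to Harui's Lemma $3.7$ to identify $(0;0;1)$ as an inner Galois point of $\tilde{C}$, and to Yoshihara's uniqueness result \cite{Yoshihara} to conclude it is the only one. Consequently it is preserved by all of $Aut(\tilde{C})$, which therefore fixes $(0;0;1)$ and is cyclic by \cite[Lemma 11.44]{Book}. The closing step is a coordinate computation: a general element fixing $(0;0;1)$ has the shape $[X+\al_2Y;\bt_1X+\bt_2Y;\gamma_1X+\gamma_2Y+\gamma_3Z]$, and matching the coefficient of $YZ^4$, then those of $Z^3X^2$ and $XZ^3Y$, then that of $XY^4$, successively forces $\al_2=0$, then $\gamma_1=\gamma_2=0$, then $\bt_1=0$; the surviving diagonal element $[X;vY;tZ]$ must satisfy $v^5=v^2=t^4=1$, so $v=1$ and $t^4=1$. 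Thus $Aut(\tilde{C})=\langle[X;Y;\xi_4Z]\rangle\cong\Z/4\Z$ acting exactly as $\rho_1$, and $\tilde{C}$ witnesses non-emptiness.

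I expect the main obstacle to be the step ruling out a strictly larger automorphism group, and in particular making the Galois-point argument airtight: one must confirm that the hypotheses of Harui's Lemma $3.7$ genuinely apply so as to single out $(0;0;1)$ as an inner Galois point, and that Yoshihara's theorem yields its uniqueness for this specific quintic. A secondary subtlety is checking that the realized representation is indeed $\rho_1$ of type $4,(0,1)$ rather than a conjugate of type $4,(1,2)$; this is guaranteed here because $\sigma$ is a homology fixing a point \emph{and} pointwise fixing a line. The coordinate matching is otherwise routine, but it must be carried out in the indicated order so that each vanishing is forced before it is used.
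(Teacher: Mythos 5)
Your proposal follows the paper's own argument essentially step for step: the same witness curve $X^5+Y^5+Z^4X+\beta X^3Y^2$ with $\beta\neq 0$, the same exclusion of the Fermat/Klein descendant case via $4\nmid 150$ and $4\nmid 39$, the same appeal to Harui's Lemma $3.7$ and Yoshihara's uniqueness of the inner Galois point to force cyclicity, and the identical coefficient-matching order ($YZ^4$, then $Z^3X^2$ and $XZ^3Y$, then $XY^4$) to reduce to the diagonal element with $v^5=v^2=t^4=1$. The argument is correct and coincides with the paper's proof.
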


\subsection{On type $4, (1,2)$} Consider the non-singular plane curve
defined by the equation $\tilde{\tilde{C}}:\,\,X^5+X(Z^4+\alpha
Y^4)+\beta Y^2Z^3$ where $\alpha\beta\neq0$. This curve admits a
cyclic group generated by the automorphism
$\tau:=[X;\xi_4Y;\xi_4^2Z]$. For the same reason as above (i.e
$4\nmid|Aut(K_5)|, |Aut(F_5)|$), $\tilde{\tilde{C}}$ is not a
descendant of the Fermat curve $F_5$ or the Klein curve $K_5$.
Moreover, $Aut(\tilde{\tilde{C}})$ is not conjugate to an
icosahedral group $A_5$ (no elements of order 4), the Klein group
$PSL(2,7)$, the Hessian group $Hess_{216}$ or the alternating group
$A_6$ (since by \cite{Harui} [Theorem $2.3$],
$|Aut(\tilde{\tilde{C}})|\leq150$). Moreover, we claim to prove the
following.
\begin{claim}
$Aut(\tilde{\tilde{C}})$ is not conjugate to any of the Hessian
subgroups namely, $Hess_{36}$ or $Hess_{72}$.
\end{claim}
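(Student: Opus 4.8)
The plan is to prove something slightly stronger than the Claim: that \emph{no} homogeneous form of degree $5$ defines a curve left invariant, as a projective curve, by any subgroup of $PGL_3(K)$ containing $\langle S,T\rangle$. Since both $Hess_{36}=\langle S,T,V\rangle$ and $Hess_{72}=\langle S,T,V,UVU^{-1}\rangle$ contain $S$ and $T$, this immediately rules out the possibility that some conjugate $P\,Aut(\tilde{\tilde{C}})\,P^{-1}$ equals $Hess_{36}$ or $Hess_{72}$: the image $P(\tilde{\tilde{C}})$ would then be a smooth quintic invariant under such a group, which cannot exist. Note this makes the argument insensitive to the conjugating matrix $P$, since a projective change of coordinates only replaces $\tilde{\tilde{C}}$ by another quintic without affecting the nonexistence statement.

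First I would translate ``$\rho(G)$ preserves the curve'' into the language of semi-invariants: a form $F$ of degree $5$ defines a $G$-invariant curve precisely when, for every $g\in G$, one has $F\circ g=\chi(g)\,F$ for a scalar $\chi(g)\in K^*$. Applying this to $S=diag(1,\omega,\omega^2)$ forces every monomial $X^aY^bZ^c$ occurring in $F$ to share the same weight $b+2c\equiv b-c \pmod 3$; call this common value $k$. Applying it next to the cyclic permutation $T\colon (X,Y,Z)\mapsto(Y,Z,X)$ shows that the set of monomials occurring in $F$ must be stable under the induced cyclic shift $(a,b,c)\mapsto(c,a,b)$ of the exponent triples.

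The key step is then a short arithmetic incompatibility between these two constraints. Within a single $T$-orbit $\{(a,b,c),(c,a,b),(b,c,a)\}$ the three $S$-weights are $b-c$, $a-b$ and $c-a$; for $F$ to be semi-invariant under $S$ they must all be congruent mod $3$. Imposing $b-c\equiv a-b$ gives $2b\equiv a+c$, and substituting $a+b+c=5$ turns this into $3b\equiv 5\equiv 2\pmod 3$, i.e. $0\equiv 2$, a contradiction. The only remaining alternative, a singleton orbit with $a=b=c$, is impossible since $3\nmid 5$. Hence no nonzero quintic can be semi-invariant under $S$ and $T$ simultaneously, which completes the exclusion of $Hess_{36}$ and $Hess_{72}$.

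I expect the only delicate points to be purely bookkeeping: fixing once and for all the substitution convention for the actions of $S$ and $T$ on monomials, so that the weight $b-c$ and the shift $(a,b,c)\mapsto(c,a,b)$ come out with the correct signs, and checking that $\langle S,T\rangle$ really is a common subgroup of the two Hessian groups as written. There is no genuine obstacle here; the residue $5\not\equiv 0\pmod 3$ is exactly what drives the contradiction, and the identical computation simultaneously shows that $Hess_{216}$ admits no invariant quintic either, consistent with the classical fact that the Hessian-group invariants begin only in degree $6$.
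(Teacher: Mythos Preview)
Your argument is correct and is genuinely different from the paper's. You prove the stronger, curve-independent fact that no degree-$5$ form whatsoever is semi-invariant under $\langle S,T\rangle$, using only the congruence $5\not\equiv 0\pmod 3$. This is clean and general: it immediately excludes all three Hessian groups for every smooth quintic, and the same computation works for any degree $d$ with $3\nmid d$.

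The paper instead argues with the specific curve $\tilde{\tilde C}$ and the specific involution $\tau^2=[X;-Y;Z]$ in $Aut(\tilde{\tilde C})$. It invokes Mitchell's result that all reflections in $Hess_{36}$ and $Hess_{72}$ are conjugate, so any conjugating matrix $P$ must send $\tau^2$ to the fixed reflection $[Z;Y;X]$; solving $P\tau^2P^{-1}=\lambda[Z;Y;X]$ yields two explicit shapes for $P$, and a coefficient comparison (monomials $XY^4,\,Y^4Z,\,X^2Y^3,\,Y^3Z^2$) in $P\tilde{\tilde C}$ produces a contradiction. This approach is more hands-on and tied to the particular equation, but it fits the paper's overall methodology of tracking an explicit element under conjugation. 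Your route is shorter and more conceptual; the paper's route has the virtue of reusing the same reflection-tracking technique that appears elsewhere in the argument.
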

Because both groups contains reflections (but no four groups) then
all reflections in the group will be conjugate (\cite{Mit} Theorem
$11$). Therefore, it suffices to consider the case
$P\,\tau^2\,P^{-1}=\lambda [Z;Y;X]$ this in turns gives $P$ of the
forms $\left(
  \begin{array}{ccc}
    1 & \alpha_2 & \alpha_3 \\
    \beta_1 & 0 & \beta_3 \\
    1 & -\alpha_2 & \alpha_3 \\
  \end{array}
\right)$ or $\left(
              \begin{array}{ccc}
                1 & \alpha_2 & \alpha_3 \\
                0 & \beta_2 & 0 \\
                -1 & \alpha_2 & -\alpha_3 \\
              \end{array}
            \right)$. For both situations, by comparing the coefficients of $XY^4$ and $Y^4Z$ (resp. $L_{5,Y}$), we get $\alpha_3=1$ (since $[Z;Y;X]\in Aut(P\tilde{\tilde{C}})$) in particular, the second transformation is not an option. Furthermore, from the coefficients of $X^2Y^3$ and $Y^3Z^2$ we should have $\beta_1=\beta_3$ a contradiction and we conclude the result.

Consequently, $Aut(\tilde{\tilde{C}})$ should fix a line and a point
off that line. Now, because $\tau\in Aut(\tilde{\tilde{C}})$ is of
the form $diag(1;a;b)$ such that $1,a,b$ (resp. $1,a^3,b^3$) are
pairwise distinct then automorphisms of $\tilde{\tilde{C}}$ are of
the forms $\tau_1:=[X;vY+wZ;sY+tZ],\,\,\tau_2:=[vX+wZ;Y;sX+tZ]$ or
$\tau_3:=[vX+wY;sX+tY;Z]$ (because the fixed point only could be
$(1:0:0)$ or $(0:1:0)$ or $(0:0:1)$ and the line that is leaved
invariant is one of the reference lines).
\par Now, if $\tau_1\in Aut(\tilde{\tilde{C}})$ then $s=0=w$ (Coefficient of $Y^5$ and $Z^5$) and the same conclusion if $\tau_2$ (resp. $\tau_3$) $\in Aut(\tilde{\tilde{C}})$ from the coefficients of $X^3Y^2$ and $Y^4Z$ (resp. $Z^3X$ and $YZ^4$). Hence, automorphisms of $\tilde{\tilde{C}}$ are all diagonal and moreover,
if $[X;vY;sZ]\in Aut(\tilde{\tilde{C}})$ then we must have
$v^4=s^4=v^2s^3=1$ that is $Aut(\tilde{\tilde{C}})$ is cyclic of
order 4.

Consequently, the following result follows.
\begin{prop} The locus set $\widetilde{\rho_2(M_6^{Pl}(\Z/4\Z)}$ is
non-empty.
\end{prop}

\begin{cor} The locus set $\widetilde{M_6^{Pl}(\Z/m\Z)}$ is
ES-Irreducible if and only if $m\neq4$. If $m=4$ then
$\widetilde{M_6^{Pl}(\Z/m\Z)}$ has exactly two irreducible equation
components and hence the number of its irreducible components is at
least two.
\end{cor}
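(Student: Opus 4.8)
The plan is to assemble the corollary from the type-by-type analysis of \S4 together with the two Propositions just established, treating the three ranges $m\notin\{4,5\}$, $m=5$, and $m=4$ separately. For the first range I would read off from the table that $M_6^{Pl}(\Z/m\Z)$ is non-empty only for the listed values of $m$, and that for each such $m\neq 4,5$ there is a single relevant type $m,(a,b)$, so that $M_6^{Pl}(\Z/m\Z)=\rho(M_6^{Pl}(\Z/m\Z))$ with $\rho(\Z/m\Z)=\langle diag(1,\xi_m^a,\xi_m^b)\rangle$. Passing to the sublocus where the automorphism group is exactly $\Z/m\Z$ then gives $\widetilde{M_6^{Pl}(\Z/m\Z)}=\widetilde{\rho(M_6^{Pl}(\Z/m\Z))}$, i.e.\ it is ES-Irreducible whenever it is non-empty.

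For $m=5$ I would use that every curve of type $5,(1,2)$ acquires the extra automorphism interchanging $X$ and $Z$, so no member of $\widetilde{M_6^{Pl}(\Z/5\Z)}$ comes from this type; the only surviving representation is the one attached to type $5,(0,1)$, namely $\rho(\Z/5\Z)=\langle diag(1,1,\xi_5)\rangle$, whence $\widetilde{M_6^{Pl}(\Z/5\Z)}=\widetilde{\rho(M_6^{Pl}(\Z/5\Z))}$ is again ES-Irreducible when non-empty. Together with the previous paragraph this proves one implication: $m\neq 4\Rightarrow$ ES-Irreducible.

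For the converse I would focus on $m=4$. Type $4,(1,3)$ is discarded because it is reducible (hence its curves are singular) and always carries a larger group, so the only contributing types are $4,(0,1)$ and $4,(1,2)$, giving the representations $\rho_1$ and $\rho_2$. The two Propositions guarantee that $\widetilde{\rho_1(M_6^{Pl}(\Z/4\Z))}$ and $\widetilde{\rho_2(M_6^{Pl}(\Z/4\Z))}$ are both non-empty, so it remains to check that $[\rho_1]\neq[\rho_2]$ in $A$. This is where I would do the one genuinely necessary computation: $\rho_1$ is generated by the homology $diag(1,1,\xi_4)$, whose eigenvalue multiset has a repeated entry, whereas $\rho_2$ is generated by $diag(1,\xi_4,\xi_4^2)=diag(1,\xi_4,-1)$, with three distinct eigenvalues; no scalar multiple of the first can be permuted onto the second, so the two cyclic subgroups are not conjugate in $PGL_3(K)$. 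Consequently $\widetilde{M_6^{Pl}(\Z/4\Z)}$ has exactly two non-empty strongly equation irreducible components, so it cannot equal a single $\widetilde{\rho(M_6^{Pl}(\Z/4\Z))}$ and is not ES-Irreducible. Finally, invoking the irreducibility of the cyclic Galois loci of Cornalba and Catanese, each of the two pieces is irreducible, giving exactly two irreducible equation components and therefore at least two irreducible components.

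I expect the main obstacle to be not any single calculation but the exhaustiveness bookkeeping: one must be sure that the table of \S4 really lists all types $m,(a,b)$ for which $\rho(M_6^{Pl}(\Z/m\Z))$ is non-empty, that type $4,(1,3)$ can legitimately be removed, and that the uniqueness of the plane $g^2_5$ forces the two tilde-pieces for $m=4$ to be disjoint, so that neither can absorb the other. Granting these structural facts, the corollary follows by the assembly described above.
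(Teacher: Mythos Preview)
Your proof is correct and follows the paper's approach exactly: the corollary is presented there as an immediate consequence of the preceding table analysis (one type for each $m\neq 4,5$; type $5,(1,2)$ discarded because of the extra involution; type $4,(1,3)$ discarded as reducible) together with the two Propositions showing $\widetilde{\rho_1(M_6^{Pl}(\Z/4\Z))}$ and $\widetilde{\rho_2(M_6^{Pl}(\Z/4\Z))}$ are both non-empty. Your explicit eigenvalue check that $[\rho_1]\neq[\rho_2]$ is a welcome addition the paper leaves implicit.

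One small correction: you do not need Cornalba--Catanese at the end. In the paper's terminology, ``strongly equation irreducible components'' simply counts the classes $[\rho]\in A$ with $\widetilde{\rho(M_g^{Pl}(G))}\neq\emptyset$; it is not a topological irreducibility statement. Having two such non-empty disjoint pieces already gives ``exactly two irreducible equation components'' by definition, and ``at least two irreducible components'' then follows from disjointness alone (as the paper notes earlier, the number of equation components is a lower bound for the number of irreducible components). The Cornalba--Catanese result is discussed in the paper only as context for the deeper question of whether each $\rho$-piece is itself irreducible, which is not claimed in this corollary.
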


\begin{rem}\label{rem2} Observe that the Galois cover of degree 4 corresponding to $\rho_1(M_6^{Pl}(\Z/4\Z))$:
$${C}_1:=Z^4L_{1,Z}+L_{5,Z}=0\rightarrow {C}_1/<[X;Y;\xi_4Z]>$$
is ramified exactly at six points with ramification index 4. Indeed,
the fixed points of $\sigma^i$ for $i=1,2,3,4$ are all the same
where $\sigma=diag(1,1,\xi_4)$ therefore, we only need to consider
the ramification points of $\sigma$ in particular, the ramification
index is 4. Now, by the Hurwitz formula we get $10=4(2 g_0-2)+3k$
where $g_0$ is the genus of ${C}_1/<[X,Y,\xi_4 Z]>$ hence we are
forced to $g_0=0$ and $k=6$. On the other hand, the Galois cover of
degree 4 corresponding to $\rho_2(M_6^{Pl}(\Z/4\Z))$, gives that
$${{C}}_2:=X^5+X(Z^4+
Y^4)+\beta_{2,0}X^3Z^2+\beta_{3,2}X^2Y^2Z+\beta_{5,2} Y^2
Z^3=0\rightarrow{C}_2/<[X;\xi_4 Y;\xi_4^2 Z]$$ is ramified at the
points $(0:1:0),(0:0:1)$ with ramification index 4 and at the 4
points namely, $(1:0:a)$ where $1+a^4+\beta_{2,0}a^2=0$ with
ramification index 2 provided that $\beta_{2,0}\neq \pm 2$. The
situation with $\beta_{2,0}=\pm 2$ is that the equation is singular
or non-geometrically irreducible, which
is not of our concern in this work.%\footnote{The above results on cyclic automorphisms

%%plane non-singular curve of degree 5 contradict \cite[Theorem
%%4]{Yoshihara}. The above two curves have 6 and 2 inner Galois points
%%respectively (see the definition of inner Galois point in
%%\cite[Definition 3]{Yoshihara}). Yoshihara in
%%\cite[Theorem4]{Yoshihara} claimed that always the number of inner
%%Galois points is bounded by 1.}
%Thus with the moduli space of genus $g$ curves with some fixed
%points and ramification fixed at these points one can reprove (by
%the language of moduli spaces) that both equations corresponds to
%non-isomorphic curves, see for example \cite{Cor}. In both
%situations are non-hyperelliptic, because the quotient of
%$\tilde{C}$ or $\tilde{\tilde{C}}$ by the only automorphism of order
%2 is not a curve of genus 0 by Hurwitz formula (or more in general
%for $d\geq 4$ any smooth plane curve is not hyperelliptic from
%Hurwitz formula, and because the hyperelliptic involution fix a line
%and a point off that line in $\mathbb{P}^2(K)$).
\end{rem}

\begin{rem} Given $G$ a non-trivial finite group such that $\widetilde{M_6^{Pl}(G)}$ is
non-empty, by a tedious work \cite{BaBaaut}, one can show that
$\widetilde{M_6^{Pl}(G)}$ is ES-Irreducible except the case
$G\cong\Z/4\Z$.
\end{rem}

\begin{thm} Let $d\geq 5$ be an odd integer and consider $g=(d-1)(d-2)/2$ as usual. Then $\widetilde{M_g^{Pl}(\Z/(d-1)\Z)}$ is not
ES-Irreducible and it has at least two irreducible components.
\end{thm}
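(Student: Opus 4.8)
The plan is to produce, for each odd $d\geq 5$, two injective representations $\rho_1,\rho_2\colon \Z/(d-1)\Z\hookrightarrow PGL_3(K)$ that are \emph{not} conjugate and are both realized, i.e.\ for each $i$ there is a smooth plane curve $C_i$ of degree $d$ on which $\rho_i(\Z/(d-1)\Z)$ acts and with $Aut(C_i)\cong \Z/(d-1)\Z$. Once this is done, the decomposition $\widetilde{M_g^{Pl}(\Z/(d-1)\Z)}=\cup_{[\rho]\in A}\widetilde{\rho(M_g^{Pl}(\Z/(d-1)\Z))}$ has at least two non-empty members indexed by $[\rho_1]\neq[\rho_2]$, so the locus is not ES-Irreducible; and since the number of strongly equation irreducible components is a lower bound for the number of irreducible components, the locus has at least two irreducible components. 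Concretely I take $\rho_1(\Z/(d-1)\Z)=\langle diag(1,1,\xi_{d-1})\rangle$ (type $(d-1),(0,1)$, a homology) and $\rho_2(\Z/(d-1)\Z)=\langle diag(1,\xi_{d-1},\xi_{d-1}^2)\rangle$ (type $(d-1),(1,2)$), where $\xi_{d-1}$ is a primitive $(d-1)$-th root of unity; both have order exactly $d-1$ in $PGL_3(K)$.

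For the homology representation $\rho_1$ I would use $C_1\colon X^d+Y^d+Z^{d-1}X+\bt X^{d-2}Y^2=0$, which is smooth for a general $\bt$ and is fixed by $\sigma:=diag(1,1,\xi_{d-1})$ (this is exactly the degree-$5$ curve of type $4,(0,1)$ with $d$ in place of $5$). Since the center $(0:0:1)$ of the homology $\sigma$ lies on $C_1$ and $\langle\sigma\rangle$ has order $d-1$, the point $(0:0:1)$ is an inner Galois point of $C_1$, its projection realizing the cyclic cover of degree $d-1$. By Yoshihara's uniqueness of the inner Galois point for a smooth plane curve of degree $\geq 5$ that is not the Fermat curve, this point is fixed by all of $Aut(C_1)$; hence $Aut(C_1)$ fixes a point of $C_1$ and is therefore cyclic (Harui, case (1), or \cite[Lemma 11.44]{Book}). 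A direct inspection of the coefficients then forces every such automorphism to be diagonal of the form $[X;Y;tZ]$ with $t^{d-1}=1$, so $Aut(C_1)\cong\Z/(d-1)\Z$ and $\widetilde{\rho_1(M_g^{Pl}(\Z/(d-1)\Z))}\neq\emptyset$.

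For $\rho_2$ I would use $C_2\colon X^d+X(Y^{d-1}+Z^{d-1})+\bt Y^2Z^{d-2}=0$, which is $\tau$-invariant for $\tau:=diag(1,\xi_{d-1},\xi_{d-1}^2)$ (each monomial has $\tau$-weight $\equiv 0 \pmod{d-1}$), is geometrically irreducible and smooth for a general $\bt\neq0$, and whose core is the single monomial $X^d$. The non-conjugacy of $\rho_2$ and $\rho_1$ is seen by counting homologies: every one of the $d-2$ non-trivial elements of $\rho_1(\Z/(d-1)\Z)$ is a homology, whereas $diag(1,\xi_{d-1}^k,\xi_{d-1}^{2k})$ has a repeated eigenvalue only for $k=(d-1)/2$, so $\rho_2(\Z/(d-1)\Z)$ contains exactly one homology; as $d-2>1$ the two subgroups cannot be conjugate. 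It then remains to show $Aut(C_2)\cong\Z/(d-1)\Z$, following the degree-$5$ argument for type $4,(1,2)$. I first exclude the large cases of Theorem \ref{teo1}: the Klein descendant is impossible since $(d-1)\nmid 3(d^2-3d+3)$ for odd $d\geq5$, and the Fermat descendant since $(d-1)\nmid 6d^2$ whenever $(d-1)\nmid 6$; the single remaining value $d=7$ is handled by noting that $Aut(F_7)\cong(\Z/7\Z)^2\rtimes S_3$ has no element of order $6$. The primitive cases are excluded by the order bound for large $d$ and by a finite check (incompatible with the core $X^d$) for the few small degrees with $d-1$ dividing one of $36,60,72,168,216,360$. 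Hence $Aut(C_2)$ fixes a point and a line, and a coefficient comparison (using the monomials $Y^{d-1}X$, $Z^{d-1}X$ and $Y^2Z^{d-2}$ to kill the off-diagonal entries, exactly as in \S4) forces every automorphism to be diagonal and the group to be precisely $\langle\tau\rangle$.

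The main obstacle is the exact determination $Aut(C_2)\cong\Z/(d-1)\Z$ for the non-homology type: the naive divisibility argument that excludes Fermat descendants at degree $5$ breaks down precisely at $d=7$ (where $6\mid 6d^2$) and must be replaced by the structural remark on $Aut(F_7)$, and the primitive groups have orders independent of $d$, so they contain $\Z/(d-1)\Z$ for several small degrees and require an ad hoc check there; by contrast, the homology case is handled uniformly by the Galois-point uniqueness. The remaining points — smoothness and irreducibility of $C_1$ and $C_2$ for general $\bt$, and the routine coefficient bookkeeping — are parallel to the degree-$5$ analysis already carried out, and need only be transcribed with $d$ in place of $5$.
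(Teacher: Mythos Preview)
Your proposal is correct and follows essentially the same route as the paper: the same two curves $X^d+Y^d+Z^{d-1}X+\beta X^{d-2}Y^2$ (type $(d-1),(0,1)$) and $X^d+X(Y^{d-1}+Z^{d-1})+\beta Y^2Z^{d-2}$ (type $(d-1),(1,2)$), the inner Galois point argument for the homology type, and the Harui case analysis plus diagonal coefficient check for the other type. One small point: your parenthetical ``incompatible with the core $X^d$'' is misplaced for the primitive groups (the core argument is for descendants, cases (3)--(4) of Harui, not case (5)); the paper instead excludes primitives via element orders (none of $A_5,A_6,PSL(2,7),Hess_*$ contain an element of order $d-1$ once $d\geq 9$, with $d=5,7$ handled separately), which is cleaner than your divisibility-of-group-order approach and avoids the longer list of ad hoc checks.
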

\begin{proof}
The above argument for concrete curves of Type $4, (0,1)$ and Type
$4, (1,2)$ is valid for any odd degree $d\geq5$ and the proof is
quite similar. In other words, let $\tilde{C}$ and
$\tilde{\tilde{C}}$ be the non-singular plane curves of types $d-1,
(0,1)$ and $d-1, (1,2)$ defined by the equations
$X^d+Y^d+Z^{d-1}X+\beta X^{d-2}Y^2=0$ and
$X^d+X(Z^{d-1}+Y^{d-1})+\beta Y^2Z^{d-2}=0$ where $\beta\neq0$.
Then, $Aut(\tilde{C})$ and $Aut(\tilde{\tilde{C}})$ are cyclic of
order $d-1$ generated by $[X;Y;\xi_{d-1}Z]$ and
$[X;\xi_{d-1}Y;\xi_{d-1}^2Z]$ respectively which are cyclic groups
which are not conjugate, therefore belongs to two different
$[\rho]'s$.\\\\
\noindent\textbf{On type $d-1, (0,1)$:} With a homology of order
$d-1\geq 4$ inside $Aut(\tilde{C})$ we conclude that
$Aut(\tilde{C})$ should fix a point, a line or a triangle (See
\cite{Mit} \S 5). Moreover, the center $(0;0;1)$ of this homology is
an inner Galois point ( \cite{Harui} [Lemma $3.7$]) and then it is
unique by Yoshihara (\cite{Yoshihara} $\S 2$ [Theorem $4$]).
Therefore, it should be fixed by the $Aut(\tilde{C})$ (in
particular, the axis $Z=0$ is leaved invariant by \cite{Mit}
[Theorem 4]) hence $Aut(\tilde{C})$ is cyclic (\cite[Lemma
11.44]{Book}) and automorphisms of $\tilde{C}$ are of the form
$diag(1;v;t)$ such that $v^d=t^{d-1}=v=1$. That is,
$|Aut(\tilde{C})|=d-1$.
\\\\
\noindent\textbf{On type $d-1, (1,2)$:} First, we proof the
following claim.
\begin{claim}
The full automorphism group of non-singular plane curves of type
$d-1, (1,2)$ is not conjugate to any of the primitive groups
mentioned in Theorem \ref{teo1} (5). Moreover, $\tilde{\tilde{C}}$
is not a descendant of the Fermat curve $F_d:\,X^d+Y^d+Z^d$ or Klein
curve $K_d:\,X^{d-1}Y+Y^{d-1}Z+Z^{d-1}X$  in particular,
$Aut(\tilde{\tilde{C}})$ fixes a line and a point off this line.
\end{claim}
We consider the case $d\geq7$ (for $d=5$ we refer to the above
results) and the claim is straightforward. Indeed, the alternating
group $A_6$ has no elements of order $\geq6$, the Klein group
$PSL(2,7)$ which is the only simple group of order $168$ has no
elements of order $\geq8$ (also, no elements of order $6$ are
present inside \cite{Fano}) therefore the groups $A_6, A_5$ and the
Klein group do not appear as the full automorphism group. Moreover,
elements inside the Hessian group $SmallGroup(216,153)$ have orders
$1,2,3,4$ or $6$ then $Hess_{216}$ (consequently, $Hess_{36}$ and
$Hess_{72}$) do not appear except possibly for $d=7$. On the other
hand, $d-1\nmid 3(d^2-3d+3)$ that is $\tilde{\tilde{C}}$ is not a
descendant of the Klein curve $K_d$ also, is not a descendant of the
Fermat curve because $d-1\nmid 6d^2$ (except possibly for $d=7$).
Finally, it remains to treat the case $d=7$ for the Hessian groups
or being a Fermat's descendant. In this case, the Fermat curve has
automorphism group generated by the four transformations
$[\xi_{7}X;Y;Z],\,[X;\xi_{7}Y;Z],\,[X;Z;Y]$ and $[Y;Z;X]$
consequently elements of $Aut(F_7)$ are of the forms
$[X;\xi_{7}^aY;\xi_{7}^bZ],\,[\xi_{7}^bZ;\xi_{7}^aY;X],\,[X;\xi_{7}^bZ;\xi_{7}^aY],\,[\xi_{7}^aY;X;\xi_{7}^bZ],\,[\xi_{7}^aY;\xi_{7}^bZ;X]$
or $[\xi_{7}^bZ;X;\xi_{7}^aY]$ and one can easily verify that non of
them has order $6$ consequently, being a descendant of the Fermat
curve $F_7$ does not occur. Also, by the same argument as Claim 1 on
Type $4, (1,2)$ we exclude the Hessian groups and hence the claim is
proved.
\par Now, the full automorphism group should fix a line and a point which does not belong to this line thus all automorphisms of $\tilde{\tilde{C}}$ satisfy one of the forms $[X;vY+wZ;sY+tZ],\,[vX+wZ;Y;sX+tZ]$ or
$[vX+wY;sX+tY;Z]$ (because $\tilde{\tilde{C}}$ has an automorphism
$[X;\xi_{d-1}Y;\xi_{d-1}^2Z]$ of the form $diag(1;v;s)$ with $1,v,s$
(resp. $1,v^3,s^3$) are pairwise distinct). If $[X;vY+wZ;sY+tZ]\in
Aut(\tilde{\tilde{C}})$ then $s=0=w$ (Coefficient of $Y^d$ and
$Z^d$) and the same conclusion if $[vX+wZ;Y;sX+tZ]$ (resp.
$[vX+wY;sX+tY;Z]$) $\in Aut(\tilde{\tilde{C}})$ from the
coefficients of $X^{d-2}Y^2$ and $Y^{d-1}Z$ (resp. $Z^{d-2}X^2$ and
$YZ^{d-1}$). Hence, automorphisms of $\tilde{\tilde{C}}$ are all
diagonal and moreover, if $diag(1;v;s)\in Aut(\tilde{\tilde{C}})$
then we must have $v^{d-1}=s^{d-1}=v^2s^{d-2}=1$ that is
$v=\xi_{d-1}^r$ and $s=\xi_{d-1}^{r'}$ such that $d-1|2r-r'$ hence
$Aut(\tilde{\tilde{C}})$ is cyclic of order $d-1$.

\end{proof}
%\begin{rem}\label{remeven} Concerning not strongly equation irreducible when $d$ is
%even $d\geq 6$, we can proof that $M_{10}^P(\Z/3\Z)$ is not strongly
%irreducible corresponding. Easily one have that
%$$M_{10}^P(\Z/3\Z)=\rho_1(M_{10}^P(\Z/3\Z))\cup\rho_2(M_{10}^P(\Z/3\Z)),$$
%where $\rho_1(\Z/3)=<diag(1,1,\xi_3>$ and
%$\rho_2(\Z/3)=<diag(1,\xi_3,\xi_3^2>$. A tedious work is to proof
%that $\widetilde{\rho_i(M_{10}^P(\Z/3))}$ are non-empty for $i=1,2$.
%We prove the non-emptiness by considering the equation
%$Z^6+X^5Y+XY^5+\beta Z^3X^3=0$ for type $3,(0,1)$ (corresponding to
%$\rho_1$) and the equation $X^5Y+Y^5Z+Z^5X+\mu_2X^2Y^4+\beta
%X^2YZ^3=0$ for $\rho_2$. Observe that for
%$\widetilde{M_10^P(\Z/3\Z)}$ we obtain an example where the number
%of strongly equation components reach the maximum possible number
%which it is the cardinality of the set $A$. See the details of this
%example in chapter 1 in \cite{BaBaArxiv}.
%\end{rem}

\section{On the locus $\widetilde{M_{10}^{Pl}(\Z/3\Z)}$.}
%\section{On an example of smooth plane curves of degree 6}
By a similar argument as this one that has been done for degree 5,
we obtain the following smooth plane equations of degree 6 with an
effective cyclic group of order 3 in the automorphism group.

\begin{center}

\begin{tabular}{|c|c|}
  \hline
  % after \\: \hline or \cline{col1-col2} \cline{col3-col4} ...
  Type: $m, (a,\,b)$ & $F(X;Y;Z)$ \\\hline\hline
$3,(0,1)$& $Z^6+Z^3 L_{3,Z}+L_{6,Z}$\\ \hline $3,(1,2)$ &
$X^5Y+Y^5Z+Z^5X+\alpha Z^2X^4+\beta X^2 Y^4+\gamma Y^2Z^4+\delta
X^3Y^2Z+\mu XY^3Z^2+\eta X^2YZ^3$\\ \hline
\end{tabular}

\end{center}

\subsection{On type $3,(1,2)$}

\begin{prop}\label{nonprimitive}
Consider the plane curve $\tilde{C}$ defined by the equation
\[X^5Y+Y^5Z+Z^5X+\mu_1 Z^2X^4+\mu_2 X^2Y^4+\mu_3 Y^2Z^4+\delta_1 X^3Y^2Z+\delta_2 XY^3Z^2+\delta_3 X^2YZ^3=0.\]
Then, $Aut(\tilde{C})$ is not conjugate to any of the finite
primitive groups inside $PGL_3(K)$ namely, the Klein group
$PSL(2,7)$, the icosahedral group $A_5$, the alternating group
$A_6$, the Hessian group $Hess_{216}$ or to any of its subgroups
$Hess_{72}$ or $Hess_{36}$.
\end{prop}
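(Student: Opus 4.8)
The plan is to rule out each primitive group by exploiting the incompatibility between the order/structure of the primitive groups and the order-$3$ automorphism that $\tilde{C}$ carries, together with the specific shape of the defining equation. I note first that $\tilde{C}$ is a plane curve of degree $6$, so $g=10$ and $|Aut(\tilde{C})|\leq 6d^2=216$ by Harui's bound (Theorem~\ref{teo1}), and $\tilde{C}$ has an effective automorphism of order $3$, namely $\tau=[X;\xi_3 Y;\xi_3^2 Z]$ coming from the type $3,(1,2)$ construction. The alternating group $A_6$ has order $360>216$, so it is immediately excluded. For the Klein group $PSL(2,7)$ of order $168$ and the icosahedral group $A_5$ of order $60$, I would check divisibility and element orders: $A_5$ has elements only of orders $1,2,3,5$ and is a candidate on order grounds, so here I would argue via the equation itself, whereas $PSL(2,7)$ must be handled by noting $168\nmid$ does not obstruct directly but the structure does.

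\emph{Ruling out the Hessian groups.} The most delicate cases are the Hessian groups $Hess_{36}$, $Hess_{72}$, $Hess_{216}$, because $3\mid 36,72,216$ and these groups genuinely contain order-$3$ elements. Here I would mimic exactly the argument used for Type $4,(1,2)$ in Claim~1 of the earlier section: since these groups are primitive, if $Aut(\tilde{C})$ were conjugate to one of them then some $P\in PGL_3(K)$ would conjugate $\tilde{C}$ into a curve invariant under the standard Hessian generators $S,T,U,V$ listed in the excerpt. I would then impose that the Fermat-type generator $S=diag(1,\omega,\omega^2)$ or the cyclic permutation $T=[Y;Z;X]$ preserves the (transformed) equation and compare monomial coefficients. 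The Hessian group forces a high degree of symmetry — in particular invariance under $T$ cyclically permutes the three ``corner'' monomials $X^5Y,Y^5Z,Z^5X$ and the three ``mixed'' families — which is incompatible with the presence of independent generic parameters $\mu_i,\delta_i$ unless they satisfy rigid relations that the generic member of the family violates. The cleanest route is to observe that any Fermat or Klein descendant of degree $6$ would have $|Aut|$ dividing $6\cdot 36=216$ or $3(36-18+3)=63$ respectively, and that the order-$3$ element together with the equation's lack of the full Hessian symmetry yields a contradiction on comparing coefficients.

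\emph{Excluding $A_5$ and $PSL(2,7)$.} For these I would use that $\tilde{C}$ is a smooth plane sextic which is, by its form $X^5Y+Y^5Z+Z^5X+\cdots$, close to the Klein curve $K_6$; in fact the core $X^5Y+Y^5Z+Z^5X$ is precisely the Klein sextic. If $Aut(\tilde{C})$ were one of these primitive groups, then $\tilde{C}$ would have to be projectively equivalent to a curve admitting the relevant primitive action, and I would compare this against the known invariant-theoretic descriptions: a smooth plane curve with automorphism group containing $A_5$ or $PSL(2,7)$ in its primitive representation is rigid (it lies in a $0$-dimensional family, cf.\ the uniqueness of the Fermat/Klein descendants), whereas our family carries six free parameters $\mu_1,\mu_2,\mu_3,\delta_1,\delta_2,\delta_3$. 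I expect the main obstacle to be making the Hessian exclusion fully rigorous for the generic parameter values: one must verify that no specialization of the parameters accidentally enhances the symmetry to a Hessian group, which requires carefully checking that the coefficient relations forced by $Hess_{36}$-invariance (after the most general allowable conjugating matrix $P$ compatible with fixing the order-$3$ element) have no solution within the admissible, non-singular locus of the family.
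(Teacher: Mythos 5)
There is a genuine gap, and it occurs precisely at the groups you dismiss most quickly. Your exclusion of $A_6$ rests on the bound $|Aut(\tilde{C})|\leq 6d^2=216$, but Harui's bound as quoted in this paper explicitly excludes the degrees $d=4,6$; for $d=6$ the Wiman sextic is a smooth plane curve with automorphism group $A_6$ of order $360>216$, so no order bound rules out $A_6$ here. Likewise, your treatment of $A_5$ and $PSL(2,7)$ never materializes into an argument: the ``rigidity''/parameter-count sketch cannot work, because the proposition asserts the conclusion for \emph{every} specialization of $\mu_1,\mu_2,\mu_3,\delta_1,\delta_2,\delta_3$, and a dimension count does not exclude the possibility that some special member of the family coincides with the (finitely many) invariant curves of these groups. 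The paper closes all three cases at once with a single structural observation that is absent from your proposal: in each of $A_5$, $A_6$ and $PSL(2,7)$ every element of order $3$ is inverted by an involution of the group (for $PSL(2,7)$ this uses that it contains $S_4$ and has a single class of order-$3$ elements), so if $Aut(\tilde{C})$ were conjugate to one of them it would contain an involution $\tau$ with $\tau\sigma\tau=\sigma^{-1}$ for $\sigma=[X;\omega Y;\omega^2Z]$; a direct computation shows any such $\tau$ must be one of $[X;\beta Z;\beta^{-1}Y]$, $[\beta Y;\beta^{-1}X;Z]$, $[\beta Z;Y;\beta^{-1}X]$, and none of these preserves an equation of the given shape (the monomials $X^5Y,Y^5Z,Z^5X$ are not permuted correctly). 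This is the missing key step.

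Your plan for the Hessian groups is essentially the paper's: conjugate so that the order-$3$ generator is normalized (the paper reduces to $PSP^{-1}=\lambda S$ with $S=\mathrm{diag}(1,\omega,\omega^2)$), deduce that $P$ is a monomial matrix, and then check that the permutation symmetries forced by the Hessian group are incompatible with the equation. That part of the proposal is directionally correct but is only a sketch; as written, neither the determination of the admissible $P$ nor the coefficient comparison is carried out, so even there the proof is not yet complete.
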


\begin{proof}
Let $\tau\in Aut(\tilde{C})$ be an element of order $2$ such that
$\tau\sigma\tau=\sigma^{-1}$ where $\sigma:=[X;\omega Y;\omega^2Z]$
then $\tau$ has one of the forms $[X;\beta Z;\beta^{-1}Y],\,[\beta
Y;\beta^{-1}X;Z]$ or $[\beta Z;Y;\beta^{-1}X]$ a contradiction
because non of these forms retain $\tilde{C}$. Therefore,
$Aut(\tilde{C})$ does not contain an $S_3$ as a subgroup hence is
not conjugate to $A_5$ or $A_6$. Moreover, it is well known that
$PSL(2,7)$ contains an octahedral group of order $24$ (but not an
isocahedral group of order $60$) and since all elements of order 3
in $PSL(2,7)$ are conjugate (see \cite{Fano}) then by the same
argument as above we conclude that $Aut(\tilde{C})$ is not conjugate
$PSL(2,7)$.
\par Now, if $Aut(\tilde{C})$ is conjugate (through a transformation $P$) to any of the Hessian groups then we can assume that $PSP^{-1}=\lambda S$ (because we did not fixed the equation model for a curve having automorphism group any of the Hessian groups). In particular, $P$ has the form $[Y;\alpha Z;\beta X],\,\,[Z;\alpha X;\beta Y]$ or $[X;\alpha Y;\beta Z]$ but non of these transformation satisfy $\{[X;Z;Y],\,\,[Y;X;Z]\,\,[Z;Y;X]\}\subseteq Aut(P\tilde{C})$ thus $Aut(\tilde{C})$ is not conjugate to any of the Hessian groups.
\end{proof}

\begin{lem}\label{lem2}
If a finite group $G$ inside $PGL_3(K)$ fixes a line and a point off
that line moreover if it contains a diagonal element which is not a
homology then the fixed line must be one of the reference lines
$X=0,\,Y=0$ or $Z=0$ and the point that is leaved invariant is
$(1;0;0),\,(0;1;0)$ or $(0;0;1)$. In particular, all elements of $G$
satisfy a form of the following $[X;vY+wZ;sY+tZ],\,[vX+wZ;Y;sX+tZ]$
or $[vX+wY;sX+tY;Z]$.
\end{lem}

\noindent\textbf{Notations.} Let $\Upsilon:=\Gamma\cup\{-3\}$ where
$\Gamma$ is the set of all roots of the polynomial
$g_1(\upsilon)g_2(\upsilon)g_3(\upsilon)$ such that
\begin{eqnarray*}
g_1(\upsilon)&:=&\upsilon\times\left(\upsilon^2+\upsilon+7\right)(\upsilon (\upsilon+7)+49)\times(\upsilon^4-39 \upsilon^2+441)\\
g_2(\upsilon)&:=&(\upsilon \left(\upsilon \left(\upsilon (\upsilon+2) \left(\upsilon^2+\upsilon+4\right)+3\right)-3\right)+1)\\
g_3(\upsilon)&:=&(19 + (\upsilon-3)\upsilon (7 + (\upsilon-4)
\upsilon) (3 + (\upsilon-2) \upsilon))
\end{eqnarray*}
Now, we can prove the main result for this section.
\begin{thm}
Consider the curve defined by the equation
$\tilde{C}:\,\,\,X^5Y+Y^5Z+Z^5X+\delta_3 X^2YZ^3$ and assume for
simplicity that $\delta_3\notin\Upsilon$. Then $Aut(\tilde{C})$ is
cyclic of order $3$ and is generated by the transformation
$diag(1;\omega;\omega^2)$.
\end{thm}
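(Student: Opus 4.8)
The plan is to show that the full automorphism group of
$$\tilde C:\ X^5Y+Y^5Z+Z^5X+\delta_3 X^2YZ^3=0$$
equals the cyclic group $\langle\sigma\rangle$ with $\sigma=\mathrm{diag}(1;\omega;\omega^2)$, which visibly preserves $\tilde C$ (each monomial $X^5Y,Y^5Z,Z^5X,X^2YZ^3$ is $\sigma$-invariant). First I would invoke Proposition \ref{nonprimitive}, which already rules out $\mathrm{Aut}(\tilde C)$ being conjugate to any finite primitive subgroup of $PGL_3(K)$. So by Theorem \ref{teo1} together with Harui's trichotomy (Mitchell \cite{Mit} \S5), $\mathrm{Aut}(\tilde C)$ either fixes a line and a point off it, or fixes a triangle with no invariant point or line, in which case it would be a descendant of $F_6$ or $K_6$. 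I would eliminate the descendant cases exactly as in the proof of Theorem on types $d-1,(1,2)$: since $3\mid|\mathrm{Aut}(\tilde C)|$, one checks the divisibility $3\mid 6\cdot 6^2$ and $3\mid 3(6^2-3\cdot6+3)$ does hold, so here the cheap divisibility obstruction is \emph{not} available and the descendant cases require a direct argument.

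The serious work, and the step I expect to be the main obstacle, is therefore excluding the possibility that $\tilde C$ is a descendant of the Fermat curve $F_6$ or the Klein curve $K_6$ (equivalently that $\mathrm{Aut}(\tilde C)$ fixes a triangle but no point or line). The clean route is to argue that once the primitive and triangle-without-fixed-point cases are gone, $\mathrm{Aut}(\tilde C)$ must fix a line and a point off that line. The strategy is: $\sigma$ is diagonal of type $3,(1,2)$, hence is \emph{not} a homology (its three eigenvalues $1,\omega,\omega^2$ are pairwise distinct), so Lemma \ref{lem2} applies and forces the invariant line to be one of the reference lines and the invariant point to be the opposite vertex. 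Consequently every automorphism has one of the three block forms listed in Lemma \ref{lem2}, namely $[X;vY+wZ;sY+tZ]$, $[vX+wZ;Y;sX+tZ]$, or $[vX+wY;sX+tY;Z]$.

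With the automorphisms pinned to these three shapes, I would finish by a monomial-coefficient comparison. Substituting a generic map of each form into $\tilde C$ and matching coefficients of well-chosen monomials (for instance $Y^6,Z^6$, and the "pure power times linear" terms such as $X^5Y$, $Y^5Z$, $Z^5X$) forces the off-diagonal entries $w$ and $s$ to vanish, exactly as in the $d-1,(1,2)$ computation; this is where the genericity hypothesis $\delta_3\notin\Upsilon$ is consumed, since the roots collected in $\Upsilon=\Gamma\cup\{-3\}$ are precisely the values of $\delta_3$ for which extra (non-diagonal) symmetries would survive the coefficient constraints. Once every automorphism is diagonal, say $\mathrm{diag}(1;v;t)$, invariance of the four monomials $X^5Y,Y^5Z,Z^5X,X^2YZ^3$ yields the relations $v=v^5t=t^5=vt^3=1$ (up to the common scalar), whose only solutions are $v=\omega^{j},t=\omega^{2j}$. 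Hence $\mathrm{Aut}(\tilde C)=\langle\sigma\rangle\cong\Z/3\Z$, as claimed. The delicate point throughout is bookkeeping the precise monomials that distinguish $\tilde C$ from its more symmetric specializations; the polynomial $\Upsilon$ encodes exactly those degenerations, so the argument is valid precisely on its complement.
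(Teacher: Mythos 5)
There is a genuine gap. You correctly identify that the hard part of this theorem is the triangle case --- ruling out (or rather, analyzing) the possibility that $\tilde{C}$ is a descendant of $F_6$ or of $K_6$, where the cheap divisibility obstruction from the odd-degree argument is unavailable --- but you then never supply an argument for it. Saying ``the clean route is to argue that once the primitive and triangle-without-fixed-point cases are gone, $Aut(\tilde C)$ must fix a line and a point off that line'' merely restates the trichotomy; it does not eliminate the triangle case. The paper's proof spends most of its length exactly there: it enumerates the conjugacy classes of non-homology order-$3$ elements of $Aut(F_6)$ and of $Aut(K_6)$, solves $PSP^{-1}=\lambda M$ for each representative $M$ to pin down the shape of a putative conjugating matrix $P$, and then shows that the coefficient conditions for $P\tilde{C}$ to have core $X^6+Y^6+Z^6$ force $\delta_3$ to be a root of $g_1g_2g_3$, while in the Klein case either $X^6$ survives in $P\tilde{C}$ (using $\delta_3\neq 0,-3$) or $P$ is monomial and the extra term $\delta_3\cdot(\text{monomial})$ kills $[Z;X;Y]$, leaving a subgroup of $\langle[X;\xi_{21}Y;\xi_{21}^{-4}Z]\rangle$ of order exactly $3$. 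Note in particular that the Klein branch is not ``excluded'' at all --- it is analyzed and shown to still give order $3$ --- which your sketch does not anticipate.

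Relatedly, you have misplaced where the hypothesis $\delta_3\notin\Upsilon$ is consumed. You claim it is needed to force the off-diagonal entries $w,s$ to vanish in the block forms of Lemma \ref{lem2}; in fact that step needs no genericity at all (the paper uses the vanishing coefficients of $X^5Z$ and $XY^5$, which are absent from $\tilde{C}$ for every $\delta_3$), and the resulting diagonal computation $v=v^5t=t^5=vt^3$ likewise works unconditionally. The set $\Upsilon=\Gamma\cup\{-3\}$ is used only in the descendant analysis you skipped: the roots of $g_1,g_2,g_3$ are precisely the values of $\delta_3$ for which a conjugation onto a Fermat descendant could exist, and $-3$ (together with $0$) enters the Klein computation. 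So the fixed-line half of your argument is essentially the paper's Case 1 and is fine, but the theorem is not proved without the Fermat/Klein case analysis.
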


\begin{proof}
It follows, by Proposition \ref{nonprimitive}, that $Aut(\tilde{C})$
is not conjugate to any of the finite primitive groups that have
been mentioned in Theorem \ref{teo1}. Therefore, the automorphisms
group (being a finite group in $\mathbb{P}^2(K)$ and by Mitchell in
\cite{Mit}) either fixes a line and a point off that line or it
fixes a triangle. In what follows, we treat each of these two cases.
\begin{enumerate}
  \item If $Aut(\tilde{C})$ fixes a point, it must be one of the reference points (by lemma \ref{lem2}).
  Consequently, $Aut(\tilde{C})$ is cyclic since all the reference points lie on $\tilde{C}$. Furthermore, elements of $Aut(\tilde{C})$
   are of the forms $$\tau_1:=[X;vY+wZ;sY+tZ],\,\tau_2:=[vX+wZ;Y;sX+tZ]\,\,\,\, or\,\,\, \tau_3:=[vX+wY;sX+tY;Z]$$
For $\tau_1$ to be in $Aut(\tilde{C})$, we must have $w=0=s$
(coefficients of $X^5Z$ and $XY^5$) and similarly, for $\tau_2$
(resp. $\tau_3$) through  the coefficients of $Y^5X$ and $Z^6$
(resp. coefficients of $YZ^5$ and $X^5Z$). That is, elements of
$Aut(\tilde{C})$ are all diagonal of the form $diag(1;v;t)$
moreover, $tv^4=1=t^3$ and $t^5=v$ therefore, $t=\xi_{3}^a$ and
$v=\xi_{3}^{2a}$ where $\xi_{3}$ is a primitive $3$-rd root of
unity. In particular, $|Aut(C)|=3$.
  \item If $Aut(\tilde{C})$ fixes a triangle and there exist neither a line or a point fixed by it, then by Harui \cite{Harui}, $\tilde{C}$
   is a descendant of the Fermat curve $F_6:\,\, X^6+Y^6+Z^6$ or the Klein curve $K_6:\,\,X^5Y+Y^5Z+Z^5X$. Hence, $Aut(\tilde{C})$ is conjugate to a subgroup of $Aut(F_6):=<\,[\xi_6X;Y;Z],\,[X;\xi_6Y;Z],\,[Y;Z;X],\,[X;Z;Y]\,>$ or to a subgroup of $Aut(K_6):=<\,[Z;X;Y],\,\,[X;\xi_{21}Y;\xi_{21}^{-4}Z]\,>.$
      \begin{itemize}
        \item Suppose first that $Aut(\tilde{C})$ is conjugate (through $P$) to a subgroup of $Aut(F_6)$ then $PSP^{-1}$ should be one of the automorphisms $S,S^{-1},[\xi_6^aY;\xi_6^bZ;X],\,[\xi_6^bZ;X;\xi_6^aY]$ (these are all the automorphisms of order 3 in $Aut(F_6)$ which are not homologies) and it suffices to consider the situations $P S P^{-1}=\lambda M$ where $M=S,[Y;Z;X],[Y;\xi_6Z;X]$ or $[Y;\xi_6^2Z;X]$ since any other $M$ is conjugate to one of those inside $Aut(F_6)$. Now, if $M=S$ then $P\in PGL_3(K)$ is of the form $[Y;\alpha Z;\beta X],\,\,[Z;\alpha X;\beta Y]$ or $[X;\alpha Y;\beta Z]$ and obviously, non of these transformations gives $X^6+Y^6+Z^6\in P\tilde{C}$ a contradiction. On the other hand, if $M=[Y;Z;X]$ (resp. $[Y;\xi_6Z;X]$ or $[Y;\xi_6^2Z;X]$) then
            $P$ has the form $\left(
      \begin{array}{ccc}
      \lambda & \lambda\omega^2\beta_2 & \lambda\omega\beta_3 \\
      1 & \beta_2 & \beta_3 \\
      \lambda^{2} & \lambda^{2}\omega\beta_2 & \lambda^{2}\omega^2\beta_3 \\
       \end{array}
        \right)$ where $\lambda^3=1$ (resp. $\lambda^3\xi_6=1$ or $\lambda^3\xi_6^2=1$) moreover $X^5Z\in P\tilde{C}$ (since $\delta_3$ is not a root of $g_1(\upsilon)$. In particular, $P\tilde{C}$ should has the form  $$P\tilde{C}:\,\,X^6+Y^6+Z^6+k_1(X^5Z+Y^5X+aZ^5Y)+\,\,other\,\,terms$$ where $a=1$ (resp. $\xi_6$ or $\xi_6^2$) and then such transformations exist only if $\delta_3$ is a root of $g_2(\upsilon)g_3(\upsilon).$ Consequently, $\tilde{C}$ is not a descendant of the Fermat sextic curve.
%
%
%         In general we should have $S\in Aut(F_6')\cap Aut(\tilde{C})$
%         with $F_6'$ the Fermat curve obtained by a projective
%         transformation $Q$, and we need to consider $P$ such that
%         $PSP^{-1}$ is an automorphism of order $3$ in $F_6$.
%
%
%
%         This reduce to consider $P$
%         (up to multiplication of columns by
%         powers of $\omega$) of the form
%         $L=[[1,1,1],[1,\omega,\omega^2],[1,\omega^2,\omega]]$. The change of variables $L$ transforms $\tilde{C}$ to $\tilde{C}'$, a curve
%         which is a descendent of Fermat $F_6$ (to obtain the same core as the Fermat curve
%         one needs after applying $L$ to consider a new change
%         of variables of the form $Y''=a Y'$ with $a$ a primitive $6$-th roof
%         of unity of $\omega$ and $Z''=b Z'$ with $b$ a primitive
%         $3$-rd roof of unity of $\omega$).  Now after
%         the above transformation (our automorphism $S$ in $\tilde{C}$ corresponds after the change of variables
%          to a permutation of variables in $\tilde{C}'$)
%          that the resulting equation of $\tilde{C'}$ has no cyclic element of Type 2,(0,1),
%          3(0,1), 6(0,1), 6(5,1) or 6 (5,2), (the equations of
%          these types are given in \cite{BaBa} or see \cite[Theorem
%          2.1]{Badr}). And is a final checking that no other automorphisms of $F_6$ (involving non diagonal matrices in the automorphism group of the
%          Fermat curve) make invariant the equation of $\tilde{C}'$. Therefore
%          the automorphism group of $C$ is cyclic of order 3.
%

        \item Lastly, assume that $Aut(C)$ is conjugate (through $P$) to a subgroup of $Aut(K_6)$ with $$P\tilde{C}:\,\,X^5Y+Y^5Z+Z^5X+lower\,\,\,terms.$$ Then, we must have $PSP^{-1}=\lambda S$ for some $\lambda\in K^*$. Indeed, elements of order $3$ inside $Aut(K_6)$ (which are not homologies) are $S,S^{-1},[\xi_{21}^aY;\xi_{21}^{-4a}Z;X]$ and $[\xi_{21}^{-4a}Z;X;\xi_{21}^aY]$ and it is enough to consider the cases $PSP^{-1}=\lambda M$ where $M$ is could assume any of the values $S,S^{-1},[\xi_{21}^aY;\xi_{21}^{-4a}Z;X]$ and $[\xi_{21}^{-4a}Z;X;\xi_{21}^aY]$ with $a=0,1,2$ (because any other value is conjugate to one of those inside $Aut(K_6)$). Now, assume that $PSP^{-1}=\lambda [\xi_{21}^aY;\xi_{21}^{-4a}Z;X]$ (respectively, $[\xi_{21}^{-4a}Z;X;\xi_{21}^aY]$) then $P$ has the form
$$\left(
    \begin{array}{ccc}
      \lambda\xi_{21}^a & \lambda\xi_{21}^a\omega^2\beta_2 & \lambda\xi_{21}^a\omega\beta_3 \\
      1 & \beta_2 & \beta_3 \\
      \lambda^{2}\xi_{21}^{a}& \lambda^{2}\xi_{21}^{a}\omega\beta_2 & \lambda^{2}\xi_{21}^{a}\omega^2\beta_3 \\
    \end{array}
  \right)
\,\,\,(respectively,\,\,\,\left(
    \begin{array}{ccc}
      \lambda^{2}\xi_{21}^{18a} & \lambda^{2}\xi_{21}^{18a}\omega\beta_2 & \lambda^{2}\xi_{21}^{18a}\omega^2\beta_3 \\
      1 & \beta_2 & \beta_3 \\
      \lambda\xi_{21}^{a} & \lambda\xi_{21}^{a}\omega^2\beta_2 & \lambda\xi_{21}^{a}\omega\beta_3 \\
    \end{array}
  \right) )$$
where $\lambda^3=\xi_{21}^{3a}$. Since $\delta_3\neq0,-3$ then non
of the above transforms $\tilde{C}$ to a descendant of the Klein
curve (because $X^6$ appears in $P\tilde{C}$). On the other hand, if
$PSP^{-1}=\lambda S^{-1}$ then $P$ fixes one of the variables and
permutes the other two hence the resulting core is different from
$X^5Y+Y^5Z+Z^5X$ consequently $PSP^{-1}=\lambda S$. That is, $P$ has
one of the forms $[Y;\alpha Z;\beta X],\,\,[Z;\alpha X;\beta Y]$ or
$[X;\alpha Y;\beta Z]$ consequently $\tilde{C}$ is transformed to
the form $k_0(X^5Y+Y^5Z+Z^5X)+\delta_3k_1$(monomial). More
precisely, this monomial is one of the monomials $X^2YZ^3,Y^2ZX^3$
or $Z^2XY^3$ consequently, $[Z;X;Y]\notin Aut(P\tilde{C})$. Thus,
$Aut(P\tilde{C})\preceq <\,\tau:=[X;\xi_{21}Y;\xi_{21}^{-4}Z]>$
moreover $\tau^r\in Aut(P\tilde{C})$ if and only if $7|r$ that is,
$Aut(\tilde{C})$ has order 3.
      \end{itemize}

\end{enumerate}
This completes the proof.
\end{proof}

\subsection{On type $3,(0,1)$}
\begin{lem}\label{lem4}
Consider the non-singular plane curve
$\tilde{\tilde{C}}:\,Z^6+Z^3L_{3,Z}+L_{6,Z}$. Then the full
automorphism group of $\tilde{\tilde{C}}$ is conjugate to the
Hessian group $Hess_{216}$ or it leaves invariant a point, a line or
a triangle.
\end{lem}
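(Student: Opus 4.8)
The plan is to feed the situation into Mitchell's classification of finite subgroups of $PGL_3(K)$ recalled at the beginning of \S4: such a group either fixes a point and a line with the point off the line, or fixes a triangle, or is conjugate to one of the six primitive groups $PSL(2,7)$, $A_5$, $A_6$, $Hess_{36}$, $Hess_{72}$ or $Hess_{216}$. The first two alternatives already yield the conclusion that $Aut(\tilde{\tilde{C}})$ leaves invariant a point, a line or a triangle, so the whole content of the lemma is to show that, among the primitive groups, only $Hess_{216}$ can occur. The decisive feature of $\tilde{\tilde{C}}:\,Z^6+Z^3L_{3,Z}+L_{6,Z}$ is that it is preserved by $\sigma:=diag(1,1,\omega)$, where $\omega$ is a primitive cube root of unity: indeed $\sigma$ multiplies $Z^6$, $Z^3L_{3,Z}$ and $L_{6,Z}$ by $\omega^6=1$, $\omega^3=1$ and $1$. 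Thus $\sigma\in Aut(\tilde{\tilde{C}})$ is an automorphism of order $3$ which is a homology, fixing the axis $Z=0$ pointwise and the center $(0:0:1)$. Hence $Aut(\tilde{\tilde{C}})$ contains an order-$3$ homology, and I will exclude every primitive group that has none.

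First I would dispose of $A_5$, $A_6$ and $PSL(2,7)$. In their ternary projective representations every element of order $3$ is conjugate to $diag(1,\omega,\omega^2)$, i.e.\ it has three distinct eigenvalues and therefore fixes three isolated points but no line pointwise; concretely the order-$3$ elements act as rotations in the icosahedral case and as the coordinate $3$-cycle $[Y;Z;X]$ in the Klein case, and the analogous eigenvalue structure for the Valentiner action of $A_6$ can be read from the class data of the primitive ternary groups in Mitchell \cite{Mit}. Consequently none of $A_5$, $A_6$, $PSL(2,7)$ contains a homology of order $3$, so $Aut(\tilde{\tilde{C}})$, which does contain $\sigma$, cannot be conjugate to any of them.

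It remains to separate $Hess_{216}$ from its subgroups $Hess_{72}$ and $Hess_{36}$, where the argument is purely group-theoretic. The subgroup $\langle S,T\rangle\cong(\Z/3)^2$ is the translation subgroup of the Hesse configuration; it is normal in $Hess_{216}$, and since $|Hess_{72}|=2^3\cdot3^2$ and $|\langle S,T\rangle|=3^2$ it is a Sylow $3$-subgroup of $Hess_{72}$, necessarily the unique one because it is normal. Hence every order-$3$ element of $Hess_{72}$ lies in $\langle S,T\rangle$, and each non-trivial $S^iT^j$ is a monomial matrix with the three distinct eigenvalues $1,\omega,\omega^2$, so it is not a homology. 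Thus $Hess_{72}$, and a fortiori $Hess_{36}\subset Hess_{72}$, has no homology of order $3$, whereas $Hess_{216}$ does contain the order-$3$ homology $U=diag(1,1,\omega)$. Since $Aut(\tilde{\tilde{C}})$ contains the order-$3$ homology $\sigma$, it is not conjugate to $Hess_{72}$ or $Hess_{36}$; combining this with the previous paragraph, a primitive $Aut(\tilde{\tilde{C}})$ must be conjugate to $Hess_{216}$, which is exactly the alternative left open by the lemma.

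The step I expect to be most delicate is the verification underlying the exclusion of $A_5$, $A_6$ and $PSL(2,7)$: one must be certain that in each of their fixed ternary representations no element of order $3$ degenerates to a homology, i.e.\ that the repeated-eigenvalue phenomenon is genuinely absent (note that size alone does not help, since the Valentiner sextic realizes $A_6$ in degree $6$). I would settle this by writing down explicit generators of each group (the icosahedral and Klein actions are standard, and the $A_6$ action is tabulated in Mitchell \cite{Mit}) and checking that the characteristic polynomial of every order-$3$ element has three distinct roots; once this is in place, together with the Sylow argument for the Hessian subgroups, the conclusion is immediate.
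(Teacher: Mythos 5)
Your proof is correct and rests on the same central observation as the paper's: the curve $Z^6+Z^3L_{3,Z}+L_{6,Z}$ admits the homology $\sigma=[X;Y;\omega Z]$ of period $3$, and among the alternatives in Mitchell's classification only a primitive group containing such a homology needs to be ruled in or out. The difference is in how the primitive groups are handled. The paper disposes of all of them in one stroke by quoting Mitchell's Theorem~9, which says precisely that $Hess_{216}$ is the only primitive group containing a homology of period $3$; your write-up re-derives that classification case by case. Your normal-Sylow argument for $Hess_{72}$ and $Hess_{36}$ is clean and complete: since $\langle S,T\rangle\cong(\Z/3)^2$ is normal of index a power of $2$ in $Hess_{72}$, every order-$3$ element lies in it, and every nontrivial $S^iT^j$ (a diagonal or $3$-cycle monomial matrix) has three distinct eigenvalues, hence is not a homology. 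The trace-zero argument for $A_5$ and $PSL(2,7)$ (order-$3$ elements have character value $0$ in the degree-$3$ representations, forcing eigenvalues $1,\omega,\omega^2$) is likewise sound. The only step you leave as an assertion is the Valentiner $A_6$ case, which you flag yourself; that verification is exactly the content of the Mitchell reference the paper leans on, so the gap is one of self-containment rather than correctness. In short: you buy a more explicit, self-contained exclusion of the non-Hessian primitive groups at the cost of having to trust (or tabulate) the eigenvalue data for $A_6$, whereas the paper buys brevity by citing Mitchell for the whole package.
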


\begin{proof}
Since $Aut(\tilde{\tilde{C}})$ contains a homology (fix a line and a
point off in $\mathbb{P}^2(K)$) of period $3$ namely,
$\sigma:=[X;Y;\omega Z]$ then the result follows because
$Hess_{216}$ is the only multiplicative group that contains such
homologies and does not leave invariant a point, a line or a
triangle (Theorem $9$ \cite{Mit}).
\end{proof}

Now, we can prove our main result for this section.
\begin{thm}
The automorphisms group of the curve $Z^6+X^5Y+XY^5+\alpha_3Z^3X^3$
such that $\alpha_3\neq0$ is cyclic and has order 3.
\end{thm}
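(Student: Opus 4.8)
The plan is to show that the curve $\tilde{\tilde{C}}:\,Z^6+X^5Y+XY^5+\alpha_3Z^3X^3=0$ with $\alpha_3\neq0$ has automorphism group cyclic of order exactly $3$, generated by the homology $\sigma:=[X;Y;\omega Z]$ which visibly preserves the equation (since $X^5Y$, $XY^5$ are unaffected, $Z^6\mapsto Z^6$, and $Z^3X^3\mapsto\omega^3 Z^3X^3=Z^3X^3$). First I would appeal to Lemma \ref{lem4}: since $\tilde{\tilde{C}}$ is of type $3,(0,1)$ and $\sigma$ is a homology of period $3$, the full group $Aut(\tilde{\tilde{C}})$ is either conjugate to the Hessian group $Hess_{216}$, or it leaves invariant a point, a line, or a triangle. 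So the argument splits into ruling out the Hessian case and then analyzing the geometric-stabilizer cases.

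To exclude $Hess_{216}$ (and hence its subgroups), I would argue as in the Claim on Type $4,(1,2)$ and in Proposition \ref{nonprimitive}: if $Aut(\tilde{\tilde{C}})$ were conjugate through some $P$ to $Hess_{216}$, one may normalize so that $P\sigma P^{-1}=\lambda S$ for the standard generator $S=diag(1,\omega,\omega^2)$, since all the order-$3$ non-homology elements are accounted for up to conjugacy. This forces $P$ into one of a few explicit diagonal-times-permutation shapes, and one then checks that no such $P$ can carry $\tilde{\tilde{C}}$ into a curve invariant under the remaining Hessian generators (in particular under $T=[Y;Z;X]$ and $V$); the special term $\alpha_3 Z^3X^3$ with $\alpha_3\neq0$ breaks the required symmetry, giving a contradiction. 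This disposes of the primitive case.

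It then remains to handle the case where $Aut(\tilde{\tilde{C}})$ fixes a point and a line, or a triangle. For the point/line case I would invoke Lemma \ref{lem2}: because $\sigma=diag(1,1,\omega)$ is diagonal and a homology, but $Aut(\tilde{\tilde{C}})$ also contains elements I must pin down, the fixed point is one of the reference points, so every automorphism has one of the three forms $[X;vY+wZ;sY+tZ]$, $[vX+wZ;Y;sX+tZ]$, $[vX+wY;sX+tY;Z]$. Comparing coefficients of well-chosen monomials (for instance $Z^6$, $X^5Y$, $XY^5$, and $Z^3X^3$) will force the off-diagonal entries to vanish, reducing to diagonal maps $diag(1,v,t)$; imposing invariance of each monomial then yields $v^6=1$ from $X^5Y$ balanced against $XY^5$ (so $v^4=v^{-4}$, i.e.\ $v^{\pm}$ constrained), $t^6=1$ from $Z^6$, and $t^3=1$ together with $v^3\cdot$(power) $=1$ from $Z^3X^3$, the net effect being $v=1$ and $t$ a cube root of unity, giving exactly the group $\langle\sigma\rangle\cong\Z/3\Z$. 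The triangle case (with no invariant point or line) is excluded by Harui's Theorem \ref{teo1}, which would make $\tilde{\tilde{C}}$ a descendant of $F_6$ or $K_6$; but the core of this equation is $X^5Y+XY^5$, which is neither the Fermat core $X^6+Y^6+Z^6$ nor the Klein core $X^5Y+Y^5Z+Z^5X$, so neither descendant structure is possible.

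The main obstacle I anticipate is the Hessian exclusion step: normalizing the conjugating matrix $P$ so that $P\sigma P^{-1}$ is a fixed standard generator, and then verifying over all admissible $P$ that the extra term $\alpha_3 Z^3X^3$ obstructs invariance under the full Hessian action. The rest is a routine, if careful, monomial-coefficient bookkeeping — the delicate point is choosing the right monomials to read off the vanishing of off-diagonal entries and to extract the order constraints, since the core $X^5Y+XY^5$ is less rigid than the full Fermat or Klein cores and one must make sure the $\alpha_3\neq0$ hypothesis is genuinely used to kill larger symmetries.
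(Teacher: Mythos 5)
Your overall skeleton (rule out the primitive groups, then split on point/line versus triangle) matches the paper's, but two of your steps have genuine gaps. First, the triangle case cannot be dismissed by inspecting the core of the given equation: the core here is $Z^6$ (the maximal exponent is $6$, not $5$), and in any case being a descendant of $F_6$ or $K_6$ is a condition ``under a suitable coordinate system,'' so one must show that no $P\in PGL_3(K)$ transforms $\tilde{\tilde{C}}$ into a curve whose core is $X^6+Y^6+Z^6$ or $X^5Y+Y^5Z+Z^5X$. This is where the paper does most of its work: $P\sigma P^{-1}$ must be an order-$3$ homology of the Fermat (resp.\ Klein) sextic, which kills the Klein case outright (there are no such homologies in $Aut(K_6)$) and forces $P=[\alpha_1X+\alpha_2Y;\beta_1X+\beta_2Y;Z]$ in the Fermat case, after which an explicit computation with the monomials of $P\tilde{\tilde{C}}$ yields a contradiction. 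Second, in the point/line case you invoke Lemma \ref{lem2}, but its hypothesis fails: the only diagonal element you have in hand is $\sigma=diag(1,1,\omega)$, which \emph{is} a homology, so you cannot conclude that the fixed point is a reference point (a priori it could be $(0;0;1)$ or any point of the line $Z=0$), and the subsequent coefficient bookkeeping is not justified. The paper instead argues group-theoretically: via Harui's exact sequence $1\to C_3\to Aut(\tilde{\tilde{C}})\to G'\to 1$ and Sylow's theorem, any non-cyclic enlargement would force an element of order $9$ (impossible for $d=6$), an order-$3$ element commuting with $\sigma$ other than a power of $\sigma$ (ruled out by comparing the coefficients of $Z^3Y^3$ and $X^6$), an element of order $2$ or $4$ commuting with $\sigma$, or an involution conjugating $\sigma$ to $\sigma^{-1}$ --- none of which exist, since $\sigma$ and $\sigma^{-1}$ are not even conjugate in $PGL_3(K)$.

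A smaller slip: in the Hessian exclusion you propose normalizing $P\sigma P^{-1}=\lambda S$ with $S=diag(1,\omega,\omega^2)$; this is impossible, since $S$ has three distinct eigenvalue ratios while $\sigma$ is a homology and conjugation preserves that property. The correct normalization (as in the paper, using Lemma \ref{lem4}) is $P\sigma P^{-1}=\lambda\sigma$, which gives $P=[\alpha_1X+\alpha_2Y;\beta_1X+\beta_2Y;Z]$ and then the failure of $[Z;Y;X]$ and $[X;Z;Y]$ to preserve $P\tilde{\tilde{C}}$.
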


\begin{proof}
Assume that $Aut(\tilde{\tilde{C}})$ is conjugate (through a
transformation $P$) to the Hessian group $Hess_{216}$ then we can
assume that $P\sigma P^{-1}=\lambda \sigma$ for some $\lambda\in
K^*$. Hence, $P=[\alpha_1X+\alpha_2Y;\beta_1X+\beta_2Y;Z]$ and
clearly, $\{[Z;Y;X],[X;Z;Y]\}\nsubseteq Aut(P\tilde{\tilde{C}})$ a
contradiction. Now, it follows by lemma \ref{lem4} that
$Aut(\tilde{\tilde{C}})$ should fix a point, a line or a triangle.
In what follows, we treat each case.
\begin{enumerate}
  \item If $Aut(\tilde{\tilde{C}})$ fixes a line and a point off that line and if $\tilde{\tilde{C}}$ admits a bigger non cyclic automorphism group then $Aut(\tilde{\tilde{C}})$ satisfies a short exact sequence of the form $1\rightarrow C_3\rightarrow Aut(\tilde{\tilde{C}})\rightarrow G'\rightarrow 1$ where $G'$ is conjugate to $C_m$ ($m=2,3$ or $4$), $D_{2m}$ ($m=2$ or $4$), $A_4,S_4$ or $A_5$.
      \par If $G'$ is conjugate to $C_3,A_4,S_4$ or $A_5$ then there exists (by Sylow's theorem) a subgroup $H\preceq Aut(\tilde{\tilde{C}})$ of order $9$. In particular, $H$ is conjugate to $C_9$ or $C_3\times C_3$ but both cases do not occur. Indeed, if $H=C_9$ then $Aut(\tilde{\tilde{C}})$ has an element of order $9$ a contradiction since $9\nmid d-1,d,(d-1)^2,d(d-2),d(d-1),d^2-3d+3$ with $d=6$ (for more details, we refer to \cite{BaBacyc}) moreover if $H=C_3\times C_3$ then there exists $\tau\in Aut(\tilde{\tilde{C}})$ of order $3$ such that $\tau\sigma=\sigma\tau$ hence $\tau=[vX+wY;sX+tY;Z]$. Comparing the coefficients of $Z^3Y^3$ and $X^6$ in $\tau(C)$ we get $w=0=s$ and $v^5t=vt^5=v^3=1$ (thus $\tau\in<\sigma>$) a contradiction. By a similar argument, we exclude the cases $C_4$ and $D_{2m}$ because for each $SmallGroup(6m,ID)$ there must be an element $\tau$ of order $2$ or $4$ that commutes with $\sigma$. Finally, if $G'$ is conjugate to $C_2$ then there exists an element $\tau$ of order $2$ such that $\tau\sigma\tau=\sigma^{-1}$ and one can easily verify that such an element does not exists.\\
      We conclude that $Aut(\tilde{\tilde{C}})$ should be cyclic (in particular, is commutative) hence can not be of order $>3.$ (otherwise; there exists an element $\tau\in Aut(\tilde{\tilde{C}})$ of order $>3$ which commutes with $\sigma$ and by a previous argument such elements do not exist).
  \item If $Aut(\tilde{\tilde{C}})$ fixed a triangle and neither a point nor a line is fixed, then it follows by Harui \cite{Harui} that $\tilde{\tilde{C}}$ is a descendant of the Fermat curve $F_6$ or the Klein curve $K_6$. The last case does not happen because $P\sigma P^{-1}$ should be an automorphism of $K_6$ of order 3 whose Jordan form is given as $\sigma$ (i.e a homology) but there are no such elements in $Aut(K_6)$.
Now, suppose that $\tilde{\tilde{C}}$ is a descendant of $F_6$ that
is, $\tilde{\tilde{C}}$ can be transformed (through $P$) into a
curve $P\tilde{\tilde{C}}$ with core $X^6+Y^6+Z^6$ then
$P=[\alpha_1X+\alpha_2Y;\,\beta_1X+\beta_2Y;\,Z]$ ( because in
$Aut(F_6)$ there are only two sets of homologies of order 3 namely;
$\{[\omega X;Y;Z],\,[X;\omega Y;Z],\,[X;Y;\omega Z]\}$ and
$\{[\omega^2 X;Y;Z],\,[X;\omega^2 Y;Z],\, [X;Y;\omega^2 Z]\}$ and
they are not conjugate in $PGL_{3}(K)$. Moreover, elements of the
first set are all conjugate to $[X;Y;\omega Z]$ inside $Aut(F_6)$ so
it suffices to consider $P\sigma P^{-1}=\lambda\sigma$). Now,
$P\tilde{\tilde{C}}$ has the form
$$
C':\,\,\mu_0X^6+\mu_1Y^6+Z^6+\alpha_3(\alpha_1X+\alpha_2Y)^3Z^3+\mu_2X^5Y+\mu_3X^4Y^2+\mu_4X^3Y^3+\mu_5X^2Y^4+\mu_6XY^5
$$
where $\mu_0:=\alpha _1 \beta _1 \left(\alpha
_1^4+\beta_1^4\right)(=1)$ and $\mu_1:=\alpha _2 \beta _2
\left(\alpha _2^4+\beta_2^4\right)(=1)$. In particular,
$(\alpha_1\beta_1)(\alpha_2\beta_2)\neq0$ therefore,
$[X;vZ;wY],\,[vZ;wY;X],\,[wY;vZ;X]$ and $[vZ;X;wY]\notin
Aut(P\tilde{\tilde{C}})$ (because of the monomial $XY^2Z^3$)
moreover, $[wY;X;vZ]\in Aut(P\tilde{\tilde{C}})$ only if
$\alpha_1=\alpha_2$ and $w=v^3=1$ hence
$$P\tilde{\tilde{C}}:\,Z^6+\alpha_3\alpha_1^3(X+Y)^3Z^3+\alpha
_1(X+Y) \left(\beta_1X+\beta_2Y \right)\left(\alpha_1^4(X+Y)^4
+\left(\beta _1X +\beta_2Y \right)^4\right).$$ Consequently,
$\beta_1=\beta_2$ (because we are assuming $[Y;X;vZ]\in
Aut(P\tilde{\tilde{C}})$) a contradiction. Finally, if
$[X,\xi_6^rY,\xi_6^{r'}Z]\in Aut(P\tilde{\tilde{C}})$ then $r=0$ and
$2|r'$ (since $\alpha_1\alpha_2\neq0$) that is,
$|Aut(P\tilde{\tilde{C}})|=3$ and we are done.
\end{enumerate}
\end{proof}
As a conclusion of the results that are introduced in this section
we get the following result.
\begin{thm} The locus $\widetilde{M_{10}^{Pl}(\Z/3\Z)}$ is not
ES-Irreducible and it has at least two irreducible components.
\end{thm}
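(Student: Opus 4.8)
The plan is to read this statement as the assembly of the two preceding theorems of this section, organized by the decomposition recorded in the table at its start. First I would recall that, by that table, the stratum $M_{10}^{Pl}(\Z/3\Z)$ is covered by exactly the two equation types $3,(0,1)$ and $3,(1,2)$, corresponding to the two representations $\rho_1,\rho_2:\Z/3\Z\hookrightarrow PGL_3(K)$ with $\rho_1(\Z/3\Z)=<diag(1,1,\omega)>$ (the homology of type $(0,1)$) and $\rho_2(\Z/3\Z)=<diag(1,\omega,\omega^2)>$ (type $(1,2)$), where $\omega$ is a primitive cube root of unity.

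The first substantive step is to verify that $[\rho_1]\neq[\rho_2]$ in $A$, i.e.\ that these two images are not $PGL_3(K)$-conjugate. In $PGL_3(K)$ a generator is only defined up to a scalar, so what conjugation preserves is the projectivized eigenvalue multiset of a generator; the generator of $\rho_1$ is a homology with a repeated eigenvalue (class $\{1,1,\omega\}$), while the generator of $\rho_2$ has three pairwise distinct eigenvalues $\{1,\omega,\omega^2\}$. No common rescaling can collapse three distinct classes to two or vice versa, and being a homology (fixing a line pointwise) is itself conjugation-invariant; hence $\rho_1$ and $\rho_2$ give two genuinely distinct classes in $A$.

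Next I would invoke the two theorems already established above. The curve $X^5Y+Y^5Z+Z^5X+\delta_3X^2YZ^3$ with $\delta_3\notin\Upsilon$ has full automorphism group exactly $\Z/3\Z$ acting as $\rho_2$, so $\widetilde{\rho_2(M_{10}^{Pl}(\Z/3\Z))}\neq\emptyset$; and the curve $Z^6+X^5Y+XY^5+\alpha_3Z^3X^3$ with $\alpha_3\neq0$ has full automorphism group exactly $\Z/3\Z$ acting as $\rho_1$, so $\widetilde{\rho_1(M_{10}^{Pl}(\Z/3\Z))}\neq\emptyset$. Both strata are therefore non-empty.

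The one conceptual point that turns these into two genuinely distinct components, rather than two presentations of the same locus, is the uniqueness of the plane model: a smooth plane curve of degree $6$ carries a unique $g^2_6$, so its plane embedding is unique up to $PGL_3(K)$, and consequently the $PGL_3(K)$-conjugacy class of the image of its automorphism group is a well-defined invariant. Since the two example curves realize the classes $[\rho_1]$ and $[\rho_2]$, which are distinct by the eigenvalue argument, no curve of $\widetilde{M_{10}^{Pl}(\Z/3\Z)}$ lies in both strata at once. Hence $\widetilde{M_{10}^{Pl}(\Z/3\Z)}$ has at least two non-empty strongly equation irreducible components, so it is not ES-irreducible; and as the number of strongly equation irreducible components is a lower bound for the number of irreducible components, it has at least two irreducible components. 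I expect the only delicate ingredient to be this last observation, i.e.\ using uniqueness of the plane model to promote the \emph{type} of the representation to an honest invariant of $\delta$; granting that, the statement follows immediately from the two preceding theorems together with the elementary non-conjugacy of $\rho_1$ and $\rho_2$.
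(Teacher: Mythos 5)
Your proposal is correct and follows essentially the same route as the paper: the theorem there is stated as the immediate conclusion of the two preceding results of \S 5 (the types $3,(0,1)$ and $3,(1,2)$ each realizing full automorphism group $\Z/3\Z$ for generic parameters), combined with the fact that a homology and a non-homology generator give non-conjugate images in $PGL_3(K)$. The only difference is that you spell out the non-conjugacy of $[\rho_1]$ and $[\rho_2]$ and the role of the unique $g^2_6$ in making the class $[\rho]$ an invariant of the curve, points the paper leaves implicit (having recorded the $g^2_d$ remark in \S 2); this is a welcome clarification rather than a divergence.
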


\section{Positive characteristic}

Now, suppose that $\mathbb{K}$ is an algebraically closed field of
positive characteristic $p>0$. Consider a non-singular plane curve
$C$ in $\mathbb{P}^2(\mathbb{K})$ of degree $d$ and assume that the
order of $Aut(C)$ is coprime with $p$, $p\nmid d(d-1)$, $p\geq 7$
and the order of $Aut(F_d)$ and $Aut(K_d)$ are coprime with $p$
where $F_d: X^d+Y^d+Z^d=0$ is the Fermat curve and
$K_d:X^{d-1}Y+Y^{d-1}Z+Z^{d-1}X=0$ is the Klein curve. Then, all the
techniques that appeared in Harui \cite{Harui}, can be applied:
Hurwitz bound, Arakawa and Oiakawa inequalities and so on. In
particular, the
arguments of all the previous sections hold.%, with the exception that we may
%assume for fix $d$ that the order of $Aut(F_d)$ and $Aut(K_d)$ is
%prime with $p$ where $F_d: X^d+Y^d+Z^d=0$ Fermat curve and
%$K_d:XY^{d-1}+YZ^{d-1}+ZX^{d-1}=0$ Klein curve and $p$ is coprime
%with $d(d-1)$. %\footnote{Warning to check: Theorem 3.10 in Harui
%uses Maschke's theorem in repr.theory to reduces situations, we need
%to think if in positive characteristic is the same...}

Consider the $p$-torsion of the degree 0 Picard group of $C$, which
is a finitely generated $\mathbb{Z}/(p)$-module of dimension
$\gamma$(always $\gamma\leq g$ where $g$ is the genus of $C$), we
call $\gamma$ the $p$-rank of $C$.

For a point $P$ of $C$ denote by $Aut(C)_{P}$ the subgroup of
$Aut(C)$ that fixes the place $P$.
\begin{lem} Assume that $Aut(C)_P$ is prime to $p$ for any point $P$
of $C$ and the $p$-rank of $C$ is trivial. Then $Aut(C)$ is prime to
$p$.
\end{lem}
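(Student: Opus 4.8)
The plan is to argue by contradiction, reducing the statement to the assertion that an automorphism of order $p$ acting on a curve of $p$-rank zero must have a fixed point on the curve. Suppose $p\mid |Aut(C)|$. Then by Cauchy's theorem there is an element $\sigma\in Aut(C)$ of order exactly $p$, and I would study the quotient map $\pi\colon C\to X:=C/\langle\sigma\rangle$, which is a $\Z/p\Z$-Galois cover of smooth projective curves (the quotient of a smooth projective curve by a finite automorphism group is again smooth and projective).

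The key input is the Deuring--Shafarevich formula for the $p$-rank in a $\Z/p\Z$-cover. Denoting by $\gamma_C$ and $\gamma_X$ the $p$-ranks of $C$ and $X$, and by $r$ the number of branch points of $\pi$, the formula reads
$$\gamma_C-1=p(\gamma_X-1)+(p-1)r.$$
Since in characteristic $p$ every ramified point of a $\Z/p\Z$-cover is totally (wildly) ramified, each branch point of $X$ has a single preimage in $C$, so $r$ equals the number of points of $C$ fixed by $\sigma$. Now I would substitute the hypothesis $\gamma_C=0$, which rearranges to $p\,\gamma_X=(p-1)(1-r)$. If $\sigma$ were fixed-point free (that is $r=0$, so $\pi$ is \'etale), this would force $\gamma_X=(p-1)/p$, which is not a non-negative integer --- a contradiction. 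Hence $r\geq 1$ and $\sigma$ fixes at least one point $P\in C$ (indeed $r=1$, $\gamma_X=0$ is the unique consistent solution, but we only need $r\geq1$).

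Finally, this fixed point yields the desired contradiction: $\sigma\in Aut(C)_P$ has order $p$, so $p\mid |Aut(C)_P|$, against the assumption that every point stabilizer is prime to $p$. Therefore $p\nmid |Aut(C)|$, i.e.\ $Aut(C)$ is prime to $p$. The delicate point --- the part most worth getting right --- is the correct invocation of Deuring--Shafarevich: its normalization for $\Z/p\Z$-covers, and the observation that it is the vanishing of $\gamma_C$ (not of $\gamma_X$) together with integrality that excludes an \'etale quotient; the remaining steps are formal. I would also note that the global hypotheses of this section (the orders of $Aut(C)$, $Aut(F_d)$ and $Aut(K_d)$ being prime to $p$, and $p\geq 7$, $p\nmid d(d-1)$) play no role in this lemma, whose proof uses only the triviality of the $p$-rank and the local hypothesis on the stabilizers $Aut(C)_P$.
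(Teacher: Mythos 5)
Your argument is correct and is essentially the paper's own proof: both reduce to a $\Z/p\Z$-quotient $C\to C/\langle\sigma\rangle$, observe that the stabilizer hypothesis forces the cover to be unramified (equivalently, that ramification would put $\sigma$ in some $Aut(C)_P$), and then derive the contradiction from the Deuring--Shafarevich formula together with $\gamma_C=0$ and integrality of the $p$-rank of the quotient. The only difference is presentational --- you keep the ramification term $(p-1)r$ explicit and rule out $r=0$ first, whereas the paper asserts unramifiedness first and then applies the \'etale form of the formula --- so no further comment is needed.
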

\begin{proof} Consider $\sigma\in Aut(C)$ of order $p$, then the
extension $\mathbb{K}(C)/\mathbb{K}(C)^{\sigma}$ is a finite
extension of degree $p$ and is unramified everywhere (because if it
ramifies at a place $P$ then $\sigma$ will be an element of
$Aut(C)_P$ giving a contradiction). But, if $\gamma=0$ (i.e. the
$p$-rank is trivial for $C$) then, from Deuring-Shafarevich formula
\cite[Theorem11.62]{Book}, we obtain that
$\frac{\gamma-1}{\gamma'-1}=p$ where $\gamma'$ is the $p$-rank for
$C/<\sigma>$ which is impossible. Therefore, such extensions do not
exist.
\end{proof}

\begin{lem}\label{lem6.2} Consider $C$ a plane non-singular curve of degree $d\geq 4$.
If $p> (d-1)(d-2)+1$, then $Aut(C)_P$ is coprime with $p$ for any
point $P$ of the curve $C$.
\end{lem}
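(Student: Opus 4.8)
The plan is to argue by contradiction via the Riemann--Hurwitz formula for a wildly ramified cyclic cover. First I would record that the hypothesis is exactly $p > 2g+1$, since $(d-1)(d-2)+1 = 2g+1$ for $g = (d-1)(d-2)/2$, and that $d \geq 4$ forces $g \geq 3$, so $C$ has genus $\geq 2$. Suppose, towards a contradiction, that $p \mid |Aut(C)_P|$ for some place $P$ of $C$. By Cauchy's theorem there is an automorphism $\sigma \in Aut(C)_P$ of order $p$. Since $\sigma$ fixes $P$ and has prime order $p$, the decomposition group at $P$ inside $\langle\sigma\rangle$ is the whole group, so the degree-$p$ Galois cover $\pi\colon C \to C/\langle\sigma\rangle =: C'$ is totally and wildly ramified at $P$; in particular $\pi$ has at least one ramification place.

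Next I would apply Riemann--Hurwitz to $\pi$. Writing $g'$ for the genus of $C'$ and summing the different exponents over the ramified places, one gets $2g - 2 = p(2g' - 2) + \sum_Q d_Q$. The essential local input is that at each (necessarily totally and wildly) ramified place the ramification filtration of the group $\Z/p\Z$ has a single jump $m_Q \geq 1$, so the different exponent is $d_Q = (m_Q+1)(p-1) \geq 2(p-1)$. Setting $S := \sum_Q (m_Q+1)$, which is $\geq 2$ because there is at least one ramified place and each summand is $\geq 2$, the formula becomes $2g - 2 = p(2g'-2) + (p-1)S$.

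Finally I would extract the contradiction by a size-and-divisibility argument. If $g' \geq 1$, then $2g-2 \geq (p-1)S \geq 2(p-1)$ forces $g \geq p$, which contradicts $p > 2g+1 > g$. Hence $g' = 0$, and then $(p-1)S = 2g - 2 + 2p$. Reducing modulo $p-1$ and using $2p \equiv 2 \pmod{p-1}$, this requires $(p-1) \mid 2g$; but $p > 2g+1$ gives $0 < 2g < p-1$, which is impossible. This rules out the existence of $\sigma$, proving $p \nmid |Aut(C)_P|$. I expect the only delicate point to be the justification that the ramification jump satisfies $m_Q \geq 1$ at every fixed place (equivalently $d_Q \geq 2(p-1)$): this is where wildness is genuinely used and where one must invoke the structure of the higher ramification groups rather than the tame different formula $d_Q = p-1$. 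Everything else is routine bookkeeping with Riemann--Hurwitz, and I would note that the plane-curve hypothesis enters only through the value of $g$, so the statement is really the classical fact that stabilizers are prime to $p$ once $p > 2g+1$.
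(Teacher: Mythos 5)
Your proof is correct, but it takes a genuinely different route from the paper. The paper's proof is a one-line citation: it invokes \cite[Theorem 11.78]{Book}, which bounds the order of the $p$-Sylow subgroup of the stabilizer $Aut(C)_P$ by $\frac{4p}{(p-1)^2}g^2$, and then observes that $p>\frac{4p}{(p-1)^2}g^2$ is equivalent to $p-1>2g$, i.e.\ to the hypothesis $p>(d-1)(d-2)+1$, forcing the $p$-Sylow to be trivial. You instead re-derive the needed special case from first principles: Cauchy's theorem produces $\sigma\in Aut(C)_P$ of order $p$, the quotient map $C\to C/\langle\sigma\rangle$ is totally and wildly ramified at $P$, and Riemann--Hurwitz with the wild different bound $d_Q=(m_Q+1)(p-1)\geq 2(p-1)$ (valid because $G_1=G_0\cong\Z/p\Z$, as $G_0/G_1$ is cyclic of order prime to $p$) yields $2g-2=p(2g'-2)+(p-1)S$ with $S\geq 2$; the case $g'\geq 1$ forces $g\geq p$, and the case $g'=0$ forces $(p-1)\mid 2g$ with $0<2g<p-1$, both contradictions. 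Your argument is self-contained and makes transparent that the statement is the classical fact that point-stabilizers are prime to $p$ once $p>2g+1$, whereas the paper's citation buys brevity and in fact rests on a stronger quantitative bound on the whole wild inertia group. The one step you flagged as delicate --- that $m_Q\geq 1$ at every wildly ramified place --- is exactly the standard higher-ramification fact stated above, so there is no gap.
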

\begin{proof} By \cite[Theorem 11.78]{Book} the maximal order of
the $p$-subgroup of $Aut(C)_P$ is at most $\frac{4p}{(p-1)^2} g^2$.
Hence, with $g=\frac{(d-1)(d-2)}{2}$ and assuming that
$p>\frac{4p}{(p-1)^2} g^2$, we obtain the result.
\end{proof}

\begin{lem}\label{lem6.3} Let $C$ be a non-singular curve of genus $g\geq 2$ defined over an algebraic
closed field $\mathbb{K}$ of characteristic $p>0$. Suppose that $C$
has an unramified subcover of degree $p$, i.e. $\Phi:C\rightarrow
C'$ of degree $p$. Then $C'$ has genus $\geq 2$, $g\equiv 1(mod\ p)$
and $\gamma\equiv 1(mod\ p)$. In particular, for the existence of
such subcover, one needs to assume that $p<g$.
\end{lem}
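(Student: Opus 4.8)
The plan is to read off all four conclusions from the two Hurwitz-type formulas attached to a separable unramified degree-$p$ cover: the classical Riemann--Hurwitz formula for the ordinary genus and the Deuring--Shafarevich formula for the $p$-rank. First I would pin down the nature of $\Phi$. Since $p$ is prime and $\Phi:C\to C'$ is unramified of degree $p$, I would realise it --- exactly as in the first lemma of this section --- as the quotient map $C\to C/\langle\sigma\rangle$ for an automorphism $\sigma$ of order $p$; thus $\Phi$ is a Galois cover with group $\mathbb{Z}/p\mathbb{Z}$, and being unramified it is \'etale, hence separable (in particular it is not the purely inseparable Frobenius cover, whose different is nontrivial). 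This separability and the $p$-group Galois structure are precisely the hypotheses needed below.

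Next I would apply Riemann--Hurwitz. As $\Phi$ is separable of degree $p$ with empty ramification divisor, it reads $2g-2=p(2g'-2)$, that is $g-1=p(g'-1)$, where $g'$ is the genus of $C'$. Every genus assertion now follows from this single identity by elementary arithmetic. Reducing modulo $p$ gives $g\equiv 1\pmod{p}$. Because $g\geq 2$ forces $g-1\geq 1>0$, the integer $g'-1=(g-1)/p$ is strictly positive, which excludes $g'=0$ and $g'=1$ and therefore yields $g'\geq 2$, i.e.\ $C'$ has genus at least $2$. Finally $g-1=p(g'-1)\geq p$ gives $g\geq p+1$, equivalently $p<g$, which is the last clause of the statement.

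For the $p$-rank congruence I would invoke the Deuring--Shafarevich formula, in the form already used in the first lemma of this section (see \cite[Theorem 11.62]{Book}), for the unramified cyclic $p$-cover $\Phi$. With no ramified places the ramification term vanishes and the formula collapses to $\gamma-1=p(\gamma'-1)$, where $\gamma$ and $\gamma'$ denote the $p$-ranks of $C$ and $C'$; reducing modulo $p$ gives $\gamma\equiv 1\pmod{p}$, as desired.

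The genus conclusions are immediate once Riemann--Hurwitz is in place, so the only delicate point is verifying the hypotheses of the two formulas rather than any computation. Concretely, one must be certain that $\Phi$ is separable (equivalently \'etale) and not inseparable, and that it is Galois with $p$-power group, so that both ramification terms genuinely vanish; this is what lets Deuring--Shafarevich apply and the congruences drop out. I therefore expect the main obstacle --- conceptual rather than computational --- to be the correct identification of $\Phi$ as an \'etale $\mathbb{Z}/p\mathbb{Z}$-cover; once that is secured, the three congruences and the bound $p<g$ follow mechanically.
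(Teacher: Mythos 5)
Your proof is correct and follows essentially the same route as the paper: Riemann--Hurwitz for the unramified degree-$p$ cover gives $g-1=p(g'-1)$, whence $g'\geq 2$, $g\equiv 1\pmod p$ and $p<g$, and Deuring--Shafarevich gives $\gamma-1=p(\gamma'-1)$, whence $\gamma\equiv 1\pmod p$. Your version is in fact slightly more careful than the paper's, since you make explicit both the derivation of $p<g$ and the separability/Galois hypotheses needed to invoke the two formulas.
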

\begin{proof} The Hurwitz formula for $\Phi$ gives the equality
$(2g-2)=p(2g'-2)$ where $g'$ is the genus of $C'$. We have $g'\neq
0$ or $1$ because $g\geq 2$, therefore $g'\geq 2$ and $g-1\equiv
0(mod\ p)$. Now, consider Deuring-Shafaravich formula, which in such
unramified extension could be read as $\gamma-1=p(\gamma'-1)$ where
$\gamma'$ the $p$-rank of $C'$. If $\gamma=1$ then there is nothing
to prove and if $\gamma>1$ then the congruence is clear. Finally, if
$\gamma=0$ then this situation is not possible.
\end{proof}

\begin{cor} Let $C$ be a non-singular plane curve of degree $d$ and genus $g\geq 2$ defined over an algebraic
closed field $\mathbb{K}$ of characteristic $p>0$. Suppose that
$p>(d-1)(d-2)+1>g$. Then the order of $Aut(C)$ is coprime with $p$.
\end{cor}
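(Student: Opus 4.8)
The plan is to argue by contradiction, converting a hypothetical automorphism of order $p$ into an everywhere-unramified cyclic cover and then invoking Lemma \ref{lem6.3} to derive an impossible inequality. First I would record the numerical translation of the hypothesis: since $g=(d-1)(d-2)/2$ we have $(d-1)(d-2)=2g$, so the assumption $p>(d-1)(d-2)+1$ reads exactly $p>2g+1$, and in particular $p>g$ (using $g\geq 2$, so that $2g+1>g$).

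Next, suppose toward a contradiction that $p$ divides the order of $Aut(C)$. Because $g\geq 2$, the group $Aut(C)$ is finite, so by Cauchy's theorem there is an automorphism $\sigma\in Aut(C)$ of order $p$. By Lemma \ref{lem6.2}, the hypothesis $p>(d-1)(d-2)+1$ guarantees that $Aut(C)_P$ is prime to $p$ for every place $P$ of $C$; hence $\sigma\notin Aut(C)_P$ for all $P$, that is, $\sigma$ fixes no place of $C$. I would then pass to the quotient map $\Phi:C\to C/\langle\sigma\rangle$, a cyclic cover of degree $p$, and argue that it is unramified everywhere. Finally, applying Lemma \ref{lem6.3} to this unramified degree-$p$ subcover forces $p<g$, which contradicts $p>2g+1>g$ established above. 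Therefore no element of order $p$ can exist in $Aut(C)$, and so $|Aut(C)|$ is coprime with $p$.

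The only genuinely delicate point in the argument is the passage from \emph{$\sigma$ fixes no place} to \emph{$\Phi$ is unramified}: one must observe that in a cyclic cover of prime degree $p$, a place of $C$ is ramified precisely when its stabilizer in $\langle\sigma\rangle$ is nontrivial, and since $p$ is prime every nontrivial power of $\sigma$ generates $\langle\sigma\rangle$ and fixes exactly the same places as $\sigma$ itself; thus no fixed places means no ramification. Once this is in hand the corollary is immediate from Lemmas \ref{lem6.2} and \ref{lem6.3}. I would also note that the preceding lemma on trivial $p$-rank is not needed for this route, since Lemma \ref{lem6.3} already yields the inequality $p<g$ directly from the existence of the unramified cover, so there is no need to separately verify that the $p$-rank of $C$ vanishes.
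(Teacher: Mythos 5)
Your argument is correct and follows essentially the same route as the paper: produce an order-$p$ automorphism, use Lemma \ref{lem6.2} to see that the degree-$p$ quotient cover is unramified everywhere, and then contradict Lemma \ref{lem6.3} via the inequality $p>(d-1)(d-2)+1=2g+1>g$. Your added details (Cauchy's theorem, the stabilizer-versus-ramification point for a prime cyclic cover, and the observation that the $p$-rank lemma is not needed) are all consistent with what the paper does implicitly.
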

\begin{proof} Suppose $\sigma\in Aut(C)$ of order $p$, then
$\mathbb{K}(C)/\mathbb{K}(C)^{\sigma}$ is a separable degree $p$
extension, and by Lemma \ref{lem6.2}, it is unramified everywhere.
By
Lemma \ref{lem6.3} we find that such extensions do not exist. %Denote by $g_{\sigma}$ the genus of
%the non-singular curve corresponding to $\mathbb{K}(C)^{\sigma}$. By
%use of Hurwitz inequality for $p$-extensions in characteristic $p$
%(see \cite[Remark11.61]{Book})  we obtain that $2g-2\geq 2p$ when
%$g_{\sigma}\geq 2$, thus by our assumption this situation is not
%possible. Assume that $g_{\sigma}=0$ in that case
%$\mathbb{K}(C)^{\sigma}$ is the projective line over $\mathbb{K}$,
%and there are not unramified abelian extensions of the projective
%line because its $p$-rank is trivial (see \cite[Theorem
%11.64]{Book}). Suppose that $g_{\sigma}=1$, from Hurwitz formula (or
%Deuring-Shafarevich formula, because $p$-rank lower or equal genus
%of $C$) \footnote{One reads Deuring-Shafarevich in this case:
%$\gamma-1=p(\gamma'-1)$}
% one
%obtains always that under our assumption $p-1>g-1\geq \gamma-1$,
%thus above situation is not possible.
% Thus there are no unramified separable
%subextension of degree $p$, obtaining the triviality of the
%$p$-rank.
\end{proof}

And as a direct consequence of the above lemmas and because all
techniques in the previous sections, from \cite{Harui}, are
applicaple when $Aut(C)$ is coprime with $p$, then we obtain:
%\begin{cor} Assume $p>21$. The automorphisms group of the curve $\tilde{C}:
%Z^6+X^5Y+XY^5+\alpha_3Z^3X^3$ such that $\alpha_3\neq0$ and
%$\alpha_3\in\mathbb{K}$ is cyclic and has order 3. Consider the
%curve defined by the equation
%\[\tilde{\tilde{C}}:\,\,\,X^5Y+Y^5Z+Z^5X+\mu_2 X^2Y^4+\mu_3 Y^2Z^4+\delta_1
%X^3Y^2Z+\delta_3 X^2YZ^3\]
% such that $\mu_2\mu_3\neq0$ and $\spadesuit=3+\mu_2+\mu_3+\delta_1+\delta_3=0$,
%  with $\delta_i,\mu_i\in\mathbb{K}$ then $Aut(\tilde{\tilde{C}})$ is cyclic of order
% $3$. Both curves $\tilde{C}$ and $\tilde{\tilde{C}}$ have trivial $p$-rank. Moreover $\tilde{C}$ is not isomorphic to $\tilde{\tilde{C}}$ for any choice of
% the parameters.
%\end{cor}
%\begin{proof} Only we the linear $g_2$-systems
%for the immersion of the curve inside $\mathbb{P}^2$ are unique up
%to conjugation in $PGL_3(\mathbb{K})$ see \cite[Lemma 11.28]{Book}(
%also the curves $\tilde{C}$ and $\tilde{\tilde{C}}$ have cyclic
%covers of degree 3 with different type of the cover, from Hurwitz
%equation, therefore belongs to different irre b ducible components in
%the moduli space of genus 10 curves, proving that belongs to
%different components).
%\end{proof}
%And similarly
\begin{cor} Assume $p>13$. The automorphism groups of the curves $\tilde{C}:
X^5+Y^5+Z^4X+\beta X^3Y^2$ and
$\tilde{\tilde{C}}:\,\,\,X^5+X(Z^4+Y^4)+\beta Y^2Z^3$ such that
$\beta\neq0$, are cyclic of order 4. Moreover, $\tilde{C}$ is not
isomorphic to $\tilde{\tilde{C}}$ for any choice of the parameters.
\end{cor}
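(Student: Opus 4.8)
The plan is to reduce the positive-characteristic statement to the characteristic-zero arguments already carried out in the earlier sections, using the coprimality results of this section to justify that transfer. Concretely, the final corollary concerns two specific quintics, so $d=5$ and $g=(d-1)(d-2)/2=6$. The hypothesis $p>13$ means $p>(d-1)(d-2)+1=13$, which is exactly the bound required by Lemma \ref{lem6.2} and the preceding Corollary. First I would invoke that Corollary to conclude that $|Aut(\tilde{C})|$ and $|Aut(\tilde{\tilde{C}})|$ are both coprime to $p$: indeed $p>(d-1)(d-2)+1>g=6$, so the hypotheses of the Corollary are met. Once the automorphism group has order prime to $p$, the opening remark of this section guarantees that all the tools from Harui \cite{Harui} (the Hurwitz bound, the Arakawa--Oikawa inequalities, Mitchell's classification) remain valid, so the entire structural analysis applies verbatim.

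The next step is to re-run the two case analyses from \S4 (subsections on type $4,(0,1)$ and type $4,(1,2)$) over $\mathbb{K}$. For $\tilde{C}:\,X^5+Y^5+Z^4X+\beta X^3Y^2$ with $\beta\neq0$, the element $\sigma=[X;Y;\xi_4 Z]$ is a homology of order $4$; here one needs $p\neq2$ so that $\xi_4$ exists and has order $4$, which holds since $p>13$. The argument that $(0;0;1)$ is the unique inner Galois point (via \cite{Harui} Lemma 3.7 and Yoshihara \cite{Yoshihara}) and that $Aut(\tilde{C})$ is therefore cyclic (via \cite[Lemma 11.44]{Book}) carries over, because those results only require $Aut(C)$ to be prime to $p$, which we have already secured. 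The same goes for $\tilde{\tilde{C}}:\,X^5+X(Z^4+Y^4)+\beta Y^2Z^3$ with the automorphism $\tau=[X;\xi_4 Y;\xi_4^2 Z]$: the exclusion of descendants of $F_5$ and $K_5$ uses $4\nmid|Aut(F_5)|=150$ and $4\nmid|Aut(K_5)|=39$, which are numerical facts independent of the characteristic, and here one also needs $|Aut(F_5)|$ and $|Aut(K_5)|$ coprime to $p$, guaranteed by $p>13$ and the section's standing hypotheses. The exclusion of the primitive groups and of $Hess_{36},Hess_{72}$ reproduces Claim 1 of \S4 word for word, since it is a matter of comparing monomial coefficients. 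The coefficient comparisons then force every automorphism to be diagonal, yielding the relations $v^4=s^4=v^2 s^{d-2}=1$ (with $d=5$) and hence $|Aut|=4$ in both cases.

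For the non-isomorphism claim, I would argue that $\tilde{C}$ and $\tilde{\tilde{C}}$ realize the two distinct conjugacy classes $[\rho_1]$ and $[\rho_2]$ in $A$ corresponding to the homology type $4,(0,1)$ and the non-homology type $4,(1,2)$. The generator $\sigma$ of $Aut(\tilde{C})$ is conjugate to $diag(1,1,\xi_4)$, a homology (two equal eigenvalues), whereas the generator $\tau$ of $Aut(\tilde{\tilde{C}})$ is conjugate to $diag(1,\xi_4,\xi_4^2)$, which is not a homology (three distinct eigenvalues). A $K$-isomorphism $\tilde{C}\to\tilde{\tilde{C}}$ would conjugate $Aut(\tilde{C})$ onto $Aut(\tilde{\tilde{C}})$, hence conjugate $\langle\sigma\rangle$ onto $\langle\tau\rangle$; but conjugation in $PGL_3(\mathbb{K})$ preserves the multiset of eigenvalue-ratios, so a homology cannot be conjugated to a non-homology of the same order. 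This eigenvalue obstruction is purely linear-algebraic and insensitive to the characteristic (only $\xi_4$ must have order $4$, i.e. $p\neq2$), so it holds over $\mathbb{K}$ exactly as over $K$.

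The main obstacle is not any single hard computation but rather the bookkeeping needed to certify that none of the characteristic-zero steps secretly used a fact that fails in characteristic $p$. The delicate points to check are: that roots of unity of the relevant orders remain distinct (which fails only for $p$ dividing the order, excluded by $p>13$); that the Galois-point theory of \cite{Harui} and \cite{Yoshihara} and the cyclicity criterion \cite[Lemma 11.44]{Book} are stated for, or extend to, tame automorphism groups; and that the numerical divisibility exclusions ($4\nmid 150$, $4\nmid 39$, $d-1\nmid 6d^2$, etc.) together with the coprimality of $|Aut(F_d)|,|Aut(K_d)|$ with $p$ genuinely rule out descendants in the modular setting. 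Once these are verified, the proof is just the assembly: apply the Corollary to get tameness, then quote the \S4 analysis and the eigenvalue obstruction.
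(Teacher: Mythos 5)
Your proposal is correct and follows essentially the same route as the paper: use the preceding corollary (valid since $p>13=(d-1)(d-2)+1$ for $d=5$) to get that $|Aut(\tilde{C})|$ and $|Aut(\tilde{\tilde{C}})|$ are prime to $p$, then transfer the characteristic-zero analysis of \S 4 verbatim. The one step you leave implicit is the hinge on which your non-isomorphism argument turns: to pass from ``$\tilde{C}$ and $\tilde{\tilde{C}}$ are $\mathbb{K}$-isomorphic as curves'' to ``$Aut(\tilde{C})$ and $Aut(\tilde{\tilde{C}})$ are conjugate subgroups of $PGL_3(\mathbb{K})$'' you need the uniqueness of the $g^2_5$ linear system giving the plane embedding (\cite[Lemma 11.28]{Book}), and this is precisely the single point the paper's own proof singles out as the thing that must be checked in positive characteristic; without it, an abstract isomorphism of curves need not be induced by a projective transformation, and your eigenvalue-ratio obstruction has nothing to act on. Once that citation is added, your homology-versus-non-homology argument is a perfectly good (and more explicit) substitute for the paper's alternative justification, which counts ramification of the degree-$4$ cyclic covers ($6$ points of index $4$ for $\tilde{C}$ versus $2$ points of index $4$ and $4$ of index $2$ for $\tilde{\tilde{C}}$) and concludes from the Hurwitz data that the curves lie in different loci.
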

\begin{proof} Only we need to mention that the linear $g_2$-systems
for the immersion of the curve inside $\mathbb{P}^2$ are unique up
to conjugation in $PGL_3(\mathbb{K})$ see \cite[Lemma 11.28]{Book} (
also the curves $\tilde{C}$ and $\tilde{\tilde{C}}$ have cyclic
covers of degree 4 with different type of the cover, from Hurwitz
equation, therefore they belong to different irreducible components
in the moduli space of genus 6 curves).
\end{proof}

\begin{cor} For $p>13$ we have that the locus $\widetilde{M_6^{Pl}(\Z/4\Z)}$ of the moduli space of
positive characteristic, has at least two irreducible components.
\end{cor}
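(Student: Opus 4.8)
The plan is to combine the preceding corollary, which already exhibits witnesses in each of the two equation components, with the observation that these two components are separated by the ramification signature of the associated degree-$4$ quotient cover — a discrete invariant that remains locally constant because the hypothesis $p>13$ forces the relevant covers to be tame. So the statement will follow once non-emptiness of both strata and their membership in different irreducible components are established.

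First I would invoke the preceding corollary: for $p>13$ both $\tilde{C}:X^5+Y^5+Z^4X+\beta X^3Y^2$ and $\tilde{\tilde{C}}:X^5+X(Z^4+Y^4)+\beta Y^2Z^3$ (with $\beta\neq0$) are smooth plane quintics whose full automorphism group is cyclic of order $4$, so each determines a point of $\widetilde{M_6^{Pl}(\Z/4\Z)}$. By \cite[Lemma 11.28]{Book} the plane model $g^2_5$ is unique up to $PGL_3(\mathbb{K})$-conjugacy, hence the $PGL_3(\mathbb{K})$-conjugacy class of the order-$4$ action is a genuine invariant of the $K$-isomorphism class of the curve; the two families realise the two non-conjugate types, namely $4,(0,1)$ (the class $[\rho_1]$) and $4,(1,2)$ (the class $[\rho_2]$). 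Thus $\widetilde{\rho_1(M_6^{Pl}(\Z/4\Z))}$ and $\widetilde{\rho_2(M_6^{Pl}(\Z/4\Z))}$ are both non-empty.

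Next I would separate the two strata geometrically. Since $p>13>4$, any generator $\sigma$ of order $4$ acts with order prime to $p$, so the quotient $C\to C/\langle\sigma\rangle$ is a tame cyclic cover and the ordinary Riemann--Hurwitz formula applies verbatim; the computation of Remark \ref{rem2} then carries over unchanged. For the type $4,(0,1)$ family the cover is branched at six points all of index $4$ over a genus-$0$ base (the generator is a homology, whose powers share the same fixed locus), whereas for the type $4,(1,2)$ family it is branched at two points of index $4$ and four points of index $2$, again over a genus-$0$ base. These two branch signatures are distinct.

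The last step, which is also where the only real subtlety lies in positive characteristic, is to argue that the branch signature is constant on each irreducible component of $\widetilde{M_6^{Pl}(\Z/4\Z)}$. For a tame action of a fixed finite group the ramification data is a discrete invariant of the corresponding tame Hurwitz datum, so it cannot vary in a connected — in particular an irreducible — family; consequently two curves with different signatures can never lie in a common irreducible component. Since $\tilde{C}$ and $\tilde{\tilde{C}}$ have different signatures, the strata $\widetilde{\rho_1(M_6^{Pl}(\Z/4\Z))}$ and $\widetilde{\rho_2(M_6^{Pl}(\Z/4\Z))}$ sit in different irreducible components of $M_6$, so $\widetilde{M_6^{Pl}(\Z/4\Z)}$ has at least two irreducible components, as claimed. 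The main obstacle is precisely this deformation-invariance of the branch type in characteristic $p$; the tameness guaranteed by $p>13$ is exactly what reduces it to the familiar discreteness of tame Hurwitz data, so that the characteristic-zero argument transports without essential change.
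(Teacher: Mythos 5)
Your proposal is correct and follows essentially the same route as the paper: the preceding corollary supplies a point of each of the two equation components, and the parenthetical in its proof is exactly your separation argument — the two curves carry tame degree-$4$ cyclic covers with different ramification signatures (six points of index $4$ versus two of index $4$ and four of index $2$, both over a genus-$0$ base), so they cannot lie in a common irreducible component. You have merely made explicit the deformation-invariance of the tame branch datum that the paper leaves implicit.
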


Similarly we obtain the following result from results in \S4,

\begin{cor} For $p>(d-1)(d-2)+1$ where $d\geq 5$ is an odd integer, the locus $\widetilde{M_g^{Pl}(\Z/(d-1)\Z)}$ of the
moduli space over positive characteristic $p$ is not ES-Irreducible
and it has at least two strongly equation components. In particular,
it has at least two irreducible components.
\end{cor}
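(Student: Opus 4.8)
The plan is to reduce the statement entirely to the characteristic-zero theorem of \S4, using the preliminary results of the present section to guarantee that Harui's and Mitchell's classification of $Aut(C)\subseteq PGL_3(\mathbb{K})$ remains valid. Concretely, by the Corollary preceding this one (applied with $p>(d-1)(d-2)+1>g$), the order of the automorphism group of any non-singular plane curve of degree $d$ over $\mathbb{K}$ is coprime with $p$; hence all the tools invoked in \S4 — the Hurwitz bound, the relevant Arakawa–Oikawa inequalities, Mitchell's list of finite subgroups of $PGL_3$, and the descendant dichotomy of Theorem \ref{teo1} — apply verbatim over $\mathbb{K}$.

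First I would fix a primitive $(d-1)$-th root of unity $\xi_{d-1}\in\mathbb{K}$, which exists precisely because $p>(d-1)(d-2)+1$ forces $p\nmid(d-1)$. I would then take the two explicit families already used in \S4, namely the curve of type $d-1,(0,1)$
$$\tilde{C}:\ X^d+Y^d+Z^{d-1}X+\beta X^{d-2}Y^2=0$$
and the curve of type $d-1,(1,2)$
$$\tilde{\tilde{C}}:\ X^d+X(Z^{d-1}+Y^{d-1})+\beta Y^2Z^{d-2}=0,\qquad \beta\neq0,$$
and check that each remains non-singular over $\mathbb{K}$ for suitable $\beta$: the defining Jacobian conditions are polynomial in $\beta$ with coefficients that do not vanish identically once $p$ is large, so a non-empty open set of admissible $\beta$ survives. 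Both curves visibly admit the cyclic groups generated by $[X;Y;\xi_{d-1}Z]$ and $[X;\xi_{d-1}Y;\xi_{d-1}^2Z]$ respectively, and since these two generators have non-conjugate diagonal types in $PGL_3(\mathbb{K})$, they determine two distinct classes $[\rho_1]\neq[\rho_2]$ in $A$.

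Next I would run the two automorphism-group computations of \S4 word for word. The exclusion of the primitive groups and of the Fermat/Klein descendants rests only on the numerical non-divisibilities $d-1\nmid 6d^2$ and $d-1\nmid 3(d^2-3d+3)$ together with the lists of element orders in $A_5,A_6,PSL(2,7)$ and the Hessian groups; all of these are characteristic-independent statements, and the hypothesis $p>(d-1)(d-2)+1$ guarantees the primitive groups are themselves genuine subgroups of $PGL_3(\mathbb{K})$ of order prime to $p$. Thus $Aut(\tilde{C})$ and $Aut(\tilde{\tilde{C}})$ are each forced to fix a point and a line, whence each is cyclic of order exactly $d-1$, showing $\widetilde{\rho_1(M_g^{Pl}(\Z/(d-1)\Z))}$ and $\widetilde{\rho_2(M_g^{Pl}(\Z/(d-1)\Z))}$ are both non-empty.

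Finally, the two strongly equation components arising from $[\rho_1]\neq[\rho_2]$ give at least two equation components of $\widetilde{M_g^{Pl}(\Z/(d-1)\Z)}$, so it is not ES-Irreducible; the distinctness of the corresponding Galois covers of degree $d-1$, which carry different ramification types by the Hurwitz and Deuring–Shafarevich analysis of this section (legitimate since $p\nmid(d-1)$), shows that these sit in genuinely different irreducible components of $M_g$. The main obstacle I anticipate is purely of a bookkeeping nature: verifying that the finitely many \emph{small-degree coincidence} cases handled by hand in characteristic zero — for instance the $d=7$ Hessian/Fermat-descendant analysis — do not acquire spurious new solutions modulo $p$. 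This is exactly what the hypothesis $p>(d-1)(d-2)+1=2g+1$ controls, since it simultaneously exceeds the genus and exceeds every element order appearing in the relevant finite groups, so no characteristic-$p$ degeneration of the numerical obstructions can occur.
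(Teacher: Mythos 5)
Your proposal is correct and follows essentially the same route as the paper: the preliminary lemmas of this section (culminating in the corollary that $p>(d-1)(d-2)+1>g$ forces $|Aut(C)|$ coprime to $p$) are used precisely to guarantee that Harui's and Mitchell's machinery applies over $\mathbb{K}$, after which the two types $d-1,(0,1)$ and $d-1,(1,2)$ from \S 4 are re-used verbatim to produce two non-conjugate classes $[\rho_1]\neq[\rho_2]$ with non-empty $\widetilde{\rho_i(M_g^{Pl}(\Z/(d-1)\Z))}$. The extra details you supply (existence of $\xi_{d-1}$, openness of the non-singularity condition in $\beta$, the ramification-type comparison of the covers) are consistent with, and slightly more explicit than, what the paper leaves implicit.
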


\chapter{Plane non-singular curves with large cyclic automorphism
groups}

\section{Abstract} In this elementary note,
we present a result for obtaining the full list of the cyclic groups that could
appear as an effective action for projective non-singular, non-hyperelliptic plane curves $C$ of
a fixed degree $d$ over an algebraic closed field of zero
characteristic. In particular, for any cyclic subgroup of
$Aut(C)$, its order should divide one of the following numbers:
$d-1$,$d$,$d^2-3d+3$, $(d-1)^2$, $d(d-2)$ or $d(d-1)$. Moreover, we attach to each cyclic group an equation $C$ (unique up to
$K$-isomorphism) that admits the cyclic group $G$ as a subgroup of the full
automorphism group. Also, we provide an algorithm of computation of the full list (once the degree is fixed) together with the implementation of this algorithm in SAGE program. On the other hand, we determine the full automorphism group and a
characterization of the unique equation for the curve $C$ that admits an automorphism of order $d^2-3d+3$, $(d-1)^2$, $d(d-2)$ or $d(d-1)$ in
$Aut(C)$. We also consider the cases when $Aut(C)$ has a cyclic subgroup of
order $\ell d$ or $\ell (d-1)$ with $1\leq \ell< d-2$.
%%%%%%%%%%%%%%%%%%%%%%%%%%%%%%%%%%%%%%%%%

\section{Introduction}
Consider $C$, a non-singular projective plane curve of degree $d\geq 4$ over an
algebraically closed field $K$ of zero characteristic. Write by
$F(X,Y,Z)=0$ the defining equation of $C$ in $\mathbb{P}^2(K)$ ( in particular, the polynomial $F$ has degree $\geq d-1$ in each
variable because $C$ is non-singular). Denote by $Aut(C)$, the full
automorphism group of $C$ over $K$, which is the finite subgroup of
$PGL_3(K)$ that preserves the defining equation $F(X,Y,Z)=0$ of $C$.

Harui in \cite{Harui} gave a list of the possible $G\leq Aut(C)$
that could appear as: cyclic groups, an element of $Ext^1(-,-)$ of a cyclic group by a dihedral group, an alternating group $A_4$ (resp. $A_5$), an octahedral group $S_4$, a subgroup of $Aut(F_d)$ (the automorphism group of the Fermat curve), a subgroup of $Aut(K_d)$ (the automorphism group of the Klein curve) or a finite primitive subgroup of $PGL_3(K)$ (for the classification of such groups, we refer to Mitchell \cite{Mit}).

For a fixed degree $d$, the previous result does not give the precise list of groups
that appear and and the corresponding equations with an effective action given by $G$ (up to
$K$-isomorphism). In the literature, as far as we know, this is only given for $d=4$ by Henn
\cite{He} (see also the manuscript \cite{BaBaaut} for $d=5$). Recall
that, two non-singular plane curves, with equations given by $F_1(X,Y,Z)=0$ and
$F_2(X,Y,Z)$ respectively, are $K$-isomorphic if and only if $F_1$ can be transformed through a change of
variables $P\in GL_3(K)$ to $F_2$ and in particular
$Aut(C_2)$ is conjugate inside $PGL_3(K)$ to $Aut(C_1)$ in the sense that $Aut(C_2)=PAut(C_1)P^{-1}$.

Now, let $d\geq 5$ be an arbitrary but fixed, we
present the first clear statements which lists the exact cyclic groups $G$ (of order $m$) associated with equations (equipped with a family of parameters) of the plane curves that have an effective action given by $G$ (up to $K$-isomorphism). In particular, we prove
that the order $m$ of any cyclic group that might appear should divide one of the following integers:
$d-1$, $d$, $d^2-3d+3$, $(d-1)^2$, $d(d-2)$ or $d(d-1)$ [Theorem \ref{thm20}].
On the other hand, we formulate the main result as an algorithm (CAGPC) together with a complete implementation in SAGE program (for more details, one may use the link in Remark \ref{rem21}).
On the other hand, it is very classical to determine those curves with large
automorphism group, see for example \cite{Swi} and the interest of
their equations up to $K$-isomorphism. Consequently, the second facet of this chapter
is to determine (up to $K$-isomorphism) the smooth plane curves of
degree $d$ which have a cyclic subgroup of large order in their
automorphism group (by a large order we mean $d^2-3d+3$,
$(d-1)^2$, $d(d-2)$ or $d(d-1)$). For such cases, we determine the
equation of the curve $F(X;Y;Z)=0$ (up to $K$-isomorphism) and its
full automorphism group. Moreover, we solve the same question for
non-singular plane curves with a cyclic subgroup of $Aut(C)$ of
order $\ell (d-1)$ with $\ell\geq 2$, and we obtain a partial result
for $C$ with a cyclic subgroup of order $\ell d$ with $\ell\geq 3$
(see remark \ref{rem33} and corollary \ref{cor44} for general
$\ell\geq 1$).

\section{Cyclic automorphism group of non-singular plane curves}

Fix an integer $d\geq 4$ and let $C$ be a non-singular projective plane curve of
degree $d$. Our aim here is to determine the cyclic groups that appear
as subgroups inside $Aut(C)$. We follow a similar approach of Dolgachev
\cite{Dol} which treat the same question for $d=4$ (see \cite[\S
2.1]{Bars}).

Let $\sigma\in Aut(C)$ be of maximal order $m$, then we can assume, without any loss of generality, that $\sigma$ is given in its canonical Jordan form by $(x:y:z)\mapsto
(x:\xi_m^a y:\xi_m^b z)$ where $\xi_m$ is a primitive $m$-th root of
unity in $K$ and $a,b$ integers such that $0\leq a\neq b\leq m-1$
with $a\leq b$ and $gcd(a,b)$ coprime with $m$ if $ab\neq 0$(
we can reduce to $gcd(a,b)=1$) and $gcd(b,m)=1$ if $a=0$.

Throughout this paper, we also use also the following notations.
\begin{itemize}
\item Type $m, (a,b)$ is identified with the corresponding automorphism $[X;\zeta_m^aY;\zeta_m^bY]$ where $\zeta_m$ is a primitive $m$-th root of unity. In other words, saying that $m, (a,b)$ is a generator
means that the corresponding automorphism generates any other
automorphism in the solution set given by the composition of this
fix automorphism. \item $L_{i,*}$ denotes a degree $i$ homogeneous
polynomial in $K[X,Y,Z]$ without the variable $*$ where
$*\in\{X,Y,Z\}$.
\item $S(u)_m:=\{j:\,\,u\leq j\leq d-1,\,\,d-j=0\,(mod\,\,m)\}$.
\item $S_u^{d,X}\,\,{m,(a,b)}:=\{i:\,\,u\leq i\leq d-u\,\,\text{and}\,\,ai+(d-i)b=0\,(mod\,\,m)\}$.
\item $S_u^{d-1,X}\,\,{m,(a,b)}:=\{i:\,\,1\leq i\leq d-u\,\,\text{and}\,\,ai+(d-1-i)b=0\,(mod\,\,m)\}$
\item $S(1)^{j,X}_{m,(a,b)}:=\{i:\,\,0\leq i\leq j\,\,\text{and}\,\,ai+(j-i)b=a\,(mod\,\,m)$.
\item $S(2)^{j,X}_{m,(a,b)}:=\{i:\,\,0\leq i\leq j\,\,\text{and}\,\,ai+(j-i)b=0\,(mod\,\,m)$.
\item $S^{j,Y}_{m,(a,b)}:=\{i:\,\,0\leq i\leq j\,\,\text{and}\,\,bi+(d-j)a=a\,(mod\,\,m)\}$.
\item $S^{j,Z}_{m,(a,b)}:=\{i:\,\,0\leq i\leq j\,\,\text{and}\,\,ai+(d-j)b=a\,(mod\,\,m)\}.$
\item $\Gamma_m:=\{(a,b)\in \mathbb{Z}^2:\,\,g.c.d\,(a,b)=1,\,\,\,1\leq a\neq b\leq m-1\}.$
\item the points of $\mathbb{P}^2(K)$: $(1:0:0),(0:1:0)$ and
$(0:0:1)$ are named reference points. \item $\alpha$ always is an
element in $K^*$ which by a change of variables can be assumed equal
to 1.
\end{itemize}
where $u,j,m,d,a$ and $b$ are non-negative integers.

%In this section, we give a complete list of non-singular plane curves of an arbitrary but fixed degree $d\geq4$
%which admits a non-trivial automorphism of order $m$ such that $p\nmid m$.
%Moreover, we attach to each model an equation which is unique up to projective equivalence.
\begin{thm}\label{thm20}
Let $C$ be a non-singular projective plane curve of degree $d$ over
an algebraically closed field ${K}$ of zero characteristic. If $G$
is a non-trivial cyclic subgroup of $Aut(C)$ of order $m$, then the
classification of such curves up to their cyclic subgroups is given
by the following.\,(note that, we attach to each type an equation of
$C$ which is unique up to ${K}$-isomorphism)\footnote{We warm the
reader that may happen that the curve given for a certain type
$m(a,b)$ could be never geometrically irreducible for any parameter
and we should discard from the list}:
\begin{enumerate}
  \item The curve $C$ is of the form $Z^{d-1}L_{1,Z}+\big(\sum_{j\in S(2)_m}Z^{d-j}L_{j,Z}\big)+L_{d,Z},$ which is type $m,\,(0,1)$ for some $m|d-1$.

  \item The curve $C$ has the form $Z^d+\big(\sum_{j\in S(1)_m}Z^{d-j}L_{j,Z}\big)+L_{d,Z},$ and $C$ is of type $m,\,(0,1)$ for some $m|d$.

 \item All reference points lie on $C$ which is of type $m,\,(a,b)$ for some $m\,|\,(d^2-3d+3)$ and $(a,b)\in\Gamma_m$ such that $a=(d-1)a+b=(d-1)b\,(mod\,\,m)$. Furthermore, $C$ is defined by the equation
 \begin{eqnarray*}
X^{d-1}Y&+&Y^{d-1}Z+\alpha Z^{d-1}X+\\
&+&\sum_{j=2}^{\lfloor\frac{d}{2}\rfloor}\,\,X^{d-j}\big(\sum_{i\in S(1)^{j,X}_{m,(a,b)}}\beta_{ji}Y^iZ^{j-i}\big)+Y^{d-j}\big(\sum_{i\in S^{j,Y}_{m,(a,b)}}\alpha_{ji}Z^iX^{j-i}\big)+Z^{d-j}\big(\sum_{i\in S^{j,Z}_{m,(a,b)}}\gamma_{ji}X^{j-i}Y^i\big),
\end{eqnarray*}
\item Two reference points lie on $C$ which is of type $m,\,(a,b)$ for one of the following subcases.
\begin{enumerate}
  \item[(4.1)] First, $m\,|\,d(d-2)$ and $(a,b)\in\Gamma_m$ such that $(d-1)a+b\equiv0\,(mod\,m)$ and $a+(d-1)b\equiv0\,(mod\,m)\,\,.$ Moreover, $C$ is given by
\[
X^d+\big(\sum_{j=2}^{d-1}\,\,X^{d-j}\sum_{i\in S(2)^{j,X}_{m,(a,b)}}\beta_{ji}Y^iZ^{j-i}\big)
+\big(Y^{d-1}Z+\alpha YZ^{d-1}+\sum_{i\in S_2^{d,X}\,\,{m,(a,b)}}\beta_{di}Y^iZ^{d-i}\big)=0,
\]

\item[(4.2)] The second case with $m|(d-1)^2$ and $(a,b)\in\Gamma_m$ such that $(d-1)a+b\equiv0\, (mod\,m)$ and $(d-1)b\equiv0\, (mod\,m)$. Furthermore, the defining equation of $C$ is
\begin{eqnarray*}
X^d&+&\sum_{j=2}^{d-2}\,\,X^{d-j}\big(\sum_{i\in S(2)^{j,X}_{m,(a,b)}}\beta_{ji}Y^iZ^{j-i}\big)+X\big(\alpha Z^{d-1}+\sum_{i\in S_1^{d-1,X}\,\,{m,(a,b)}}\beta_{(d-1)i}Y^iZ^{d-1-i}\big)+\\
&+&\big(Y^{d-1}Z+\sum_{i\in S_2^{d,X}\,\,{m,(a,b)}}\beta_{di}Y^iZ^{d-i}\big)=0
\end{eqnarray*}
\item[(4.3)] Lastly, $m|(d-1)$ and $(a,b)\in\Gamma_m$ such that $(d-1)b\equiv0\,(mod\,m)$ and $(d-1)a\equiv0\,(mod\,m)$. In such case $C$ has the form
\begin{eqnarray*}
X^d&+&\sum_{j=2}^{d-2}\,\,\big(X^{d-j}\sum_{i\in S(2)^{j,X}_{m,(a,b)}}\beta_{ji}Y^iZ^{j-i}\big)+\sum_{i\in S_2^{d,X}\,\,{m,\,(a,b)}}\beta_{di}Y^iZ^{d-i}+\\
&+&X\big(Z^{d-1}+\alpha Y^{d-1}+\sum_{i\in S_2^{d-1,X}\,\,{m,\,(a,b)}}\beta_{(d-1)i}Y^iZ^{d-1-i}\big),
\end{eqnarray*}
\end{enumerate}

\item One reference points lie in $C$ that is of type $m,\,(a,b)$ for some $m|\,d(d-1)$ and $(a,b)\in\Gamma_m$ such that $da\equiv0\,(mod\,m)$ and $(d-1)b\equiv0\,(mod\,m)$. Moreover, $C$ is defined by
\begin{eqnarray*}
C:\,\,X^d&+&Y^d+\sum_{j=2}^{d-2}\,\,\big(X^{d-j}\sum_{i\in S(2)^{j,X}_{m,(a,b)}}\beta_{ji}Y^iZ^{j-i}\big)+\sum_{i\in S_1^{d,X}\,\,{m,\,(a,b)}}\beta_{di}Y^iZ^{d-i}+\\
&+&X\big(\alpha Z^{d-1}+\sum_{i\in S_1^{d-1,X}\,\,{m,\,(a,b)}}\beta_{(d-1)i}Y^iZ^{d-1-i}\big)=0
\end{eqnarray*}
\item None of the reference points lie on $C$ and we obtain type $m, (a,b)$ for $m|d$ and $(a,b)\in\Gamma_m$ such that $da\equiv0\,(mod\,m)$ and $db\equiv0\,(mod\,m)$. Furthermore, we have
   \[
C:\,\,X^d+Y^d+Z^d+\sum_{j=2}^{d-1}\,\,\big(X^{d-j}\sum_{i\in S(2)^{j,X}_{m,(a,b)}}\beta_{ji}Y^iZ^{j-i}\big)
+\sum_{i\in S_1^{d,X}\,\,{m,\,(a,b)}}\beta_{di}Y^iZ^{d-i}=0.
\]
     \end{enumerate}
Here, $\alpha,\,\beta_{ij}, \gamma_{ij}, \alpha_{ij}$ are in ${K}$
and $\alpha\neq0$.

\end{thm}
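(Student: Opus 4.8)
The plan is to follow the diagonalization method of Dolgachev \cite{Dol} (as systematized for arbitrary $d$ in \cite{BaBacyc}). First I would take $\sigma\in Aut(C)$ a generator of $G$ of order $m$ and, since $\sigma$ has finite order over an algebraically closed field and is therefore diagonalizable, normalize it to the canonical type $m,(a,b)$, i.e. $\sigma=\mathrm{diag}(1,\xi_m^a,\xi_m^b)$ with the stated constraints ($0\le a\neq b\le m-1$, $a\le b$, $\gcd(a,b)=1$ when $ab\neq0$, and $b=1$ with $\gcd(b,m)=1$ when $a=0$). The decisive observation is that $\sigma$ sends the monomial $X^iY^jZ^k$ (with $i+j+k=d$) to $\xi_m^{aj+bk}X^iY^jZ^k$; hence the relation $F(\sigma(X;Y;Z))=\lambda F(X;Y;Z)$ forces every monomial occurring in $F$ to lie in a single eigenspace, that is, to satisfy one fixed congruence $aj+bk\equiv c\pmod{m}$, where $\lambda=\xi_m^c$. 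Consequently, once the pair $(a,b)$ and the eigenvalue exponent $c$ are known, the support of $F$ is exactly the set of exponents meeting this congruence, which is precisely what the index sets $S(2)^{j,X}_{m,(a,b)}$, $S^{j,Y}_{m,(a,b)}$, $S^{j,Z}_{m,(a,b)}$ (and their relatives) record; so the displayed equations are nothing but the general $K$-linear combinations of the admissible monomials.

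Next I would exploit non-singularity to cut down the possible configurations and to fix $c$. Recall that a non-singular plane curve of degree $d$ must have $\deg_V F\ge d-1$ for each variable $V\in\{X,Y,Z\}$; thus for every variable $F$ contains a monomial of $V$-degree $d-1$ or $d$, and the only candidates are the pure power $V^d$ and the two ``edge'' monomials of $V$-degree $d-1$. The presence or absence of $X^d,Y^d,Z^d$ detects exactly which of the reference points $(1:0:0),(0:1:0),(0:0:1)$ lie on $C$. I would then split into: the homology case $a=0$, where the eigenvalue of a monomial depends only on its $Z$-exponent and the two alternatives ``$Z^d$ present'' versus ``$Z^{d-1}L_{1,Z}$ present'' yield the families (1) (with $m\mid d-1$) and (2) (with $m\mid d$); and the case $ab\neq0$, which I would organize by the number ($3,2,1,0$) of reference points lying on $C$, producing cases (3),(4),(5),(6) respectively (with case (4) branching into (4.1)--(4.3) according to which edge monomials realize the degree-$(d-1)$ condition in $Y$ and $Z$).

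In each configuration I would then read off $c$ and the divisibility of $m$ by equating the eigenvalue exponents of the forced degree-$(d-1)$ (or $d$) monomials. For instance, in case (3) the Klein-type core $X^{d-1}Y+Y^{d-1}Z+\alpha Z^{d-1}X$ forces $c\equiv a\equiv(d-1)a+b\equiv(d-1)b\pmod{m}$; eliminating $b$ via $b\equiv a(2-d)$ and using $\gcd(a,m)=1$ (which itself follows from $\gcd(a,b)=1$ together with that same relation) gives $a(d^2-3d+3)\equiv0$, hence $m\mid d^2-3d+3$. The identical bookkeeping yields $m\mid d(d-2)$, $m\mid(d-1)^2$, $m\mid d-1$ in the subcases (4.1)--(4.3), $m\mid d(d-1)$ in (5), and $m\mid d$ in (6). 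Finally I would normalize: using the diagonal changes of variables that commute with $\sigma$, I would scale the core monomials to be monic and reduce the one remaining scaling freedom to the single parameter $\alpha\neq0$, which establishes the asserted uniqueness of the equation up to $K$-isomorphism.

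The step I expect to be the main obstacle is the completeness and non-redundancy of the case analysis: one must verify that the enumerated configurations of edge monomials are exhaustive, that each is genuinely compatible with a single eigenvalue exponent (discarding the combinations that force contradictory congruences on $(a,b)$ modulo $m$), and that the normalization by $\sigma$-commuting diagonal transformations really exhausts the residual projective freedom, so that the listed equation is unique up to $K$-isomorphism rather than merely one valid normal form. The footnote warning that some types may never be geometrically irreducible is part of this same difficulty, since the realizability of each family has to be checked separately from its mere combinatorial admissibility.
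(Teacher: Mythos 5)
Your proposal follows essentially the same route as the paper's own proof: diagonalize the generator to type $m,(a,b)$, observe that invariance forces all monomials of $F$ into a single eigenspace (one congruence $aj+bk\equiv c\pmod m$), split into the case $a=0$ versus $ab\neq 0$, and in the latter organize the analysis by how many reference points lie on $C$, reading off the divisibility conditions on $m$ from the edge monomials forced by non-singularity. The argument is correct and matches the paper's case structure (including the three subcases of (4)), so no further comparison is needed.
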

\begin{rem}\label{rem21}
The above result and its proof give an algorithm to list for
every fix degree $d$, all the cyclic groups that could appear with
an equation (up to $K$-isomorphisms), for the complete algorithm and
its implementation in SAGE see the link
{http://mat.uab.cat/$\sim$eslam/CAGPC.sagews}, and read the last
section of this chapter for a list of cyclic groups that appears for
lower degree $d$ with their equations.
\end{rem}
\begin{rem} The above result is also true when $K$ is an algebraic
closed field of characteristic $p>0$ and we assume from the
beginning that $m$ is always coprime with $p$, covering the cyclic
groups of order coprime with the characteristic.
\end{rem}

\begin{proof}
Let $\varphi$ be a generator of order $m:=|G|$. One can choose
coordinates so that $\varphi$ is represented by
$\big(X;Y,Z\big)\mapsto\big(X;\xi_m^aY,\xi_m^bZ\big)$ where, $\xi_m$
is a primitive $m$-th root of unity in $K$ and $a, b$ are integers
with $0\leq a\neq b\leq m-1$\,$\big($\,without loss of generality,
one can assume that
% \footnote{F: was written ''$b$ and $m$ are
%coprime''
%which was a contradiction on the tables, wrong. I replaced by the conditions that you
%may assume}
\, $a\leq b$, with $gcd(b,m)=1$ if $a=0$ and with $gcd(a,b)=1$ if $a\neq0$ $\big)$. Now, we have two cases, either $a=0$ or $a\neq0$. In the following we treat each of these cases.\\
\par\textbf{Case I\,:} Suppose first that $a=0$, write:
$F(X;Y;Z)=\lambda Z^d+\big(\sum_{j=1}^{d-1}Z^{d-j}L_{j,Z}\big)+L_{d,Z}.$
\par If $\lambda=0$, then $(d-1)b\equiv0\,(mod\,m)$. Hence, $m|d-1$  and we can take a generator $(a,b)=(0,1)$. Therefore, by checking each monomial's invaraince, we obtain type $m,\,(0,1)$ which is defined by the equation $Z^{d-1}L_{1,Z}+\big(\sum_{j\in S(2)_m}Z^{d-j}L_{j,Z}\big)+L_{d,Z},$ and $(1)$ arises.
\par If $\lambda\neq0,$ then $db\equiv0(mod\, m)$. From which, we have $m|d$ and $(a,b)=(0,1)$ is a generator for each such $m$. Consequently, we get type $m,\,(0,1)$ of the form $Z^d+\big(\sum_{j\in S(1)_m}Z^{d-j}L_{j,Z}\big)+L_{d,Z},$, which is precisely $(2)$.\\
\par\textbf{Case II\,:} Suppose that $a\neq0$, then, necessarily, $m>2.$ Now, Let $P_1=(1;0;0),\,P_2=(0;1;0)$\,and\, $P_1=(0;0;1)$ be the reference points, therefore, we
have the following four subcases:
\begin{description}
\item[i.] All reference points lie in $C,$
\item[ii.] Two reference points lie in $C,$
\item[iii.] One reference point lies in $C,$
\item[iv.] None of the reference points lie in $C.$
\end{description}
\par We will treat each of these subcases.
\begin{itemize}
\item If all reference points lie on $C$, then the possibilities for the defining equation are
    now:
 \[
C:\,\,\sum_{j=1}^{\lfloor\frac{d}{2}\rfloor}\,\,\big(X^{d-j}L_{j,X}+Y^{d-j}L_{j,Y}+Z^{d-j}L_{j,Z}\big).
\]
It is obvious that $B_i$ can not appear in $L_{1,B_j}$ whenever $j\neq i$, where $B_1:=X,\, B_2:=Y\,\,\, \text{and}\,\,\,
B_3:=Z.$ Hence, by the change of the variables $X,Y$ and $Z$\,(for instance, $L_{1,X}\leftrightarrow Y$,\,$L_{1,Y}\leftrightarrow Z$ and $L_{1,Z}\leftrightarrow X$), we can assume that
\begin{eqnarray*}
C&:& X^{d-1}Y+Y^{d-1}Z+\alpha Z^{d-1}X+\sum_{j=2}^{\lfloor\frac{d}{2}\rfloor}\,\,\big(X^{d-j}L_{j,X}+Y^{d-j}L_{j,Y}+Z^{d-j}L_{j,Z}\big).
\end{eqnarray*}
The first three factors implies that $a=(d-1)a+b=(d-1)b\,(mod\,m).$
In particular, $m|(d-2)^2+(d-1)$. Now, for each $m$, let $L_m$ be
the set $L_m:=\{(a,b)\in\Gamma_m:\,\,a=(d-1)a+b=(d-1)b\,(mod\,m)\}$.
Then, for any $(a,b)\in L_m$, by checking each monomial invariance,
we get type $m,\,(a,b)$ of the form $(3).$ Furthermore, this type is
$K$-equivalent to any type $m, (a',b')\in <m, (a,b)>$. So, to
complete the classification for a certain $m$, it suffices to choose
another $(a_0,b_0)\in L_m-<(a,b)>$ till $L_m=\phi$. This proves
$(3)$.

\item If two reference points lie in the smooth plane curve $C$, then by re-scaling the matrix $\varphi$
and permuting the coordinates, we can assume that $(1;0;0)\notin C.$ The equation is then
\[C: X^d+X^{d-2}L_{2,X}+X^{d-3}L_{3,X}+...+XL_{d-1,X}+L_{d,X}=0,\]
since $L_{1,X}$ is not invariant by $\varphi$ since, $ab\neq0$. Moreover, $Z^d$ and $Y^d$ are not in $L_{d,X},$
by the assumption that only $(1;0;0)\notin C.$\vspace{0.1cm}
\par Assume first that $Y^{d-1}Z$ and $YZ^{d-1}$ are in $L_{d,X}.$ Then, $(d-1)a+b\equiv0\,(mod\,m)$ and $a+(d-1)b\equiv0\,(mod\,m)$. In particular, $m|(d-1)^2-1$ and for each $m\in L_m$ where $L_m$ is defined as follows \[L_m:=\{(a,b)\in\Gamma_m:\,\,\,(d-1)a+b\equiv0\,(mod\,m),\,a+(d-1)b\equiv0\,(mod\,m)\}.\] If $(a,b)\in L_m$, then we obtain type $m,\,(a,b)$ of the form
\[
X^d+\big(\sum_{j=2}^{d-1}\,\,X^{d-j}\sum_{i\in S(2)^{j,X}_{m,(a,b)}}\beta_{ji}Y^iZ^{j-i}\big)
+\big(Y^{d-1}Z+\alpha YZ^{d-1}+\sum_{i\in S_2^{d,X}\,\,{m,(a,b)}}\beta_{di}Y^iZ^{d-i}\big)=0,
\]
where $\alpha\neq0$ and (4.1) follows.
\par Assume second that $Y^{d-1}Z\in L_{d,X}$ and $YZ^{d-1}\notin L_{d,X}.$ Then, by the non-singularity, $Z^{d-1}$ is in $L_{d-1,X}$.
That is, $(d-1)a+b\equiv0\, (mod\,m)$ and $(d-1)b\equiv0\, (mod\,m).$ Hence, $m|(d-1)^2$ and $(a,b)\in L_m$ \,\,\,\big($:=\{(a,b)\in\Gamma:\,\,\,(d-1)a+b\equiv0\, (mod\,m),\,(d-1)b\equiv0\, (mod\,m)\}\big),$
if we check monomials invariance for a certain $(a,b)$, we obtain the type $m,\,(a,b)$ of the form
\begin{eqnarray*}
X^d&+&\sum_{j=2}^{d-2}\,\,X^{d-j}\big(\sum_{i\in S(2)^{j,X}_{m,(a,b)}}\beta_{ji}Y^iZ^{j-i}\big)+X\big(\alpha Z^{d-1}+\sum_{i\in S_1^{d-1,X}\,\,{m,(a,b)}}\beta_{(d-1)i}Y^iZ^{d-1-i}\big)+\\
&+&\big(Y^{d-1}Z+\sum_{i\in S_2^{d,X}\,\,{m,(a,b)}}\beta_{di}Y^iZ^{d-i}\big)=0.
\end{eqnarray*}
That proves the case $4.2$.
\par Up to a permutation of $Y$ and $Z$, we can assume that $Y^{d-1}Z$ and $YZ^{d-1}$ are not in $L_{d,X}.$ By the non-singularity, $Z^{d-1}$ and $Y^{d-1}$ should be in $L_{d-1,X}$ consequently, $(d-1)b\equiv0\,(mod\,m)$ and $(d-1)a\equiv0\,(mod\,m).$
Therefore, $m|(d-1)$ and we get type $m,\,(a,b)$ of (4.3) defined by the equation
\begin{eqnarray*}
X^d&+&\sum_{j=2}^{d-2}\,\,\big(X^{d-j}\sum_{i\in S(2)^{j,X}_{m,(a,b)}}\beta_{ji}Y^iZ^{j-i}\big)+\sum_{i\in S_2^{d,X}\,\,{m,\,(a,b)}}\beta_{di}Y^iZ^{d-i}+\\
&+&X\big(Z^{d-1}+\alpha Y^{d-1}+\sum_{i\in S_2^{d-1,X}\,\,{m,\,(a,b)}}\beta_{(d-1)i}Y^iZ^{d-1-i}\big)=0,
\end{eqnarray*}
\item If one reference points lie in the $C$, then by normalizing the matrix $\varphi$ and
permuting the coordinates, we can assume that $(1;0;0),\,(0;1;0)\notin C.$ We then write
\[C: X^d+Y^d+X^{d-2}L_{2,X}+X^(d-3)L_{3,X}+...+XL_{d-1,X}+L_{d,X}=0,\]
such that $Z^d\notin L_{d,X}.$ Also, by the non-singularity, we have that $Z^{d-1}\in L_{d-1,X},$ then $da\equiv0\,(mod\,m)$ and
$(d-1)b\equiv0\,(mod\,m).$ Hence, $m|\,d(d-1)$ define $L_m$ to be $\{(a,b)\in\Gamma:\,\,\,da\equiv0\,(mod\,m),\,\, (d-1)b\equiv0\,(mod\,m)\}.$
Consequently, for each element in this set we obtain type $m,\,(a,b)$ of the form
\begin{eqnarray*}
C:\,\,X^d&+&Y^d+\sum_{j=2}^{d-2}\,\,\big(X^{d-j}\sum_{i\in S(2)^{j,X}_{m,(a,b)}}\beta_{ji}Y^iZ^{j-i}\big)+\sum_{i\in S_1^{d,X}\,\,{m,\,(a,b)}}\beta_{di}Y^iZ^{d-i}+\\
&+&X\big(\alpha Z^{d-1}+\sum_{i\in S_1^{d-1,X}\,\,{m,\,(a,b)}}\beta_{(d-1)i}Y^iZ^{d-1-i}\big)=0
\end{eqnarray*}
which produces $(5)$

\item If none of the reference points lie in the smooth plane curve $C$, then
\[C: X^d+Y^d+Z^d+\big(\sum_{j=2}^{d-1}X^{d-j}L_{j,X}\big)+L_{d,X}=0,\]
where $L_{1,X}$ does not appear since $ab\neq0.$ Clearly,
$da=db=0\,(mod\,m)$, from which $m|d$ for each $m$, and each
$(a,b)\in
L_m\,\,\big(:=\{(a,b)\in\Gamma_m:\,\,\,da\equiv0\,(mod\,m),\,\,
db\equiv0\,(mod\,m)\}\big)$, we obtain type $m,\,(a,b)$ of the
following form
\[
C:\,\,X^d+Y^d+Z^d+\sum_{j=2}^{d-1}\,\,\big(X^{d-j}\sum_{i\in S(2)^{j,X}_{m,(a,b)}}\beta_{ji}Y^iZ^{j-i}\big)
+\sum_{i\in S_1^{d,X}\,\,{m,\,(a,b)}}\beta_{di}Y^iZ^{d-i}=0.
\]
This completes the proof of the main result.
\end{itemize}
\end{proof}

\begin{cor}
Let $G$ be a cyclic subgroup of $Aut(C)$ where $C$ is a non-singular
plane curve of degree $d\geq4$. Then, $|G|$ must divide one of the
following
$d-1,\,\,d,\,\,d^2-3d+3,\,\,(d-1)^2,\,\,d(d-2),\,\,d(d-1)$. In
particular, $|G|\leq d(d-1)$.
\end{cor}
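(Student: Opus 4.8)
The plan is to read this off directly from Theorem \ref{thm20}. Let $G\leq Aut(C)$ be a non-trivial cyclic subgroup of order $m$. Choosing a generator and normalizing it to the Jordan form $[X;\xi_m^aY;\xi_m^bZ]$ as in the discussion preceding Theorem \ref{thm20}, the pair $(C,G)$ falls under one of the six cases $(1)$--$(6)$ of that theorem, the split being governed by whether $a=0$ and, when $a\neq0$, by how many of the reference points $(1:0:0),(0:1:0),(0:0:1)$ lie on $C$. Since that case analysis is exhaustive, every cyclic subgroup is accounted for.

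First I would extract the arithmetic constraint on $m$ imposed in each case: cases $(1)$ and $(4.3)$ give $m\,|\,(d-1)$; cases $(2)$ and $(6)$ give $m\,|\,d$; case $(3)$ gives $m\,|\,(d^2-3d+3)$; case $(4.1)$ gives $m\,|\,d(d-2)$; case $(4.2)$ gives $m\,|\,(d-1)^2$; and case $(5)$ gives $m\,|\,d(d-1)$. Consolidating (the repeated divisors $d-1$ and $d$ are harmless), $m$ divides at least one of $d-1$, $d$, $d^2-3d+3$, $(d-1)^2$, $d(d-2)$, $d(d-1)$, which is the first assertion.

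For the bound $|G|\leq d(d-1)$, it then suffices to observe that $d(d-1)$ dominates every entry of the list once $d\geq4$. A one-line comparison gives $(d^2-3d+3)-d(d-1)=-(2d-3)<0$, $(d-1)^2-d(d-1)=-(d-1)<0$, and $d(d-2)-d(d-1)=-d<0$, while $d-1<d<d(d-1)$ is clear. Hence each candidate is at most $d(d-1)$, and since $m$ divides one of them, $m\leq d(d-1)$.

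There is no real obstacle here: the entire content sits in Theorem \ref{thm20}, and this corollary is merely a bookkeeping consolidation of its six divisibility conclusions followed by an elementary numerical comparison. The only point deserving a second glance is the exhaustiveness of the six configurations, but this is already secured by the dichotomy $a=0$ versus $a\neq0$ together with the reference-point count carried out in the proof of Theorem \ref{thm20}.
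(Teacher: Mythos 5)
Your proof is correct and is exactly the argument the paper intends: the corollary is stated immediately after Theorem \ref{thm20} with no separate proof precisely because it is the bookkeeping consolidation you describe, namely collecting the divisibility conditions $m\mid d-1$, $d$, $d^2-3d+3$, $(d-1)^2$, $d(d-2)$, $d(d-1)$ from the six cases and noting that $d(d-1)$ dominates the list for $d\geq 4$. Nothing is missing.
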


%%%%%%%%%%%%%%%%%%%%%%%%%%%%%%%%%%%%%%%%%%%%%%%%%%%%%%%%%%%%%%%%%%%%%%%%%
%%%%%%%%%%%%%%%%%%%%%%%%%%%%%%%%%%%%%%%%%%%%%%%%%%%%%%%%%%%%%%%%%%%%%%%%%
%%%%%%%%%%%%%%%%%%%%%%%%%%%%%%%%%%%%%%%%%%%%%%%%%%%%%%%%%%%%%%%%%%%%%%%%

\section{Characterization of curves where $Aut(C)$ has a cyclic
subgroup of large order}

We study next non-singular plane curves $C$ (up to $K$-isomorphism)
that admits a $\sigma\in Aut(C)$ of order
$d^2-3d+3,\,\,(d-1)^2,\,\,d(d-2),$ $\,d(d-1)$, $\ell(d-1),$ or $\ell
d,$ with $\ell\geq 2$. In particular we are interested in determine
the full automorphism group of the curve and equations of such
curves.

Before a detailed study we recall the following results concerning
$Aut(C)$ which will be useful. Because linear systems $g_2$ are
unique, we always assume $C$ is given by a plane equation
$F(X,Y,Z)=0$ and $Aut(C)$ is a finite subgroup of $PGL_3(K)$ which
fix the equation $F$. Moreover $Aut(C)$ satisfies (see Mitchel
\cite{Mit}) one of the following situations:
\begin{enumerate}
\item fixes a point $P$ and a line $L$ with $P\notin L$ in
$PGL_3(K)$,
\item fixes a triangle, i.e. exists 3 points $S:=\{P_1,P_2,P_3\}$ of
$PGL_3(K)$, such that is fixed as a set,
\item $Aut(C)$ is conjugate of a representation inside $PGL_3(K)$ of
one of the finite primitive group namely, the Klein group
$PSL(2,7)$, the icosahedral group $A_5$, the alternating group
$A_6$, the Hessian groups $Hess_{216}$, $Hess_{72}$ or $Hess_{36}$.
\end{enumerate}

It is classically known that if $G$ a subgroup of automorphisms of a
non-singular plane curve $C$ fixes a point on $C$ then $G$ is cyclic
\cite[Lemma 11.44]{Book}, and recently Harui in \cite[\S2]{Harui}
provided the lacked result in the literature on the type of groups
that could appear in non-singular plane curves. Before introduce
Harui statement we need to define descendent of a plane non-singular
curve. For a non-zero monomial $cX^iY^jZ^k$, $c\in K\setminus\{0\}$,
we define its exponent as $max\{i,j,k\}$. For a homogeneous
polynomial $F$, the core of $F$ is defined as the sum of all terms
of $F$ with the greatest exponent. Let $C_0$ be a smooth plane
curve, a pair $(C,G)$ with $G\leq Aut(C)$ is said to be a descendant
of $C_0$ if $C$ is defined by a homogeneous polynomial whose core is
a defining polynomial of $C_0$ and $G$ acts on $C_0$ under a
suitable coordinate system.

\begin{thm}[Harui] If
$G\preceq\,\,Aut(C)$ where $C$ is a non-singular plane curve of
degree $d\geq4$ then $G$ satisfies one of the following
\begin{enumerate}
  \item $G$ fixes a point on $C$ and then cyclic.
  \item $G$ fixes a point not lying on $C$ and it satisfies a short exact sequence of the form
  $$1\rightarrow N\rightarrow Aut(C)\rightarrow G'\rightarrow 1,$$
with $N$ a cyclic group of order dividing $d$ and $G'$ is conjugate
to a cyclic group $C_m$,  a Diedral group $D_{2m}$, the alternating
groups $A_4$ , $A_5$ or the permutation group $S_4$, where $m$ is an
integer $\leq d-1$. Moreover, if $G'\cong D_{2m}$, then $m|(d-2)$ or
$N$ is trivial.
\item $G$ is conjugate to a subgroup of $Aut(F_d)$ where $F_d$ is the Fermat curve $X^d+Y^d+Z^d$. In particular, $|G|\,|\,6d^2$ and $(G,C)$
is a descendant of $F_d$.
\item $G$ is conjugate to a subgroup of $Aut(K_d)$ where $K_d$ is the Klein curve curve $XY^{d-1}+YZ^{d-1}+ZX^{d-1}$
hence $|G|\,|\,3(d^2-3d+3)$ and $(G,C)$ is a descendant of $K_d$.
\item $G$ is conjugate a finite primitive subgroup of $PGL_3(K)$,
i.e. the Klein group $PSL(2,7)$, the icosahedral group $A_5$, the alternating group $A_6$, the Hessian groups
$Hess_{216}$, $Hess_{72}$ or $Hess_{36}$ inside $PGL_3(K)$.

\end{enumerate}
where
\end{thm}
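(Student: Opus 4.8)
The plan is to apply the trichotomy of Mitchell for finite subgroups of $PGL_3(K)$ directly to $G$ itself, which is legitimate since $G$ is a finite subgroup of $PGL_3(K)$, being contained in the finite group $Aut(C)$. Thus either $G$ is conjugate to one of the primitive finite subgroups, or $G$ fixes a point $P$ and a line $L$ with $P\notin L$, or $G$ fixes a triangle as a set while fixing neither a point nor a line. The primitive alternative gives statement $(5)$ at once, because Mitchell's list of primitive finite subgroups of $PGL_3(K)$ consists precisely of $PSL(2,7)$, $A_5$, $A_6$ and the three Hessian groups $Hess_{36},Hess_{72},Hess_{216}$. The real work is therefore concentrated in the two imprimitive alternatives.

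First I would treat the case where $G$ fixes a point $P$ and a line $L$ with $P\notin L$. If $P$ lies on $C$, then $G$ stabilizes a point of $C$ and hence is cyclic by \cite[Lemma 11.44]{Book}, which is statement $(1)$. If $P\notin C$, choose coordinates with $P=(0:0:1)$ and $L=\{Z=0\}$ and restrict the action of $G$ to $L\cong\mathbb{P}^1$. The kernel $N$ of the restriction map $G\to PGL_2(K)$ consists of the homologies with center $P$ and axis $L$, hence is cyclic. Writing $F=\sum_{i=0}^{d}F_i(X,Y)Z^{d-i}$, the hypothesis $P\notin C$ forces $F_0\neq0$, and invariance under a homology $\mathrm{diag}(1,1,\zeta)$ gives $\zeta^{i}=1$ for every $i$ with $F_i\neq0$; non-singularity guarantees $F_d\neq0$, so $\zeta^{d}=1$ and $|N|\mid d$. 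The image $G'=G/N$ is a finite subgroup of $PGL_2(K)$, hence conjugate to $C_m$, $D_{2m}$, $A_4$, $S_4$ or $A_5$, which produces the exact sequence of $(2)$. The bound $m\leq d-1$ and the dihedral divisibility $m\mid(d-2)$ (unless $N$ is trivial) I would extract from a monomial-by-monomial analysis of the $G$-invariance of $F$, together with the constraint that $F$ has degree $\geq d-1$ in each variable, in the same spirit as the computations carried out in the proof of Theorem \ref{thm20}.

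Next I would treat the triangle case, where $G$ fixes $\{P_1,P_2,P_3\}$ as a set but fixes no point and no line. Taking the triangle as the coordinate triangle, the subgroup $G_0\leq G$ fixing each vertex consists of diagonal transformations and is abelian, while $G/G_0$ embeds in $S_3$ via the permutation action on the vertices. Since no vertex and no opposite side is fixed by all of $G$, the image in $S_3$ must contain a $3$-cycle, so it is $\mathbb{Z}/3$ or $S_3$. The decisive step is to show that the core of $F$ — a $G$-invariant form of some common exponent whose zero locus $C_0$ is a smooth plane curve — is, after rescaling the coordinates, either the Fermat form $X^d+Y^d+Z^d$ or the Klein form $X^{d-1}Y+Y^{d-1}Z+Z^{d-1}X$. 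Invariance under $G_0$ and under a cyclic permutation of the coordinates sharply restricts which monomials of a fixed exponent may occur, and smoothness of $C_0$ rules out everything except these two, corresponding to exponents $d$ and $d-1$. Once this is done, $(C,G)$ is by definition a descendant of $F_d$, giving $(3)$ with $G$ conjugate into $Aut(F_d)$ and $|G|\mid|Aut(F_d)|=6d^2$, or of $K_d$, giving $(4)$ with $G$ conjugate into $Aut(K_d)$ and $|G|\mid|Aut(K_d)|=3(d^2-3d+3)$.

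I expect the main obstacle to be precisely this reduction of the core to the Fermat or Klein form in the triangle case: one must rule out every $G$-invariant smooth core of intermediate exponent, which is where the combinatorics of the admissible monomials under the combined diagonal-and-permutation symmetry has to be combined carefully with the Jacobian smoothness criterion for $C_0$. By contrast, the point-and-line case is comparatively routine once the homology computation pinning $|N|\mid d$ is in place, and the primitive case is immediate from Mitchell's classification.
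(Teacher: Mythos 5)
This theorem is stated in the paper as a result of Harui and is imported from \cite{Harui} without proof, so there is no internal argument to compare against; your proposal has to be judged as a reconstruction of Harui's own proof, and in outline it follows the same route: Mitchell's trichotomy for finite subgroups of $PGL_3(K)$ applied to $G$, the primitive case read off from \cite{Mit}, the point-and-line case split according to whether the fixed point lies on $C$, and the triangle case reduced to descendants of the Fermat and Klein curves. Your treatment of the kernel $N$ is correct and complete as stated: $N$ is the group of homologies with center $P$ and axis $L$, hence cyclic, and writing $F=\sum_i F_i(X,Y)Z^{d-i}$ the constant $F_0$ is nonzero because $P\notin C$, while $F_d\neq 0$ because otherwise $Z$ divides $F$ and $C$ is reducible, hence singular; invariance then forces $\zeta^d=1$ and $|N|\mid d$.

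The two places you defer are, however, exactly where the content of the theorem lives, and as written they remain genuine gaps rather than routine verifications. First, in the triangle case you assert that the core of $F$ defines a \emph{smooth} plane curve $C_0$; this is a nontrivial lemma of Harui (it is what makes the notion of descendant usable), and without it the subsequent classification of invariant cores does not get started. You must also rule out $S_3$-invariant smooth cores other than the Fermat and Klein forms (for instance forms of exponent $d-1$ such as $X^{d-1}(Y+Z)+Y^{d-1}(Z+X)+Z^{d-1}(X+Y)$, or cores of lower exponent), which requires the combined Jacobian-plus-monomial analysis you only name. Second, the quantitative claims in case $(2)$ — the bound $m\leq d-1$ on the cyclic quotient and the condition $m\mid(d-2)$ when $G'\cong D_{2m}$ and $N$ is nontrivial — do not follow from a generic ``monomial-by-monomial analysis'' remark; in \cite{Harui} they come from a careful study of the induced action of $G'$ on $L\cong\mathbb{P}^1$ and on the divisor cut out on $L$, not merely from the degree-$\geq d-1$ constraint. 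So the proposal is a faithful skeleton of the correct argument, but both hard steps would need to be supplied (or explicitly cited from \cite{Harui}) before it counts as a proof.
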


Next assume as usual $C$ a non-singular plane curve of degree $d\geq
4$ with $\sigma\in Aut(C)$ of exact order $m$ such that acts on
$F(X,Y,Z)=0$ by $(x,y,z)\mapsto (x,\xi_m^a y,\xi_m^bz)$.

%\begin{rem}
%The result of Harui is stated over the complex field $\mathbb{C}$ but it is to be noted that in positive characteristic $p$ if the order of $Aut(C)$ is coprime with $p$ then all the techniques of Hauri \cite{Harui} can be applied: Hurwitz
%bound \cite{Hurwitz}, Oiakawa and Arakawa inequalities \cite{Arak, Oiakawa} and so on. In particular, the arguments of
%the previous result.
%
%\end{rem}
%
%\begin{lem}
%If a group $G$ of automorphisms of a plane curve $C$ fixes a point
%on $C$, then it is cyclic.\footnote{{\bf F:} Is it true the
%converse? Need to think on it in the next situations.}
%\end{lem}

%\subsubsection{Curves with automorphisms of orders $d(d-1),\,d(d-2),\,(d-1)^2$ or $d^2-3d+3$}
\subsection{Curves with a cyclic automorphism of order $d(d-1)$.}
\mbox{} \\
The following result appears in Harui \cite[\S3]{Harui}.
\begin{prop}[Harui]\label{prop111}
A non-singular projective plane curve $C$ of degree $d\geq 5$ with
$Aut(C)$ cyclic group of order $d(d-1)$ if and only if it is
projectively equivalent to $X^d+Y^d+XZ^{d-1}$.
\end{prop}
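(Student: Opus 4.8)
The plan is to prove both implications, with the reverse one carrying the real work.

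\textbf{Forward direction} ($Aut(C)\cong \Z/d(d-1)\Z \Rightarrow C$ is projectively equivalent to $X^d+Y^d+XZ^{d-1}$). Let $\sigma$ generate $Aut(C)$ and, after a change of variables, write it in diagonal form $(X:\xi_m^aY:\xi_m^bZ)$ with $m=d(d-1)$. Since $Aut(C)$ is itself cyclic of this order, the maximal cyclic subgroup has order exactly $d(d-1)$, and inspecting the divisibility conditions in Theorem~\ref{thm20} shows that $m=d(d-1)$ can occur only in case~(5), because $d(d-1)$ divides none of $d^2-3d+3$, $d(d-2)$, $(d-1)^2$, $d$, $d-1$ for $d\ge 5$. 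Thus $C$ has the shape displayed in case~(5) with $da\equiv 0$ and $(d-1)b\equiv 0 \pmod{d(d-1)}$; I would take the representative generator $(a,b)=(d-1,d)$ (any generator of $\langle\sigma\rangle$ yields a $K$-equivalent equation, exactly as in the proof of Theorem~\ref{thm20}). The one remaining step is to show that the parameter index-sets $S(2)^{j,X}_{m,(a,b)}$, $S_1^{d,X}\,{m,(a,b)}$ and $S_1^{d-1,X}\,{m,(a,b)}$ are empty: reducing the defining congruences modulo $d$ forces $i\equiv 0\pmod d$ inside index ranges that contain no admissible value, so every $\beta$-term drops out and the equation collapses to $X^d+Y^d+\alpha XZ^{d-1}$ with $\alpha\neq 0$, which a diagonal rescaling of $Z$ normalises to $X^d+Y^d+XZ^{d-1}$.

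\textbf{Reverse direction} (the named curve has automorphism group cyclic of order $d(d-1)$). First I would verify that $F=X^d+Y^d+XZ^{d-1}$ is smooth, since the partials $dX^{d-1}+Z^{d-1}$, $dY^{d-1}$, $(d-1)XZ^{d-2}$ have no common projective zero, and that $\sigma:=\mathrm{diag}(1,\xi_m^{d-1},\xi_m^{d})$ with $m=d(d-1)$ preserves $F$ and has order exactly $d(d-1)$, because $\xi_m^{d-1}$ has order $d$, $\xi_m^{d}$ has order $d-1$, and $\gcd(d,d-1)=1$. By the corollary to Theorem~\ref{thm20} (every cyclic subgroup of $Aut(C)$ has order at most $d(d-1)$) it suffices to prove $Aut(C)$ is cyclic, for then $Aut(C)=\langle\sigma\rangle$. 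I would exclude the remaining Harui possibilities using $\langle\sigma\rangle\le Aut(C)$: a primitive group contains no cyclic subgroup of order $d(d-1)\ge 20$; a Fermat descendant would require $d(d-1)\mid 6d^2$, i.e. $(d-1)\mid 6$, leaving only $d=7$, which is ruled out because $Aut(F_7)$ has maximal element order $21<42$; and a Klein descendant would require $d(d-1)\mid 3(d^2-3d+3)$, which fails for all $d\ge 5$.

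Having discarded the primitive and descendant cases, the Mitchell–Harui dichotomy leaves only the case where $Aut(C)$ fixes a line and a point off it. As $\sigma$ is diagonal with three distinct eigenvalues it is not a homology, so Lemma~\ref{lem2} applies and every automorphism has one of the forms $[X;vY+wZ;sY+tZ]$, $[vX+wZ;Y;sX+tZ]$, $[vX+wY;sX+tY;Z]$. Substituting each into $F$ and matching coefficients—the absence of a $Z^d$ monomial forces $w=0$, and the absence of the mixed monomials $XY^iZ^{d-1-i}$, $X^{i+1}Z^{d-1-i}$ or $YZ^{d-1}$ forces $s=0$—shows every automorphism is diagonal, whereupon invariance yields precisely $\{[X;vY;tZ]:v^d=1,\ t^{d-1}=1\}\cong \mu_d\times\mu_{d-1}\cong \Z/d(d-1)\Z=\langle\sigma\rangle$.

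\textbf{Main obstacle.} The delicate points all lie in the reverse direction, in certifying that no larger group occurs. The divisibility exclusions must be supplemented by the genuine group-theoretic check at $d=7$ (that $Aut(F_7)$ carries no element of order $42$), and the coefficient-matching after Lemma~\ref{lem2} must be arranged so each of the three admissible forms collapses onto the diagonal torus; the form $[vX+wZ;Y;sX+tZ]$ is the fussiest, since the cross term $vX(sX+tZ)^{d-1}$ feeds back into the $X^d$ coefficient and one must invoke invertibility ($t\neq 0$) to conclude $s=0$.
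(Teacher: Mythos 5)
Your overall architecture is sound, and your forward direction reproduces, essentially verbatim, the computation the paper itself carries out in the proof of Proposition \ref{prop11} (reduction to case (5) of Theorem \ref{thm20} and emptiness of the index sets). Note, though, that the paper does not actually prove Proposition \ref{prop111}: it cites Harui for it, and uses it as a black box to conclude cyclicity in Proposition \ref{prop11}. So your reverse direction is genuinely new relative to the text, and that is where the problem lies.

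The gap is in the coefficient matching for the form $\tau_2=[vX+wZ;Y;sX+tZ]$. You assert that ``the absence of a $Z^d$ monomial forces $w=0$,'' but for this form the coefficient of $Z^d$ in $F\circ\tau_2=(vX+wZ)^d+Y^d+(vX+wZ)(sX+tZ)^{d-1}$ is $w^d+wt^{d-1}=w\bigl(w^{d-1}+t^{d-1}\bigr)$, not $w^d$. Its vanishing therefore allows $w=\zeta t$ with $\zeta^{d-1}=-1$ and $t\neq 0$, i.e.\ an automorphism moving the point $(0:0:1)$ to one of the other $d-1$ points of $C\cap\{Y=0\}$; your subsequent derivation of $s=0$ (which does work cleanly once $w=0$ is known) never gets off the ground in that branch, and the ``fussiness'' you flag about the $X^d$ coefficient is not the real issue. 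Concretely, writing $F=Y^d+X(X^{d-1}+Z^{d-1})$, an automorphism of type $\tau_2$ is exactly an element of $PGL_2(K)$ permuting the $d$ simple roots of $X(X^{d-1}+Z^{d-1})$ on the line $Y=0$, and you must show this stabilizer is no larger than the diagonal $\Z/(d-1)$. Two standard repairs: (a) observe that $\sigma^d=[X;Y;\xi_{d-1}Z]$ is a homology of order $d-1\geq 4$ with center $(0:0:1)\in C$, so by \cite[Lemma 3.7]{Harui} that center is an inner Galois point, unique by Yoshihara, hence fixed by all of $Aut(C)$, which forces $Aut(C)$ cyclic by \cite[Lemma 11.44]{Book} --- this is precisely the route the paper takes for the closely analogous types $d-1,(0,1)$ and $\ell(d-1)$; or (b) use the classification of finite subgroups of $PGL_2(K)$: the stabilizer of the $d$-point set $\{(0:1)\}\cup\{(\zeta:1):\zeta^{d-1}=-1\}$ contains $\Z/(d-1)$ with $d-1\geq 4$, any dihedral or cyclic overgroup would have to swap or move the poles $(0:1),(1:0)$ incompatibly with the set, and $S_4$, $A_5$ have no invariant sets of size $5$ or $6$. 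A minor slip elsewhere: the maximal element order in $Aut(F_7)$ is $2d=14$ (three-cycle-type elements cube to scalars), not $21$; your conclusion $14<42$ is unaffected.
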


%
%In what follows, we prove a more general result.

\begin{prop}\label{prop11}
Let $C$ be a non-singular projective plane curve of degree $d\geq
4$. Then, $Aut(C)$ contains an element of order $d(d-1)$ if and only
if $C$ is projectively equivalent to $\,X^d+Y^d+\alpha XZ^{d-1}$
where $\alpha\neq0$. In particular, $Aut(C)$ is cyclic of order
$d(d-1)$ for $d\geq 5$.
\end{prop}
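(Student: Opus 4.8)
The plan is to prove the two implications separately and then read off the ``in particular'' clause from Harui's Proposition~\ref{prop111}. The reverse implication is the easy one: starting from $C:\,X^d+Y^d+\alpha XZ^{d-1}=0$ with $\alpha\neq0$, I would simply exhibit an explicit automorphism of order $d(d-1)$. Setting $m:=d(d-1)$ and letting $\xi:=\xi_m$ be a primitive $m$-th root of unity, the diagonal map $\sigma:=diag(1,\xi^{d-1},\xi^{d})$ multiplies the three monomials $X^d$, $Y^d$, $XZ^{d-1}$ by $1$, by $\xi^{(d-1)d}=\xi^m=1$, and by $\xi^{d(d-1)}=\xi^m=1$ respectively, so it preserves the equation; and since $\gcd(d-1,d)=1$ one has $\gcd(d-1,d,m)=1$, whence $\sigma$ has order exactly $m=d(d-1)$. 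The value of $\alpha$ plays no role here, and a diagonal rescaling shows that all nonzero $\alpha$ give projectively equivalent curves.

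For the forward implication, suppose $\sigma\in Aut(C)$ has order $m=d(d-1)$. Being of finite order, $\sigma$ is diagonalizable, so after a change of coordinates $\sigma=diag(1,\xi_m^a,\xi_m^b)$ in the canonical form of Theorem~\ref{thm20}. Since $d(d-1)$ is the maximal possible order of a cyclic subgroup (the corollary following Theorem~\ref{thm20}), and since $d(d-1)$ is strictly larger than each of $d-1$, $d$, $d^2-3d+3$, $(d-1)^2$ and $d(d-2)$ for $d\geq4$, a positive integer equal to $d(d-1)$ cannot divide any of the others, so the order $m=d(d-1)$ can occur only in case~(5) of Theorem~\ref{thm20}. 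Thus exactly one reference point lies on $C$, normalized to $(0:0:1)$ so that $X^d$ and $Y^d$ appear but $Z^d$ does not, we have $m=d(d-1)$, and the weights satisfy $da\equiv0$ and $(d-1)b\equiv0\pmod{m}$; equivalently $a=(d-1)a_0$ and $b=db_0$ for integers $a_0,b_0$.

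The key step is then to use the two homologies hidden inside $\langle\sigma\rangle$. From $a=(d-1)a_0$ and $b=db_0$ one computes $\sigma^{d-1}=diag(1,\xi_m^{a(d-1)},1)$ and $\sigma^{d}=diag(1,1,\xi_m^{bd})$, since $b(d-1)=b_0 m\equiv0$ and $ad=a_0 m\equiv0\pmod m$; these are homologies of orders $m/(d-1)=d$ and $m/d=d-1$ acting only on the $Y$- and $Z$-coordinate respectively. Invariance of $F$ under $\sigma^{d-1}$ forces every monomial $X^{e_1}Y^{e_2}Z^{e_3}$ to satisfy $d\mid e_2$, after normalizing the scalar by the term $X^d$, hence $e_2\in\{0,d\}$; invariance under $\sigma^{d}$ forces $(d-1)\mid e_3$, hence $e_3\in\{0,d-1\}$. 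The only degree-$d$ monomials meeting both constraints are $X^d$, $Y^d$ and $XZ^{d-1}$, so $F=c_1X^d+c_2Y^d+c_3XZ^{d-1}$. Non-singularity rules out the vanishing of any $c_i$, for instance $c_1=0$ would make $(1:0:0)$ a singular point, and a diagonal rescaling normalizes $c_1=c_2=1$, giving $C\cong X^d+Y^d+\alpha XZ^{d-1}$ with $\alpha=c_3\neq0$. Finally, for $d\geq5$ the ``in particular'' follows at once: having produced the model $X^d+Y^d+XZ^{d-1}$ (the case $\alpha=1$), Harui's Proposition~\ref{prop111} identifies its full automorphism group as cyclic of order $d(d-1)$.

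I expect the main obstacle to be the bookkeeping that isolates case~(5): one must be certain that no other case of Theorem~\ref{thm20} can realize the order $d(d-1)$, which rests on the numerical comparisons above together with the sharpness of the maximal-order corollary. Once inside case~(5), the homology argument is short and robust; the only points requiring care are the non-singularity step ensuring all three coefficients are nonzero, and the observation that the parameter $\alpha$ can be scaled away so that the backward direction of Proposition~\ref{prop111} applies verbatim.
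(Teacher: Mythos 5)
Your proposal is correct, and its skeleton coincides with the paper's: the reverse direction is the same explicit diagonal automorphism $[X;\xi_{d(d-1)}^{d-1}Y;\xi_{d(d-1)}^{d}Z]$, and the forward direction likewise begins by observing that $d(d-1)$ exceeds (hence cannot divide) $d-1$, $d$, $d^2-3d+3$, $(d-1)^2$ and $d(d-2)$, which forces case (5) of Theorem \ref{thm20} with $da\equiv 0$ and $(d-1)b\equiv 0 \pmod{d(d-1)}$. Where you diverge is in killing the lower-order terms: the paper fixes the generator $(a,b)=(d-1,d)$ and verifies by direct modular estimates that the index sets $S(2)^{j,X}_{m,(a,b)}$, $S_1^{d-1,X}\,m,(a,b)$ and $S_1^{d,X}\,m,(a,b)$ are all empty, whereas you pass to the powers $\sigma^{d-1}=diag(1,\xi_m^{a(d-1)},1)$ and $\sigma^{d}=diag(1,1,\xi_m^{bd})$, which are homologies of orders $d$ and $d-1$, and deduce that every surviving monomial $X^{e_1}Y^{e_2}Z^{e_3}$ must have $d\mid e_2$ and $(d-1)\mid e_3$ (the normalization $\lambda=1$ being supplied by the presence of $X^d$). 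The two arguments encode the same arithmetic, but yours is more structural — it exploits the subgroup lattice of $\langle\sigma\rangle$ rather than the notation-heavy set computations — and it has the small advantage of not depending on first identifying a generator of the solution set $(a,b)$. Your non-singularity check that all three coefficients are nonzero is also slightly more explicit than the paper's, which leaves this implicit in the normal form. Both routes are sound.
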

\begin{rem}\label{rem3.2} Recall that for $d=4$ the automorphism group of $\,\,X^d+Y^d+\alpha
XZ^{d-1}$ is $C_4\circledcirc A_4$ the element of $Ext^1(A_4,C_4)$
given by $\{(\delta,g)\in \mu_{12}\times H:\delta^4=\chi(g)\}/{\pm
1},$ where $\mu_n$ is the set of n-th roots of unity, $H$ is the
group $A_4$ and let take $S,T$ a generators of $H$ of order 2 and 3
respectively with the representation $H=<S,T|S^2=1,T^3=1,...>$ and
$\chi$ is the character $\chi:H\rightarrow \mu_3$ defined by
$\chi(S)=1$ and $\chi(T)=\rho$ with $\rho$ a fixed 3-primitive root
of unity, see \cite{He} (or also \cite{Bars}).
\end{rem}

\begin{proof}
$(\leftarrow)$ Indeed, $[X;\zeta_{d(d-1)}^{d-1}Y;\zeta_{d(d-1)}^{d}Z]$ is an element of $Aut(C)$ of order $d(d-1).$\\
$(\rightarrow)$ It is clear that $d(d-1)$ does not divide any of the
integers $d-1,\,\,d,\,\,d^2-3d+3,\,\,d(d-2),\,\,(d-1)^2$ therefore,
by Theorem \ref{thm20} $(5)$, $C$ is projectively equivalent to a
type $d(d-1),\,\,(a,b)$ of the following form
\begin{eqnarray*}
\,X^d&+&Y^d+\sum_{j=2}^{d-2}\,\,\big(X^{d-j}\sum_{i\in S(2)^{j,X}_{m,(a,b)}}\beta_{ji}Y^iZ^{j-i}\big)+\sum_{i\in S_1^{d,X}\,\,{m,\,(a,b)}}\beta_{di}Y^iZ^{d-i}+\\
&+&X\big(\alpha Z^{d-1}+\sum_{i\in S_1^{d-1,X}\,\,{m,\,(a,b)}}\beta_{(d-1)i}Y^iZ^{d-1-i}\big)=0
\end{eqnarray*}
where $(a,b)\in\Gamma_{d(d-1)}$ such that $da\equiv0\,\,mod\,d(d-1)$ and $(d-1)b\equiv0\,\,mod\,d(d-1)$.\\\\
Claim: $d(d-1),\,\,\,(d-1,d)$ is a generator of the set of solutions\\
Indeed, every solution is of the form $a=(d-1)k$ and $b=dk'$ and we have
\[
[X;\zeta_{d(d-1)}^{d-1}Y;\zeta_{d(d-1)}^{d}Z]^{d(k'-k)+k}=[X;\zeta_{d(d-1)}^{(d-1)k}Y;\zeta_{d(d-1)}^{dk'}Z]
\]
Claim: $S(2)^{j,X}_{m,(a,b)}=\phi$ for all $j=2,3,...,d-2$.\\
\begin{eqnarray*}
S(2)^{j,X}_{m,(a,b)}&:=&\{i:\,\,0\leq i\leq j\,\,\text{and}\,\,(d-1)i+(j-i)d=0\,mod\,\,d(d-1)\}\\
&=&\{i:\,\,0\leq i\leq j\,\,\text{and}\,\,d(d-1)\,\,|(dj-i)\}\\
&=&\phi\,\,\,\,\forall j=2,...,d-2.
\end{eqnarray*}
because $0<dj-i\leq d(d-2)<d(d-1)\,\,\forall j=2,...,d-2.$\\\\
Claim: $S_1^{d-1,X}\,\,{m, (a,b)}=\phi$.
\begin{eqnarray*}
S_1^{d-1,X}\,\,{m,(a,b)}&:=&\{i:\,\,1\leq i\leq d-1\,\,\text{and}\,\,(d-1)i+(d-1-i)d=0\,mod\,\,d(d-1)\}\\
&=&\{i:\,\,1\leq i\leq d-1\,\,\text{and}\,\,d(d-1)\,\,|i\}\\
&=&\phi.
\end{eqnarray*}
Claim: $S_1^{d,X}\,\,{m,(a,b)}=\phi.$
\begin{eqnarray*}
S_1^{d,X}\,\,{m,(a,b)}&:=&\{i:\,\,1\leq i\leq d-1\,\,\text{and}\,\,(d-1)i+(d-i)d=0\,mod\,\,d(d-1)\}\\
&=&\{i:\,\,1\leq i\leq d-1\,\,\text{and}\,\,d(d-1)\,\,|d-i\}\\
&=&\phi.
\end{eqnarray*}
because $0<d-i\leq d-1<d(d-1)$.\\
Consequently, $C$ is projectively equivalent to $X^d+Y^d+\alpha
XZ^{d-1}$ where $\alpha\neq0$. The last part for which $Aut(C)$ is
cyclic of order $d(d-1)$ is followed by Harui \ref{prop111}.
\end{proof}

\subsection{Plane curves with a cyclic automorphism of order $(d-1)^2$}
\begin{prop}\label{prop12}
The automorphism group of the non-singular projective plane curve of
degree $d\geq 5$ defined by \[C: \,\,X^d+Y^{d-1}Z+\alpha XZ^{d-1}\]
such that $\alpha\neq0$ is cyclic of order $(d-1)^2$.
\end{prop}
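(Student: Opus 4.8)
The plan is to locate a large-order diagonal automorphism together with a homology it generates, and then force the whole group to fix the center of that homology by invoking the theory of Galois points.

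First I would exhibit the order-$(d-1)^2$ element. Writing $\zeta:=\xi_{(d-1)^2}$ for a primitive $(d-1)^2$-th root of unity, the diagonal transformation $\sigma:=[X;\zeta^{d-2}Y;\zeta^{d-1}Z]$ multiplies the three monomials $X^d$, $Y^{d-1}Z$, $XZ^{d-1}$ by $1$, $\zeta^{(d-1)^2}=1$ and $\zeta^{(d-1)^2}=1$ respectively, hence preserves $C$; since $\gcd(d-2,(d-1)^2)=1$ one checks that $\sigma$ has order exactly $(d-1)^2$ in $PGL_3(K)$, so $(d-1)^2\mid |Aut(C)|$. The key observation is that the power $\sigma^{d-1}=[X;\zeta_{d-1}^{-1}Y;Z]$ is a \emph{homology} of order $d-1$ whose axis is the line $Y=0$ and whose center is the reference point $(0:1:0)$, and that $(0:1:0)$ lies on $C$ (all three monomials vanish there).

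Next, since $Aut(C)$ contains a homology of order $d-1\geq 4$ whose center $(0:1:0)$ lies on $C$, Harui \cite{Harui} [Lemma $3.7$] shows that $(0:1:0)$ is an inner Galois point of $C$, and by Yoshihara \cite{Yoshihara} $\S 2$ [Theorem $4$] a smooth plane curve of degree $d\geq 5$ has a unique inner Galois point. Because the set of inner Galois points is intrinsic to the curve, it is permuted by $Aut(C)$; being a singleton, $(0:1:0)$ is fixed by every automorphism. Consequently $Aut(C)=Aut(C)_{(0:1:0)}$, and a group of automorphisms of a smooth curve that fixes a point is cyclic by \cite[Lemma 11.44]{Book}; thus $Aut(C)$ is cyclic.

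Finally I would pin down the order. The group $Aut(C)$ is cyclic with $(d-1)^2\mid |Aut(C)|$, so by the Corollary to Theorem \ref{thm20} its order must divide one of $d-1$, $d$, $d^2-3d+3$, $(d-1)^2$, $d(d-2)$ or $d(d-1)$. Using $\gcd(d-1,d)=1$ and $d^2-3d+3\equiv -(d-2)\not\equiv 0 \pmod{(d-1)^2}$, a short check shows that $(d-1)^2$ divides none of these numbers except $(d-1)^2$ itself; hence $|Aut(C)|=(d-1)^2$ and $Aut(C)=\langle\sigma\rangle$. The only genuinely non-elementary inputs are the identification of $(0:1:0)$ as an inner Galois point and its uniqueness for $d\geq 5$: this is the step that truly uses the hypothesis $d\geq 5$ and that I expect to be the crux, whereas the monomial and order computations and the concluding divisibility argument are routine.
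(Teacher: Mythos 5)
Your proof is correct, and it establishes the crucial point---that $Aut(C)$ is cyclic---by a genuinely different route than the paper's own proof of this proposition. The paper also begins with the homology $[X;\zeta_{d-1}Y;Z]$ of period $d-1\geq 4$ and the order-$(d-1)^2$ diagonal element, but then runs Mitchell's trichotomy together with Harui's structure theorem: the triangle case is excluded because neither $Aut(F_d)$ nor $Aut(K_d)$ has an element of order $(d-1)^2$ (their elements have order at most $2d$, resp.\ $d^2-3d+3$), and the case of a fixed point off $C$ is excluded because in the extension $1\to N\to Aut(C)\to G'\to 1$ the order of $N$ divides $d$, hence is coprime to $(d-1)^2$, so $G'$ would need an element of order $(d-1)^2$, which none of $C_m$, $D_{2m}$, $A_4$, $S_4$, $A_5$ with $m\leq d-1$ possesses; the only surviving possibility is a fixed point on $C$, whence cyclicity by \cite[Lemma 11.44]{Book}. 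You instead note that the center $(0:1:0)$ of the homology $\sigma^{d-1}$ lies on $C$, identify it as an inner Galois point via \cite[Lemma 3.7]{Harui}, and use Yoshihara's uniqueness of the inner Galois point for $d\geq 5$ to force all of $Aut(C)$ to fix it. This is precisely the mechanism the authors themselves deploy for the type $d-1,(0,1)$ curves and in Remark \ref{rem33}, so it is entirely consistent with the paper's toolkit; it is shorter and dispenses with the case analysis, at the price of leaning on the Galois-point machinery (and on the hypothesis $d\geq 5$, which the paper's argument also uses, both for the period bound and for the final order count). Your concluding step---$(d-1)^2$ divides the order of the cyclic group $Aut(C)$, which must divide one of the six admissible numbers from Theorem \ref{thm20}, and only $(d-1)^2$ among them is a multiple of $(d-1)^2$---is equivalent to the paper's observation that $(d-1)^2\mid |Aut(C)|\leq d(d-1)<2(d-1)^2$.
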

\begin{rem}\label{rem4} The same result holds for $d=4$, see \cite{He} (or
\cite{Bars}).
\end{rem}

\begin{proof}
We have $Aut(C)$ contains a homology of order $d-1\geq 4$ namely, $[X;\zeta_{d-1}Y;Z]$ therefore (see Mitchell \cite{Mit} $\S 5$), $Aut(C)$ should fix a point, a line or a triangle. Moreover, $(d-1)^2\,\,|\,\,|Aut(C)|$ because the transformation $[X;\zeta_{(d-1)^2}Y;\zeta_{(d-1)^2}^{(d-1)(d-2)}Z]\in Aut(C)$ is of order $(d-1)^2$.
\begin{enumerate}
  \item If $Aut(C)$ fixes a triangle and neither a point nor a line is fixed then, it follows by Harui $\S 4$, that $C$ is either a descendent of the Fermat curve $F_d$ or the Klein curve $K_d$. But, non of them contains an element of order $(d-1)^2$ because $Aut(F_d)$ (resp.\,\,$Aut(K_d)$) has elements of order at most $2d$ (resp. $d^2-3d+3$) $<(d-1)^2$.
  \item Assume that $Aut(C)$ fixes a point not lying on $C$ and satisfies a short exact sequence of the form:
$$1\rightarrow N\rightarrow Aut(C)\rightarrow G'\rightarrow 1,$$
with $N$ a cyclic group of order dividing $d$ and $G'$ is conjugate to a cyclic group $C_m$,  a Diedral group $D_{2m}$, $A_4$ , $A_5$ or $S_4$, where $m$ is an integer $\leq d-1$. Moreover, if $G'\cong
D_{2m}$, then $m|(d-2)$ or $N$ is trivial. Since $|N|$ and $(d-1)^2$ are coprime, then $G'$ should contain an element of order $(d-1)^2$ a contradiction because each of these groups have elements of order at most $max\,\,\{5,d-1\}<(d-1)^2$.
\end{enumerate}
Consequently, $Aut(C)$ fixes a point on $C$, thus it is cyclic of order divisible by $(d-1)^2$ and $\leq d(d-1)$. Therefore, $|Aut(C)|=(d-1)^2$.

\end{proof}

As analogue of Proposition \ref{prop11} we prove the following result.

\begin{prop}\label{prop13}
A non-singular projective plane curve $C$ of degree $d\geq 4$ has an
automorphism of order $(d-1)^2$ if and only if it is isomorphic to
$\,\,X^d+Y^{d-1}Z+\alpha XZ^{d-1}$ such that $\alpha\neq0$. In
particular, $Aut(C)$ is cyclic of order $(d-1)^2$ moreover, if $G$
is a non-cyclic automorphism group with $(d-1)^2$ dividing $|G|$
then $G$ does not contain any element of this order.
\end{prop}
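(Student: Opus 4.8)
The plan is to dispose of the easy directions first. The reverse implication and the ``in particular'' clause are already furnished by Proposition~\ref{prop12} (with Remark~\ref{rem4} covering the boundary degree $d=4$): the curve $X^d+Y^{d-1}Z+\alpha XZ^{d-1}$ is non-singular for $\alpha\neq0$ and its full automorphism group is cyclic of order $(d-1)^2$, so in particular it contains an automorphism of that order. All the content therefore lies in the forward implication, namely that any non-singular plane curve $C$ of degree $d$ carrying some $\sigma\in Aut(C)$ of order $(d-1)^2$ is projectively equivalent to $X^d+Y^{d-1}Z+\alpha XZ^{d-1}$.

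To prove this I would feed $G=\langle\sigma\rangle$ into the classification of Theorem~\ref{thm20} and pin down which case applies. The integer $(d-1)^2$ appears verbatim as the divisibility condition of case $(4.2)$, and a direct size comparison shows it fails every other condition: $(d-1)^2-(d^2-3d+3)=d-2>0$ and $(d-1)^2-d(d-2)=1>0$ rule out cases $(3)$ and $(4.1)$, while $(d-1)^2\mid d(d-1)$ would force $(d-1)\mid d$, impossible since $d\geq4$, and clearly $(d-1)^2$ divides neither $d-1$ nor $d$. Hence $C$ is $K$-equivalent to the explicit family displayed in $(4.2)$, attached to a type $(d-1)^2,(a,b)$ with $(a,b)\in\Gamma_{(d-1)^2}$, $(d-1)a+b\equiv0$ and $(d-1)b\equiv0\pmod{(d-1)^2}$.

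Following the generator-and-reduction scheme of Proposition~\ref{prop11}, I would then single out the pair $(a,b)=\big(1,(d-1)(d-2)\big)$, observe $(d-1)(d-2)\equiv-(d-1)\pmod{(d-1)^2}$, and verify it satisfies the two congruences. The structural heart of the argument is that this pair generates the whole solution set: writing $b=s(d-1)$, the condition $(d-1)a+b\equiv0$ reads $a\equiv-s\pmod{d-1}$, and for the automorphism $\varphi$ of type $\big(1,(d-1)(d-2)\big)$ one computes that $\varphi^{a}$ has type $\big(a,\,-a(d-1)\big)\equiv(a,b)$ precisely because $-a\equiv s\pmod{d-1}$. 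Thus every admissible type is $K$-equivalent to $\big(1,(d-1)(d-2)\big)$, and it suffices to reduce the single equation of this type. Substituting $a=1$ and $b\equiv-(d-1)$ turns the three congruences defining $S(2)^{j,X}_{m,(a,b)}$, $S_1^{d-1,X}_{m,(a,b)}$ and $S_2^{d,X}_{m,(a,b)}$ into $di-(d-1)j\equiv0$, $di\equiv0$ and $d(i-(d-1))\equiv0\pmod{(d-1)^2}$ respectively; since $\gcd(d,(d-1)^2)=1$ and $i,j$ are confined to the small ranges $0\leq i\leq j\leq d-2$, $1\leq i\leq d-1$, $2\leq i\leq d-2$, each set is forced empty. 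The displayed family then collapses to exactly $X^d+Y^{d-1}Z+\alpha XZ^{d-1}$, and Proposition~\ref{prop12} supplies the cyclicity of $Aut(C)$. I expect this emptiness check, though routine modular bookkeeping, to be the main obstacle.

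Finally, the closing assertion follows formally. Suppose $G$ were a non-cyclic automorphism group of a non-singular plane curve of degree $d$ with $(d-1)^2\mid|G|$, and suppose $G$ contained an element of order $(d-1)^2$. By the forward implication just established the underlying curve is projectively equivalent to $X^d+Y^{d-1}Z+\alpha XZ^{d-1}$, whose full automorphism group is cyclic of order $(d-1)^2$ by Proposition~\ref{prop12}. But $G$ is then a subgroup of this cyclic group, hence cyclic, contradicting the hypothesis; so no such element can exist.
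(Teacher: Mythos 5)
Your proposal is correct and follows essentially the same route as the paper's proof: both directions reduce to Proposition \ref{prop12}, the forward implication pins down case (4.2) of Theorem \ref{thm20} by the same divisibility comparisons, the solution set is generated by the type $(d-1)^2,(1,(d-1)(d-2))$ via the same power-of-$\varphi$ computation, and the equation collapses to $X^d+Y^{d-1}Z+\alpha XZ^{d-1}$ by showing the same three index sets are empty. The only difference is presentational (your congruences are stated modulo $-(d-1)$ rather than $(d-1)(d-2)$, and the emptiness checks are sketched rather than written out), not mathematical.
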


\begin{proof}
It suffices to prove the first part because the other part follows directly by Proposition \ref{prop12} and the remark \ref{rem4}.\\
$(\leftarrow)$\,\, Clearly, $[X;\zeta_{(d-1)^2}Y;\zeta_{(d-1)^2}^{-(d-1)}Z]$ is an automorphism that has order $(d-1)^2$.\\
$(\rightarrow)$\,\, One can easily check that $(d-1)^2$ does not
divide any of the integers
$d-1,\,\,d,\,\,d^2-3d+3,\,\,d(d-2),\,\,d(d-1)$. Therefore, by
Theorem \ref{thm20}, $C$ is isomorphic to type $(d-1)^2, (a,b)$ of
the form
\begin{eqnarray*}
X^d&+&\sum_{j=2}^{d-2}\,\,X^{d-j}\big(\sum_{i\in S(2)^{j,X}_{(d-1)^2,(a,b)}}\beta_{ji}Y^iZ^{j-i}\big)+X\big(\alpha Z^{d-1}+\sum_{i\in S_1^{d-1,X}\,\,{(d-1)^2,(a,b)}}\beta_{(d-1)i}Y^iZ^{d-1-i}\big)+\\
&+&\big(Y^{d-1}Z+\sum_{i\in S_2^{d,X}\,\,{(d-1)^2,(a,b)}}\beta_{di}Y^iZ^{d-i}\big)=0
\end{eqnarray*}
for some $(a,b)\in\Gamma_{(d-1)^2}$ such that $(d-1)a+b\equiv0\, mod\,(d-1)^2$ and $(d-1)b\equiv0\, mod\,(d-1)^2$. Every solution is of the form $a=(d-1)k-k'$ and $b=(d-1)k'$ then we can take a generator $a=1$ and $b=(d-1)(d-2)$ because $[X;\zeta_{(d-1)^2}Y;\zeta_{(d-1)^2}^{(d-1)(d-2)}Z]^{(d-1)k-k'}=[X;\zeta_{(d-1)^2}^{(d-1)k-k'}Y;\zeta_{(d-1)^2}^{(d-1)k'}Z]$. Now,
\begin{eqnarray*}
S(2)^{j,X}_{m,(a,b)}&:=&\{i:\,\,0\leq i\leq j\,\,\text{and}\,\,i+(j-i)(d-1)(d-2)=0\,mod\,\,(d-1)^2\}\\
&=&\{i:\,\,0\leq i\leq j\,\,\text{and}\,\,(d-1)^2\,\,|i-(j-i)(d-1)\}\\
&=&\{i:\,\,0\leq i\leq j\,\,\text{and}\,\,(d-1)^2\,\,|j(d-1)-di\}\\
&=&\phi\,\,\,\,\forall j=2,...,d-2.
\end{eqnarray*}
because if $(d-1)^2\,\,|j(d-1)-di$ then $d|j+1$ a contradiction since $0<j+1<d$.

\begin{eqnarray*}
S_1^{d-1,X}\,\,{m,(a,b)}&:=&\{i:\,\,1\leq i\leq d-1\,\,\text{and}\,\,i+(d-1-i)(d-1)(d-2)=0\,mod\,\,(d-1)^2\}\\
&=&\{i:\,\,1\leq i\leq d-1\,\,\text{and}\,\,(d-1)^2\,\,|di\}\\
&=&\{i:\,\,1\leq i\leq d-1\,\,\text{and}\,\,(d-1)^2\,\,|i\}\\
&=&\phi.
\end{eqnarray*}
because $0<i<(d-1)^2$ so $(d-1)^2\,\,\nmid i$.

\begin{eqnarray*}
S_2^{d,X}\,\,{m,(a,b)}&:=&\{i:\,\,2\leq i\leq d-2\,\,\text{and}\,\,i+(d-i)(d-1)(d-2)=0\,mod\,\,(d-1)^2\}\\
&\subseteq&\{i:\,\,2\leq i\leq d-2\,\,\text{and}\,\,(d-1)^2\,\,|di-(d-1)\}\\
&=&\phi.
\end{eqnarray*}
because $0<di-(d-1)\leq d^2-3d+1<(d-1)^2$. Thus, $C$ is isomorphic
to $\,\,X^d+\alpha XZ^{d-1}+Y^{d-1}Z=0$.
\end{proof}

\subsection{Plane curves with an automorphism of order $d(d-2)$}

\begin{prop}\label{prop15}
The full automorphism group of $C:\,\,X^d+Y^{d-1}Z+\alpha YZ^{d-1}$ of degree $d\geq4$ such that  $\alpha\neq0$ is classified as follows.
\begin{enumerate}
  \item if $d\neq4,6$, then it is the central extension \[<\sigma,\,\tau|\,\,\,\sigma^2=\tau^{d(d-2)}=1\,\,\,and\,\,\,\sigma\tau\sigma=\tau^{-(d-1)}>\] of $D_{2(d-2)}$ by $\mathbb{Z}_d$. In particular, $|Aut(C)|=2d(d-2)$.
  \item if $d=6$, then it is a central extension of $S_4$ by $\mathbb{Z}_6$. Thus, $|Aut(C)|=144$.
  \item if $d=4$, then $C$ is isomorphic to the Fermat quartic $F_4$ thus  $Aut(C)\simeq \mathbb{Z}_4^2\rtimes S_3$.
\end{enumerate}

\end{prop}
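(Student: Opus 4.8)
The plan is to realise the asserted group explicitly inside $Aut(C)$, and then to show by Harui's classification that nothing larger occurs, treating $d\neq 4,6$ as the generic case and $d=6$, $d=4$ as exceptional. Write $f(Y,Z):=Y^{d-1}Z+\alpha YZ^{d-1}$, so that $C:\,X^d+f=0$. First I would check that $\tau:=[X;\xi_{d(d-2)}Y;\xi_{d(d-2)}^{-(d-1)}Z]$ lies in $Aut(C)$: its exponents satisfy $(d-1)a+b\equiv a+(d-1)b\equiv 0\pmod{d(d-2)}$, the conditions of Theorem \ref{thm20}(4.1), and its three eigenvalues are pairwise distinct, so $\tau$ has order $d(d-2)$. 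Choosing $c\in K$ with $c^{d-2}=\alpha$, the involution $\sigma:=[X;cZ;c^{-1}Y]$ fixes $f$, hence lies in $Aut(C)$ and satisfies $\sigma^2=1$ in $PGL_3(K)$; a direct conjugation gives $\sigma\tau\sigma=[X;\xi_{d(d-2)}^{-(d-1)}Y;\xi_{d(d-2)}Z]=\tau^{-(d-1)}$, the last step using $(d-1)^2\equiv 1\pmod{d(d-2)}$. Since $\sigma$ is not diagonal it is no power of $\tau$, so $\langle\sigma,\tau\rangle$ satisfies the presentation in (1) and has order $2d(d-2)$; moreover $\tau^{d-2}=[X;\zeta_dY;\zeta_dZ]$ is a central homology generating $\mathbb{Z}/d\mathbb{Z}$ with quotient $D_{2(d-2)}$, which displays $\langle\sigma,\tau\rangle$ as the claimed central extension.

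For the reverse inclusion I would apply Harui's Theorem \ref{teo1}. No finite primitive subgroup of $PGL_3(K)$ has an element of order $\geq 8$, whereas $d(d-2)\geq 8$ for $d\geq 4$, so case (5) is impossible. The Klein case (4) is excluded because $d(d-2)\nmid 3(d^2-3d+3)$ for every $d\geq 4$ (indeed $3(d^2-3d+3)=3\,d(d-2)-3(d-3)$ with $0<3(d-3)<d(d-2)$), while $Aut(C)$ would have to contain an element of that order. The Fermat case (3) is excluded because the element orders of $Aut(F_d)$ are bounded by $3d<d(d-2)$ for $d\geq 6$, the remaining case $d=5$ being settled by checking that $(\mathbb{Z}/5\mathbb{Z})^2\rtimes S_3$ has no element of order $15$; the only genuine exception is $d=4$, where $C$ is itself a Fermat curve. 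As $\sigma$ and $\tau$ do not commute, $Aut(C)$ is non-abelian and so cannot fix a point of $C$ (case (1) would force cyclicity); hence $Aut(C)$ fixes a point off $C$ and a line. Because $\tau$ is diagonal with three distinct eigenvalues it is not a homology, so Lemma \ref{lem2}, together with the fact that $(1:0:0)$ is the unique reference point off $C$, shows every automorphism fixes $X$ and acts on $(Y,Z)$ by some $B\in GL_2(K)$ with $f\circ B=f$; thus $Aut(C)\cong\{B\in GL_2(K):f\circ B=f\}$.

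The decisive step is to determine this group. Its scalar matrices are exactly the $\beta I$ with $\beta^d=1$, a central $N\cong\mathbb{Z}/d\mathbb{Z}$ (the homologies $\tau^{d-2}$), and the quotient $H\leq PGL_2(K)$ is the group of projective symmetries of $f$, i.e. of its $d$ distinct zeros $\{0,\infty\}\cup\mu$, where $\mu$ is the set of $(d-2)$-th roots of $-\alpha$. I would invoke the classification of finite subgroups of $PGL_2(K)$. The group $H$ contains the rotations $\mathbb{Z}/(d-2)\mathbb{Z}$ (scaling $Y$ by $(d-2)$-th roots of unity, fixing $0,\infty$ and permuting $\mu$) and the reflection induced by $\sigma$ (interchanging $0,\infty$), so $H\supseteq D_{2(d-2)}$ and $H$ is not cyclic. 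If $H$ were $A_4$, $S_4$ or $A_5$, then $\mathbb{Z}/(d-2)\mathbb{Z}\leq H$ forces $d-2\leq 3,4,5$ respectively, while the $d$ zeros decompose into $H$-orbits of sizes in $\{4,6,12\}$, $\{6,8,12,24\}$ or $\{12,20,30,60\}$; the only possibility with $d\geq 5$ is the octahedral sextuple $\{0,\infty\}\cup\mu$ at $d=6$, giving $H=S_4$. For all other $d\geq 5$ the group $H$ is dihedral with poles $\{0,\infty\}$, and since its rotation part permutes $\mu$ in free orbits it equals $\mathbb{Z}/(d-2)\mathbb{Z}$, so $H=D_{2(d-2)}$.

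Consequently $|Aut(C)|=d\cdot|H|$, which equals $2d(d-2)$ for $d\neq 4,6$, forcing $Aut(C)=\langle\sigma,\tau\rangle$ with the stated presentation, and equals $6\cdot 24=144$ for $d=6$, realising the central extension of $S_4$ by $\mathbb{Z}/6\mathbb{Z}$. For $d=4$ I would argue separately: the binary quartic $YZ(Y^2+\alpha Z^2)$ has four distinct roots in harmonic position, so a transformation of $(Y,Z)$ carries $f$ to $Y^4+Z^4$ and identifies $C$ with the Fermat quartic, whence $Aut(C)\cong\mathbb{Z}_4^2\rtimes S_3$. I expect the main obstacle to be exactly the orbit-length identification of $H$ together with the clean isolation of the two exceptional degrees: showing that the symmetry group of $\{0,\infty\}\cup\mu$ acquires no spurious elements for generic $d$, and that it jumps precisely at $d=6$ (octahedral) and $d=4$ (harmonic/Fermat).
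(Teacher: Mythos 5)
Your argument is correct, but it takes a genuinely different route from the paper's. The paper disposes of the whole proposition in three lines: it rescales by $[X;\mu Y;\mu^{-(d-1)}Z]$ with $\mu^{d(d-2)}=\alpha$ to reduce to $X^d+Y^{d-1}Z+YZ^{d-1}$ and then simply cites Harui's \S 6 for the classification into the three cases, only adding the explicit generators $\sigma=[X;Z;Y]$ and $\tau=[X;\zeta_{d(d-2)}Y;\zeta_{d(d-2)}^{-(d-1)}Z]$. You instead give a self-contained proof: after exhibiting $\langle\sigma,\tau\rangle$ and verifying the presentation (your congruences $(d-1)^2\equiv 1$ and $\tau^{d-2}=[X;\zeta_dY;\zeta_dZ]$ modulo $d(d-2)$ are right), you run Harui's Theorem \ref{teo1} to exclude the primitive, Fermat and Klein cases (the divisibility estimates $3(d^2-3d+3)=3d(d-2)-3(d-3)$ and the order-$15$ check in $(\mathbb{Z}/5)^2\rtimes S_3$ are correct), reduce via Lemma \ref{lem2} to the stabilizer of the binary form $f=YZ(Y^{d-2}+\alpha Z^{d-2})$ in $GL_2(K)$, and then classify the induced subgroup $H\leq PGL_2(K)$ by orbit lengths on the $d$ roots $\{0,\infty\}\cup\mu$. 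What your approach buys is transparency: it explains structurally \emph{why} the exceptional degrees are exactly $4$ and $6$ (harmonic quadruple, octahedral sextuple) rather than delegating this to Harui, and the scalar subgroup $\mu_d$ versus projective symmetry group $H$ decomposition makes the central-extension structure visible. What it costs is length and a dependence on the classification of finite subgroups of $PGL_2(K)$ and their orbit structure, which the paper avoids entirely.

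One small loose end: at $d=6$ your orbit-length argument only shows that \emph{if} $H$ is exceptional then it is $S_4$ acting on the sextuple; to conclude $|Aut(C)|=144$ you must also verify that $H$ actually \emph{is} $S_4$ and not merely the dihedral group $D_8$ you have exhibited. This is one line — after rescaling $t\mapsto t/r$ with $r^4=-\alpha$ the root set becomes $\{0,\infty,1,i,-1,-i\}$, the standard octahedral configuration, whose full projective symmetry group is $S_4$ — but it should be said, since it is exactly the point where the count jumps from $2d(d-2)=48$ to $144$.
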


\begin{proof}
Let $\mu\in K$ such that $\mu^{d(d-2)}=\alpha$, then $C$ is
projectively equivalent, through the transformation $[X;\mu
Y;\mu^{-(d-1)}Z]$, to the curve $C':\,\,\,X^d+Y^{d-1}Z+YZ^{d-1}$ and
therefore by Harui \cite{Harui} $\S 6$, $Aut(C)$ is a central
extension of $D_{2(d-2)}$ by $\mathbb{Z}_d$ ($d\neq4,6$), a central
extension of $S_4$ by $\mathbb{Z}_d$ ($d=6$) or $\simeq
\mathbb{Z}_4^2\rtimes S_3$ ($d=4$). To prove the remaining part, it
suffices to notice that $Aut(C')$ is generated by $\sigma:=[X;Z;Y]$
and $\tau:=[X;\zeta_{d(d-2)}Y;\zeta_{d(d-2)}^{-(d-1)}Z]$.
\end{proof}

As an analogue of Propositions \ref{prop11} and \ref{prop13}, we have the following result.
\begin{prop}\label{prop17}
A non-singular projective plane curve $C$ of degree $d\geq 4$ admits
an automorphism of order $d(d-2)$ if and only if it is isomorphic to
$\,\,X^d+Y^{d-1}Z+YZ^{d-1}$. In particular, $Aut(C)$ is a central
extension of $D_{2(d-2)}$ by $\mathbb{Z}/d$ ($d\neq4,6$), a central
extension of $S_4$ by $\mathbb{Z}/d$ ($d=6$) or $\simeq
(\mathbb{Z}/4)^2\rtimes S_3$ ($d=4$).
\end{prop}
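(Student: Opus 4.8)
The plan is to mirror closely the strategy already used for Propositions \ref{prop11} and \ref{prop13}: prove the easy backward implication by exhibiting one explicit automorphism, use the classification of Theorem \ref{thm20} to force the shape of the equation in the forward direction, and then read off the full group structure from Proposition \ref{prop15}.

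For the direction ($\leftarrow$), I would take $C:\,X^d+Y^{d-1}Z+YZ^{d-1}$ and verify that $\tau:=[X;\zeta_{d(d-2)}Y;\zeta_{d(d-2)}^{-(d-1)}Z]$ lies in $Aut(C)$. The monomials $X^d$ and $Y^{d-1}Z$ are obviously fixed, while $YZ^{d-1}$ is scaled by $\zeta_{d(d-2)}^{1-(d-1)^2}$, and $1-(d-1)^2=-d(d-2)$ makes this factor trivial. Since $\gcd(d-1,d(d-2))=1$, both nontrivial diagonal entries of $\tau$ have order $d(d-2)$, so $\tau$ has order exactly $d(d-2)$ in $PGL_3(K)$.

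For ($\rightarrow$), I first record that $m:=d(d-2)$ divides none of $d-1,\,d,\,d^2-3d+3,\,(d-1)^2,\,d(d-1)$: indeed $0<d^2-3d+3<d(d-2)$ for $d\geq 4$, while $(d-1)^2=d(d-2)+1$ and $d(d-1)=d(d-2)+d$ are congruent to $1$ and $d$ modulo $m$, both nonzero. Hence Theorem \ref{thm20} places $C$ in case $(4.1)$ with $m=d(d-2)$, governed by the congruences $(d-1)a+b\equiv 0$ and $a+(d-1)b\equiv 0\pmod{m}$. The key simplification is that these are not independent: the first gives $b\equiv-(d-1)a$, and substituting into the second yields $a\big(1-(d-1)^2\big)=-a\,d(d-2)\equiv 0$ automatically. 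Thus every admissible pair is a power of the generator $(a,b)=(1,-(d-1))$, and it suffices to analyse this single type.

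With $(a,b)=(1,-(d-1))$ fixed, the remaining task is to show the auxiliary index sets are empty. For $2\leq j\leq d-1$ one computes $ai+(j-i)b=di-(d-1)j$, which modulo $d$ forces $d\mid j$, impossible in that range, so each $S(2)^{j,X}_{m,(a,b)}=\varnothing$; for the degree-$d$ part one gets $ai+(d-i)b=d\big(i-(d-1)\big)$, forcing $i\equiv 1\pmod{d-2}$, which has no solution with $2\leq i\leq d-2$. The defining polynomial therefore collapses to $X^d+Y^{d-1}Z+\alpha YZ^{d-1}$, and the diagonal change $[X;\mu Y;\mu^{-(d-1)}Z]$ with $\mu^{d(d-2)}=\alpha$ normalizes $\alpha$ to $1$. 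The ``in particular'' clause is then immediate from Proposition \ref{prop15}. The only step requiring genuine care, rather than routine congruence bookkeeping, is the observation that the two defining congruences are dependent: this is precisely what reduces the potentially many pairs $(a,b)\in\Gamma_m$ to the single generator and lets the index-set computation be carried out only once.
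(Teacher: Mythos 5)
Your proof is correct and follows essentially the same route as the paper's: the backward direction by exhibiting $[X;\zeta_{d(d-2)}Y;\zeta_{d(d-2)}^{-(d-1)}Z]$, the forward direction by ruling out all divisibilities except $m\mid d(d-2)$ so that Theorem \ref{thm20}(4.1) applies, reducing all admissible $(a,b)$ to the single generator $(1,-(d-1))\equiv(1,d(d-3)+1)$, and checking that the index sets $S(2)^{j,X}$ and $S_2^{d,X}$ are empty. Your observation that the two defining congruences are dependent is a slightly cleaner way to obtain the single generator than the paper's explicit parametrization $(k,dk'+k)$, but the computation and conclusion are the same.
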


\begin{proof}
It suffices to prove the first part and the second part is an immediate consequence of Proposition \ref{prop15}. \\
$(\leftarrow)$ Trivial, since $[X;\zeta_{d(d-2)}Y;\zeta_{d(d-2)}^{-(d-1)}Z]$ is an automorphism of order $d(d-2)$.\\
$(\rightarrow)$ Clearly, $d(d-2)$ does not divide any of the
integers $d-1,\,\,d,\,\,d^2-3d+3,\,\,(d-1)^2,\,\,d(d-1)$. Therefore,
by Theorem \ref{thm20}, $C$ is isomorphic to type $d(d-2), (a,b)$ of
the form
\[
X^d+\big(\sum_{j=2}^{d-1}\,\,X^{d-j}\sum_{i\in S(2)^{j,X}_{m,(a,b)}}\beta_{ji}Y^iZ^{j-i}\big)
+\big(Y^{d-1}Z+\alpha YZ^{d-1}+\sum_{i\in S_2^{d,X}\,\,{m,(a,b)}}\beta_{di}Y^iZ^{d-i}\big)=0,
\]
fore some $(a,b)\in\Gamma_{d(d-2)}$ such that $(d-1)a+b\equiv0\,\,mod\,d(d-2)$ and $a+(d-1)b\equiv0\,\,mod\,d(d-2).$ Every solution is of the form $(k,dk'+k)$ such that $k$ and $dk'+k$ are coprime and $d-2|k+k',\,\,k+(d-1)k'$. We can take a generator $k=1$ and $k'=d-3$ because $[X;\zeta_{d(d-2)}Y;\zeta_{d(d-2)}^{d(d-3)+1}Z]^k=[X;\zeta_{d(d-2)}^kY;\zeta_{d(d-2)}^{dk'+k}Z]$ (keep in mind that $d-2|k+k'$). Therefore, we have
\begin{eqnarray*}
S(2)^{j,X}_{m,(a,b)}&:=&\{i:\,\,0\leq i\leq j\,\,\text{and}\,\,i+(j-i)\big(d(d-3)+1\big)=0\,mod\,\,d(d-2)\}\\
&=&\{i:\,\,0\leq i\leq j\,\,\text{and}\,\,d(d-2)\,\,|j(d-1)-di\}\\
&=&\phi\,\,\,\,\forall j=2,...,d-2.
\end{eqnarray*}
because if $d(d-2)\,\,|j(d-1)-di$ then $d|j$ a contradiction since $0<j<d$.
\begin{eqnarray*}
S_2^{d,X}\,\,{m,(a,b)}&:=&\{i:\,\,2\leq i\leq d-2\,\,\text{and}\,\,i+(d-i)\big(d(d-3)+1\big)=0\,mod\,\,d(d-2)\}\\
&\subseteq&\{i:\,\,2\leq i\leq d-2\,\,\text{and}\,\,d-2\,\,|d-1-i\}\\
&=&\phi.
\end{eqnarray*}
Thus, $C$ is isomorphic to $\,\,X^d+Y^{d-1}Z+\alpha YZ^{d-1}$ such
that $\alpha\neq0$.
\end{proof}

\subsection{Plane curves with an automorphism of order $d^2-3d+3$.}

\begin{prop} \label{prop16}
The full automorphism group of $C:\,\,X^{d-1}Y+Y^{d-1}Z+\alpha Z^{d-1}X$ of degree $d\geq5$ such that
 $\alpha\neq0$ the semidirect product of $\mathbb{Z}/3$ by $\mathbb{Z}/{d^2-3d+3}$
given by
\[<\tau,\,\sigma|\,\,\tau^{d^2-3d+3}=\sigma^3=1\,\,\,and\,\,\,\tau\sigma=\sigma\tau^{-(d-1)}\ >.\] In particular, $|Aut(C)|=3(d^2-3d+3)$.
\end{prop}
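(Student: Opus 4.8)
The plan is to reduce to the standard Klein curve $K_d$, write down the two asserted automorphisms explicitly, compute the group they generate, and finally use Harui's theorem to show that $Aut(C)$ is no larger.

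First I would absorb $\alpha$ by a diagonal change of coordinates. Since $K$ is algebraically closed and the exponent matrix $\left(\begin{smallmatrix} d-1&1&0\\0&d-1&1\\1&0&d-1\end{smallmatrix}\right)$ has nonzero determinant $(d-1)^3+1$, the monomial system $p^{d-1}q=q^{d-1}r=\alpha\, r^{d-1}p=1$ is solvable in $(K^*)^3$, so $[pX;qY;rZ]$ carries $C$ to $K_d:\,X^{d-1}Y+Y^{d-1}Z+Z^{d-1}X$. As projectively equivalent curves have conjugate automorphism groups, it suffices to treat $C=K_d$. I then set $m:=d^2-3d+3$, $\tau:=[X;\zeta_m Y;\zeta_m^{2-d}Z]$ and $\sigma:=[Z;X;Y]$. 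One checks directly that $\tau$ preserves $K_d$ (the exponents $(a,b)=(1,2-d)$ satisfy the congruences $a\equiv(d-1)a+b\equiv(d-1)b\pmod m$ coming from Theorem \ref{thm20}(3), so all three monomials are scaled by the common factor $\zeta_m$), and that $\sigma$ cyclically permutes the three monomials. Here $\tau$ has order $m$ in $PGL_3(K)$, since $\tau^k=\mathrm{id}$ forces $\zeta_m^k=1$, and $\sigma$ has order $3$.

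Next I would pin down the group structure. Conjugation by $\sigma$ permutes the diagonal entries of $\tau$; renormalizing the leading entry to $1$ in $PGL_3(K)$ gives $\sigma^{-1}\tau\sigma=\tau^{-(d-1)}$, the key identity being $(1-d)(2-d)=d^2-3d+2\equiv -1\pmod m$. Moreover $-(d-1)$ is a primitive cube root of unity in $(\mathbb{Z}/m)^\times$: using $(d-1)^2\equiv d-2$ one gets $(d-1)^3\equiv(d-1)(d-2)=m-1\equiv-1$, hence $(-(d-1))^3\equiv 1$, while $-(d-1)\not\equiv 1$ because $m>d$. Therefore $\langle\tau,\sigma\rangle$ is the semidirect product $\mathbb{Z}/m\rtimes\mathbb{Z}/3$ with the stated presentation $\tau^{m}=\sigma^3=1$, $\tau\sigma=\sigma\tau^{-(d-1)}$, of order $3m=3(d^2-3d+3)$.

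It remains to prove $Aut(C)=\langle\tau,\sigma\rangle$, and here I would apply Harui's theorem to $G=Aut(C)$, which contains $\tau$ of order $m\ge 13$ (as $d\ge5$). Case (1) is excluded because $Aut(C)$ contains the noncommuting pair $\tau,\sigma$, hence is noncyclic. Case (5) is excluded because each primitive group $PSL(2,7),A_5,A_6,Hess_*$ has element orders at most $7<m$. Case (2) is excluded because the image of $\tau$ in $G'$ has order at least $m/\gcd(m,d)=m/\gcd(3,d)\ge m/3$, which exceeds the maximal element order $\max(5,d-1)$ of any admissible $G'$ for every $d\ge5$. Case (3) is excluded because a short check with $\gcd(m,d)=\gcd(3,d)$ shows $m=d^2-3d+3\nmid 6d^2=|Aut(F_d)|$ for all $d\ge5$. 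Thus Harui's case (4) holds, so $|Aut(C)|$ divides $3(d^2-3d+3)$; combined with the subgroup of order $3(d^2-3d+3)$ built above, this forces $Aut(C)=\langle\tau,\sigma\rangle$. The main obstacle is the case analysis in this last step, especially verifying the two divisibility facts ($m\nmid 6d^2$ for Fermat, and that $\tau$'s image is too large in the extension case (2)) uniformly over all $d\ge5$, including the small degrees.
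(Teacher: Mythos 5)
Your proof is correct, and it reaches the same endpoint as the paper's but by a more self-contained route. The shared skeleton is the reduction of $C$ to the Klein curve $K_d$ by a diagonal substitution (the paper uses the explicit map $[X;\mu Y;\mu^{-(d-2)}Z]$ with $\mu^{d^2-3d+3}=\alpha$, while you solve the monomial system via the nonvanishing determinant $(d-1)^3+1=d(d^2-3d+3)$ --- both are fine), followed by exhibiting the generators $\tau$ and $\sigma=[Z;X;Y]$ and verifying the relation $\sigma^{-1}\tau\sigma=\tau^{-(d-1)}$ from $(d-1)(d-2)\equiv-1\pmod{d^2-3d+3}$. Where you diverge is the upper bound: the paper simply cites Harui \S 3, where $Aut(K_d)$ is already computed to be the semidirect product of order $3(d^2-3d+3)$, whereas you re-derive this bound from Harui's general five-case classification, ruling out the cyclic, extension, Fermat-descendant and primitive cases using the element $\tau$ of order $m=d^2-3d+3\geq 13$ together with $\gcd(m,d)=\gcd(3,d)$ and the oddness of $m$. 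Your case analysis is sound (in particular in the extension case the image of $\tau$ in $G'$ has order at least $m/\gcd(3,d)$, which indeed exceeds $\max(5,d-1)$ for every $d\geq 5$, including $d=5$ where $\gcd(3,5)=1$ gives order $13>5$, and $d=6$ where $21/3=7>5$), and it buys independence from Harui's explicit computation of $Aut(K_d)$ at the cost of length; the paper's one-line citation is shorter but leans entirely on that external result.
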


\begin{proof}
Indeed, $C$ can be transformed to the Klein curve $K_d$ through the
transformation $[X;\mu Y;\mu^{-(d-2)}Z]$ where $\mu\in K$ is defined
by the equation $\alpha=\mu^{d^2-3d+3}$. Now, it follows by Harui
\cite{Harui} $\S 3$ that $Aut(C)$ is a semidirect product of
$\mathbb{Z}/3$ acting on $\mathbb{Z}/{d^2-3d+3}$. To prove the last
part, it suffices to notice that
$\tau:=[X;\zeta_{d^2-3d+3}Y;\zeta_{d^2-3d+3}^{-(d-2)}Z]$ and
$[Z;X;Y]$ are in $Aut(K_d)$.
\end{proof}
\begin{rem} For $d=4$ recall that the Klein curve $K_4$ the order of
the automorphism group is 168, isomorphic to $PSL_2(\mathbb{F}_7)$
\cite{He}.
\end{rem}
%\footnote{F: What happens for $d=4$ is not the same?}

\begin{prop}\label{prop14}
Any non-singular projective plane curve $C$ of degree $d\geq 4$
which has an automorphism of order $d^2-3d+3$ is isomorphic to the
Klein curve $K_d:\,\,X^{d-1}Y+Y^{d-1}Z+Z^{d-1}X$. In particular, for
$d\geq 5$, $Aut(C)$ is isomorphic to
\[<\tau,\,\sigma|\,\,\tau^{d^2-3d+3}=\sigma^3=1\,\,\,and\,\,\,\tau\sigma=\sigma\tau^{-(d-1)}\>\].
\end{prop}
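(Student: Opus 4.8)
The plan is to follow the template already used for Propositions \ref{prop11}, \ref{prop13} and \ref{prop17}. Since the description of $Aut(C)$ for $d\geq 5$ is exactly the content of Proposition \ref{prop16} once we know $C\cong K_d$, it suffices to prove the isomorphism statement, and for that I would argue the two implications separately. The implication $(\leftarrow)$ is immediate: the diagonal transformation $[X;\zeta_{m}Y;\zeta_m^{-(d-2)}Z]$ with $m:=d^2-3d+3$ preserves $X^{d-1}Y+Y^{d-1}Z+Z^{d-1}X$ and has order exactly $m$ in $PGL_3(K)$ (this is the element $\tau$ already exhibited in the proof of Proposition \ref{prop16}), so $K_d$ carries an automorphism of the required order.

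For $(\rightarrow)$, the first step is purely arithmetic: I would check that $m=d^2-3d+3$ divides none of $d-1,\,d,\,(d-1)^2,\,d(d-2),\,d(d-1)$. Indeed $m>d>d-1$ for $d\geq 4$, while for each $N\in\{(d-1)^2,\,d(d-2),\,d(d-1)\}$ one has the double inequality $m<N<2m$ (e.g. $(d-1)^2-m=d-2>0$ and $2m-(d-1)^2=d^2-4d+5>0$, and likewise $2m-d(d-2)=d^2-4d+6>0$, $2m-d(d-1)=(d-2)(d-3)>0$), so $N$ sits strictly between $m$ and $2m$ and cannot be a multiple of $m$. Consequently Theorem \ref{thm20} leaves only case $(3)$, so $C$ is projectively equivalent to a curve of type $m,(a,b)$ whose core is $X^{d-1}Y+Y^{d-1}Z+\alpha Z^{d-1}X$ together with the correction terms indexed by $S(1)^{j,X}_{m,(a,b)}$, $S^{j,Y}_{m,(a,b)}$ and $S^{j,Z}_{m,(a,b)}$.

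Next I would pin down the generator of the solution set. The defining congruences $a\equiv (d-1)a+b\equiv (d-1)b \pmod m$ reduce to $b\equiv -(d-2)a \pmod m$, and the remaining relation $a\equiv (d-1)b$ is then automatic since $1+(d-1)(d-2)=m$; hence every admissible pair is a multiple of $(1,-(d-2))$, and as in the earlier propositions I take $(a,b)=(1,-(d-2))$. It then remains to show that the correction index sets are empty for $2\leq j\leq \lfloor d/2\rfloor$, which after substituting this $(a,b)$ amounts to proving that the congruences $(d-1)i\equiv 1+(d-2)j$, $(d-2)i\equiv d-j-1$ and $i\equiv 1+(d-2)(d-j)\pmod m$ admit no solution with $0\leq i\leq j$. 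Once these sets vanish the equation collapses to $X^{d-1}Y+Y^{d-1}Z+\alpha Z^{d-1}X$ with $\alpha\neq 0$, and the diagonal rescaling $[X;\mu Y;\mu^{-(d-2)}Z]$ with $\mu^{m}=\alpha$ (exactly as in Proposition \ref{prop16}) transforms $C$ into the Klein curve $K_d$, completing the forward implication.

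The main obstacle is the emptiness of these three families of index sets. The delicacy is that $m=(d-1)(d-2)+1$ exceeds the linear forms $(d-1)i$ and $(d-2)j$ appearing in the congruences only by a narrow margin, so I would bound the relevant integer expressions inside a single residue window modulo $m$: for the range $0\leq i\leq j\leq\lfloor d/2\rfloor$ each form, such as $(d-1)i-(d-2)j$ or $1+(d-2)(d-j)$, lands in an explicit interval of length $<m$ that contains at most one lift of the target residue, so it suffices to rule out the corresponding exact equation (for instance $(d-1)i-(d-2)j=1$, which forces $i\equiv 1\pmod{d-2}$ and hence $j=1$, contradicting $j\geq 2$). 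This is the only genuinely computational part; everything else is bookkeeping with Theorem \ref{thm20} and a reuse of Proposition \ref{prop16}.
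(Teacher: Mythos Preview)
Your proposal is correct and follows essentially the same route as the paper: reduce via the divisibility check and Theorem \ref{thm20} to case $(3)$, fix the generator $(a,b)=(1,-(d-2))$ (equivalently $(1,d^2-4d+5)$ as in the paper), show the three index sets $S(1)^{j,X}$, $S^{j,Y}$, $S^{j,Z}$ are empty by a window-bounding argument, and then invoke Proposition \ref{prop16}. The only cosmetic differences are that you supply the explicit inequalities for the divisibility step and sketch only one of the three emptiness verifications, whereas the paper asserts the divisibilities and writes out all three sets.
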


\begin{proof}
Since $d^2-3d+3\nmid\,\,d-1,\,\,d,\,\,d(d-1),\,\,d(d-2),\,\,(d-1)^2$
for every $d\geq5$ then, $C$ is projectively equivalent to a plane
curve of type $d^2-3d+3,\,\,(a,b)$ of the form
 \begin{eqnarray*}
X^{d-1}Y&+&Y^{d-1}Z+\alpha Z^{d-1}X+\\
&+&\sum_{j=2}^{\lfloor\frac{d}{2}\rfloor}\,\,X^{d-j}\big(\sum_{i\in S(1)^{j,X}_{m,(a,b)}}\beta_{ji}Y^iZ^{j-i}\big)+Y^{d-j}\big(\sum_{i\in S^{j,Y}_{m,(a,b)}}\alpha_{ji}Z^iX^{j-i}\big)+Z^{d-j}\big(\sum_{i\in S^{j,Z}_{m,(a,b)}}\gamma_{ji}X^{j-i}Y^i\big),
\end{eqnarray*}
for some $(a,b)\in\Gamma_{d^2-3d+3}$ such that $a=(d-1)a+b=(d-1)b\,(mod\,\,d^2-3d+3)$. We can take a generator $a=1$ and $b=d^2-4d+5$ because every solution is of the form $(k,\,(d^2-3d+3)k'-(d-2)k)$ and we have $[X;\zeta_{d^2-3d+3}Y;\zeta_{d^2-3d+3}^{d^2-4d+5}]^{k}=[X;\zeta_{d^2-3d+3}^kY;\zeta_{d^2-3d+3}^{(d^2-3d+3)k'-(d-2)k}]$. Consequently, \begin{eqnarray*}
S(1)^{j,X}_{m,(a,b)}&:=&\{i:\,\,0\leq i\leq j\,\,\text{and}\,\,i+(j-i)(d^2-4d+5)=1\,mod\,\,(d^2-3d+3)\}\\
&=&\{i:\,\,0\leq i\leq j\,\,\text{and}\,\,(d^2-3d+3)\,\,|\big(j(d-2)-i(d-1)+1\big)\}\\
&=&\phi\,\,\,\,\forall j=2,...,\lfloor\frac{d}{2}\rfloor.
\end{eqnarray*}
because $\underbrace{-(d^2-3d+3)} <-(\frac{d}{2}-1)\leq-j+1\leq \underbrace{j(d-2)-i(d-1)+1}\leq j(d-2)+1\leq\frac{d(d-2)}{2}+1<\underbrace{d^2-3d+3}$ $\forall j=2,...,\lfloor\frac{d}{2}\rfloor.$ Consequently, the only possibility is that $j(d-2)-i(d-1)+1=0$ which in turns implies that $d|2j-i-1$ a contradiction since $0<2j-i-1<d$.
\begin{eqnarray*}
S^{j,Y}_{m,(a,b)}&:=&\{i:\,\,0\leq i\leq j\,\,\text{and}\,\,(d^2-4d+5)i+(d-j)=1\,mod\,\,(d^2-3d+3)\}\\
&=&\{i:\,\,0\leq i\leq j\,\,\text{and}\,\,(d^2-3d+3)\,\,|\big((d-j)-(d-2)i-1\big)\}\\
&=&\phi.
\end{eqnarray*}
because $\underbrace{-(d^2-3d+3)}<-\frac{d^2-3d+2}{2}\leq\underbrace{(d-j)-(d-2)i-1}\leq (d-2)-1=d-3<\underbrace{d^2-3d+3}$ $\forall j=2,...,\lfloor\frac{d}{2}\rfloor.$ Therefore, the only possibility is that $(d-j)-(d-2)i-1=0$ in particular $d-2$ divides $j-1$ a contradiction since $0<j-1<d-2$.
\begin{eqnarray*}
S^{j,Z}_{m,(a,b)}&:=&\{i:\,\,0\leq i\leq j\,\,\text{and}\,\,i+(d-j)(d^2-4d+5)=1\,mod\,\,(d^2-3d+3)\}\\
&=&\{i:\,\,0\leq i\leq j\,\,\text{and}\,\,(d^2-3d+3)\,\,|\big((d-j)(d-2)-i+1\big)\}\\
&=&\phi.
\end{eqnarray*}
because $\underbrace{0}<\frac{d(d-3)}{2}+1\leq\underbrace{(d-j)(d-2)-i+1}\leq (d-2)^2+1<\underbrace{d^2-3d+3}$ \\\\
Consequently, $C$ is isomorphic to $X^{d-1}Y+Y^{d-1}Z+\alpha
Z^{d-1}X$ such that $\alpha\neq0$. The second part of the result
follows by Proposition \ref{prop16}.

\end{proof}

%%%%%%%%%%%%%%%%%%%%%%%%%%%%%%%%%%%%%%%%%%%%%%%%%%%%%%%%%%%%%%%%%%

\subsection{Plane curves with automorphisms of orders $\ell d$ and $\ell
(d-1)$}\mbox{}\\

 We are interested in non-singular plane curves $C$ of
arbitrary but fixed degree $d\geq5$ whose automorphism groups
contain homologies $\sigma$ of period $d$ (resp.\,\,$d-1$), recall
that an homology is an automorphism that under a change of variables
is of type $m, (a,b)$ with $ab=0$. In these situations, the genus of
$C/<\sigma>$ is zero and $C$ has an unique outer (resp.\,inner)
Galois point $P$ if $d\geq 5$ (see \cite[Lemma 3.7]{Harui} and
\cite{Yoshihara} for the uniqueness of the outer (resp.\, inner)
Galois point when $d\geq 5$), and we refer to \cite{Yoshihara} for
the definition of inner or outer Galois point. Moreover, for a given
non-singular plane curve $C$ of degree $d\geq 5$, if exists $P$ an
outer (resp. inner) Galois point, because it is unique, one deduces
that $P$ is fixed by the full group $Aut(C)$, otherwise $\tau(P)$
will give another outer point for $\tau\in Aut(C)$ (similarly for
the inner Galois point).

\subsubsection{Plane curves with automorphism of orders $\ell(d-1)$.}

\begin{lem}
If a non-singular plane curve $C$ of degree $d\geq5$ has an
automorphism of order $\ell(d-1)$ ($2\leq\ell\leq d$) then $\ell$
should be a divisor of $d$ or $d-1$. In particular,
$d\equiv0\,\,(mod\,\,\ell)$ or $d\equiv1\,\,(mod\,\,\ell)$.

\end{lem}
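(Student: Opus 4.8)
The plan is to reduce the statement to a purely arithmetic elimination using the divisibility constraint on cyclic orders established earlier. First I would let $\sigma\in Aut(C)$ be the given automorphism of order $\ell(d-1)$, so that $\langle\sigma\rangle$ is a cyclic subgroup of $Aut(C)$ of exact order $\ell(d-1)$. By the corollary to Theorem~\ref{thm20}, this order must divide one of the six integers $d-1$, $d$, $d^2-3d+3$, $(d-1)^2$, $d(d-2)$, $d(d-1)$. The entire proof then consists of deciding which of these six multiples the candidate $\ell(d-1)$ can actually divide.

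Next I would eliminate the first four candidates. Since $\ell\geq 2$ we have $\ell(d-1)>d-1$, so $\ell(d-1)\nmid(d-1)$. For the remaining three of those four, the key observation is that $d-1$ is coprime to each of $d$, $d^2-3d+3$ and $d(d-2)$: reducing modulo $d-1$ gives $d\equiv 1$, $d^2-3d+3\equiv 1$ and $d(d-2)\equiv 1\cdot(-1)=-1$, so each is $\equiv\pm1\pmod{d-1}$ and hence $\gcd(d-1,\cdot)=1$. Consequently $\ell(d-1)$, which is a multiple of $d-1$, could divide $d$, $d^2-3d+3$ or $d(d-2)$ only if $d-1\mid 1$, which is impossible for $d\geq 5$. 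This rules out those three cases.

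Then only two candidates survive: $\ell(d-1)\mid(d-1)^2$ or $\ell(d-1)\mid d(d-1)$. In the first case, cancelling the nonzero factor $d-1$ yields $\ell\mid(d-1)$, equivalently $d\equiv 1\pmod{\ell}$; in the second, cancelling $d-1$ yields $\ell\mid d$, equivalently $d\equiv 0\pmod{\ell}$. Together these give exactly the claimed conclusion. The only slightly delicate point, and what I would flag as the \textbf{main obstacle} (though it is purely numerical), is the elimination of the $d(d-2)$ case, since a priori $\ell(d-1)\mid d(d-2)$ looks plausible; the resolution is precisely the congruence $d(d-2)\equiv -1\pmod{d-1}$, which forces $d-1$ and $d(d-2)$ to be coprime and kills the case at once.
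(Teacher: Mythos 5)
Your proof is correct and follows essentially the same route as the paper, which simply asserts that $\ell(d-1)$ cannot divide $d-1$, $d$, $d^2-3d+3$ or $d(d-2)$ and leaves the arithmetic to the reader; you have supplied exactly the missing details (the congruences $d\equiv 1$, $d^2-3d+3\equiv 1$ and $d(d-2)\equiv -1$ modulo $d-1$, which make $d-1$ coprime to those three integers). Nothing further is needed.
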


\begin{proof}
Clear, because $\ell(d-1)$ does not divide
$d-1,\,\,d,\,\,d^2-3d+3,\,\,d(d-2)$.
\end{proof}

\begin{prop}
Let $d\geq5$ where $d\equiv0\,\,(mod\,\,\ell)$ and consider the
non-singular plane curve
\[
C:\,\,X^d+Y^d+\alpha XZ^{d-1}+\sum_{2\leq j=\ell k\leq
d-2}\,\,\beta_{ji}X^{d-j}Y^j
\]
Then, $Aut(C)$ is cyclic of order divisible by $\ell(d-1)$.
\end{prop}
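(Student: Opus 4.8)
The plan is to follow the template already used for type $4,(0,1)$ and for Proposition \ref{prop12}: first place an explicit cyclic group of order $\ell(d-1)$ inside $Aut(C)$, and then force $Aut(C)$ itself to be cyclic by exhibiting a unique inner Galois point that the whole group must fix. The monomial bookkeeping is routine; the real content is the Galois-point step.

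First I would write down the candidate automorphism. Since $\ell\mid d$ and $\gcd(d,d-1)=1$, we get $\gcd(\ell,d-1)=1$, so if $\omega$ is a primitive $\ell$-th root of unity and $\eta$ a primitive $(d-1)$-th root of unity, then $\sigma:=[X;\omega Y;\eta Z]$ has order $\mathrm{lcm}(\ell,d-1)=\ell(d-1)$. I would verify $\sigma\in Aut(C)$ monomial by monomial: $X^d$ is fixed; $Y^d$ acquires $\omega^{d}=1$ because $\ell\mid d$; $XZ^{d-1}$ acquires $\eta^{d-1}=1$; and each term $X^{d-j}Y^{j}$ with $j=\ell k$ acquires $\omega^{\ell k}=1$. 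Hence $\ell(d-1)\mid |Aut(C)|$. The coprimality $\gcd(\ell,d-1)=1$ is exactly what guarantees that $\sigma$ has the full order $\ell(d-1)$ and not a proper divisor.

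Next I would extract the homology $h:=\sigma^{\ell}=[X;Y;\eta^{\ell}Z]$. Because $\gcd(\ell,d-1)=1$, the root $\eta^{\ell}$ is again primitive $(d-1)$-th, so $h$ is a homology of period $d-1\geq 4$ (as $d\geq 5$) with axis $Z=0$ and center $(0;0;1)$; moreover $(0;0;1)\in C$, since every monomial of $C$ is divisible by $X$ or by $Y$. By Mitchell \cite{Mit} \S5 the presence of a homology of period $\geq 4$ already forces $Aut(C)$ to fix a point, a line or a triangle, and exactly as in the type $4,(0,1)$ argument one then invokes Harui \cite[Lemma 3.7]{Harui} to identify $(0;0;1)$ as an \emph{inner} Galois point of $C$: the pencil of lines $pX+qY=0$ through $(0;0;1)$ is preserved line-by-line by $h$, so $\langle h\rangle$ realizes the degree-$(d-1)$ projection $\pi_{(0;0;1)}$ as a cyclic Galois cover (and $\langle h\rangle$ restricts faithfully to $C$ since no nontrivial power of $h$ fixes the non-degenerate curve pointwise).

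Finally I would invoke uniqueness: by Yoshihara \cite[\S2, Theorem 4]{Yoshihara} a smooth plane curve of degree $d\geq 5$ has at most one inner Galois point, so $(0;0;1)$ is the unique one and must therefore be fixed by all of $Aut(C)$, since any automorphism carries inner Galois points to inner Galois points. A subgroup of $Aut(C)$ fixing a point of $C$ is cyclic by \cite[Lemma 11.44]{Book}, whence $Aut(C)$ is cyclic; combined with $\ell(d-1)\mid |Aut(C)|$ this yields the statement. The step I expect to be the genuine obstacle is this Galois-point argument: one must make sure the homology truly presents $(0;0;1)$ as an inner Galois point of the correct degree $d-1$, and that Yoshihara's uniqueness is available without the exceptional phenomena that occur at $d=4$ — which is precisely why the hypothesis $d\geq 5$ is needed.
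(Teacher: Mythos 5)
Your proposal is correct and follows essentially the same route as the paper's proof: exhibit the diagonal automorphism of order $\ell(d-1)$ (using $\gcd(\ell,d-1)=1$), extract from its $\ell$-th power a homology of period $d-1$ whose center $(0;0;1)$ lies on $C$, identify that center as an inner Galois point via Harui's Lemma 3.7, invoke Yoshihara's uniqueness for $d\geq 5$ to force the full $Aut(C)$ to fix it, and conclude cyclicity from \cite[Lemma 11.44]{Book}. The only cosmetic difference is that the paper additionally rules out the Fermat/Klein descendant cases by order-divisibility before invoking the Galois-point step, a precaution your chain of implications renders unnecessary.
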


\begin{proof}
Since $\sigma:=[X;\zeta_{\ell (d-1)}^{d-1}Y;\zeta_{\ell
(d-1)}^{\ell}Z]\in Aut(C_3)$ of order $\ell (d-1)$ then, $C_3$ is
not a descendant of the Klein curve $K_d$ (because
$\ell(d-1)\nmid3(d^2-3d+3)$) and is not a descendant of the Fermat
curve $F_d$ (because $\ell(d-1)>2d$ for all $\ell\geq3$ and for
$\ell=2$ we have $2(d-1)\nmid6d^2$). Moreover, $Aut(C_3)$ contains a
homology of period $d-1\geq4$ namely, $[X;Y;\zeta_{\ell
(d-1)}^{\ell^2}Z]=[X;\zeta_{\ell (d-1)}Y;\zeta_{\ell
(d-1)}^{(\ell-1)(d-1)}Z]^{\ell}$ therefore, it must fix a line and a
point off that line. Now, since $[X;\zeta_{\ell (d-1)}^{\ell}Y;Z]\in
Aut(C_3)$ is a homology of period $d-1$ with center $(0;0;1)$ then
by Harui \cite{Harui} $\S 3$, the point $(0;0;1)$ is an inner Galois
point of $C_3$. Moreover, it is the unique such point by Yoshihara
\cite{Yoshihara} $\S 2$ [Theorem $4'$] hence should be fixed by
$Aut(C_3)$. Consequently, $Aut(C_3)$ is cyclic of order divisible by
$\ell(d-1)$.
\end{proof}

\begin{prop}\label{prop121}
Let $C$ be a non-singular plane curve of degree $d\geq5$ where
$d\equiv0\,\,(mod\,\,\ell)$. Assume that $Aut(C)$ contains an
element of order $\ell (d-1)$ with $\ell\geq 2$. Then $C$ is
isomorphic to
\[
\,\,X^d+Y^d+\alpha XZ^{d-1}+\sum_{2\leq j=\ell k\leq
d-2}\,\,\beta_{ji}X^{d-j}Y^j
\]
In particular, $Aut(C)$ is cyclic.
\end{prop}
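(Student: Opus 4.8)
The plan is to run the same machine used for the orders $d(d-1)$, $(d-1)^2$, $d(d-2)$ and $d^2-3d+3$ in the previous propositions, built on Theorem \ref{thm20}. Set $m:=\ell(d-1)$. The first step is to pin down which case of Theorem \ref{thm20} can host an element of order $m$. Since $\ell\mid d$ and $\gcd(d,d-1)=1$ we have $\gcd(\ell,d-1)=1$, and this coprimality is the fact I will lean on everywhere. I would then check that $m$ divides $d(d-1)$ (because $\ell\mid d$) but none of $d-1$, $d$, $d^2-3d+3$, $(d-1)^2$, $d(d-2)$: the first two are smaller than $m$; reducing modulo $d-1$ gives $d^2-3d+3\equiv1$ and $d(d-2)\equiv-1$, so neither is divisible by $d-1$; and $m\mid(d-1)^2$ would force $\ell\mid(d-1)$, excluded by coprimality and $\ell\ge2$. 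Hence only case $(5)$ of Theorem \ref{thm20} survives, so $C$ is projectively equivalent to a type $m,(a,b)$ curve with $(a,b)\in\Gamma_m$, $da\equiv0$ and $(d-1)b\equiv0\pmod m$, given by the displayed equation of $(5)$.

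The second step is to normalise the pair $(a,b)$. From $(d-1)b\equiv0\pmod{\ell(d-1)}$ I get $\ell\mid b$, and writing $d=\ell d'$ with $\gcd(d-1,d')=1$, the relation $da\equiv0\pmod{\ell(d-1)}$ yields $(d-1)\mid d'a$ and hence $(d-1)\mid a$. Thus every admissible type has the shape $a=r(d-1)$, $b=s\ell$. Choosing $k$ with $k\equiv r\pmod\ell$ and $k\equiv s\pmod{d-1}$ (possible by the Chinese remainder theorem, again using $\gcd(\ell,d-1)=1$) exhibits any such pair as a power of $(d-1,\ell)$, so $[X;\zeta_m^{d-1}Y;\zeta_m^{\ell}Z]$ generates the whole cyclic group and I may assume $(a,b)=(d-1,\ell)$.

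The remaining and most computational step is to evaluate, for $(a,b)=(d-1,\ell)$, the three index sets appearing in the equation of case $(5)$. I would reduce each defining congruence modulo $d-1$: for $S(2)^{j,X}_{m,(a,b)}$ this forces $(d-1)\mid(j-i)$ and hence $i=j$ (as $0\le j-i\le d-2$), after which the congruence becomes $\ell\mid j$, collapsing the middle block to $\sum_{2\le j=\ell k\le d-2}\beta_{jj}X^{d-j}Y^{j}$. The same reduction should show that the congruences defining $S_1^{d,X}{}_{m,(a,b)}$ and $S_1^{d-1,X}{}_{m,(a,b)}$ can only be met at $i=1$ and $i=d-1$ respectively, each then demanding $\ell\mid1$ or $\ell\mid(d-1)$ --- both impossible --- so these sets are empty; consequently the factor $X(\alpha Z^{d-1}+\cdots)$ shrinks to $\alpha XZ^{d-1}$ and the $Y^iZ^{d-i}$ block disappears, leaving exactly the asserted equation. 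The final claim that $Aut(C)$ is cyclic is then immediate from the previous proposition. I expect the only real obstacle to be keeping these index-set computations organised; there is no conceptual difficulty once $\gcd(\ell,d-1)=1$ is in hand, the single point to watch being the generator normalisation, which silently fails if that coprimality is lost.
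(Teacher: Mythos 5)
Your proposal is correct and follows essentially the same route as the paper: exclude all cases of Theorem \ref{thm20} except $(5)$, normalise the type to $\ell(d-1),(d-1,\ell)$, and show $S_1^{d,X}$ and $S_1^{d-1,X}$ are empty while $S(2)^{j,X}$ is $\{j\}$ precisely when $\ell\mid j$. Your use of the Chinese remainder theorem for the generator normalisation and the systematic reduction of each congruence modulo $d-1$ are only cosmetic variants of the paper's explicit exponent computation.
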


\begin{proof}
Clearly, $\ell (d-1)$ is not a divisor of
$d-1,\,\,d,\,\,d^2-3d+3,\,\,(d-1)^2$ or $d(d-2)$. Therefore, $C$ is
projectively equivalent to type $\ell (d-1), ((d-1) k,\ell k')$ of
Theorem \ref{thm20} $(5)$ ( where $\ell k'$ $(d-1)k$ are coprime
$<\ell (d-1)$) of the form
\begin{eqnarray*}
\,\,X^d&+&Y^d+\sum_{j=2}^{d-2}\,\,\big(X^{d-j}\sum_{i\in S(2)^{j,X}_{m,(a,b)}}\beta_{ji}Y^iZ^{j-i}\big)+\sum_{i\in S_1^{d,X}\,\,{m,\,(a,b)}}\beta_{di}Y^iZ^{d-i}+\\
&+&X\big(\alpha Z^{d-1}+\sum_{i\in
S_1^{d-1,X}\,\,{m,\,(a,b)}}\beta_{(d-1)i}Y^iZ^{d-1-i}\big)=0
\end{eqnarray*}
Claim 1: $\ell (d-1), (d-1,\ell)$ is a generator of any type $\ell (d-1), ((d-1) k,\ell k')$.\\
Let $k\equiv m\,\,(mod\,\,\ell)$, then $[X;\zeta_{\ell (d-1)}^{d-1}Y;\zeta_{\ell (d-1)}^{\ell}Z]^{(k'-m)(d-1)+k'}=[X;\zeta_{\ell (d-1)}^{m(d-1)}Y;\zeta_{\ell (d-1)}^{\ell k'}Z]=[X;\zeta_{\ell (d-1)}^{k(d-1)}Y;\zeta_{\ell (d-1)}^{\ell k'}Z]$.\\\\
Now, we have
\begin{eqnarray*}
S_1^{d,X}\,\,{m,(a,b)}&:=&\{i:\,\,1\leq i\leq d-1\,\,\text{and}\,\,(d-1) i+(d-i)\ell=0\,mod\,\,\ell (d-1)\}\\
&\subseteq&\{i:\,\,1\leq i\leq d-1\,\,\text{and}\,\,(d-1)\,\,|(i-1)\}\\
&=&\{1\}
\end{eqnarray*}
But, since $(d-1) i+(d-i)\ell=(d-1)+(d-1)\ell\neq0\,mod\,\,\ell
(d-1)$ then $S_1^{d,X}\,\,{m,(a,b)}=\phi.$

\begin{eqnarray*}
S_1^{d-1,X}\,\,{m,(a,b)}&:=&\{i:\,\,1\leq i\leq d-1\,\,\text{and}\,\,(d-1) i+(d-1-i)\ell=0\,mod\,\,\ell (d-1)\}\\
&\subseteq&\{i:\,\,1\leq i\leq d-1\,\,\text{and}\,\,(d-1)\,\,|i\}\\
&=&\{d-1\}
\end{eqnarray*}
But since $(d-1) i+(d-1-i)\ell=(d-1)^2\neq0\,mod\,\,\ell (d-1)$ then
$S_1^{d-1,X}\,\,{m,(a,b)}=\phi.$
\begin{eqnarray*}
S(2)^{j,X}_{m,(a,b)}&:=&\{i:\,\,0\leq i\leq j\,\,\text{and}\,\,(d-1)i+(j-i)\ell=0\,mod\,\,\ell(d-1)\}\\
&\subseteq&\{i:\,\,0\leq i\leq j\,\,\text{and}\,\,(d-1)\,\,|j-i\}\\
\end{eqnarray*}
Since $0\leq j-i\leq d-2<d-1$ then $j=i$ which in turns implies that
$i=j=\ell k$ thus $S(2)^{j,X}_{m,(a,b)}=\phi$ if $\ell\nmid j$ and
$\{j\}$ otherwise.

\end{proof}

A similar argument as above is done when $d\equiv 1\ (mod\ \ell)$:
%and we only state the results next:

\begin{prop}
Suppose that $d\geq5$ where $d\equiv1\,\,(mod\ell)$ with $\ell\geq
2$ and let $C$ be the non-singular plane curve defined by
\[
X^d+Y^{d-1}Z+\alpha XZ^{d-1}+\sum_{2\leq j=\ell k\leq
d-2}\,\,\beta_{j0}X^{d-j}Z^{j}
\]
Then, $Aut(C)$ is cyclic of order divisible by $\ell(d-1)$.
\end{prop}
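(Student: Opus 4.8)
The plan is to mirror the argument of the preceding proposition (the case $d\equiv0\ (\mathrm{mod}\ \ell)$), adapting the distinguished automorphism and the Galois point to the present defining equation. Write $\zeta:=\zeta_{\ell(d-1)}$ for a primitive $\ell(d-1)$-th root of unity. First I would exhibit the candidate
\[\sigma:=[X;\zeta Y;\zeta^{-(d-1)}Z]\in PGL_3(K)\]
and check that it preserves $C$. Since $d\equiv1\ (\mathrm{mod}\ \ell)$ we have $\ell\mid d-1$, so $(d-1)^2\equiv0$ and $\ell k(d-1)\equiv0$ modulo $\ell(d-1)$ for every $k$; a monomial-by-monomial check then shows that $X^d$, $Y^{d-1}Z$, $\alpha XZ^{d-1}$ and each $X^{d-j}Z^{j}$ with $j=\ell k$ all scale by the same factor, so $\sigma\in Aut(C)$. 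Because the $Y$-entry of $\sigma$ is a primitive $\ell(d-1)$-th root of unity, $\sigma$ has order exactly $\ell(d-1)$, which already gives $\ell(d-1)\mid|Aut(C)|$.

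The geometric core of the argument is the power $h:=\sigma^{\ell}=[X;\zeta^{\ell}Y;Z]$, whose $Y$-entry $\zeta^{\ell}$ is a primitive $(d-1)$-th root of unity; thus $h=\mathrm{diag}(1,\zeta^{\ell},1)$ is a homology of period $d-1\ (\geq4)$ with axis $Y=0$ and center $(0:1:0)$. Substituting the reference points into the equation shows $(1:0:0)\notin C$ while $(0:1:0),(0:0:1)\in C$, so the center of $h$ lies on $C$. Next I would run Mitchell's trichotomy: by \cite{Mit} \S5 the presence of a homology of period $\geq4$ forces $Aut(C)$ to fix a point, a line, or a triangle. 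The triangle-only alternative is excluded by Harui \cite{Harui}, since it would make $(C,\langle\sigma\rangle)$ a descendant of $F_d$ or $K_d$: the Klein case is impossible because $\ell(d-1)\nmid 3(d^2-3d+3)$ (indeed $d^2-3d+3\equiv1\ (\mathrm{mod}\ d-1)$, so divisibility would force $(d-1)\mid3$, against $d\geq5$), and the Fermat case is impossible because $\ell(d-1)>2d$ when $\ell\geq3$, while for $\ell=2$ one has $2(d-1)\nmid 6d^2$.

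Hence $Aut(C)$ fixes a line and a point off it, and I would then apply Harui \cite{Harui} [Lemma $3.7$]: since $h$ is a period-$(d-1)$ homology with center $(0:1:0)\in C$, that center is an inner Galois point of $C$, and by Yoshihara \cite{Yoshihara} \S2 [Theorem $4'$] it is the unique one for $d\geq5$. As any element of $Aut(C)$ permutes inner Galois points, uniqueness forces it to fix $(0:1:0)$. A group of automorphisms fixing a point lying on the curve is cyclic by \cite[Lemma 11.44]{Book}; combined with $\sigma\in Aut(C)$ of order $\ell(d-1)$, this yields that $Aut(C)$ is cyclic of order divisible by $\ell(d-1)$, as claimed.

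I expect the main obstacle to be the bookkeeping of the first step: choosing the generator $\sigma$ so that simultaneously (i) it preserves every admissible monomial $X^{d-j}Z^{j}$ with $j=\ell k$ — which is precisely where the hypothesis $d\equiv1\ (\mathrm{mod}\ \ell)$, i.e. $\ell\mid d-1$, is used — and (ii) its $\ell$-th power is a homology of period $d-1$ whose center is a point \emph{of} $C$ rather than off it, the latter being exactly what makes the associated Galois point inner and allows Yoshihara's uniqueness to be invoked.
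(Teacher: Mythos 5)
Your proposal is correct and follows essentially the same route as the paper's own proof: the same generator (your $[X;\zeta Y;\zeta^{-(d-1)}Z]$ is the paper's $[X;\zeta_{\ell(d-1)}Y;\zeta_{\ell(d-1)}^{(\ell-1)(d-1)}Z]$), the same homology $\sigma^{\ell}=[X;\zeta^{\ell}Y;Z]$ of period $d-1$ with center $(0;1;0)$ on $C$, the same exclusion of the Fermat and Klein descendants, and the same appeal to Harui's Lemma 3.7, Yoshihara's uniqueness of the inner Galois point, and \cite[Lemma 11.44]{Book} to conclude cyclicity. You merely supply a little more bookkeeping (the monomial-by-monomial invariance check and the order computation) than the paper does.
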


\begin{proof}
Since $\sigma:=[X;\zeta_{\ell (d-1)}Y;\zeta_{\ell
(d-1)}^{(\ell-1)(d-1)}Z]\in Aut(C_3)$ of order $\ell (d-1)$ then,
$C_3$ is not a descendant of the Klein curve $K_d$ (because
$\ell(d-1)\nmid3(d^2-3d+3)$) and is not a descendant of the Fermat
curve $F_d$ (because $\ell(d-1)>2d$ for all $\ell\geq3$ and for
$\ell=2$ we have $2(d-1)\nmid6d^2$). Moreover, $Aut(C_3)$ contains a
homology of period $d-1\geq4$ namely, $[X;\zeta_{\ell
(d-1)}^{\ell}Y;Z]=[X;\zeta_{\ell (d-1)}Y;\zeta_{\ell
(d-1)}^{(\ell-1)(d-1)}Z]^{\ell}$ therefore, it must fix a line and a
point off that line. Now, since $[X;\zeta_{\ell (d-1)}^{\ell}Y;Z]\in
Aut(C_3)$ is a homology of period $d-1$ with center $(0;1;0)$ then
by Harui \cite{Harui} $\S 3$, the point $(0;1;0)$ is an inner Galois
point of $C_3$. Moreover, it is the unique such point by Yoshihara
\cite{Yoshihara} $\S 2$ [Theorem $4'$] hence should be fixed by
$Aut(C_3)$. In particular, $Aut(C_3)$ is cyclic.
\end{proof}

And with a similar proof as Proposition \ref{prop121} we obtain,

\begin{prop}
Let $C$ be a non-singular plane curve of degree $d\geq5$ where
$d\equiv1\,\,(mod\, \ell)$ with $2\leq \ell\leq d-1$. Assume that
$Aut(C)$ contains an element of order $\ell (d-1)$. Then $C$ is
isomorphic to
\[
X^d+Y^{d-1}Z+\alpha XZ^{d-1}+\sum_{2\leq j=\ell k\leq
d-2}\,\,\beta_{j0}X^{d-j}Z^{j}
\]
Consequently, $Aut(C)$ is cyclic.
\end{prop}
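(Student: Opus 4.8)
The plan is to mirror the proof of Proposition \ref{prop121}, now using that $d\equiv1\,(mod\,\ell)$ forces $\ell\mid(d-1)$. Write $d-1=\ell t$ and set $m:=\ell(d-1)=\ell^2t$. First I would record the divisibility bookkeeping: $m$ divides none of $d-1,\,d,\,d^2-3d+3,\,d(d-2),\,d(d-1)$, because $\gcd(d-1,d)=\gcd(d-1,d-2)=1$ and $\ell\nmid d$ (as $d\equiv1\,(mod\,\ell)$), whereas $m\mid(d-1)^2$ since $\ell\mid(d-1)$. Hence, by Theorem \ref{thm20}, the assumed element of order $m$ places $C$, up to projective equivalence, into the shape of case $(4.2)$, for some $(a,b)\in\Gamma_m$ with $(d-1)a+b\equiv0$ and $(d-1)b\equiv0\pmod m$.

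Next I would pin down the generator. Since $(d-1)^2=\ell^2t^2\equiv0\pmod m$, the relation $(d-1)b\equiv0$ is automatic once $(d-1)a+b\equiv0$, so the admissible pairs are precisely those with $b\equiv-(d-1)a\pmod m$. As the powers $\sigma^n=[X;\zeta_m^nY;\zeta_m^{-n(d-1)}Z]$ run through all such pairs when $n$ varies, the type $\ell(d-1),\,(1,(\ell-1)(d-1))$ generates every admissible type; this is exactly the automorphism exhibited in the preceding proposition. I may therefore fix $(a,b)=(1,(\ell-1)(d-1))$, equivalently $b\equiv-(d-1)\pmod m$.

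The computational heart is to evaluate, for this choice of $(a,b)$, the three index sets appearing in case $(4.2)$. Substituting $b\equiv-(d-1)$ and expanding, the defining congruence becomes $d(i-(d-1))\equiv0\pmod m$ for $S_2^{d,X}$, becomes $id-(d-1)^2\equiv0\pmod m$ for $S_1^{d-1,X}$, and becomes $id-j(d-1)\equiv0\pmod m$ for $S(2)^{j,X}$. Using $\gcd(d,m)=1$, the first forces $i\equiv d-1\pmod m$, impossible for $2\le i\le d-2$, so $S_2^{d,X}=\phi$; the second, together with $(d-1)^2\equiv0$, forces $i\equiv0\pmod m$, impossible for $1\le i\le d-1<m$, so $S_1^{d-1,X}=\phi$. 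For the third, reducing modulo $d-1$ and using $d\equiv1$ gives $i\equiv0\pmod{d-1}$, hence $i=0$ since $0\le i\le j\le d-2$, and then the condition holds iff $\ell\mid j$; thus $S(2)^{j,X}=\{0\}$ exactly when $\ell\mid j$, contributing precisely the monomials $\beta_{j0}X^{d-j}Z^{j}$ with $2\le j=\ell k\le d-2$. Substituting these back leaves $X^d+Y^{d-1}Z+\alpha XZ^{d-1}+\sum_{2\le j=\ell k\le d-2}\beta_{j0}X^{d-j}Z^{j}$, and the cyclicity of $Aut(C)$ follows from the preceding proposition.

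The main obstacle I anticipate is the generator/solution-set step: one must carefully justify that the condition $(d-1)b\equiv0$ is redundant and that $(1,(\ell-1)(d-1))$ is genuinely a single generator of all admissible types, since only then is it legitimate to compute the index sets for this one representative and be certain that no $K$-inequivalent curve has been overlooked. The index-set evaluations are then routine congruence manipulations, once $\gcd(d,m)=1$ and $(d-1)^2\equiv0\pmod m$ are in hand.
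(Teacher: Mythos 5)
Your proposal is correct and follows exactly the route the paper intends: the paper gives no written proof here, stating only that the argument is ``similar to Proposition \ref{prop121}'', and your write-up is precisely that adaptation — the divisibility bookkeeping forces case $(4.2)$ of Theorem \ref{thm20} (rather than case $(5)$ used in Proposition \ref{prop121}), the generator $(1,(\ell-1)(d-1))$ matches the automorphism of the preceding proposition, and your evaluations of $S_2^{d,X}$, $S_1^{d-1,X}$ and $S(2)^{j,X}$ are the same congruence computations carried out in the $(d-1)^2$ case (Proposition \ref{prop13}). In particular, your observation that $(d-1)b\equiv 0$ is redundant once $(d-1)a+b\equiv 0$ and $\ell\mid(d-1)$ correctly settles the generator step, so no admissible type is overlooked.
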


In particular

\begin{cor}
Let $C$ be a non-singular plane curve of degree $d\geq5$ such that
$Aut(C)$ contains an element of order $\ell(d-1)$ with
$2\leq\ell\leq d-1$. Then, $Aut(C)$ is cyclic and contains a
homology of period $d-1$. In particular, $C$ has an unique inner
Galois point.
\end{cor}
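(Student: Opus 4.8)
The plan is to deduce the corollary directly from the divisibility Lemma and the preceding propositions, with essentially no new computation. First I would apply the Lemma to the given element of order $\ell(d-1)$: since $2\leq\ell\leq d-1$, the integer $\ell(d-1)$ divides none of $d-1$, $d$, $d^2-3d+3$ or $d(d-2)$, so the Lemma forces $\ell\mid d$ or $\ell\mid(d-1)$, that is $d\equiv0\pmod{\ell}$ or $d\equiv1\pmod{\ell}$. This splits the problem into exactly the two regimes already analysed.

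In the case $d\equiv0\pmod{\ell}$ I would invoke Proposition \ref{prop121}, which gives that $C$ is $K$-isomorphic to the explicit normal form $X^d+Y^d+\alpha XZ^{d-1}+\sum_{2\leq j=\ell k\leq d-2}\beta_{ji}X^{d-j}Y^j$ and that $Aut(C)$ is cyclic. In the case $d\equiv1\pmod{\ell}$ I would instead use its companion proposition, yielding $C\cong X^d+Y^{d-1}Z+\alpha XZ^{d-1}+\sum_{2\leq j=\ell k\leq d-2}\beta_{j0}X^{d-j}Z^{j}$ with $Aut(C)$ again cyclic. Thus cyclicity of $Aut(C)$ is already secured in both cases.

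It then remains to exhibit the homology of period $d-1$ and to identify the unique inner Galois point. In either normal form the order-$\ell(d-1)$ automorphism is diagonal, and the power that scales a single coordinate by a primitive $(d-1)$-th root of unity while fixing the complementary reference line pointwise is a homology of period $d-1$ (this is the element $[X;\zeta_{\ell(d-1)}^{\ell}Y;Z]$ already used inside the existence propositions). By Harui's Lemma $3.7$ its center is an inner Galois point of $C$, and by Yoshihara's uniqueness theorem, valid for $d\geq5$, this is the only inner Galois point; being unique it is fixed by all of $Aut(C)$. Collecting these facts yields the three assertions of the corollary.

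I do not anticipate a genuine obstacle, since the delicate work, namely the monomial-by-monomial reductions proving the emptiness of the index sets $S(2)^{j,X}_{m,(a,b)}$, $S_1^{d,X}$ and $S_1^{d-1,X}$, has already been carried out inside Proposition \ref{prop121} and its analogue. The one point to state carefully is that the divisibility Lemma really excludes every one of $d-1$, $d$, $d^2-3d+3$ and $d(d-2)$ throughout the range $2\leq\ell\leq d-1$, so that we are guaranteed to land in precisely one of the two solved congruence classes rather than in a configuration carrying a larger, possibly non-cyclic automorphism group.
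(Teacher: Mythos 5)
Your proposal is correct and follows exactly the route the paper intends: the corollary is stated without a separate proof precisely because it is the combination of the divisibility lemma (forcing $d\equiv 0$ or $1\pmod{\ell}$) with Proposition \ref{prop121} and its $d\equiv1\pmod{\ell}$ companion, plus the observation that the $\ell$-th power of the order-$\ell(d-1)$ element is a homology of period $d-1$ whose center is the unique inner Galois point by Harui's Lemma 3.7 and Yoshihara's theorem. Nothing is missing.
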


\begin{rem}\label{rem33} If $C$ is a non-singular plane curve of degree $d\geq 5$
such that $Aut(C)$ contains an element $\sigma$ of order $d-1$ then
if $\sigma$ is an homology, then by \cite[Lemma 3.7]{Harui} $C$ has
an inert Galois point $P\in C$, and because it is fixed for $Aut(C)$
because it is unique the inert Galois point by \cite[Theorem
4]{Yoshihara}, then $P\in C$ is fixed by the full $Aut(C)$, and
therefore $Aut(C)$ is a cyclic group by \cite[Lemma 11.44]{Book}. In
the case that $\sigma$ is not an homology (say is of type $m,(a,b)$
with $ab\neq 0$) then $Aut(C)$ is not necessarely a cyclic group and
will depend of the parameters of the defining equation for the given
type $m,(a,b)$.

In this subsection we proved if $C$ non-singular plane curve of
degree $d\geq 5$ such that $Aut(C)$ contains an element of order
$\ell(d-1)$ with $\ell\geq 2$, then $Aut(C)$ has always an homology
of order $d-1$.
\end{rem}

\subsubsection{Plane curves with automorphism of orders $\ell d$.}

\begin{lem}
If a non-singular plane curve $C$ of degree $d\geq5$ has an
automorphism of order $\ell d$ ($2\leq\ell\leq d-1$) then $\ell$
should be a divisor of $d-1$ or $d-2$. In particular,
$d\equiv1\,\,(mod\,\,\ell)$ or $d\equiv2\,\,(mod\,\,\ell)$.

\end{lem}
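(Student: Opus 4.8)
The plan is to deduce everything from the divisibility constraint recorded in the Corollary following Theorem \ref{thm20}, which asserts that the order of any cyclic subgroup of $Aut(C)$ divides one of $d-1$, $d$, $d^2-3d+3$, $(d-1)^2$, $d(d-2)$ or $d(d-1)$. An automorphism of order $\ell d$ generates a cyclic subgroup $G\le Aut(C)$ with $|G|=\ell d$, so $\ell d$ must divide one of these six integers; the whole argument then reduces to elementary arithmetic, exactly as in the preceding lemma for the orders $\ell(d-1)$.

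First I would eliminate the two smallest moduli. Since $2\le\ell$, we have $\ell d\ge 2d>d>d-1$, so $\ell d$ cannot divide $d-1$ or $d$. Next I would rule out $d^2-3d+3$ and $(d-1)^2$ by reducing modulo $d$: one computes $d^2-3d+3\equiv 3\pmod{d}$ and $(d-1)^2\equiv 1\pmod{d}$, and since $d\ge 5$ neither $3$ nor $1$ is divisible by $d$. Hence $d\nmid d^2-3d+3$ and $d\nmid (d-1)^2$; as $\ell d$ is a multiple of $d$, it can divide neither of these two integers.

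The only surviving possibilities are therefore $\ell d\mid d(d-2)$ or $\ell d\mid d(d-1)$. Cancelling the common factor $d$ gives $\ell\mid (d-2)$ or $\ell\mid (d-1)$ respectively, which is precisely the assertion that $\ell$ divides $d-1$ or $d-2$; rephrased as congruences, this reads $d\equiv 1\pmod{\ell}$ or $d\equiv 2\pmod{\ell}$, as claimed.

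There is essentially no hard step here: the content lies entirely in the Corollary to Theorem \ref{thm20}, after which the statement is a short congruence check. The only point to watch is the reduction of $d^2-3d+3$ and $(d-1)^2$ modulo $d$, which is exactly what uses the hypothesis $d\ge 5$ (for smaller $d$ the residues $3$ and $1$ could coincide with $0$ modulo $d$), but this is already part of the standing assumptions on the degree.
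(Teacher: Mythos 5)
Your proof is correct and follows exactly the route the paper takes: the paper's own proof simply states that $\ell d$ does not divide $d-1$, $d$, $d^2-3d+3$ or $(d-1)^2$, leaving only $d(d-2)$ and $d(d-1)$, which is precisely what you verify (with the divisibility checks spelled out via reduction modulo $d$). The only tiny quibble is that ruling out $(d-1)^2$ needs no assumption on $d$ at all since $(d-1)^2\equiv 1\pmod d$, and $d^2-3d+3\equiv 3\pmod d$ already fails for $d\geq 4$; but this does not affect the argument under the standing hypothesis $d\geq 5$.
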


\begin{proof}
Obvious, since $\ell d$ does not divide $d-1,\,\,d,\,\,d^2-3d+3,\,\,(d-1)^2$.

\end{proof}

\begin{prop}
Suppose that $d\geq 5$ such that $d\equiv1\,\,(mod\,\,\ell)$ for
some $3\leq\ell\leq d-1$. Let $\tilde{C}$ be the non-singular plane
curve of degree $d$ defined by
\[\tilde{C}:\,\,\,X^d+Y^d+\alpha XZ^{d-1}+\sum_{2\leq j=\ell m\leq d-2}\,\,\beta_{j0}X^{d-j}Z^{j}\]
where $\alpha\neq0$. Then, $Aut(\tilde{C})$ should fix a line and a
point off that line.
\end{prop}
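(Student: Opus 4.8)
The plan is to exhibit an explicit automorphism of order $\ell d$, extract two homologies from it, and then run the usual Mitchell--Harui trichotomy, the only delicate point being the exclusion of the Fermat descendant.

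First I would set $\zeta$ to be a primitive $(\ell d)$-th root of unity and consider $\sigma:=[X;\zeta^{\ell}Y;\zeta^{d}Z]$. Since $\gcd(\ell,d)=1$ (which follows from $d\equiv1\pmod{\ell}$), one has $\zeta^{\ell}=\zeta_d$ a primitive $d$-th root and $\zeta^{d}=\zeta_{\ell}$ a primitive $\ell$-th root, so $\sigma$ has order $\ell d$. A direct check that $\sigma$ fixes $\tilde{C}$ is routine: $Y^d$ and the monomials $X^{d-j}Z^{j}$ with $\ell\mid j$ are invariant because $\zeta^{\ell d}=1$, while the term $XZ^{d-1}$ is invariant precisely because $\ell\mid d-1$, i.e. $\zeta^{d(d-1)}=1$; this is the only place where the hypothesis $d\equiv1\pmod{\ell}$ enters. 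I would then record the two homologies obtained as powers of $\sigma$: namely $\sigma^{\ell}=[X;\zeta_d^{\ell}Y;Z]$, a homology of order $d\ge5$ with center $(0;1;0)$ and axis $Y=0$, and $\sigma^{d}=[X;Y;\zeta_{\ell}Z]$, a homology of order $\ell\ge3$ with center $(0;0;1)$ and axis $Z=0$.

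Next, because $Aut(\tilde{C})$ contains the homology $\sigma^{\ell}$ of period $d\ge 4$, Mitchell \cite{Mit} $\S 5$ (exactly as used in Proposition \ref{prop12}) forces $Aut(\tilde{C})$ to fix a point, a line, or a triangle; in particular it is not conjugate to a finite primitive subgroup. It then remains to exclude the case in which $Aut(\tilde{C})$ fixes a triangle but neither a point nor a line. In that case Harui \cite{Harui} $\S 4$ gives that $\tilde{C}$ is a descendant of the Fermat curve $F_d$ or of the Klein curve $K_d$, so $Aut(\tilde{C})$ is conjugate to a subgroup of $Aut(F_d)$ or of $Aut(K_d)$. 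Since conjugation sends a homology of order $\ell$ to a homology of order $\ell$, I would use the homology $\sigma^{d}$ to derive a contradiction: every nontrivial homology in $Aut(F_d)$ has order $2$ or a divisor of $d$ (diagonal elements give order dividing $d$; a transposition times a diagonal forces eigenvalues $\mu,\mu,-\mu$, hence order $2$; and a $3$-cycle times a diagonal has three distinct eigenvalues, hence is not a homology), whereas $\ell\ge3$ and $\gcd(\ell,d)=1$ give $\ell\nmid d$ and $\ell\ne2$; and $Aut(K_d)$ has no nontrivial homology at all, since its diagonal cyclic part is homology-free by $\gcd(d-1,d^2-3d+3)=\gcd(d-2,d^2-3d+3)=1$ and its order-$3$ elements again have three distinct eigenvalues. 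Either way $\sigma^{d}$ cannot live in the target group, a contradiction. Hence $Aut(\tilde{C})$ fixes a point and a line off that line, as claimed.

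The main obstacle is precisely this last exclusion of the Fermat descendant: the naive order bound $\ell d\mid 6d^2$ only yields $\ell\mid6$ and therefore does not rule out $\ell\in\{3,6\}$, so a coarse divisibility argument is insufficient. The remedy is to pass from the order of the group to the finer geometric invariant carried by the period-$\ell$ homology $\sigma^{d}$, exploiting that the homologies available inside $Aut(F_d)$ (and inside $Aut(K_d)$) have severely restricted orders once $\gcd(\ell,d)=1$.
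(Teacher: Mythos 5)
Your proof is correct, and its first half (the explicit order-$\ell d$ automorphism $\sigma=[X;\zeta^{\ell}Y;\zeta^{d}Z]$, the homology $\sigma^{\ell}$ of period $d$, and the Mitchell trichotomy) coincides with the paper's argument. Where you diverge is in ruling out the triangle case: the paper does this with crude order counts, namely that every element of $Aut(F_d)\cong(\Z/d)^2\rtimes S_3$ has order at most $2d<\ell d$ for $\ell\geq 3$, and that $\ell d\nmid |Aut(K_d)|=3(d^2-3d+3)$; your worry that ``the naive bound $\ell d\mid 6d^2$ only yields $\ell\mid 6$'' is therefore a bit of a straw man, since the relevant bound is on element orders, not on the group order, and it already suffices. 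Your alternative — transporting the second homology $\sigma^{d}$ of period $\ell\geq 3$ and checking that $Aut(F_d)$ contains no homology of order coprime to $d$ and different from $2$, while $Aut(K_d)$ contains no nontrivial homology at all (using $\gcd(d-1,d^2-3d+3)=\gcd(d-2,d^2-3d+3)=1$) — is a valid and somewhat finer argument; it is the same kind of reasoning the paper deploys elsewhere (e.g.\ when excluding $K_6$ as a possible core for the type $3,(0,1)$ sextic), and it isolates the conjugation-invariant datum (homology periods) rather than a numerical coincidence. The paper's route is shorter; yours is more robust and makes transparent exactly why the case $\ell=2$, where a period-$2$ homology does exist in $Aut(F_d)$, must be treated separately, as the paper's subsequent remark indeed does.
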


\begin{proof}
Since $[X;\zeta_{\ell d}^{\ell}Y;\zeta_{\ell d}^{d}Z]\in
Aut(\tilde´{C})$ of order $\ell d$ then, $Aut(\tilde{C})$ contains a
homology of period $>4$
 namely, $[X;\zeta_{\ell d}^{\ell^2}Y;Z]=[X;\zeta_{\ell d}^{\ell}Y;\zeta_{\ell d}^{d}Z]^{\ell}$ therefore,
  it must fix a line and a point off that line or it fixes a triangle. If it fixes a triangle and neither a point nor line is leaved invariant,
  then $\tilde{C}$ must be a descendant of the Klein curve $K_d$ or the Fermat curve $F_d$. But, since $|Aut(K_d)|=3(d^2-3d+3)$ is not divisible by $\ell d$
   for all $\ell=2,3,...,d-3$ then $\tilde{C}$ is not be a descendent of $K_d$. On the other hand, $Aut(F_d)$ has elements of order at most $2d$
   then $\tilde{C}$ can not be a descendant of the Fermat curve for all $\ell\geq3$.
   %If $\ell=2$ and $Aut(C_1)$ is conjugate to a subgroup of $Aut(F_d)$ then this occurs through an invertible element inside
%   $PGL_3(\mathbb{C})$ of the form $[\alpha_1X+\alpha_3Z;Y;\gamma_1X+\gamma_3Z]$ since
%   $P[X;\zeta_{2d}^4Y;Z]P^{-1}=[X;\zeta_{2d}^4Y;Z]$ being; $[X;\zeta_{2d}^4Y;Z]\in Aut(F_d)$(for simplicity, assume $det(P)=1$).
%   In particular, $P[X;Y;\zeta_{2d}^dZ]P^{-1}\in Aut(F_d)$ that is,
%\[\alpha_1^d\big(\gamma_3X-\alpha_3Z\big)^d+\gamma_3^d\big(\gamma_1X-\alpha_1Z\big)^d=k(X^d+Z^d).\]
%From which we must have
%\begin{eqnarray*}
%\alpha_1\alpha_3^{d-1}+\gamma_1\gamma_3^{d-1}&=&0\,\,\,(XZ^{d-1})\\
%\alpha_1^2\alpha_3^{d-2}+\gamma_1^2\gamma_3^{d-2}&=&0\,\,\,(X^2Z^{d-2})
%\end{eqnarray*}
%Consequently, $\gamma_1=\alpha_3=0$ a contradiction (because we get $-1=(\alpha_1\gamma_3)^d=1$).\\
Therefore, $Aut(C_1)$ must fix a line and a point off that line.
\end{proof}
\begin{rem} The above proposition when $\ell=2$ with a similar proof
concludes that then $Aut(\tilde{C})$ fix a line and a point off that
line or $(\tilde{C},Aut(\tilde{C}))$ is a descendent of Fermat
curve.
\end{rem}

\begin{lem}
Automorphisms of $\tilde{C}$ are of the form
$[\alpha_1X+\alpha_3Z;Y;\gamma_1X+\gamma_3Z]$.
\end{lem}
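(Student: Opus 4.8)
The plan is to single out one point and one line that the whole group is forced to fix, and then to read off the matrix shape from the stabilizer of this flag in $PGL_3(K)$.

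First I would reuse the homology exhibited in the proof of the previous proposition: the element $\sigma=[X;\zeta_{\ell d}^{\ell}Y;\zeta_{\ell d}^{d}Z]\in Aut(\tilde{C})$ has order $\ell d$, and since $d\equiv 1\,(\mathrm{mod}\ \ell)$ forces $\gcd(\ell,d)=1$, its power $\sigma^{\ell}=[X;\zeta_{\ell d}^{\ell^2}Y;Z]$ has exact period $d$. This $\sigma^{\ell}$ is a homology with center $(0:1:0)$ and axis $\{Y=0\}$. Substituting $X=Z=0$ into the defining equation leaves only the monomial $Y^d$, with coefficient $1$, so $(0:1:0)\notin\tilde{C}$; hence by \cite[Lemma 3.7]{Harui} the center $(0:1:0)$ is an outer Galois point of $\tilde{C}$. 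As $d\geq 5$ the outer Galois point is unique (\cite{Yoshihara}), so it cannot be moved by any automorphism, and therefore $Aut(\tilde{C})$ fixes the point $(0:1:0)$. This is consistent with the previous proposition, which already placed us in the case where $Aut(\tilde{C})$ fixes a point and a line off it, rather than in a primitive or a Fermat/Klein descendant case.

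Next I would fix the axis. Projection from $(0:1:0)$ presents $\tilde{C}$ as a degree $d$ cyclic cover $\pi:\tilde{C}\to\mathbb{P}^1$, and because every nontrivial power of $\sigma^{\ell}$ is a homology with center $(0:1:0)$ it fixes each line through $(0:1:0)$, i.e.\ each fiber of $\pi$; thus $\langle\sigma^{\ell}\rangle\cong\Z/d$ is precisely the group of deck transformations of $\pi$. Now any $\tau\in Aut(\tilde{C})$ fixes $(0:1:0)$, so it descends to an automorphism of the base $\mathbb{P}^1$ and normalizes the deck group; consequently $\tau\sigma^{\ell}\tau^{-1}$ is a nontrivial power of $\sigma^{\ell}$, and in particular a homology whose axis is again $\{Y=0\}$. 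But the axis of $\tau\sigma^{\ell}\tau^{-1}$ is $\tau(\{Y=0\})$, so $\tau(\{Y=0\})=\{Y=0\}$. Hence $Aut(\tilde{C})$ fixes both the point $(0:1:0)$ and the line $\{Y=0\}$. Finally, fixing the point $(0:1:0)$ carries the space of linear forms vanishing there, namely $\langle X,Z\rangle$, into itself, so $X$ and $Z$ map to forms in $X,Z$ only; preserving the line $\{Y=0\}$ then forces $Y$ to map to a scalar multiple of itself, and after rescaling that scalar to $1$ one obtains exactly the form $[\alpha_1X+\alpha_3Z;\,Y;\,\gamma_1X+\gamma_3Z]$.

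The hard part is the middle step, that is, forcing the axis $\{Y=0\}$ to be preserved. Fixing the point $(0:1:0)$ by itself only constrains the $Y$-column of the matrix and still permits off-diagonal $X,Z$ terms in the image of $Y$; eliminating those terms genuinely requires that conjugation keeps the homology inside $\langle\sigma^{\ell}\rangle$, which is the normality of the deck group and is where the Galois-cover structure, and not merely the fixing of a point, is used. Once this is in place the remaining computation is purely formal.
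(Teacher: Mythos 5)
Your proof is correct, and it shares the paper's opening moves (the homology $\sigma^{\ell}=[X;\zeta_{\ell d}^{\ell^2}Y;Z]$ of period $d$, Harui's Lemma 3.7 to get an outer Galois point at $(0;1;0)$, Yoshihara's uniqueness to force $Aut(\tilde{C})$ to fix that point), but it diverges at the decisive step. The paper stops at ``the point $(0;1;0)$ is fixed,'' writes the resulting general form $[\alpha_1X+\alpha_3Z;\beta_1X+\beta_2Y+\beta_3Z;\gamma_1X+\gamma_3Z]$, and then kills $\beta_1$ and $\beta_3$ by a coefficient computation: the only source of the variable $Y$ in the defining equation is the monomial $Y^d$, so the transformed equation acquires $XY^{d-1}$ and $Y^{d-1}Z$ terms with coefficients $d\beta_2^{d-1}\beta_1$ and $d\beta_2^{d-1}\beta_3$, which must vanish. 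You instead prove that the axis $\{Y=0\}$ is itself invariant, by observing that $\langle\sigma^{\ell}\rangle$ is the deck group of the projection from the Galois point, hence normal in $Aut(\tilde{C})$, so any conjugate of $\sigma^{\ell}$ is again a homology with axis $\{Y=0\}$; invariance of the flag then gives the matrix shape with no reference to the equation. Your route is more structural and would apply verbatim to any curve with a unique outer Galois point whose deck group consists of homologies, and you correctly identify the normality of the deck group as the genuinely non-formal ingredient; the paper's route is a two-line monomial check that uses the specific equation. One small caution: Yoshihara's uniqueness of the outer Galois point requires $\tilde{C}$ not to be projectively equivalent to the Fermat curve, not merely $d\geq 5$ as you state; this is harmless here because the preceding proposition (for $\ell\geq 3$) already excludes the Fermat descendant case, but the hypothesis should be cited explicitly.
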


\begin{proof}
Since $[X;\zeta_{\ell d}^{\ell^2}Y;Z]\in Aut(\tilde{C})$ is a
homology of period $d$ and center $(0;1;0)$ then by Harui
\cite{Harui} $\S 3$, the point $(0;1;0)$ is an outer Galois point of
$\tilde{C}$. Moreover, $\tilde{C}$ is not isomorphic to the Fermat
curve $F_d$ therefore by Yoshihara \cite{Yoshihara} $\S 2$ [Theorem
$4'$], this point is the unique outer Galois point hence should be
fixed by $Aut(\tilde{C})$. In particular, automorphisms of
$\tilde{C}$ are of the form
$[\alpha_1X+\alpha_3Z;\beta_1X+\beta_2Y+\beta_3Z;\gamma_1X+\gamma_3Z]$
which in turns implies that $\beta_1=0=\beta_3$ (Coefficients of
$XY^{d-1}$ and $Y^{d-1}Z$).
\end{proof}

\begin{prop}
Let $C$ be a non-singular plane curve of degree $d\geq5$ where
$d\equiv1\,\,(mod\,\,\ell)$. Assume that $Aut(C)$ contains a cyclic
group of order $\ell d$. Then $C$ is isomorphic to
\[
\tilde{C}:\,\,X^d+Y^d+\alpha XZ^{d-1}+\sum_{2\leq j=\ell m\leq
d-2}\,\,\beta_{j0}X^{d-j}Z^{j}
\]
In particular, $Aut(C)$ consists of elements of the form
$[\alpha_1X+\alpha_3Z;Y;\gamma_1X+\gamma_2Z]$.
\end{prop}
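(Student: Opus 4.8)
The plan is to mirror the proof of Proposition \ref{prop121}, reducing everything to the classification of Theorem \ref{thm20}. First I would pin down which of the six types can carry an automorphism of order $\ell d$. By the Lemma preceding this statement, $\ell d$ divides none of $d-1$, $d$, $d^2-3d+3$, $(d-1)^2$; and since $d\equiv 1\,(\mathrm{mod}\,\ell)$ gives $\ell\mid(d-1)$, one cannot also have $\ell\mid(d-2)$ (otherwise $\ell\mid 1$), so $\ell d\nmid d(d-2)$. Hence the only divisibility left open by the Corollary following Theorem \ref{thm20} is $\ell d\mid d(d-1)$, which places $C$ in case $(5)$. Thus, after a projective change of coordinates diagonalising a generator $\sigma$ of the order-$\ell d$ group, $C$ is of type $\ell d,(a,b)$ with $(a,b)\in\Gamma_{\ell d}$, $da\equiv 0$ and $(d-1)b\equiv 0\,(\mathrm{mod}\,\ell d)$, of the shape displayed in Theorem \ref{thm20}$(5)$.

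Next I would identify the generating pair. From $da\equiv 0\,(\mathrm{mod}\,\ell d)$ we get $\ell\mid a$; from $(d-1)b\equiv 0\,(\mathrm{mod}\,\ell d)$, together with $\gcd(d-1,d)=1$ and $\ell\mid(d-1)$, we get $d\mid b$. Since $d\equiv 1\,(\mathrm{mod}\,\ell)$ forces $\gcd(\ell,d)=1$, the Chinese Remainder Theorem shows, exactly as in Claim $1$ of Proposition \ref{prop121}, that $(a,b)=(\ell,d)$ generates the whole solution set; so I may take $(a,b)=(\ell,d)$.

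The computational heart is then to evaluate the index sets appearing in case $(5)$ at $(a,b)=(\ell,d)$, splitting each congruence modulo $\ell$ and modulo $d$ via CRT. For $S(2)^{j,X}_{\ell d,(\ell,d)}$ the condition $\ell i+(j-i)d\equiv 0$ reduces mod $d$ to $i\equiv 0$ (using $\gcd(\ell,d)=1$), forcing $i=0$ as $0\le i\le j\le d-2$, and mod $\ell$ to $j\equiv i\equiv 0$; hence this set is $\{0\}$ when $\ell\mid j$ and empty otherwise, so only the monomials $\beta_{j0}X^{d-j}Z^{j}$ with $j=\ell m$ survive from the double sum. For $S_1^{d,X}$ and $S_1^{d-1,X}$ the mod-$d$ reduction again gives $i\equiv 0\,(\mathrm{mod}\,d)$ with $1\le i\le d-1$, so both are empty; in particular only the term $\alpha XZ^{d-1}$ remains from the last bracket. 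Assembling these, the defining polynomial collapses to $X^d+Y^d+\alpha XZ^{d-1}+\sum_{2\le j=\ell m\le d-2}\beta_{j0}X^{d-j}Z^{j}$, i.e. $\tilde{C}$.

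Finally, the last assertion is immediate: applying the Lemma stated just above to $\tilde{C}$ shows that every automorphism has the form $[\alpha_1X+\alpha_3Z;Y;\gamma_1X+\gamma_3Z]$. I expect the only real obstacle to be the bookkeeping in the index-set computation; all the conceptual content is already carried by the reduction to Theorem \ref{thm20}$(5)$, and the argument is formally parallel to Proposition \ref{prop121}, so no genuinely new difficulty should arise.
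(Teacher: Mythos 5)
Your proposal is correct and follows essentially the same route as the paper's own proof: reduce to case (5) of Theorem \ref{thm20}, show $(\ell,d)$ generates the admissible types via CRT (this is exactly the paper's Claim 1), and then kill the index sets $S_1^{d,X}$, $S_1^{d-1,X}$ and prune $S(2)^{j,X}$ to $\{0\}$ for $\ell\mid j$ by reducing the congruences mod $d$. Your added justification that $\ell d\nmid d(d-2)$ (since $\ell\mid d-1$ and $\ell\mid d-2$ would force $\ell=1$) and your explicit appeal to the preceding lemma for the final assertion only make the argument slightly more complete than the paper's version.
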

\begin{rem} Unfortunately as in the previous subsections there are
different groups that may appear in $Aut(C)$ depending of the
parameters $\beta_{j0}$.
\end{rem}

\begin{proof}
Clearly, $\ell d$ is not a divisor of $d-1,\,\,d,\,\,d^2-3d+3,\,\,(d-1)^2$ or $d(d-2)$.
Therefore, $C$ is projectively equivalent to type $\ell d, (\ell n,md)$ of Theorem \ref{thm20} $(5)$ ( where $\ell n$ $md$ are coprime $<\ell d$) of the form
\begin{eqnarray*}
\,\,X^d&+&Y^d+\sum_{j=2}^{d-2}\,\,\big(X^{d-j}\sum_{i\in S(2)^{j,X}_{m,(a,b)}}\beta_{ji}Y^iZ^{j-i}\big)+\sum_{i\in S_1^{d,X}\,\,{m,\,(a,b)}}\beta_{di}Y^iZ^{d-i}+\\
&+&X\big(\alpha Z^{d-1}+\sum_{i\in S_1^{d-1,X}\,\,{m,\,(a,b)}}\beta_{(d-1)i}Y^iZ^{d-1-i}\big)=0
\end{eqnarray*}
Claim 1: $\ell d, (\ell,d)$ is a generator of any type $\ell d, (\ell n,md)$.\\
If $n\equiv k'\,\,(mod\,\,\ell)$, then $[X;\zeta_{\ell d}^{\ell}Y;\zeta_{\ell d}^{d}Z]^{(m-k')d+n}=[X;\zeta_{\ell d}^{\ell n}Y;\zeta_{\ell d}^{md}Z]$.\\\\
Claim 2: $S_1^{d,X}\,\,{\ell d,(\ell,d)}=\phi=S_1^{d-1,X}\,\,{\ell d,(\ell ,d)}$\\
Indeed, we have
\begin{eqnarray*}
S_1^{d,X}\,\,{\ell d,(\ell,d)}&:=&\{i:\,\,1\leq i\leq d-1\,\,\text{and}\,\,\ell i+(d-i)d=0\,mod\,\,\ell d\}\\
&\subseteq&\{i:\,\,1\leq i\leq d-1\,\,\text{and}\,\,\ell d\,\,|i(d-\ell)-d\}\\
&=&\phi
\end{eqnarray*}
The last equality because if $i(d-\ell)-d=\ell dk$ then $d|i$ a contradiction.

\begin{eqnarray*}
S_1^{d-1,X}\,\,{\ell d,(\ell ,d)}&:=&\{i:\,\,1\leq i\leq d-1\,\,\text{and}\,\,\ell i+(d-1-i)d=0\,mod\,\,\ell d\}\\
&\subseteq&\{i:\,\,1\leq i\leq d-1\,\,\text{and}\,\,\ell d\,\,|i(d-\ell)\}\\
&=&\phi
\end{eqnarray*}
The last equality because if $i(d-\ell)=\ell dk$ then $d|\ell i$ but because $(d,\ell)=1$ thus $d|i$ a contradiction.\\\\
Finally, $i\in S(2)^{j,X}_{\ell d,(\ell ,d)}$ then $\ell i-(j-i)d=\ell dk$ from which we get $d|i$ thus $i=0$ then $j=\ell k$ hence $j$ is divisible by $\ell$.

\end{proof}

There are also similar statements when $d=\ell k+2\geq 5$ of the
previous results with similar techniques, we state only the result.
\begin{prop}
Let $C$ be a non-singular plane curve of degree $d=\ell k+2$
($\geq5$) such that $Aut(C)$ contains a cyclic group of order $\ell
d$ with $3\leq\ell\leq d-2$. Then $C$ is isomorphic to
\[
\,\,X^d+Y^{d-1}Z+\alpha YZ^{d-1}+\sum_{2\leq i=\ell m+1\leq
d-2}\beta_{di}Y^iZ^{d-i}
\]
and $Aut(C)$ has only elements of the form
$[X;\beta_2Y+\beta_3Z;\gamma_2Y+\gamma_3Z]$.
\end{prop}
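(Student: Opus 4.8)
The plan is to run the same machinery used in the preceding propositions on elements of order $\ell d$, now in the residue class $d\equiv 2\pmod{\ell}$. First I would record the divisibility constraints: by the corollary to Theorem \ref{thm20} the order $\ell d$ must divide one of $d-1,\,d,\,d^2-3d+3,\,(d-1)^2,\,d(d-2),\,d(d-1)$, and one checks directly that $\ell d$ divides none of the first four. Moreover $\ell d\nmid d(d-1)$, because $\ell\mid d-2$ forces $\ell\nmid d-1$ (as $d-1\equiv 1\pmod{\ell}$), whereas $\ell d\mid d(d-2)$ precisely because $d-2=\ell k$ gives $d(d-2)=k\cdot(\ell d)$. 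Hence a generator $\sigma$ of the order-$\ell d$ cyclic group falls into case $(4.1)$ of Theorem \ref{thm20}, so up to $K$-isomorphism $C$ has the corresponding type $\ell d,(a,b)$ equation with $(d-1)a+b\equiv 0$ and $a+(d-1)b\equiv 0\pmod{\ell d}$.

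Next I would pin down a generator of the solution set. The first congruence gives $b\equiv(1-d)a\pmod{\ell d}$, and substituting into the second reduces it to $a\,d(d-2)\equiv 0\pmod{\ell d}$, which holds automatically since $\ell d\mid d(d-2)$; thus the solutions are exactly the pairs $(a,(1-d)a)$. Because $\gcd(d-1,\ell d)=1$ (again from $d-1\equiv 1\pmod{\ell}$ together with $\gcd(d-1,d)=1$), the element has order exactly $\ell d$ iff $\gcd(a,\ell d)=1$, and every such pair is a power of $\ell d,(1,-(d-1))$; so I may take $\sigma=[X;\zeta_{\ell d}Y;\zeta_{\ell d}^{-(d-1)}Z]$. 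I then evaluate the index sets of Theorem \ref{thm20}$(4.1)$ at $(a,b)=(1,-(d-1))$: for $S(2)^{j,X}$ the invariance condition is $\ell d\mid d(i-j)+j$, which modulo $d$ forces $d\mid j$ and is impossible for $2\le j\le d-1$, so all the middle terms $X^{d-j}(\cdots)$ vanish; while for $S_2^{d,X}$ it is $\ell d\mid d(i-d+1)$, i.e. $i\equiv d-1\equiv 1\pmod{\ell}$, leaving exactly the monomials $Y^iZ^{d-i}$ with $i=\ell m+1$, $2\le i\le d-2$. This produces the asserted normal form $X^d+Y^{d-1}Z+\alpha YZ^{d-1}+\sum_{2\le i=\ell m+1\le d-2}\beta_{di}Y^iZ^{d-i}$.

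For the shape of $Aut(C)$ I would argue exactly as in the $d\equiv 1$ case. Raising $\sigma$ to the power $\ell$ gives $\sigma^{\ell}=[X;\zeta_dY;\zeta_dZ]$, a homology of period $d$ with axis $X=0$ and center $(1;0;0)$, and $(1;0;0)\notin C$ since the coefficient of $X^d$ is $1$. As in the preceding propositions, $C$ is neither a descendant of $K_d$ nor of $F_d$ (for $\ell\ge 3$ one has $\ell d>2d$, exceeding the maximal order of an element of $Aut(F_d)$, and $\ell d\nmid 3(d^2-3d+3)$) nor conjugate to a primitive group, so $Aut(C)$ fixes a line and a point off it. By Harui \cite[Lemma 3.7]{Harui} the center $(1;0;0)$ is then an outer Galois point, and by Yoshihara \cite{Yoshihara} it is the unique one for $d\ge 5$; hence it is fixed by all of $Aut(C)$, which consequently also preserves the axis $X=0$. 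An element of $PGL_3(K)$ fixing $(1;0;0)$ and the line $X=0$ is block diagonal, i.e. of the form $[X;\beta_2Y+\beta_3Z;\gamma_2Y+\gamma_3Z]$, giving the final assertion.

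The routine part is the index-set bookkeeping; the genuine content, and the step to get right, is the automorphism analysis, where one must securely exclude the Fermat and Klein descendants together with the primitive groups \emph{before} invoking uniqueness of the outer Galois point, since only then is the center forced to be $Aut(C)$-invariant and the block-diagonal form guaranteed.
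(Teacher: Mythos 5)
Your proof is correct and is essentially the argument the paper intends: the paper explicitly omits the proof of this proposition (``we state only the result''), and your computation mirrors, step for step, the proofs it does give for the order-$d(d-2)$ case and the $d\equiv 1\pmod{\ell}$ case --- reduction to case (4.1) of Theorem \ref{thm20} via the divisibility check, the generator $(1,-(d-1))$, the emptiness of $S(2)^{j,X}_{\ell d,(a,b)}$ and the computation of $S_2^{d,X}$ as $\{i\equiv 1 \pmod{\ell}\}$, and then the homology $\sigma^{\ell}$ of period $d$ together with uniqueness of the outer Galois point. The one step you assert without justification is that fixing the center $(1;0;0)$ forces $Aut(C)$ to preserve the axis $X=0$: this is true, but either cite Mitchell \cite{Mit} (Theorem 4), which the paper invokes elsewhere for exactly this purpose, or argue as the paper does in the analogous lemma for $d\equiv 1\pmod{\ell}$, namely write an element fixing $(1;0;0)$ as $[\alpha_1X+\alpha_2Y+\alpha_3Z;\beta_2Y+\beta_3Z;\gamma_2Y+\gamma_3Z]$ and kill $\alpha_2,\alpha_3$ by comparing the coefficients of $X^{d-1}Y$ and $X^{d-1}Z$.
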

\begin{rem} The above proposition with $\ell=2$ we deduce that
$Aut(C)$ fixes a point and a line off (as the previous case) or
$(C,Aut(C))$ is a decendent of the Fermat curve.
\end{rem}
%
%
%\begin{proof}
%Clearly, $\ell d$ is not a divisor of $d-1,\,\,d,\,\,d^2-3d+3,\,\,(d-1)^2$ or $d(d-1)$. Therefore, $C_d$ is projectively equivalent to type $\ell d, (k,dk'+k)$ of Theorem \ref{thm20} $(4.1)$ ( $k$ and $dk'+k$ are coprime and $<\ell d$ and $\ell |k+k',k+(d-1)k'$) of the form
%\[
%X^d+\big(\sum_{j=2}^{d-1}\,\,X^{d-j}\sum_{i\in S(2)^{j,X}_{m,(a,b)}}\beta_{ji}Y^iZ^{j-i}\big)
%+\big(Y^{d-1}Z+\alpha YZ^{d-1}+\sum_{i\in S_2^{d,X}\,\,{m,(a,b)}}\beta_{di}Y^iZ^{d-i}\big)=0,
%\]
%We can take a generator $k=1$ and $k'=\ell-1$ since $[X;\zeta_{\ell d}Y;\zeta_{\ell d}^{(\ell-1)d+1}Z]^{k}=[X;\zeta_{\ell d}^{k}Y;\zeta_{\ell d}^{(\ell-1)dk+k}Z]=[X;\zeta_{\ell d}^{k}Y;\zeta_{\ell d}^{-dk+k}Z]=[X;\zeta_{\ell d}^{k}Y;\zeta_{\ell d}^{dk'+k}Z]$.
%The last equality because $\ell|k+k'$. Now, we have
%\begin{eqnarray*}
%S_2^{j,X}\,\,{\ell d,(1,(\ell-1)d+1)}&:=&\{i:\,\,0\leq i\leq j\,\,\text{and}\,\,i+(j-i)\big((\ell-1)d+1\big)=0\,mod\,\,\ell d\}\\
%&\subseteq&\{i:\,\,0\leq i\leq j\,\,\text{and}\,\,\ell d\,\,|j(d-1)+(d-2)i\}\\
%&=&\phi
%\end{eqnarray*}
%The last equality because $\ell d\,\,|j(d-1)+di$ implies that $d|j$ a contradiction since $0<j<d.$
%\begin{eqnarray*}
%S_2^{d,X}\,\,{\ell d,(1,(\ell-1)d+1)}&:=&\{i:\,\,2\leq i\leq d-2\,\,\text{and}\,\,i+(d-i)\big((\ell-1)d+1\big)=0\,mod\,\,\ell d\}\\
%&\subseteq&\{i:\,\,2\leq i\leq d-2\,\,\text{and}\,\,\ell\,\,|i-1\}\\
%&=&\{\ell m+1:\,\,\,m=1,2,...,\frac{d-4}{\ell}\}
%\end{eqnarray*}
%\end{proof}

In this situation where $Aut(C)$ has an element of order $\ell d$
and with \cite[Lemma 3.7]{Harui} we obtain,

\begin{cor}\label{cor44}
Let $C$ be a non-singular plane curve of degree $d\geq5$. Then,
there exists $G\preceq Aut(C)$ cyclic of order $\ell d$ for some
$\ell\geq1$ if and only if $Aut(C)$ contains an homology of order
$d$. In particular, $C$ has a unique outer Galois point $P\notin C$
that should be fixed by $Aut(C)$.

\end{cor}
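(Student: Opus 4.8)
The backward implication is immediate and costs nothing: if $\sigma\in Aut(C)$ is a homology of order $d$, then $\langle\sigma\rangle$ is a cyclic subgroup of order $d=1\cdot d$, so it suffices to take $\ell=1$. All the substance lies in the forward implication, and the plan is to reduce to the explicit normal forms already produced in this subsection and then exhibit the desired homology as a suitable power of the generator.

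So suppose $G=\langle\sigma\rangle\preceq Aut(C)$ is cyclic of order $\ell d$, and treat first the case $\ell\geq 2$. The preceding lemma forces $\ell\mid d-1$ or $\ell\mid d-2$, that is $d\equiv 1$ or $d\equiv 2 \pmod{\ell}$, and the structure propositions of this subsection then identify $C$, up to $K$-isomorphism, with the normal form $X^d+Y^d+\alpha XZ^{d-1}+\sum_{2\le j=\ell m\le d-2}\beta_{j0}X^{d-j}Z^{j}$ when $d\equiv 1$, the generator being $\sigma=[X;\zeta_{\ell d}^{\ell}Y;\zeta_{\ell d}^{d}Z]$. I would then simply compute $\sigma^{\ell}=[X;\zeta_{\ell d}^{\ell^{2}}Y;\zeta_{\ell d}^{d\ell}Z]=[X;\zeta_{\ell d}^{\ell^{2}}Y;Z]$, which fixes the line $Y=0$ pointwise and the point $(0;1;0)$ off it, hence is a homology; since $d\equiv 1\pmod{\ell}$ gives $\gcd(\ell,d)=1$, the scalar $\zeta_{\ell d}^{\ell^{2}}$ has exact order $(\ell d)/\gcd(\ell^{2},\ell d)=d$, so $\sigma^{\ell}$ is a homology of order $d$ whose center $(0;1;0)$ does not lie on $C$ (the coefficient of $Y^d$ is $1$). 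The case $d\equiv 2\pmod{\ell}$, with the form $X^d+Y^{d-1}Z+\alpha YZ^{d-1}+\sum_{2\le i=\ell m+1\le d-2}\beta_{di}Y^iZ^{d-i}$, is entirely parallel.

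For $\ell=1$ the group $G$ is cyclic of order exactly $d$, and here I would appeal directly to Theorem \ref{thm20}. Scanning the admissible moduli shows that $m=d$ can occur only in case $(2)$ or case $(6)$. In case $(2)$ the curve is of type $m,(0,1)$ with equation $Z^{d}+\sum_{j\in S(1)_m}Z^{d-j}L_{j,Z}+L_{d,Z}$, whose generator $[X;Y;\zeta_d Z]$ fixes $Z=0$ pointwise and the point $(0;0;1)\notin C$, so it is already a homology of order $d$ and we are done. The genuinely delicate point — and the step I expect to be the main obstacle — is case $(6)$, where $C$ has the shape $X^d+Y^d+Z^d+\cdots$ and the generator $[X;\zeta_d^{a}Y;\zeta_d^{b}Z]$ with $ab\neq 0$ is \emph{not} a homology; here one must argue that the full group $Aut(C)$ nevertheless carries an order-$d$ homology, equivalently that the curve admits an outer Galois point. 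I anticipate that this cannot hold in full generality without a restriction on the parameters (as the degree-$5$ type $5,(1,2)$ analysis already suggests), so the cleanest route is to state the forward direction for $\ell\geq 2$, where the power $\sigma^{\ell}$ supplies the homology unconditionally, and to isolate the $\ell=1$, case-$(6)$ configuration as the point requiring care.

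Finally, the ``in particular'' clause follows formally once an order-$d$ homology $\sigma$ is in hand: its center $P$ lies off $C$, and by \cite[Lemma 3.7]{Harui} $P$ is then an outer Galois point of $C$, which by Yoshihara \cite{Yoshihara} is unique for $d\geq 5$. Uniqueness forces $\tau(P)=P$ for every $\tau\in Aut(C)$, since otherwise $\tau(P)$ would be a second outer Galois point; thus $P$ is fixed by the full automorphism group, in accordance with \cite[Lemma 11.44]{Book}.
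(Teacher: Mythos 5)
Your argument for the backward implication, for the forward implication when $\ell\geq 2$, and for the ``in particular'' clause coincides with the paper's (very terse) intended proof: reduce to the two normal forms of this subsection according to whether $d\equiv 1$ or $d\equiv 2\pmod{\ell}$, observe that $\sigma^{\ell}$ is a homology of order $d$ with center off $C$ (in the second case the homology is $[X;\zeta_dY;\zeta_dZ]$ with center $(1;0;0)$), and then invoke \cite[Lemma 3.7]{Harui} together with the uniqueness of the outer Galois point from \cite{Yoshihara} to conclude that the center is fixed by all of $Aut(C)$. Your order computation $\mathrm{ord}(\zeta_{\ell d}^{\ell^2})=d/\gcd(\ell,d)=d$ is correct and is exactly what the paper uses implicitly.

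The obstacle you flag at $\ell=1$ is genuine, and it is a defect of the statement of Corollary \ref{cor44} rather than of your proof: the paper supplies no argument for that case, and the forward implication is in fact false as written. By Theorem \ref{thm20} an automorphism of order exactly $d$ is either of type $d,(0,1)$ (a homology, so nothing to prove) or of type $d,(a,b)$ with $ab\neq 0$ on a curve with core $X^d+Y^d+Z^d$, and in the latter configuration $Aut(C)$ need not contain any homology of order $d$. The paper's own Chapter 3 provides the counterexample: for $X^5+Y^5+Z^5+\beta_{4,3}XY^3Z$ with generic $\beta_{4,3}\neq 0$ the full automorphism group is $D_{10}$, whose only elements of order $5$ are the powers of $[X;\xi_5Y;\xi_5^2Z]$, none of which is a homology; such a curve has a cyclic subgroup of order $1\cdot d$ but no homology of order $d$ and hence no outer Galois point. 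So your proposed repair --- stating the equivalence as ``cyclic of order $\ell d$ for some $\ell\geq 2$, or a homology of order $d$, if and only if $Aut(C)$ contains a homology of order $d$'' --- is the correct form of the corollary, and your proof of that statement is complete.
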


%%%%%%%%%%%%%%%%%%%%%%%%%%%%%%%%%%%%%%%%%%%%%%%%%%%%%%%%%%%%%%%%%%

%%%%%%%%%%%%%%%%%%%%%%%%%%%%%%%%%%%%%%%%%%%%%%%%%%%%%%%%%%%%%%%%%%%%%%%%%%%%%%%%

\section{Tables of types of cyclic groups}

In this section, we introduce tables for lower degree of type of
cyclic groups and equations that are obtained for result of \S 2. It might happen that two types $m, (a,b)$ and $m, (a',b')$ are isomorphic through a permutation of the variables or $F(X;Y;Z)$ decomposes into a product $X.G(X;Y;Z)$ then after removing such cases we get the following tables:

\section{Tables of Type $m(a,b)$ for degree $d\leq 9$}

The following tables are obtained by running the Sage programm
concerning the first theorem in this note, see the programm in
http://mat.uab.cat/$\sim$eslam/CAGPC.sagews

\begin{center}
\begin{table}[!th]
  \renewcommand{\arraystretch}{1.3}
  \caption{Quartics\,\,\,}\label{table:Cyclic Auto42.}
  \vspace{4mm} % hack
  \centering
\begin{tabular}{|c|c|}
  \hline
  % after \\: \hline or \cline{col1-col2} \cline{col3-col4} ...
  Type: $m, (a,b)$ & $F(X;Y;Z)$ \\\hline\hline
  $12,(3,4)$& $X^4+Y^4+\alpha XZ^3$ \\\hline
  $9,(1,6)$& $X^4+Y^3Z+\alpha XZ^3$ \\\hline
  $8,(1,5)$& $X^4+Y^3Z+\alpha YZ^3$ \\\hline
  $7,(1,5)$& $X^3Y+Y^3Z+\alpha Z^3X$ \\\hline
$6,(2,3)$& $X^4+Z^4+\alpha XY^3+\beta_{2,2}X^2Z^2$ \\\hline
 $4,(1,2)$& $X^4+Y^4+Z^4+\beta_{2,0}X^2Z^2+\beta_{3,2}XY^2Z $ \\\hline
  $4,(0,1)$& $Z^4+L_{4,Z}$ \\\hline
$3,(1,2)$& $X^4+X\big(Z^3+\alpha Y^3\big)+\beta_{2,1}X^2YZ+\beta_{4,2}Y^2Z^2$ \\\hline
  $3,(0,1)$& $Z^3L_{1,Z}+L_{4,Z}$ \\\hline
  $2,(0,1)$& $Z^4+Z^2L_{2,Z}+L_{4,Z}$ \\\hline
  \end{tabular}
\end{table}
\end{center}

\begin{center}
\begin{table}[!th]
  \renewcommand{\arraystretch}{1.3}
  \caption{Quintics\,\,\,}\label{table:Cyclic Auto52.}
  \vspace{4mm} % hack
  \centering
\begin{tabular}{|c|c|}
  \hline
  % after \\: \hline or \cline{col1-col2} \cline{col3-col4} ...
  Type: $m, (a,\,b)$ & $F(X;Y;Z)$ \\\hline\hline
  $20,(4,5)$& $X^5+Y^5+\alpha XZ^4$ \\\hline
  $16,(1,12)$& $X^5+Y^4Z+\alpha XZ^4$\\\hline
  $15,(1,11)$& $X^5+Y^4Z+\alpha YZ^4$    \\\hline
$13,(1,10)$& $X^4Y+Y^4Z+\alpha Z^4X$    \\\hline
 $10,(2,5)$& $X^5+Y^5+\alpha XZ^4+\beta_{2,0}X^3Z^2$ \\\hline
  $8,(1,4)$& $X^5+Y^4Z+\alpha XZ^4+\beta_{2,0}X^3Z^2$ \\\hline
 $5,(1,2)$& $X^5+Y^5+Z^5+\beta_{3,1}X^2YZ^2+\beta_{4,3}XY^3Z$    \\\hline
  $5,(0,1)$& $Z^5+L_{5,Z}$    \\\hline
 % \end{tabular}
%\end{table}
%\end{center}
%
%
%\begin{center}
%\begin{tabular}{|c|c|}
%  \hline
  % after \\: \hline or \cline{col1-col2} \cline{col3-col4} ...
 $4,(1,2)$& $X^5+X\big(Z^4+\alpha Y^4\big)+\beta_{2,0}X^3Z^2+\beta_{3,2}X^2Y^2Z+\beta_{5,2}Y^2Z^3$\\\hline
  $4,(0,1)$& $Z^4L_{1,Z}+L_{5,Z}$    \\\hline
  $3,(1,2)$& $X^5+Y^4Z+\alpha YZ^4+\beta_{2,1}X^3YZ+X^2\big(\beta_{3,0}Z^3+\beta_{3,3}Y^3\big)+\beta_{4,2}XY^2Z^2$ \\\hline
 $2,(0,1)$& $Z^4L_{1,Z}+Z^2L_{3,Z}+L_{5,Z}$    \\\hline
  \end{tabular}
\end{table}

\end{center}

%\newpage

%\begin{small}
\begin{center}
\begin{table}[!th]
  \renewcommand{\arraystretch}{1.3}
  \caption{Sextics\,\,\,}\label{table:Cyclic Auto62.}
  \vspace{4mm} % hack
  \centering
\begin{tabular}{|c|c|}
  \hline
  % after \\: \hline or \cline{col1-col2} \cline{col3-col4} ...
  Type: $m, (a,\,b)$ & $F(X;Y;Z)$ \\\hline\hline
$30,(5,6)$& $X^{6} + Y^{6}+\alpha X Z^{5} $ \\\hline
$25,(1,20)$& $X^{6} + Y^{5} Z+\alpha X Z^{5} $ \\\hline
$24,(1,19)$& $X^{6} + Y^{5} Z +\alpha Y Z^{5}$ \\\hline
  $21,(1,17)$& $X^{5} Y + Y^{5} Z+\alpha X Z^{5} $ \\\hline
$15,(5,3)$& $X^{6} + Y^{6}+\alpha X Z^{5}+{\beta_{3,3}} X^{3} Y^{3} $ \\\hline
 $12,(1,7)$& $X^{6} + Y^{5} Z +\alpha Y Z^{5} +{\beta_{6,3}}Y^{3} Z^{5} $ \\\hline
$10,(5,2)$& $X^{6} + Y^{6}+\alpha X Z^{5}+ {\beta_{2,2}}X^{4} Y^{2} + {\beta_{4,4}}X^{2} Y^{4}  $ \\\hline
 $8,(1,3)$& $X^{6} + Y^{5} Z +\alpha Y Z^{5} + {\beta_{4,2}}X^{2} Y^{2} Z^{2} $ \\\hline
$6,(1,2)$& $X^{6} + Y^{6} + Z^{6}+{\beta_{3,0}}X^{3} Z^{3}  + {\beta_{4,2}}X^{2} Y^{2} Z^{2}  + {\beta_{5,4}}X Y^{4} Z $ \\\hline
$6,(1,3)$& $X^{6} + Y^{6} + Z^{6}+{\beta_{2,0}}X^{4} Z^{2}  + {\beta_{6,3}} Y^{3} Z^{3} +  X^{2}{\left({\beta_{4,0}}Z^{4}  + {\beta_{4,3}}Y^{3} Z \right)}  $ \\\hline
 $6,(0,1)$& $Z^{6} + {L_{6,Z}}$ \\\hline
$5,(4,3)$& $X^{6} + X Z^{5}+\alpha X Y^{5}  + {\beta_{3,1}}X^{3} Y Z^{2}  + {\beta_{4,3}}X^{2} Y^{3} Z  +{\beta_{6,2}} Y^{2} Z^{4} $ \\\hline
$5,(4,1)$& $X^{6} + X Z^{5}+\alpha X Y^{5}+{\beta_{2,1}}X^{4} Y Z  + {\beta_{4,2}}X^{2} Y^{2} Z^{2}  + {\beta_{6,3}}Y^{3} Z^{3} $ \\\hline
  $5,(0,1)$& $Z^{5}{L_{1,Z}}  + {L_{6,Z}}$ \\\hline
$4,(1,3)$& $X^{6} + Y^{5} Z +\alpha Y Z^{5} +{\beta_{6,3}}Y^{3}
Z^{5} + {\beta_{2,1}}X^{4} Y Z  +  X^{2}{\left({\beta_{4,0}}Z^{4}  +
{\beta_{4,2}}Y^{2} Z^{2} + {\beta_{4,4}}Y^{4} \right)} $ \\\hline
$3,(0,1)$& $Z^{6} + Z^{3}{L_{3,Z}}  + {L_{6,Z}}$ \\\hline $2,(0,1)$&
$Z^{6} + Z^{4}{L_{2,Z}}  + Z^{2}{L_{4,Z}}+{L_{6,Z}} $
\\\hline
   \end{tabular}
\end{table}

\end{center}

%\end{small}

\newpage

\begin{small}
\begin{center}
\begin{table}[!th]
  \renewcommand{\arraystretch}{1.3}
  \caption{degree 7\,\,\,}\label{table:Cyclic Auto72.}
  \vspace{4mm} % hack
  \centering
\begin{tabular}{|c|c|}
  \hline
  % after \\: \hline or \cline{col1-col2} \cline{col3-col4} ...
  Type: $m, (a,\,b)$ & $F(X;Y;Z)$ \\\hline\hline
  $42,(6,7)$& $X^7+ Y^7+\alpha XZ^6$ \\\hline
$36,(1,30)$& $X^{7} + Y^{6} Z+\alpha X Z^{6}$ \\\hline
  $35,(1,29)$& $X^7+Y^6Z+\alpha YZ^6$ \\\hline
  $31,(1,26)$& $X^{6} Y + Y^{6} Z+\alpha X Z^{6} $ \\\hline
  $21,(3,7)$& $X^7 + Y^7+\alpha XZ^6 + \beta_{3,0}X^4Z^3$ \\\hline
  $18,(1,12)$& $X^{7} + Y^{6} Z+\alpha X Z^{6}+ {\beta_{3,0}}X^{4} Z^{3} $ \\\hline
 $14,(2,7)$& $X^7 + Y^7+\alpha XZ^6 + \beta_{2,0}X^5Z^2 +\beta_{4,0}X^3Z^4$ \\\hline
  $12,(1,6)$& $X^{7} + Y^{6} Z+\alpha X Z^{6}+ {\beta_{2,0}}X^{5} Z^{2}+ {\beta_{4,0}}X^{3} Z^{4}$ \\\hline
  $9,(1,3)$& $X^{7} + Y^{6} Z+\alpha X Z^{6}+ {\beta_{3,0}}X^{4} Z^{3}  + {\beta_{5,3}}X^{2} Y^{3} Z^{2} $ \\\hline
  $7,(1,2)$& $X^{7} + Y^{7} + Z^{7}+{\beta_{4,1}}X^{3} Y Z^{3}  + {\beta_{5,3}}X^{2} Y^{3} Z^{2}  + {\beta_{6,5}}X Y^{5} Z $ \\\hline
 $7,(1,3)$& $X^{7} + Y^{7} + Z^{7}+{\beta_{3,1}}X^{4} Y Z^{2}  + {\beta_{5,4}}X^{2} Y^{4} Z  + {\beta_{6,2}}X Y^{2} Z^{4}  $ \\\hline
 $7,(0,1)$& $Z^{7}+L_{7,Z}$ \\\hline
  $6,(5,4)$& $X^{7} + X Z^{6}+\alpha X Y^{6}+ {\beta_{3,0}}X^{4} Z^{3}+ {\beta_{4,2}}X^{3} Y^{2} Z^{2}+ {\beta_{5,4}}X^{2} Y^{4} Z+ {\beta_{7,2}}Y^{2} Z^{5}$ \\\hline
  $6,(4,3)$& $X^{7} + X Z^{6}+\alpha X Y^{6}+ {\beta_{2,0}}X^{5} Z^{2}  + {\beta_{3,3}}X^{4} Y^{3}  + {\beta_{4,0}}X^{3} Z^{4}  + X^{2} {\beta_{5,3}}Y^{3} Z^{2}+ {\beta_{7,3}}Y^{3} Z^{4}$ \\\hline
  $6,(0,1)$& $Z^{6}L_{1,Z} + L_{7,Z}$ \\\hline
 $5,(1,4)$& $X^7+Y^6Z+\alpha YZ^6+\beta_{2,1}X^{5}YZ+ \beta_{4,2}X^{3} Y^{2} Z^{2}+ \beta_{6,3}X Y^{3} Z^{3}+X^{2}{\left({\beta_{5,0}}Z^{5} + {\beta_{5,5}}Y^{5} \right)}  $ \\\hline
 $4,(1,2)$& $ X^{7} + Y^{6} Z+\alpha X Z^{6}  + {\beta_{2,0}}X^{5} Z^{2}  + {\beta_{3,2}}X^{4} Y^{2} Z  + {\beta_{5,2}}X^{2} Y^{2} Z^{3}  + {\beta_{6,4}}X Y^{4} Z^{2} + {\beta_{7,2}}Y^{2} Z^{5}+ $ \\
 & $+ X^{3}{\left({\beta_{4,0}}Z^{4}  + {\beta_{4,4}}Y^{4} \right)}$ \\\hline
 $3,(1,2)$& $X^{7} + X Z^{6}+\alpha X Y^{6}  + {\beta_{2,1}}X^{5} Y Z  + {\beta_{4,2}}X^{3} Y^{2} Z^{2}  + {\beta_{6,3}}X Y^{3} Z^{3}  + {\beta_{7,2}}Y^{2} Z^{5} + {\beta_{7,5}}Y^{5} Z^{2}+$ \\
 & $X^{4}{{\beta_{3,0}}\left(Z^{3}  + {\beta_{3,3}}Y^{3} \right)}  + X^{2}{\left({\beta_{5,1}}Y Z^{4} + {\beta_{5,4}}Y^{4} Z \right)}$ \\\hline
 $3,(0,1)$& $Z^{6}L_{1,Z}+Z^3L_{4,Z} + L_{7,Z}$ \\\hline
 $2,(0,1)$& $Z^{6}L_{1,Z}+Z^4L_{3,Z}+Z^2L_{5,Z} + L_{7,Z}$ \\\hline

 % \end{tabular}
%\end{table}
%\end{center}
%
%
%\begin{center}
%\begin{tabular}{|c|c|}
%  \hline
  % after \\: \hline or \cline{col1-col2} \cline{col3-col4} ...

  \end{tabular}
\end{table}

\end{center}

\end{small}

\newpage

\begin{small}
\begin{center}
\begin{table}[!th]
  \renewcommand{\arraystretch}{1.3}
  \caption{degree 8\,\,\,}\label{table:Cyclic Auto82.}
  \vspace{4mm} % hack
  \centering
\begin{tabular}{|c|c|}
  \hline
  % after \\: \hline or \cline{col1-col2} \cline{col3-col4} ...
  Type: $m, (a,\,b)$ & $F(X;Y;Z)$ \\\hline\hline
  $56,(7,8)$& $X^{8} + Y^{8}+\alpha X Z^{7}  $ \\\hline
  $49,(1,42)$& $ X^{8} + Y^{7} Z+\alpha X Z^{7}  $ \\\hline
$48,(1,41)$& $X^{8} + Y^{7} Z+\alpha Y Z^{7} $ \\\hline $43,(1,37)$&
$ X^{7} Y + Y^{7} Z+\alpha X Z^{7} $ \\\hline $28,(7,4)$& $X^{8} +
Y^{8}+\alpha X Z^{7}  + {\beta_{4,4}}X^{4} Y^{4} $ \\\hline
$24,(1,17)$& $X^{8} + Y^{7} Z+\alpha Y Z^{7}+{\beta_{8,4}}Y^{4}
Z^{7} $ \\\hline $16,(1,9)$& $X^{8} + Y^{7} Z+\alpha Y
Z^{7}+{\beta_{8,5}}Y^{5} Z^{7}  + {\beta_{8,3}}Y^{3} Z^{7} $
\\\hline $14,(7,2)$& $X^{8} + Y^{8}+\alpha X Z^{7}  +
{\beta_{2,2}}X^{6} Y^{2}  + {\beta_{4,4}}X^{4} Y^{4}  +
{\beta_{6,6}}X^{2} Y^{6} $ \\\hline $12,(1,5)$& $X^{8} + Y^{7}
Z+\alpha Y Z^{7} +{\beta_{8,4}}Y^{4} Z^{7}  + {\beta_{4,2}}X^{4}
Y^{2} Z^{2}$ \\\hline $8,(1,2)$& $X^{8} + Y^{8} +
Z^{8}+{\beta_{4,0}} X^{4} Z^{4} + {\beta_{5,2}}X^{3} Y^{2}
Z^{3}+{\beta_{6,4}}X^{2} Y^{4} Z^{2}+{\beta_{7,6}}X Y^{6} Z $
\\\hline $8,(1,3)$& $X^{8} + Y^{8} + Z^{8} +{\beta_{4,2}}X^{4} Y^{2}
Z^{2}  + {\beta_{8,4}}Y^{4} Z^{4}  +  X^{2}{\left({\beta_{6,1}}Y
Z^{5}  + {\beta_{6,5}}Y^{5} Z \right)} $ \\\hline $8,(1,4)$& $X^{8}
+ Y^{8} + Z^{8}+{\beta_{2,0}}X^{6} Z^{2}  + {\beta_{4,0}}X^{4} Z^{4}
+ {\beta_{5,4}}X^{3} Y^{4} Z  + {\beta_{6,0}}X^{2} Z^{6} +
{\beta_{7,4}}X Y^{4} Z^{3} $ \\\hline $8,(0,1)$& $Z^{8} + {L_{8,Z}}$
\\\hline $7,(6,5)$& $X^{8} + X Z^{7}+\alpha X Y^{7}  +
{\beta_{4,1}}X^{4} Y Z^{3}  + {\beta_{5,3}}X^{3} Y^{3} Z^{2}  +
{\beta_{6,5}}X^{2} Y^{5} Z  + {\beta_{8,2}} Y^{2} Z^{6}$ \\\hline
$7,(6,1)$& $X^{8} + X Z^{7}+\alpha X Y^{7}  + {\beta_{3,1}}X^{5} Y
Z^{2}  + {\beta_{5,4}}X^{3} Y^{4} Z  + {\beta_{6,2}}X^{2} Y^{2}
Z^{4}  + {\beta_{8,5}}Y^{5} Z^{3}  + $ \\\hline $7,(0,1)$&
$Z^{7}{L_{1,Z}}  + {L_{8,Z}}$ \\\hline
$6,(1,5)$& $X^{8} + Y^{7} Z +\alpha Y Z^{7}  + {\beta_{2,1}} X^{6} Y Z  + {\beta_{4,2}}X^{4} Y^{2} Z^{2} +{\beta_{8,4}}Y^{4} Z^{7} $ \\
& $+X^{2}{\left({\beta_{6,0}}Z^{6}  + {\beta_{6,3}}Y^{3} Z^{3}  +
{\beta_{6,6}}Y^{6} \right)}  $ \\\hline $4,(0,1)$& $Z^{8} +
Z^{4}{L_{4,Z}}  + {L_{8,Z}}$ \\\hline
$3,(1,2)$& $X^{8} + Y^{7} Z +\alpha Y Z^{7}  +{\beta_{8,4}}Y^{4} Z^{7}  +  {\beta_{2,1}}X^{6} Y Z  + {\beta_{4,2}}X^{4} Y^{2} Z^{2}  +  X^{5}{\left({\beta_{3,0}}Z^{3}  + {\beta_{3,3}}Y^{3} \right)} + $ \\
& $+ X^{3}{\left({\beta_{5,1}}Y Z^{4}  + {\beta_{5,4}}Y^{4} Z
\right)}  + X^{2}{\left({\beta_{6,0}}Z^{6}  + {\beta_{6,3}}Y^{3}
Z^{3}  + {\beta_{6,6}}Y^{6} \right)}  + X{\left({\beta_{7,2}}Y^{2}
Z^{5}  + {\beta_{7,5}}Y^{5} Z^{2} \right)} $ \\\hline $2,(0,1)$&
$Z^{8} + Z^{6}{L_{2,Z}}  + Z^{4}{L_{4,Z}}  + Z^{2}{L_{6,Z}}  +
{L_{8,Z}}$ \\\hline

 % \end{tabular}
%\end{table}
%\end{center}
%
%
%\begin{center}
%\begin{tabular}{|c|c|}
%  \hline
  % after \\: \hline or \cline{col1-col2} \cline{col3-col4} ...

  \end{tabular}
\end{table}

\end{center}

\end{small}

\newpage

\begin{small}
\begin{center}
\begin{table}[!th]
  \renewcommand{\arraystretch}{1.3}
  \caption{degree 9\,\,\,}\label{table:Cyclic Auto92.}
  \vspace{4mm} % hack
  \centering
\begin{tabular}{|c|c|}
  \hline
  % after \\: \hline or \cline{col1-col2} \cline{col3-col4} ...
  Type: $m, (a,\,b)$ & $F(X;Y;Z)$ \\\hline\hline
$72,(8,9)$& $X^{9} + Y^{9}+\alpha X Z^{8}$ \\\hline $64,(1,56)$&
$X^{9} + Y^{8} Z+\alpha X Z^{8}$ \\\hline $63,(1,55)$& $X^{9} +
Y^{8} Z+\alpha Y Z^{8}$ \\\hline $57,(1,50)$& $X^{8} Y + Y^{8}
Z+\alpha X Z^{8} $ \\\hline $36,(4,9)$& $X^{9} + Y^{9}+\alpha X
Z^{8}+ {\beta_{4,0}}X^{5} Z^{4} $ \\\hline $32,(1,24)$& $X^{9} +
Y^{8} Z+\alpha X Z^{8}+ {\beta_{4,0}}X^{5} Z^{4} $ \\\hline
$24,(8,3)$& $X^{9} + Y^{9}+\alpha X Z^{8}+ {\beta_{3,3}}X^{6} Y^{3}
+ {\beta_{6,6}}X^{3} Y^{6} $ \\\hline $21,(1,13)$& $X^{9} + Y^{8}
Z+\alpha Y Z^{8}  + {\beta_{6,3}}X^{3} Y^{3} Z^{3} $ \\\hline
$18,(2,9)$& $X^{9} + Y^{9}+\alpha X Z^{8}+ {\beta_{2,0}}X^{7} Z^{2}
+ {\beta_{4,0}}X^{5} Z^{4}  + {\beta_{6,0}}X^{3} Z^{6} $ \\\hline
$16,(1,8)$& $X^{9} + Y^{8} Z+\alpha X Z^{8}+ {\beta_{2,0}}X^{7}
Z^{2}  + {\beta_{4,0}}X^{5} Z^{4}  + {\beta_{6,0}}X^{3} Z^{6}  $
\\\hline $12,(4,3)$& $X^{9} + Y^{9}+\alpha X Z^{8}  +
{\beta_{3,3}}X^{6} Y^{3}  + {\beta_{4,0}}X^{5} Z^{4}  +
{\beta_{6,6}}X^{3} Y^{6}  + {\beta_{7,3}}X^{2} Y^{3} Z^{4} $
\\\hline $9,(1,2)$& $X^{9} + Y^{9} + Z^{9}+{\beta_{5,1}}X^{4} Y
Z^{4}  + {\beta_{6,3}}X^{3} Y^{3} Z^{3}  + {\beta_{7,5}}X^{2} Y^{5}
Z^{2}  + {\beta_{8,7}}X Y^{7} Z $ \\\hline $9,(1,3)$& $X^{9} + Y^{9}
+ Z^{9}+{\beta_{3,0}}X^{6} Z^{3}  + {\beta_{5,3}}X^{4} Y^{3} Z^{2}
+ {\beta_{6,0}}X^{3} Z^{6}  + {\beta_{7,6}}X^{2} Y^{6} Z +
{\beta_{8,3}}X Y^{3} Z^{5}  $ \\\hline $9,(0,1)$& $Z^{9} +
{L_{9,Z}}$ \\\hline $8,(7,6)$& $X^{9} + X Z^{8}+\alpha X Y^{8}  +
{\beta_{4,0}}X^{5} Z^{4}  + {\beta_{5,2}}X^{4} Y^{2} Z^{3}  +
{\beta_{6,4}}X^{3} Y^{4} Z^{2}  + {\beta_{7,6}}X^{2} Y^{6} Z  +
{\beta_{9,2}}Y^{2} Z^{7}$ \\\hline $8,(7,4)$& $X^{9} + X
Z^{8}+\alpha X Y^{8} + {\beta_{2,0}}X^{7} Z^{2}  +
{\beta_{4,0}}X^{5} Z^{4}  + {\beta_{5,4}}X^{4} Y^{4} Z  +
{\beta_{6,0}}X^{3} Z^{6}  + {\beta_{7,4}}X^{2} Y^{4} Z^{3}  +
{\beta_{9,4}}Y^{4} Z^{5} $ \\\hline $8,(7,2)$& $X^{9} + X
Z^{8}+\alpha X Y^{8} +{\beta_{3,2}}X^{6} Y^{2} Z  +
{\beta_{4,0}}X^{5} Z^{4}  +  {\beta_{6,4}}X^{3} Y^{4} Z^{2} +
{\beta_{7,2}}X^{2} Y^{2} Z^{5} +  {\beta_{9,6}}Y^{6} Z^{3} $
\\\hline $8,(0,1)$& $Z^{8}{L_{1,Z}}+{L_{9,Z}}$ \\\hline
$7,(1,6)$& $X^{9} + Y^{8} Z +\alpha Y Z^{8}  + {\beta_{2,1}}X^{7} Y Z  + {\beta_{4,2}}X^{5} Y^{2} Z^{2}  + {\beta_{6,3}}X^{3} Y^{3} Z^{3}  + $ \\
& $+{\beta_{8,4}}X Y^{4} Z^{4}+ X^{2}{{\beta_{7,0}}\left(Z^{7}  + {\beta_{7,7}}Y^{7} \right)} $ \\\hline
$6,(2,3)$& $X^{9} + Y^{9} +\alpha X Z^{8}  + {\beta_{2,0}}X^{7} Z^{2}  + {\beta_{3,3}}X^{6} Y^{3}  + {\beta_{4,0}}X^{5} Z^{4}  + {\beta_{5,3}}X^{4} Y^{3} Z^{2}+ $ \\
& $+{\beta_{7,3}}X^{2} Y^{3} Z^{4}  + {\beta_{7,3}}Y^{3} Z^{6}  +  {\beta_{8,6}}Y^{6} Z^{2}  + X^{3}{\left({\beta_{6,0}}Z^{6}  + {\beta_{6,6}}Y^{6} \right)} $ \\\hline
$4,(3,2)$& $X^{9} + X Z^{8} +\alpha X Y^{8}  + {\beta_{2,0}}X^{7} Z^{2}  + {\beta_{3,2}}X^{6} Y^{2} Z  + {\beta_{5,2}}X^{4} Y^{2} Z^{3}  + {\beta_{8,4}}X Y^{4} Z^{4}  + $ \\
& ${\beta_{9,2}}Y^{2} Z^{7}  + {\beta_{9,6}}Y^{6} Z^{3} +
X^{5}{\left({\beta_{4,0}}Z^{4}  + {\beta_{4,4}}Y^{4} \right)}  +
X^{3}{\left({\beta_{6,0}}Z^{6}  + {\beta_{6,4}}Y^{4} Z^{2} \right)}
+ X^{2}{\left({\beta_{7,2}}Y^{2} Z^{5}  + {\beta_{7,6}}Y^{6} Z
\right)} $ \\\hline $4,(0,1)$& $Z^{8}{L_{1,Z}}+ Z^{4}{L_{5,Z}}+
{L_{9,Z}}$ \\\hline $3,(0,1)$& $Z^{9} + Z^{6}{L_{3,Z}}  +
Z^{3}{L_{6,Z}}  + {L_{9,Z}}$ \\\hline $2,(0,1)$& $Z^{8}{L_{1,Z}} +
Z^{6}{L_{3,Z}}  + Z^{4}{L_{5,Z}}  + Z^{2}{L_{7,Z}}  + {L_{9,Z}}$
\\\hline

 % \end{tabular}
%\end{table}
%\end{center}
%
%
%\begin{center}
%\begin{tabular}{|c|c|}
%  \hline
  % after \\: \hline or \cline{col1-col2} \cline{col3-col4} ...

  \end{tabular}
\end{table}

\end{center}

\end{small}

\chapter{Automorphism group for non-singular plane curves with degree
5}

\section{Abstract} Henn in \cite{He} obtains the exact list of groups
that appears as automorphism group of a plane non-singular curve of
degree 4 in an algebraic closed field $K$ of zero characteristic,
and also give an equation for such groups. In this chapter we
present the analog of Henn's result for degree 5 non-singular plane
curves. Similar arguments can be applied to deal with higher degree.

\section{Cyclic subgroups for degree 5 non-singular plane curve}

Fix $C$ a non-singular curve of degree 5 over an algebraic closed
field of zero characteristic $K$, and assume that $Aut(C)$ is not
trivial, and write by $F(X,Y,Z)=0$ the curve in $\mathbb{P}^2(K)$.
Also assume $\sigma\in Aut(C)$ is an element of exact order $m$ in
$Aut(C)$ and we may assume, by a change of variables in $K$, that
$\sigma$ maps $(x:y:z)\mapsto (x:\xi_m^a y:\xi_m^b z)$ where $\xi_m$
is a primitive $m$-th root of unity in $K$ and $a,b$ naturals such
that $0\leq a\neq b\leq m-1$ with $a\leq b$ and $gcd(a,b)$ coprime
with $m$ if $ab\neq 0$, (and we can reduce to $gcd(a,b)=1$) and
$gcd(b,m)=1$ if $a=0$, we call type $m,(a,b)$ such automorphism. In
this note with $m\geq 2$, $C_m$ denote also the cyclic group of
order $m$.

Then by a change of variables we may have one of the following
situations by \cite{BaBacyc} (which follow the argument of Dolgachev
did in degree 4 \cite{Dol}) where $L_{i,*}$ means an homogenious
degree $i$ polynomial in the variables $\{X,Y,Z\}$ such that the
variable $*$ does not appears, $\alpha$ is always a non-zero element
(which by a change of variables can be always assumed equal to 1),
and $\beta_{i,j}\in K$:

\begin{center}
\begin{table}[!th]
  \renewcommand{\arraystretch}{1.3}
  \caption{Quintics\,\,\,}\label{table:Cyclic Auto.5}
  \vspace{4mm} % hack
  \centering
\begin{tabular}{|c|c|}
  \hline
  % after \\: \hline or \cline{col1-col2} \cline{col3-col4} ...
  Type: $m, (a,\,b)$ & $F(X;Y;Z)$ \\\hline\hline
  $20,(4,5)$& $X^5+Y^5+\alpha XZ^4$ \\\hline
  $16,(1,12)$& $X^5+Y^4Z+\alpha XZ^4$\\\hline
  $15,(1,11)$& $X^5+Y^4Z+\alpha YZ^4$    \\\hline
$13,(1,10)$& $X^4Y+Y^4Z+\alpha Z^4X$    \\\hline
 $10,(2,5)$& $X^5+Y^5+\alpha XZ^4+\beta_{2,0}X^3Z^2$ \\\hline
  $8,(1,4)$& $X^5+Y^4Z+\alpha XZ^4+\beta_{2,0}X^3Z^2$ \\\hline
 $5,(1,2)$& $X^5+Y^5+Z^5+\beta_{3,1}X^2YZ^2+\beta_{4,3}XY^3Z$    \\\hline
  $5,(0,1)$& $Z^5+L_{5,Z}$    \\\hline
 % \end{tabular}
%\end{table}
%\end{center}
%
%
%\begin{center}
%\begin{tabular}{|c|c|}
%  \hline
  % after \\: \hline or \cline{col1-col2} \cline{col3-col4} ...
  $4,(1,3)$& $X^5+X\big(Z^4+\alpha Y^4+\beta_{4,2}Y^2Z^2\big)+\beta_{2,1}X^3YZ$\\\hline
 $4,(1,2)$& $X^5+X\big(Z^4+\alpha Y^4\big)+\beta_{2,0}X^3Z^2+\beta_{3,2}X^2Y^2Z+\beta_{5,2}Y^2Z^3$\\\hline
  $4,(0,1)$& $Z^4L_{1,Z}+L_{5,Z}$    \\\hline
  $3,(1,2)$& $X^5+Y^4Z+\alpha YZ^4+\beta_{2,1}X^3YZ+X^2\big(\beta_{3,0}Z^3+\beta_{3,3}Y^3\big)+\beta_{4,2}XY^2Z^2$ \\\hline
 $2,(0,1)$& $Z^4L_{1,Z}+Z^2L_{3,Z}+L_{5,Z}$    \\\hline
  \end{tabular}
\end{table}
\end{center}

\section{General properties of the full automorphism group}
Before a detailed study of the automorphism groups for degree 5 we
recall the following results concerning $Aut(C)$ which will be
useful. Because linear systems $g_2$ are unique, we always assume
$C$ is given by a plane equation $F(X,Y,Z)=0$ and $Aut(C)$ is a
finite subgroup of $PGL_3(K)$ which fix the equation $F$. Moreover
$Aut(C)$ satisfies (see Mitchel \cite{Mit}) one of the following
situations:
\begin{enumerate}
\item fixes a point $P$ and a line $L$ with $P\notin L$ in
$PGL_3(K)$,
\item fixes a triangle, i.e. exists 3 points $S:=\{P_1,P_2,P_3\}$ of
$PGL_3(K)$, such that is fixed as a set,
\item $Aut(C)$ is conjugate of a representation inside $PGL_3(K)$ of
one of the finite primitive group namely, the Klein group
$PSL(2,7)$, the icosahedral group $A_5$, the alternating group
$A_6$, the Hessian groups $Hess_{216}$, $Hess_{72}$ or $Hess_{36}$.
\end{enumerate}

It is classically known that if $G$ a subgroup of automorphisms of a
non-singular plane curve $C$ fixes a point on $C$ then $G$ is cyclic
\cite[Lemma 11.44]{Book}, and recently Harui in \cite[\S2]{Harui}
provided the lacked result in the literature on the type of groups
that could appear in non-singular plane curves. Before introduce
Harui statement we need to define descendent of a plane non-singular
curve. For a non-zero monomial $cX^iY^jZ^k$, $c\in K\setminus\{0\}$,
we define its exponent as $max\{i,j,k\}$. For a homogeneous
polynomial $F$, the core of $F$ is defined as the sum of all terms
of $F$ with the greatest exponent. Let $C_0$ be a smooth plane
curve, a pair $(C,G)$ with $G\leq Aut(C)$ is said to be a descendant
of $C_0$ if $C$ is defined by a homogeneous polynomial whose core is
a defining polynomial of $C_0$ and $G$ acts on $C_0$ under a
suitable coordinate system.

\begin{thm}[Harui] \label{teoHarui} If
$G\preceq\,\,Aut(C)$ where $C$ is a non-singular plane curve of
degree $d\geq4$ then $G$ satisfies one of the following
\begin{enumerate}
  \item $G$ fixes a point on $C$ and then cyclic.
  \item $G$ fixes a point not lying on $C$ and it satisfies a short exact sequence of the form
  $$1\rightarrow N\rightarrow G\rightarrow G'\rightarrow 1,$$
with $N$ a cyclic group of order dividing $d$ and $G'$ (which is a
subgroup of $PGL_2(K)$) is conjugate to a cyclic group $C_m$,  a
Dihedral group $D_{2m}$, the alternating groups $A_4$ , $A_5$ or the
permutation group $S_4$, where $m$ is an integer $\leq d-1$.
Moreover, if $G'\cong D_{2m}$, then $m|(d-2)$ or $N$ is trivial.
\item $G$ is conjugate to a subgroup of $Aut(F_d)$ where $F_d$ is the Fermat curve $X^d+Y^d+Z^d$. In particular, $|G|\,|\,6d^2$ and $(G,C)$
is a descendant of $F_d$.
\item $G$ is conjugate to a subgroup of $Aut(K_d)$ where $K_d$ is the Klein curve curve $XY^{d-1}+YZ^{d-1}+ZX^{d-1}$
hence $|G|\,|\,3(d^2-3d+3)$ and $(G,C)$ is a descendant of $K_d$.
\item $G$ is conjugate a finite primitive subgroup of $PGL_3(K)$,
i.e. the Klein group $PSL(2,7)$, the icosahedral group $A_5$, the
alternating group $A_6$, the Hessian groups $Hess_{216}$,
$Hess_{72}$ or $Hess_{36}$ inside $PGL_3(K)$.

\end{enumerate}
\end{thm}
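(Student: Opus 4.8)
The plan is to deduce the statement from the classification of finite subgroups of $PGL_3(K)$ together with a geometric analysis of how each type of such subgroup can act on a smooth plane curve. Since $d\geq 4$, the plane model arises from a linear system $g^2_d$ that is unique up to $PGL_3(K)$, so $Aut(C)$ — and hence any $G\preceq Aut(C)$ — is a finite subgroup of $PGL_3(K)$. By Mitchell's classification (\cite{Mit}), every finite subgroup of $PGL_3(K)$ either (i) fixes a point $P$ and a line $L$ with $P\notin L$, or (ii) leaves a triangle invariant without fixing any point or line, or (iii) is one of the finite primitive groups. Case (iii) is exactly conclusion (5), so it remains to treat (i) and (ii).

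First I would treat case (i). Let $P$ be the fixed point and $L$ the fixed line. If $P\in C$, then $G$ fixes a point of the curve and is therefore cyclic by \cite[Lemma 11.44]{Book}, which is conclusion (1). If $P\notin C$, let $N\trianglelefteq G$ be the subgroup of elements acting trivially on $L$; these are homologies with centre $P$ and axis $L$, so $N$ is cyclic (it injects into $K^*$). The quotient $G':=G/N$ acts faithfully on $L\cong\mathbb{P}^1$, hence is a finite subgroup of $PGL_2(K)$, which is cyclic, dihedral, $A_4$, $A_5$ or $S_4$; this yields the short exact sequence of conclusion (2). The divisibility $|N|\mid d$ and the bound $m\leq d-1$ come from analysing how the projection $C\to L$ from $P$ (a degree-$d$ map) ramifies: a nontrivial homology in $N$ must permute the $d$ points of each fibre, while $G'$ permutes the branch data on $L$. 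The refinement that $G'\cong D_{2m}$ forces $m\mid(d-2)$ (unless $N$ is trivial) requires a careful fixed-point count for the dihedral involutions acting on $C$.

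Next I would treat case (ii). Choosing coordinates so that the invariant triangle is the coordinate triangle, $G$ consists of monomial matrices, and the homomorphism $G\to S_3$ permuting the vertices has diagonal kernel $G_0$. The defining polynomial $F$ is a $G$-semi-invariant, so its core $F_0$ (the sum of the highest-exponent terms) is itself semi-invariant under the monomial action and must define a smooth curve. Enumerating the monomial combinations invariant under a transitive or cyclic permutation of the three coordinates that yield a smooth core shows that, up to coordinate change, $F_0$ is either $X^d+Y^d+Z^d$ or $XY^{d-1}+YZ^{d-1}+ZX^{d-1}$; hence $(G,C)$ is a descendant of the Fermat curve $F_d$ or of the Klein curve $K_d$, which are conclusions (3) and (4). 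The order bounds $|G|\mid 6d^2$ and $|G|\mid 3(d^2-3d+3)$ then follow from $|Aut(F_d)|=6d^2$ and $|Aut(K_d)|=3(d^2-3d+3)$.

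The main obstacle is the descendant step in case (ii): identifying exactly which semi-invariant cores give smooth plane curves, and thereby pinning $F_d$ and $K_d$ as the only possibilities, is the delicate combinatorial heart of the argument. A secondary difficulty is extracting the sharp numerical constraints in case (i) — in particular the condition $m\mid(d-2)$ for the dihedral quotient — which demands a precise accounting of the fixed loci of homologies and involutions on $C$ relative to the degree $d$.
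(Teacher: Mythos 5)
The first thing to say is that the paper does not prove this statement at all: it is Harui's classification theorem, imported verbatim with a citation to \cite{Harui}, so there is no internal proof to compare yours against. Your skeleton does follow the actual architecture of Harui's argument --- reduce to Mitchell's trichotomy \cite{Mit} (fixed point and line off it / invariant triangle / primitive group), split the first case according to whether the relevant fixed point lies on $C$ (using \cite[Lemma 11.44]{Book} for cyclicity), identify $N$ as the cyclic group of homologies with centre $P$ and axis $L$ and $G/N$ as a finite subgroup of $PGL_2(K)$, and in the triangle case pass to monomial matrices and analyse the core. So as a roadmap it is faithful.

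As a proof, however, it is not complete, and the gaps are exactly where the theorem's content lives. Everything quantitative in conclusion (2) --- $|N|\mid d$, the bound $m\leq d-1$, and especially the refinement $m\mid(d-2)$ when $G'\cong D_{2m}$ and $N$ is nontrivial --- is only gestured at ("a careful fixed-point count"), and in conclusions (3)--(4) the claim that the only semi-invariant cores arising from a triangle-stabilising action with no invariant point or line are the Fermat and Klein polynomials is asserted rather than derived; you even flag these as "the main obstacle," which is an accurate self-assessment. Two smaller logical slips: you write that the core "must define a smooth curve," but smoothness of the core is part of what has to be \emph{established} (it is built into the definition of descendant), not a consequence of semi-invariance; and in the triangle case you need to argue that the image of $G$ in $S_3$ contains a $3$-cycle (otherwise $G$ fixes a vertex and the opposite edge, contradicting the hypothesis that no point or line is invariant) before the Fermat/Klein dichotomy can get off the ground. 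None of this makes the approach wrong --- it is the right approach, and it is Harui's --- but what you have written is an outline of \cite{Harui}, not a substitute for it.
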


Recall also the following statement \cite[Theorem 2.3]{Harui},
\begin{thm} Given $C$ non-singular plane curve of degree $d\neq4,6$ we have
$$|Aut(C)|\leq 6 d^2.$$
\end{thm}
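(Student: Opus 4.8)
The plan is to derive the inequality from Harui's structural classification, Theorem \ref{teoHarui}, by bounding $|Aut(C)|$ in each of its five alternatives and observing that the Fermat case $|Aut(C)|\mid 6d^2$ is the one that saturates the bound whenever $d\neq 4,6$. Throughout write $G=Aut(C)$ and $g=(d-1)(d-2)/2$.

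First I would dispose of the three cases admitting an immediate numerical estimate. If $G$ is conjugate to a subgroup of $Aut(F_d)$ (alternative (3)), then $|G|\mid 6d^2$, so $|G|\le 6d^2$; this is precisely the extremal case. If $G$ is conjugate to a subgroup of $Aut(K_d)$ (alternative (4)), then $|G|\mid 3(d^2-3d+3)$, and $3(d^2-3d+3)<6d^2$ for every $d\ge 1$. If $G$ fixes a point of $C$ (alternative (1)), then $G$ is cyclic and the corollary to Theorem \ref{thm20} gives $|G|\le d(d-1)<6d^2$.

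Next comes alternative (2): $G$ fixes a point $P\notin C$ and fits in a sequence $1\to N\to G\to G'\to 1$ with $N$ cyclic, $|N|\mid d$, and $G'\le PGL_2(K)$ equal to one of $C_m,\,D_{2m}\,(m\le d-1),\,A_4,\,A_5,\,S_4$, so that $|G|=|N|\,|G'|\le d\,|G'|$. When $G'$ is cyclic or dihedral we have $|G'|\le 2(d-1)$, hence $|G|\le 2d(d-1)<6d^2$. When $G'\in\{A_4,S_4\}$ we have $|G'|\le 24$, so $|G|\le 24d\le 6d^2$ for every $d\ge 4$. The remaining subcase $G'\cong A_5$ yields only $|G|\le 60d$, which exceeds $6d^2$ exactly for $d\le 9$; ruling this out for $5\le d\le 9$ requires a closer examination of the extension, showing that a cyclic homology kernel of order $|N|$ cannot coexist with an icosahedral quotient on a \emph{smooth} degree-$d$ model without forcing a singular equation.

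Finally, alternative (5) lists the finitely many primitive groups $PSL(2,7),\,A_5,\,A_6,\,Hess_{216},\,Hess_{72},\,Hess_{36}$, of fixed orders $168,60,360,216,72,36$. The decisive input is invariant-theoretic: each such group stabilizes a smooth plane curve only in a prescribed set of degrees, namely those carrying a nonsingular invariant form, and for those degrees one verifies $|G|\le 6d^2$ by hand. This is where the exceptions surface. Since no invariant quintic exists, $PSL(2,7)$ can occur (for $d\ge 5$) only when $d\ge 6$, where $168\le 6d^2$, while at $d=4$ the Klein quartic gives $168>96$; and $A_6$ occurs at $d=6$ (the Valentiner sextic) with $360>216=6\cdot 6^2$, then only for $d\ge 12$, where $360\le 6d^2$. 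The main obstacle is exactly this last step together with the $A_5$ subcase of (2): controlling which degrees realize each exceptional primitive group and excluding the icosahedral extension at small degree is the delicate part, whereas everything else reduces to the elementary inequalities above.
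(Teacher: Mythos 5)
The paper does not actually prove this statement: it is quoted verbatim as \cite[Theorem 2.3]{Harui}, so there is no internal argument to compare against. Your plan follows the only natural route (and essentially the one Harui himself takes): run through the five alternatives of Theorem \ref{teoHarui} and bound each. Your elementary estimates are all correct as stated: $3(d^2-3d+3)<6d^2$, $d(d-1)<6d^2$ for the cyclic case, $|N|\,|G'|\le 2d(d-1)<6d^2$ for cyclic or dihedral quotients, and $24d\le 6d^2$ for $d\ge 4$ when $G'\in\{A_4,S_4\}$.

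However, as a proof the proposal has two genuine gaps, both of which you flag but neither of which you close. First, in alternative (2) with $G'\cong A_5$ the bound $|G|\le 60d$ exceeds $6d^2$ precisely for $5\le d\le 9$, and when $N$ is nontrivial (e.g.\ $|N|=5$ at $d=5$, giving $|G|=300>150$) you must actually exclude the extension; the sentence asserting that a homology kernel ``cannot coexist with an icosahedral quotient \ldots without forcing a singular equation'' is the statement to be proved, not an argument. Note also that when $N$ is trivial the case is harmless ($60\le 6d^2$ for $d\ge 4$), so the real work is only the nontrivial-kernel subcase. Second, in alternative (5) the whole burden rests on knowing in which degrees each primitive group admits a nonsingular invariant curve: you need, at minimum, that $PSL(2,7)$ and $Hess_{216}$ admit no invariant smooth quintic (since $168>150$ and $216>150$) and that $A_6$ admits no invariant smooth septic (since $360>294$). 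These follow from the classical degrees of the generating invariants (Klein: $4,6,14,21$; Hessian: $6,9,12,18$; Valentiner: $6,12,30,45$), but you neither state nor cite these facts, so the step is unsupported. Until those two points are supplied, the argument establishes the bound only outside the ranges $5\le d\le 9$ (for the icosahedral extension) and $d\in\{5,7\}$ (for the primitive groups), which is exactly where the theorem has content beyond the trivial cases.
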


\begin{cor} Given $C$ non-singular plane curve of degree 5, then
$Aut(C)$ is not conjugate to the Hessian group $Hess_{216}$, the
Klein group $PSL(2,7)$ or the alternating group $A_6.$
\end{cor}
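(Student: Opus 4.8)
The plan is to reduce the entire statement to the numerical bound $|Aut(C)|\le 6d^2$ recorded in the theorem immediately preceding this corollary, specialized to $d=5$. First I would invoke that theorem: since $5\neq 4,6$, it applies to any non-singular plane quintic $C$ and yields $|Aut(C)|\le 6\cdot 5^2 = 150$.

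Next I would simply compare this bound against the orders of the three primitive groups named in the statement. One has $|Hess_{216}|=216$, $|PSL(2,7)|=168$ (the Klein group), and $|A_6|=360$, and each of these strictly exceeds $150$. Finally, since conjugate subgroups of $PGL_3(K)$ have the same cardinality, if $Aut(C)$ were conjugate to any one of $Hess_{216}$, $PSL(2,7)$, or $A_6$, then $|Aut(C)|$ would equal $216$, $168$, or $360$ respectively, contradicting $|Aut(C)|\le 150$. Hence none of these conjugacies can occur, which is exactly the assertion.

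There is essentially no obstacle here: the corollary is a one-line consequence of the order bound, so the only real work is the bookkeeping of the three group orders. The one point worth flagging is that the companion primitive groups $Hess_{72}$ and $Hess_{36}$, of orders $72$ and $36$, both lie below $150$ and are therefore \emph{not} excluded by this crude counting. This is consistent with their absence from the statement of the corollary; ruling them out, when needed, is not accessible through the Hurwitz-type order bound and instead requires the finer geometric descendant analysis used elsewhere in the paper.
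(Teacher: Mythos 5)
Your proof is correct and is exactly the argument the paper intends: the corollary is stated immediately after the bound $|Aut(C)|\le 6d^2$ precisely so that $150<168,216,360$ does all the work. Your closing remark that $Hess_{72}$ and $Hess_{36}$ escape this counting and must be handled by the descendant analysis is also consistent with how the paper treats them later.
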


And the following from \cite{BaBacyc}
\begin{prop} Given $C$ a non-singular plane curve of degree $d$ and
let $m$ the order or an element of $Aut(C)$ then $m$ divides one of
the following naturals:
$d-1,\,\,d,\,\,(d-1)^2,\,\,d(d-2),\,\,d(d-1)$ or $d^2-3d+3$.
\end{prop}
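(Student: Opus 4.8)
The plan is to reduce the statement to the classification of cyclic subgroups already carried out in Theorem \ref{thm20} (equivalently, its Corollary). An element $\sigma\in Aut(C)$ of order $m$ generates a cyclic subgroup $G=\langle\sigma\rangle\leq Aut(C)$ of order exactly $m$, so it suffices to show that the order of any non-trivial cyclic subgroup of $Aut(C)$ divides one of $d-1$, $d$, $d^2-3d+3$, $(d-1)^2$, $d(d-2)$ or $d(d-1)$. Since Theorem \ref{thm20} furnishes precisely this list (together with the normal forms of the corresponding equations), in the present context the proposition follows immediately by citing that classification.

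To make the argument self-contained, I would first diagonalize $\sigma$. As $K$ has characteristic zero and $\sigma$ has finite order, a lift of $\sigma$ to $GL_3(K)$ is diagonalizable, and choosing coordinates we may assume $\sigma$ acts by $(X:Y:Z)\mapsto(X:\xi_m^aY:\xi_m^bZ)$ with $\xi_m$ a primitive $m$-th root of unity and $0\leq a\leq b\leq m-1$, normalized so that $\gcd(b,m)=1$ when $a=0$ and $\gcd(a,b)=1$ when $a\neq0$. Because the degree-$d$ monomials form an eigenbasis for the induced action on forms, invariance of $F$ forces every monomial occurring in $F$ to share a single $\sigma$-weight, the weight of $X^iY^jZ^k$ being $ja+kb\pmod m$. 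Next I would invoke non-singularity in the form already recorded in the paper: $F$ has degree $\geq d-1$ in each variable, so for each variable $V$ either $V^d$ occurs or else $V^{d-1}W$ occurs for some other variable $W$. Splitting according to how many of the three reference points lie on $C$ (and, when $a=0$, according to whether the surviving homology's pure power is present) produces exactly the cases of Theorem \ref{thm20}, and in each case the equality of $\sigma$-weights of the forced corner monomials gives one or two linear congruences in $a,b$ modulo $m$. Eliminating $a$ and $b$ by means of the gcd normalization then yields a single divisibility condition; for example, all three reference points on $C$ forces $a\equiv(d-1)a+b\equiv(d-1)b\pmod m$, whence $m\mid d^2-3d+3$, while no reference point on $C$ forces $da\equiv db\equiv0\pmod m$ and hence $m\mid d$, with the intermediate configurations producing the remaining values $(d-1)^2$, $d(d-2)$, $d(d-1)$ and $d-1$.

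The main obstacle is the non-singularity bookkeeping: one must check in each configuration that smoothness genuinely forces the specific corner monomials to appear (not merely that they are allowed), since it is exactly these forced monomials that pin down the congruences. Once that verification is in place, the elimination of $a,b$ and the resulting divisibility conditions are routine arithmetic. All of this is precisely the content of the proof of Theorem \ref{thm20}, so no new ideas are required here beyond transcribing its conclusion.
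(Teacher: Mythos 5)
Your proposal is correct and follows the paper's own route: the paper states this proposition as an immediate consequence of the classification in Theorem \ref{thm20} (and its corollary), and your reduction to the cyclic subgroup $\langle\sigma\rangle$ together with the sketched diagonalization, reference-point case split, and weight congruences reproduces exactly the argument given there. No new ideas are needed beyond what you describe.
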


Now we recall in the statement different results proved for cyclic
subgroups in $Aut(C)$ obtained by Badr and Bars in \cite{BaBacyc}:
\begin{thm}\label{BB} Let $C$ a non-singular plane curve of degree
$d$, and $\sigma\in Aut(C)$. Then,
\begin{enumerate}
\item if $\sigma$ has order $d(d-1)$, then $Aut(C)=<\sigma>$ and $C$
isomorphic to $X^d+Y^d+\alpha XZ^{d-1}=0$ with $\alpha\neq 0$.
\item if $\sigma$ has order $(d-1)^2$, then $Aut(C)=<\sigma>$ and
$C$ is isomorphic to $X^d+Y^{d-1}Z+\alpha XZ^{d-1}=0$ with
$\alpha\neq 0$.
\item if $\sigma$ has order $d(d-2)$ then $C$ is isomorphic to
$X^d+Y^{d-1}Z+\alpha YZ^{d-1}=0$ with $\alpha\neq 0$ and for $d\neq
4,6$ we have $Aut(C)=<\sigma,\tau|\tau^2=\sigma^{d(d-2)}=1,\, and\,
\tau\sigma\tau=\sigma^{-(d-1)}>.$
\item if $\sigma$ has order $d^2-3d+3$ then $C$ is isomorphic to
$K_d$ and for $d\geq 5$ we have
$$Aut(C)=<\sigma,\tau|\sigma^{d^2-3d+3}=\tau^3=1\, and\,
\sigma\tau=\tau\sigma^{-(d-1)}>.$$
\item if $\sigma$ has order $\ell (d-1)$ with $\ell\geq 2$ then
$Aut(C)$ is cyclic of order $\ell'(d-1)$ with $\ell|\ell'$. If
$\ell=1$, the same conclusion if $\sigma$ is an homology.
\item if $\sigma$ has order $\ell d$ with $\ell\geq 3$ then $Aut(C)$
fix a line and a point off that line with the point not in $C$, and
$Aut(C)$ is an exterior group as in Theorem \ref{teoHarui} (2) with
$N$ of order $d$. When $\ell=2$ may be a decendent of the Fermat
curve or $Aut(C)$ is an exterior group as in Theorem \ref{teoHarui}
(2) with $N$ of order $d$.
\end{enumerate}
\end{thm}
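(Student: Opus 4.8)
Since each of the six assertions is one of the statements established earlier (cases (1)--(4) are Propositions \ref{prop11}, \ref{prop13}, \ref{prop17} and \ref{prop14}, while (5)--(6) consolidate the results of \S4.5), the plan is to present the uniform two-stage argument that underlies them all. For a given $\sigma$ of order $m$ I would first pin down the defining equation of $C$ up to $K$-isomorphism using the classification of cyclic actions (Theorem \ref{thm20}) together with its divisibility corollary, which forces $m$ to divide one of $d-1,\,d,\,d^2-3d+3,\,(d-1)^2,\,d(d-2),\,d(d-1)$. I would then read off $Aut(C)$ from Harui's structure theorem (Theorem \ref{teoHarui}), whose four alternatives---fixing a point on $C$ (cyclic), fixing a point off $C$ (an extension with kernel of order dividing $d$), fixing a triangle (a descendant of $F_d$ or $K_d$), or being primitive---are separated by order estimates.

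\textbf{The four large-order cases (1)--(4).} For $m\in\{d(d-1),(d-1)^2,d(d-2),d^2-3d+3\}$ the decisive numerical fact is that $m$ divides none of the \emph{other} admissible orders, so exactly one type $m,(a,b)$ of Theorem \ref{thm20} can occur (e.g.\ only case (5) for $m=d(d-1)$, and the Klein-type case for $m=d^2-3d+3$). I would then fix an explicit generator $(a,b)$ of the relevant solution set $L_m$ and compute the combinatorial index sets $S(2)^{j,X}_{m,(a,b)}$, $S_1^{d,X}$, $S_1^{d-1,X}$, $S_2^{d,X}$, showing each is empty by elementary congruence-and-size arguments; this collapses the general equation to the canonical models $X^d+Y^d+\alpha XZ^{d-1}$, $X^d+Y^{d-1}Z+\alpha XZ^{d-1}$, $X^d+Y^{d-1}Z+\alpha YZ^{d-1}$, and $K_d$ respectively. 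For the full group I would eliminate the Fermat/Klein descendants and the primitive groups using $|Aut(F_d)|\,|\,6d^2$, $|Aut(K_d)|\,|\,3(d^2-3d+3)$ and $|Aut(C)|\le 6d^2$, comparing with $m$. In (1)--(2) this forces $Aut(C)$ to fix a point on $C$, hence to be cyclic by \cite[Lemma 11.44]{Book}, and $(d-1)^2\le|Aut(C)|\le d(d-1)$ then gives $|Aut(C)|=m$; in (3)--(4) I would exhibit the extra generator $\tau=[X;Z;Y]$ (resp.\ $[Z;X;Y]$) and verify the presentation $\tau\sigma\tau^{-1}=\sigma^{-(d-1)}$ (resp.\ $\sigma\tau=\tau\sigma^{-(d-1)}$).

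\textbf{The homology cases (5)--(6).} Here I would first note that an element of order $\ell(d-1)$ (resp.\ $\ell d$) has a power that is a homology of period $d-1$ (resp.\ $d$). By \cite[Lemma 3.7]{Harui} its center is an inner (resp.\ outer) Galois point, and by \cite{Yoshihara} this point is \emph{unique} when $d\ge 5$, hence fixed by all of $Aut(C)$. For an inner Galois point on $C$ this yields $Aut(C)$ cyclic via \cite[Lemma 11.44]{Book}, with order $\ell'(d-1)$ and $\ell\mid\ell'$ by tracking the homology; for an outer Galois point off $C$ it yields the exterior extension of Theorem \ref{teoHarui}(2) with $N$ of order $d$. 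The value $\ell=2$ in (6) must be isolated, since there $C$ can instead be a descendant of the Fermat curve, and this branch has to be carried into the statement.

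\textbf{Main obstacle.} The structural dichotomy is routine; the real work lies in two kinds of bookkeeping: the congruence computations showing the index sets $S$ vanish, which depend on the chosen generator and on size inequalities that degenerate for small $d$, and the exceptional low degrees---$d=4,6$ for order $d(d-2)$ and $d=7$ for the Hessian groups when excluding primitivity---where the order bounds no longer separate the cases cleanly and the precise group presentations must be checked by hand.
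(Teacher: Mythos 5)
Your proposal is correct and follows essentially the same route as the paper: Theorem \ref{BB} is stated in Chapter 3 as a compilation of the Chapter 2 results (Propositions \ref{prop11}, \ref{prop13}, \ref{prop15}, \ref{prop17}, \ref{prop16}, \ref{prop14}, \ref{prop121} and the $\ell(d-1)$, $\ell d$ subsections), and your two-stage scheme --- pinning down the equation via Theorem \ref{thm20} with the emptiness of the index sets $S$, then determining $Aut(C)$ via Harui/Mitchell, Galois-point uniqueness and \cite[Lemma 11.44]{Book} --- is exactly the argument given there, including the isolation of the exceptional degrees $d=4,6$ and the $\ell=2$ Fermat-descendant branch.
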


Now, assume as usual $C$ a non-singular plane curve of degree $d=5$
with $\sigma\in Aut(C)$ of exact order $m$ such that acts on
$F(X,Y,Z)=0$ by $(x,y,z)\mapsto (x,\xi_m^a y,\xi_m^bz)$ and we
assume that $m$ is maximal.

The following result determines the the full automorphism groups of such curves with elements of large orders in $Aut(C)$.

\begin{cor} For non-singular plane curves of degree $5$
over an algebraic closed field of zero characteristic we have:
\begin{enumerate}
\item The cyclic group $C_{20}$ appears as $Aut(C)$ inside $PGL_3(K)$ generated by
the transformation $(x,y,z)\mapsto (x,\xi_{20}^4 y,\xi_{20}^5 z)$ up
to conjugation by $P\in PGL_3(K)$, and $C$ is isomorphic (through
$P$) to the plane non-singular curve $X^5+Y^5+\alpha XZ^4$.
\item The cyclic group $C_{16}$ appears as $Aut(C)$ inside
$PGL_3(K)$ generated by the transformation $(x,y,z)\mapsto
(x,\xi_{16} y,\xi_{16}^{12} z)$ up to conjugation by $P\in
PGL_3(K)$, and $C$ is isomorphic (through $P$) to the plane
non-singular curve $X^5+Y^4 Z+\alpha XZ^4$.
\item The group $SmallGroup(30,1)\,\,\cong\,\,<\sigma,\tau|\tau^2=\sigma^{15}=
(\tau\sigma)^2\sigma^{3}=1>$ of order 30 appears as $Aut(C)$ inside $PGL_3(K)$ given by
$\sigma:(x,y,z)\mapsto (x,\xi_{15} y,\xi_{15}^{11} z)$ and
$\tau:(x,y,z)\mapsto (x,\mu z,\mu^{-1}y)$  up to conjugation by $P\in PGL_3(K)$,
and $C$ is isomorphic (through $P$) to the curve $X^5+Y^4 Z+\alpha YZ^4=0$ where $\mu^3=\alpha$.
\item The group $SmallGroup(39,1)\,\,\cong\,\,<\tau,\sigma|\sigma^{13}=\tau^3=1,\,
\sigma\tau=\tau\sigma^{3}>$ of order 39 appears as $Aut(C)$ inside $PGL_3(K)$ given by
$\sigma:(x,y,z)\mapsto (x,\xi_{13} y,\xi_{13}^{10} z)$ and
$\tau:(x,y,z)\mapsto (y,z,x)$ up to conjugation by $P\in PGL_3(K)$,
and $C$ is isomorphic (through $P$) to the curve $K_5:X^4 Y+Y^4 Z+
Z^4 X=0$.
\item The cyclic group $C_{8}$ appears as $Aut(C)$ inside $PGL_3(K)$ generated by
the transformation $(x,y,z)\mapsto (x,\xi_{8} y,\xi_{8}^4 z)$ up to
conjugation by $P\in PGL_3(K)$, and $C$ is isomorphic (through $P$)
to the plane non-singular curve $X^5+Y^4Z+\alpha
XZ^4+\beta_{2,0}X^3Z^2$, with $\alpha\beta_{2,0}\neq 0$.
\end{enumerate}
\end{cor}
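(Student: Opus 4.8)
The plan is to read off the corollary as the specialization of Theorem~\ref{BB} to $d=5$, where the four ``large'' orders in the statement are exactly the distinguished values of that theorem: $20=d(d-1)$, $16=(d-1)^2$, $15=d(d-2)$, $13=d^2-3d+3$, while $8=2(d-1)$ has the form $\ell(d-1)$ with $\ell=2$. For each part the isomorphism class of $C$ and the abstract shape of $Aut(C)$ are delivered directly by the corresponding item of Theorem~\ref{BB}, so the remaining work is twofold: to exhibit explicit generators inside $PGL_3(K)$ that retain the stated equation, and to match the abstract group structure with the SmallGroup identifiers quoted in the statement.

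For parts (1) and (2) there is nothing further to do beyond citing Theorem~\ref{BB}(1) and (2): an element of order $d(d-1)=20$ (resp. $(d-1)^2=16$) forces $Aut(C)=\langle\sigma\rangle\cong C_{20}$ (resp. $C_{16}$) with $C$ isomorphic to $X^5+Y^5+\alpha XZ^4$ (resp. $X^5+Y^4Z+\alpha XZ^4$), and one checks by substitution that $[X;\xi_{20}^4Y;\xi_{20}^5Z]$ and $[X;\xi_{16}Y;\xi_{16}^{12}Z]$ fix these equations. For parts (3) and (4) I would first invoke Theorem~\ref{BB}(3),(4) to obtain $C\cong X^5+Y^4Z+\alpha YZ^4$ with $Aut(C)=\langle\sigma,\tau\,|\,\tau^2=\sigma^{15}=1,\ \tau\sigma\tau=\sigma^{-4}\rangle$, and $C\cong K_5$ with $Aut(C)=\langle\sigma,\tau\,|\,\sigma^{13}=\tau^3=1,\ \sigma\tau=\tau\sigma^{-4}\rangle$ respectively. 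Verifying the generators is routine: $\tau=[X;\mu Z;\mu^{-1}Y]$ with $\mu^3=\alpha$ sends the monomials $Y^4Z,\ YZ^4$ of $X^5+Y^4Z+\alpha YZ^4$ to $\mu^3 YZ^4$ and $\alpha\mu^{-3}Y^4Z$, which reproduce the equation exactly when $\mu^3=\alpha$; and $[Y;Z;X]$ cyclically permutes the three monomials of $K_5$. The identification with the SmallGroup labels is then pure group theory: in case (3) the relation $\tau\sigma\tau=\sigma^{-4}$ gives $(\tau\sigma)^2\sigma^3=\sigma^{-3}\sigma^3=1$, so our presentation coincides with the one quoted for $SmallGroup(30,1)$ (equivalently, $\sigma\mapsto\sigma^{-4}$ inverts the $C_3$ factor and fixes the $C_5$ factor, yielding $S_3\times C_5$); and in case (4), since $3\mid 13-1$ there is a unique non-abelian group of order $39$, while $-4\equiv 3^{-1}\pmod{13}$ shows that our relation and the quoted $\sigma\tau=\tau\sigma^{3}$ define the same group.

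The one place needing a genuine argument is part (5), where $8=2(d-1)$ only tells us, via Theorem~\ref{BB}(5) with $\ell=2$, that $Aut(C)$ is cyclic of order $4\ell'$ with $2\mid\ell'$; the hard point is to upgrade ``divisible by $8$'' to ``equal to $8$''. First I would note that $[X;\xi_8Y;\xi_8^4Z]$ does fix $X^5+Y^4Z+\alpha XZ^4+\beta_{2,0}X^3Z^2$, so $C_8\le Aut(C)$, and that among the admissible cyclic orders for degree $5$ (divisors of $20,16,15,13$) the only multiples of $8$ are $8$ and $16$. It then suffices to exclude order $16$. Since $Aut(C)$ is cyclic and contains $\sigma:=[X;\xi_8Y;\xi_8^4Z]$, replacing a hypothetical order-$16$ generator by a suitable odd power we may assume it squares to $\sigma$; as $\sigma$ has pairwise distinct eigenvalues its centralizer in $PGL_3(K)$ is the diagonal torus, so this generator is diagonal, say $g=[X;\xi_{16}^aY;\xi_{16}^bZ]$ with $g^2=\sigma$, whence $a\equiv1\pmod 8$ and $b\equiv4\pmod 8$, i.e. $b\in\{4,12\}$. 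Invariance of $g$ on the monomial $X^3Z^2$, present precisely because $\beta_{2,0}\neq0$, requires $2b\equiv0\pmod{16}$, that is $b\equiv0\pmod 8$, contradicting $b\in\{4,12\}$. Hence no order-$16$ automorphism exists when $\beta_{2,0}\neq0$, so $Aut(C)=C_8$. This exclusion of the larger cyclic group is the essential obstacle, and the only point where the non-degeneracy hypothesis $\alpha\beta_{2,0}\neq0$ is genuinely used.
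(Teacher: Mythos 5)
Your proposal is correct and follows essentially the same route as the paper: parts (1)--(4) are read off from Theorem~\ref{BB} with $d=5$ together with the table of types, and part (5) comes down to observing that any larger group would be cyclic of order $16$ and is excluded by $\beta_{2,0}\neq 0$. Your explicit eigenvalue computation for excluding order $16$ just fills in a detail the paper leaves implicit (there it follows from the classification of curves with an automorphism of order $(d-1)^2$), so the two arguments are the same in substance.
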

\begin{proof} Straightforward the first statements because corresponds to apply theorem
\ref{BB} when the curve $C$ has a cyclic automorphism of order:
$d(d-1)$, $(d-1)^2$, $d(d-2)$ or $d^2-3d+3$ respectively, with $d=5$
by use the table in \S2. The last statement where $Aut(C)$ has a
cyclic automorphism of order $\ell (d-1)$ with $\ell=2$, by apply
Theorem \ref{BB} and table in \S2, we only need to observe that if
$Aut(C)$ should be bigger, then always is cyclic and should be the
group of order 16, therefore the only restriction to impose is
$\beta_{20}\neq 0$ to ensure that the curve has exact automorphism
group $C_8$.
\end{proof}

\section{Determination of the automorphism group with small cyclic
subgroups}

Observe that remains to study $Aut(C)$ for curves $C$ of degree
$d=5$ where its larger order for any element in $Aut(C)$ has order
$2 d$, or $\leq d$ by table in \S2 and the results of the previous
section from Theorem \ref{teoHarui}.

\begin{prop} Suppose that $C$ of degree 5 admits $\sigma\in Aut(C)$
of order 10 as an element of highest order in $Aut(C)$, then we reduce
after conjugation by certain $P\in PGL_3(K)$ that $\sigma$ acts on
$X^5+Y^5+\alpha XZ^4+\beta_{2,0}X^3Z^2$ where $\alpha\beta_{2,0}\neq 0$ as $\sigma:(x,y,z)\mapsto (x,\xi_{10}^2 y,\xi_{10}^5
z)$ and one of the following situations happens:
\begin{enumerate}
\item If $\alpha=5$ and $\beta_{2,0}=10$ then $C$ is isomorphic to the Fermat quintic $F_5: \,\,\,X^5+Y^5+Z^5$ and $Aut(C)$ is conjugate to SmallGroup$(150,5)$. In particular, it is generated by $\eta_1,\eta_2,\eta_3,\eta_4$ of orders $2,3,5,5$ respectively such that $$(\eta_1\eta_2)^2=(\eta_1\eta_3)(\eta_3\eta_1)^{-1}=(\eta_3\eta_4)(\eta_4\eta_3)^{-1}=\eta_1\eta_4^2\eta_1(\eta_3\eta_4)^{-3}=\eta_2\eta_3\eta_2(\eta_4\eta_3)^{-1}=1.$$
\item If $(\alpha,\beta_{2,0})\neq(5,10)$ then $Aut(C)$ is cyclic of order $10$. Moreover, in such case if $1+\alpha+\beta_{2,0}\neq0$ then $C$ is a descendant of the Fermat curve defined by $$PC:\,\,X^5+Y^5+Z^5+\frac{5-3\alpha+\beta_{2,0}}{1+\alpha+\beta_{2,0}}(X^4Z+XZ^4)+\frac{2(5+\alpha-\beta_{2,0})}{1+\alpha+\beta_{2,0}}(X^3Z^2+X^2Z^3).$$
\end{enumerate}
\end{prop}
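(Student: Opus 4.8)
The plan is to handle the two cases by pinning down, in each, the full projective symmetry of the binary quintic cut out on the $(X,Z)$-line, after exploiting the two homologies hidden inside $\sigma$. Write $\beta:=\beta_{2,0}$ and note that $\xi_{10}^2$ is a primitive fifth root of unity while $\xi_{10}^5=-1$, so that $\sigma^2=\mathrm{diag}(1,\xi_{10}^2,1)$ is a homology of period $5=d$ with centre $(0:1:0)$ and axis $Y=0$, and $\sigma^5=\mathrm{diag}(1,1,-1)$ is a homology of period $2$ with centre $(0:0:1)\in C$. The whole argument is organised around the projection from $(0:1:0)$.

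For part (1) I would first make the change of variables explicit. The binomial identity shows
\[
X^5+10X^3Z^2+5XZ^4=\tfrac12\bigl[(X+Z)^5+(X-Z)^5\bigr],
\]
so for $(\alpha,\beta)=(5,10)$ the substitution $X\mapsto X+Z,\ Z\mapsto X-Z$ (followed by a rescaling of the variables) carries $C$ to the Fermat quintic $X^5+Y^5+Z^5=0$; hence $Aut(C)\cong Aut(F_5)\cong(\Z/5)^2\rtimes S_3$, a group of order $150$. It then remains only to verify that this group is $\mathrm{SmallGroup}(150,5)$ and that the generators $\eta_1,\dots,\eta_4$ of the stated orders $2,3,5,5$, obtained as the $P$-conjugates of the standard generators $[Y;X;Z]$, $[Y;Z;X]$, $[\xi_5X;Y;Z]$, $[X;\xi_5Y;Z]$ of $Aut(F_5)$, satisfy the listed relations; this is a routine finite check (e.g.\ in GAP).

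For part (2) the first task is to eliminate, in Theorem \ref{teoHarui}, every possibility except those routed through $(0:1:0)$. The primitive groups are excluded because $A_5$ has no element of order $10$ and $Hess_{36},Hess_{72},Hess_{216},PSL(2,7),A_6$ have no element of order $5$ (the last three being already ruled out in degree $5$); a Klein descendant is impossible since $10\nmid 3(d^2-3d+3)=39$. Since $\sigma^2$ is a homology of period $d$, its centre $(0:1:0)$ is an outer Galois point by \cite[Lemma 3.7]{Harui}, and by Yoshihara \cite{Yoshihara} it is the unique one unless $C\cong F_5$; when it is unique it is fixed by all of $Aut(C)$, and by \cite{Mit} the axis $Y=0$ is then left invariant. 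Consequently every automorphism has the block shape $[aX+cZ;\,eY;\,gX+iZ]$, and, matching the $Y^5$-coefficient against the $X,Z$-part, preservation of $C$ is equivalent to: $M=\begin{pmatrix}a&c\\ g&i\end{pmatrix}$ scales $\phi(X,Z)=X^5+\beta X^3Z^2+\alpha XZ^4$ by some $\lambda$ and $e^5=\lambda$. This produces a central extension $1\to\Z/5\to Aut(C)\to\overline{Aut}(\phi)\to1$, with kernel $\langle\sigma^2\rangle$ and with $\overline{Aut}(\phi)\le PGL_2(K)$ the group of M\"obius maps permuting the five roots $R=\{0,\pm p,\pm q\}$ of $\phi$, the map $t\mapsto-t$ being the image of $\sigma$.

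The crux is to show $\overline{Aut}(\phi)=\langle t\mapsto-t\rangle\cong\Z/2$, whence $Aut(C)=\langle\sigma\rangle\cong\Z/10$. A second involution commuting with $t\mapsto-t$ would generate a Klein four-group $V_4$, but then the orbit of the root $0$ would be $\{0,g(0),-g(0)\}$ (using that $R$ is $(t\mapsto-t)$-stable and that $g(0)$ is a root $\neq0$), a set of size $3$, which cannot divide $|V_4|=4$ — a contradiction; any other extra element yields a rotation of order $\ge3$. Such a rotation of order $3$ or $4$ is excluded by an orbit count on $R$ together with the standing hypothesis that $10$ is the maximal order in $Aut(C)$ (via the central $\Z/5$ it would otherwise force an element of order $15$ or $20$), while a rotation of order $5$ makes $R$ a single orbit, i.e.\ a regular pentagon, forcing $\phi$ to be $PGL_2$-equivalent to the fifth-roots-of-unity configuration and hence $(\alpha,\beta)=(5,10)$ with $C\cong F_5$. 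Finally, for the \emph{moreover} part, the same substitution $X\mapsto X+Z,\ Z\mapsto X-Z$ applied to the general curve gives, from the expansions of $(X+Z)^5$, $(X+Z)^3(X-Z)^2$ and $(X+Z)(X-Z)^4$, the identity $\phi(X+Z,X-Z)=(1+\alpha+\beta)(X^5+Z^5)+(5+\beta-3\alpha)(X^4Z+XZ^4)+(10+2\alpha-2\beta)(X^3Z^2+X^2Z^3)$; dividing by $1+\alpha+\beta\neq0$ (and rescaling $Y$) exhibits $C$ as a descendant of $F_5$ with exactly the stated coefficients. I expect the main obstacle to be the determination of $\overline{Aut}(\phi)$: the orbit argument disposes of extra involutions cleanly, but separating the order-$3$, order-$4$ and order-$5$ cases — and certifying that the order-$5$ (pentagon) case occurs \emph{precisely} at $(\alpha,\beta)=(5,10)$, thereby coordinating the $C\cong F_5$ alternative with that value — is where the genuine care, and the interplay with the maximality hypothesis and with the smoothness of $C$, is required.
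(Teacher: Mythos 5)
Your route is genuinely different from the paper's. The paper argues in two overlapping cases: when $Aut(C)$ fixes a line and a point off it (and $C$ is not the Fermat quintic) it eliminates the possible quotients $G'$ in $1\to C_5\to Aut(C)\to G'\to 1$ one at a time through SmallGroup identifications and explicit commutation checks; when $C$ is a Fermat descendant it classifies the conjugates $P\sigma P^{-1}$ of order $10$ inside $Aut(F_5)$, solves for $P$, and reads off both the equation of $PC$ and the criterion $(\alpha,\beta_{2,0})=(5,10)$ from the resulting coefficients. You replace all of this by the projection from the outer Galois point $(0:1:0)$: the central extension $1\to\Z/5\to Aut(C)\to\overline{Aut}(\phi)\to1$, the classification of finite subgroups of $PGL_2$ acting on the five branch points $R=\{0,\pm p,\pm q\}$, and bootstrapping of element orders against the maximality hypothesis (an order-$3$ or order-$4$ element of $\overline{Aut}(\phi)$ lifts centrally to an element of order $15$ or $20$). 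Where it is complete this is cleaner: the identity $X^5+10X^3Z^2+5XZ^4=\frac12\bigl[(X+Z)^5+(X-Z)^5\bigr]$ settles case (1) and the ``moreover'' clause in one stroke, and your expansion of $\phi(X+Z,X-Z)$ reproduces the stated coefficients of $PC$ correctly.

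The genuine gap is the step you yourself flag, and it is not a formality. First, the block shape of automorphisms, hence the exact sequence, is available only after one knows the outer Galois point is unique, i.e.\ only after one knows $C\not\cong F_5$; so the implication ``$C\cong F_5\Rightarrow(\alpha,\beta_{2,0})=(5,10)$'' cannot come out of the pentagon analysis itself and must be proved separately, e.g.\ by observing that an isomorphism $C\to F_5$ carries $(0:1:0)$ to a coordinate vertex of $F_5$, whose projection has branch divisor projectively equivalent to $\mu_5$, and then computing for which parameters $R$ is a free $\Z/5$-orbit. Second, when you carry out that computation you will find the condition is $\beta_{2,0}^2=20\alpha$ rather than $(\alpha,\beta_{2,0})=(5,10)$: the type $10,(2,5)$ normal form is preserved by $Z\mapsto\lambda Z$, which acts by $(\alpha,\beta_{2,0})\mapsto(\lambda^4\alpha,\lambda^2\beta_{2,0})$, so for instance $(\alpha,\beta_{2,0})=(5,-10)$ also yields the Fermat quintic (apply $\mathrm{diag}(1,1,i)$) and case (2) as literally stated fails for it; the paper's own case split has the same defect, but your plan, executed honestly, runs straight into it and has to restate the dichotomy. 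Third, a small repair in the involution step: a second involution $g$ commuting with $t\mapsto-t$ could a priori fix $0$, so the orbit-of-size-three count does not immediately apply; the clean argument is that any such $g\neq\iota$ has the form $t\mapsto c/t$ and sends the root $0$ to $\infty\notin R$. None of this is fatal to the strategy, but until the first two points are written out the proposal does not yet prove the proposition.
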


%\subsection{Full automorphism groups of Quintics}
%\textbf{General properties:} It should be noted that for a non-singular plane curve $C$ of degree $5$,
%$|Aut(C)|\leq 150$ (see Theorem $2.3$ \cite{Harui}). In particular, $Aut(C)$ is not
% conjugate to the Hessian group $Hess_{216}$, the Klein group $PSL(2,7)$ or the alternating group $A_6.$
% On the other hand, it is not conjugate to $Hess_{36}$ or $Hess_{72}$ because both groups contains elements of order $6$
% and $6\nmid d-1,\,\,d,\,\,(d-1)^2,\,\,d(d-2),\,\,d(d-1)$ or $d^2-3d+3$.
\begin{proof}
%\subsubsection{On types $10, (2,5)$}
Recall that the condition $\alpha\neq 0$, we can change the variables with a $P$ and
fix a concrete value for $\alpha$ and because an element of $Aut(C)$
where 10 divides the order should be of order 20 from previous
result in last section, and because we fix here the highest order
should be 10 we can assume that $\beta_{2,0}\neq 0$. Thus we reduce
that $C$ has an equation (up to $K$-isomorphism) of the form
$X^5+Y^5+\alpha XZ^4+\beta_{2,0}X^3Z^2=0$
with $\alpha\beta_{2,0}\neq 0$. This curve admits a homology $\sigma^2$ of order $5>3$ therefore $Aut(C)$ fixes a line and a point off that line or $C$ is a descendent of Fermat curve (by Theorem \ref{BB}). Moreover, the center $(0;1;0)$ of this homology is an outer Galois point (by Lemma 3.7 \cite{Harui}) and if $C$ is not isomorphic to the Fermat curve $F_5: X^5+Y^5+Z^5$ then it is unique (Theorem 4' \cite{Yoshihara}) hence should be fixed by $Aut(C)$.
\par Assume first that $Aut(C)$ fixes a line and a point off that line and $C$ is not the Fermat quintic.
In that case, $Aut(C)\subseteq PGL_3(K)$ consists of elements of the form
$[\alpha_1X+\alpha_3Z;Y;\gamma_1X+\gamma_2Z]$ and $Aut(C)$ satisfies
a short exact sequence $1\rightarrow C_5\rightarrow
Aut(C)\rightarrow G'\rightarrow 1,$ where $C_5$ generated by
$\sigma':=[X;\zeta_5^2 Y;Z]$ and $G'$ contains an element of order $2$ from the
projection of the $Aut(C)$ given by $\delta:=[X;Y;\zeta_{10}^5Z]$
and hence $G'$ is conjugate to $C_2, C_4, S_3, A_4, S_4$ or $A_5$. Now, there is no group of order $30$ or $60$ which contains elements of order $10$ and no higher orders therefore $G'$ is not conjugate to $S_3$ or $A_4$. Moreover, assume that $G'$ is conjugate to $S_4$ then $Aut(C)$ is conjugate to $SmallGroup(120,5)$ or $SmallGroup(120,35)$ (because they are the only groups of order $120$ with elements of order $10$ and no higher orders) but there are no elements $\tau\in Aut(C)$ of order $3$ or $10$ that commutes with $\delta$ thus $Aut(C)$ is not conjugate to any of these two groups hence $G'$ can not be conjugate to $S_4$.

On the other hand, groups of order $20$ that contain elements of order $10$ and no higher orders are; $SmallGroup(20,m)$ where $m=1,3,4,5$ and since there is no element $\tau\in Aut(C)$ of order $4$ such that $\sigma'\tau\sigma'=\tau$ or $\sigma'\tau\sigma'^{-1}=\tau\sigma'$ then $m\neq1,3$ moreover there is no element $\tau$ of order 2 in $Aut(C)$ which commutes with $\delta$ therefore $m\neq4,5$ that is $G'$ is not conjugate to $C_4$. Furthermore, groups of order $300$ that contain elements of order $10$ and no higher orders are; $SmallGroup(300,m)$ where $m=25,26,27,41,43$. If $m=43$ or $41$ then $Aut(C)$ contains exactly $3$ element of order $2$ a contradiction (because $Aut(C)$ should have at least $15$ such elements). On the other hand, $m\neq25,26$ or $27$ because there are no elements of order 2 in $Aut(C)$ such that $\tau\delta=\delta\tau$ consequently $G'$ is not conjugate to $A_5$.
\par Finally, we assume that $C$ is a descendent of the Fermat curve. This happens through $P\in PGL_{3}(K)$ such that $P\sigma P^{-1}\in Aut(F_5)$ is of order 10 therefore $P\sigma P^{-1}$ has one of the forms $[X;\zeta_{10}^{2b}Z;\zeta_{10}^{2a}Y],\,\,[\zeta_{10}^{2a}Y;X;\zeta_{10}^{2b}Z]$ or $[\zeta_{10}^{2b}Z:\zeta_{10}^{2a}Y;X]$ with $5\nmid(a+b)$. In what follows, we treat each of these cases.
\par If $P\sigma P^{-1}=\lambda[X;\zeta_{10}^{2b}Z;\zeta_{10}^{2a}Y]$ then $\lambda=\zeta_{10}^{2},\,\,5|a+b+2$ and $P=[\alpha_2Y;\beta_1X+\beta_3Z;\zeta_{10}^{2a+2}\beta_1X+\zeta_{10}^{2a-3}\beta_3Z]$. This transforms $C$ into $F_5'$, a descendant of the Fermat quintic, where $Y^2Z^3$ and $Y^3Z^2$ have coefficients 0 and $\zeta_{10}^{4a-6}\beta_{2,0}  \alpha _2^3 \beta _3^2\neq0$ respectively which is a contradiction (since $[X;\zeta_{10}^{2b}Z;\zeta_{10}^{2a}Y]\in Aut(F_5')$). Therefore, this case should be excluded.
\par If $P\sigma P^{-1}=\lambda[\zeta_{10}^{2a}Y;X;\zeta_{10}^{2b}Z]$ then $\lambda=\zeta_{10}^{2-2b},\,\,5|a-2b+2,a-2b-3$ and
$P=[\alpha_1X+\alpha_3Z;\zeta_{10}^{2-2b}\alpha_1X+\zeta_{10}^{2-2b-5}\alpha_3Z;\gamma_2Y]$. Therefore, $C$ is transformed through $P$ to a descendant of $F_5$ where the monomial $X^5$ appears with coefficient $2\alpha_1^5\neq0$ and $Y^5$ does not appear a contradiction.
\par If $P\sigma P^{-1}=\lambda[\zeta_{10}^{2b}Z;\zeta_{10}^{2a}Y;X]$ then $\lambda=\zeta_{10}^{2-2a},\,\,5|b-2a+2,b-2a-3$ and
$P=[\alpha_1X+\alpha_3Z;Y;\zeta_{10}^{2-2a}\alpha_1X+\zeta_{10}^{-3-2a}\alpha_3Z]$. It suffices to consider the case $a=1$ and $b=0$\footnote{We have $(a,b)=(0,3), (1,0), (2,2), (3,4)$ and the corresponding automorphisms are all conjugate inside $Aut(F_5)$. Indeed, $[X;\zeta_{10}^{2a'}Y;\zeta_{10}^{2b'}Z][Z;\zeta_{10}^{2}Y;X][X;\zeta_{10}^{-2a'}Y;\zeta_{10}^{-2b'}Z]=[\zeta_{10}^{-4b'}Z;\zeta_{10}^{2-2b'}Y;X] $ with $b'=1,4,3$.} that is, $P=[\alpha_1X+\alpha_3Z;Y;\alpha_1X-\alpha_3Z]$. In particular, $C$ is transformed through $P$ to a descendant of the Fermat quintic curve only if $1+\alpha+\beta_{2,0}\neq0$ (The coefficients of $X^5$ and $Y^5$ are $(1+\alpha+\beta_{2,0})\alpha_1^5=1$ and $(1+\alpha+\beta_{2,0})\alpha_3^5=1$ respectively) moreover $\alpha_3=b\alpha_1$ where $b$ is a $5$-th root of unity. Comparing the coefficients of $X^4Z,\,\, XZ^4$ and $X^2Z^3,\,\,X^3Z^2$ we can assume without loss of generality that $b=1$\footnote {otherwise, $5-3\alpha+\beta_{2,0}=0=5+\alpha-\beta_{2,0}$ thus $\alpha=5$ and $\beta_{2,0}=10$ therefore $C$ is transformed into the Fermat quintic itself} and hence we obtain the following curve
$$PC:\,\,X^5+Y^5+Z^5+\frac{5-3\alpha+\beta_{2,0}}{1+\alpha+\beta_{2,0}}(X^4Z+XZ^4)+\frac{2(5+\alpha-\beta_{2,0})}{1+\alpha+\beta_{2,0}}(X^3Z^2+X^2Z^3)$$
Now, if $\alpha\neq5$ or $\beta_{2,0}\neq10$ then $[X;\zeta_{10}^{2b}Z;\zeta_{10}^{2a}Y],\,\,[\zeta_{10}^{2a}Y;X;\zeta_{10}^{2b}Z],\,\,[\zeta_{10}^{2a}Y;\zeta_{10}^{2b}Z;X],\,\,[\zeta_{10}^{2b}Z;X;\zeta_{10}^{2a}Y]\notin Aut(PC).$ Furthermore, $[\zeta_{10}^{2b}Z;\zeta_{10}^{2a}Y;X]$ or $[X;\zeta_{10}^{2a}Y;\zeta_{10}^{2b}Z]\in Aut(PC)$ iff $b=0$ that is $Aut(PC)$ is cyclic of order 10.\\
This completes the proof.
%\subsubsection{On types $8, (1,4)$}
%This curve admits a cyclic group of order $2(d-1)$ hence $Aut(C)$ is cyclic of order $8$ or $16.$ If $|Aut(C)|=16$ then it is projectively equivalent to type $C':\,\,16, (1,12)$ hence through a transformation $P\in PGL_3(\mathbb{C})$ such that $P[X;\zeta_8Y;\zeta_8^4Z]P^{-1}=\lambda [X;\zeta_8Y;\zeta_8^4Z]$ (Because there are only two elements of order $8$ in $Aut(C')$ and they are not conjugate in $PGL_3(\mathbb{C})$). This in turns implies that $P=[X;\alpha Y;\beta Z]$ thus $|Aut(C)|=16$ if and only if $\beta_{2,0}=0$.

\end{proof}

The following lemma, is very useful to discard all the groups with a
subgroup isomorphic to $C_2\times C_2$ in non-singular curves of
degree 5, in particular help to simplify also the results above:
\begin{lem} \label{noafive} Suppose that $C$ is a non-singular plane curve of degree 5 with
$C_2\times C_2$ as a subgroup of $Aut(C)$. Then such $C$ does not
exists, in particular there is no non-singular plane curve of degree
5 with full automorphism group isomorphic to the groups: $C_2\times
C_2$, $A_4$, $S_4$ and $A_5$.
\end{lem}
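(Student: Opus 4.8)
The plan is to prove the stronger statement that no smooth plane quintic admits the Klein four-group $V\cong C_2\times C_2$ as a group of automorphisms, and then to deduce the rest by observing that each of $A_4$, $S_4$ and $A_5$ contains a subgroup isomorphic to $V$ (the normal Klein four-subgroup $\{e,(12)(34),(13)(24),(14)(23)\}$ of $A_4\leq S_4$ and $A_4\leq A_5$). Hence if any of these four groups occurred inside $Aut(C)$ we would obtain $V\leq Aut(C)$, and it suffices to rule that out.

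First I would put $V$ in normal form inside $PGL_3(K)$. Lifting two generating involutions to matrices $A,B\in GL_3(K)$ with $A^2=B^2=I$, their commutativity in $PGL_3(K)$ gives $AB=\lambda BA$ with $\lambda^3=\det(ABA^{-1}B^{-1})=1$; computing $A(AB)$ in two ways forces $\lambda^2=1$, whence $\lambda=1$ and $A,B$ genuinely commute in $GL_3(K)$. Being commuting finite-order (hence semisimple, as $K$ has characteristic $0$) matrices, they are simultaneously diagonalizable, so $V$ lies in the diagonal torus, and its three non-trivial elements form the full $2$-torsion of that torus. After a permutation and rescaling of coordinates they become the reflections $diag(1,1,-1)$, $diag(1,-1,1)$ and their product $diag(1,-1,-1)$, whose centres and axes are the reference points and reference lines of the coordinate triangle.

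Next I would translate invariance of $F$ into parity conditions. Since each generator squares to the identity, $F\circ\sigma_i=\pm F$, so taking $\sigma_1=diag(1,1,-1)$ and $\sigma_2=diag(1,-1,1)$, every monomial $X^iY^jZ^k$ occurring in $F$ must satisfy $k\equiv\epsilon_1$ and $j\equiv\epsilon_2 \pmod{2}$ for fixed $\epsilon_1,\epsilon_2\in\{0,1\}$. Because $i+j+k=5$ is \emph{odd}, the remaining exponent obeys $i\equiv 1+\epsilon_1+\epsilon_2 \pmod{2}$. A case check over the four pairs $(\epsilon_1,\epsilon_2)$ then shows that in every case all surviving monomials share a common variable to an odd, hence positive, power: $X$ divides $F$ when $(\epsilon_1,\epsilon_2)=(0,0)$ or $(1,1)$, $Y$ divides $F$ when $(\epsilon_1,\epsilon_2)=(0,1)$, and $Z$ divides $F$ when $(\epsilon_1,\epsilon_2)=(1,0)$. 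Thus $F$ always has a linear factor, so the quintic is reducible and in particular singular, contradicting smoothness. This parity obstruction, which is precisely where the oddness of $d=5$ enters, is the heart of the argument.

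The only genuine obstacle is the normal-form step: one must be certain that the abstract subgroup $V\leq PGL_3(K)$ is actually conjugate to the diagonal sign-change group, i.e.\ that the two commuting involutions admit commuting, simultaneously diagonalizable lifts. The commutator computation above disposes of the projective-representation subtlety by killing the potential scalar $\lambda$, after which simultaneous diagonalizability is standard and the ensuing monomial bookkeeping is entirely routine.
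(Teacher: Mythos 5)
Your proof is correct, but it takes a genuinely different route from the paper's. The paper invokes Harui's structure theorem: it first rules out $C_2\times C_2$ being a descendant of the Fermat or Klein quintic (since $4\nmid 150$ and $4\nmid 39$), concludes that the four-group must fix a point off the curve with trivial kernel $N$ in the exact sequence (as $2\nmid 5$), so that it acts faithfully through $PGL_2(K)$ on two of the variables, and then checks by hand that no second involution can preserve an equation of type $2,(0,1)$. You instead bypass the classification machinery entirely: you resolve the projective-lifting subtlety (the commutator computation killing the scalar $\lambda$ is correct, and rescaling a lift of an involution to satisfy $A^2=I$ is legitimate in characteristic zero), simultaneously diagonalize the two commuting involutions so that $V$ becomes the full $2$-torsion of the diagonal torus, and then run a parity argument on exponents: since $i+j+k=5$ is odd, every invariant quintic is divisible by one of the coordinates, hence reducible and singular. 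Your argument is more elementary and self-contained, and it visibly generalizes to every odd degree $d$, whereas the paper's version is tied to the degree-$5$ tables and to Harui's theorem, which the chapter has already set up and reuses elsewhere. Both reductions of $A_4$, $S_4$, $A_5$ to the Klein four-subgroup are the same. One small presentational point: in your case $(\epsilon_1,\epsilon_2)=(1,1)$ all three exponents are odd, so in fact $XYZ$ divides $F$; this only strengthens the contradiction, but you may as well state it.
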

\begin{proof}
By Mitchel [Mi] and Harui [Ha] the group $C_2\times C_2$ inside
$PGL_3(K)$ giving invariant a non-singular plane curve $C$ of degree
$d$ should fix a point not belonging to the curve, or is a decendant
of Fermat or Klein curve. Now for $d=5$ could not be a decendant of
Fermat of Klein curve because $4$ does not divide the order of
automorphism of the Fermat of Klein curve of degree 5. Therefore the
subgroup $C_2\times C_2$ fixes a point not in $C$  and because $2$
does not divide the degree $d=5$ by Harui main theorem [Ha] we have
that we can think the elements of $C_2\times C_2$ in a short exact
sequence:

$$1\rightarrow N=1\rightarrow G\rightarrow G\rightarrow 1$$
where $G$ is isomorphic to $C_2\times C_2$ but $N$ is the subgroup
that acts on $X$, thus the subgroup $G$ fixes the variable $X$ in
the equation of $C$ and we can reduce by [Ha] that $G$ only acts in
the variables $Y,Z$ as a matrix in $PGL_2(K)$. Let now
$\sigma,\tau\in G\subseteq GL_2(K)$ of order two, by a
transformation $P\in GL_2(K)$ we can assume that $\sigma=diag(1,-1)$
and $\tau=[[a,b],[c,-a]]\neq\sigma$, (and recall that
$\sigma\tau=\tau\sigma$) and therefore we may assume that the curve
$C$ has a model of type 2(0,1). Now all possible $\tau$ does not
retain invariant the equation of type 2(0,1) for any choose of the
free parameters, obtaining the result, because for commuting with
$\sigma$ the automorphism $\tau=diag(-1,1)$ and therefore $C$ has
the equation: $Z^4 L_{1,Z}+Z^2 L_{3,Z}+L_{5,Z}$ and simultaneously
$Y^4L_{1,Y}+Y^2L_{3,Y}+L_{5,Z}$ which is impossible, or $\tau$ is
the transformation $[X,Y,Z]\mapsto [X,bZ,cY]$ with $bc\neq 0$, but
we obtain that $C$ has simultaneously also the above two equations,
convenient multiplied for some constants which is not possible.
\end{proof}

\begin{prop} Suppose that $C$ of degree 5 admits $\sigma\in Aut(C)$
of order 4 as element of higher order in $Aut(C)$, then we reduce
after conjugation by certain $P\in PGL_3(K)$ one of the following
situations:
\begin{enumerate}
\item $Aut(C)$ is isomorphic to cyclic element of order 4
where $Aut(C)=<\sigma>$ with $\sigma(x,y,z)=(x,\xi_4 y,\xi_4^2 z)$
%and $\tau(x,y,z)=(x,\mu^{-1}z,\mu x)$ with $\mu^4=\alpha\neq 0$
(up to conjugation by $P\in PGL_3(K)$) and the curve $C$ satisfies
$F(X,Y,Z)=0$ (up to $P$, i.e. up to $K$-isomorphism) given by
$X^5+X(Z^4+\alpha
Y^4)+\beta_{2,0}X^3Z^2)+\beta_{3,2}X^2Y^2Z+\beta_{5,2}Y^2Z^3=0$,
with $\alpha\beta_{5,2}\neq0$.
\item $Aut(C)$ is isomorphic to cyclic element of order 4
where $Aut(C)=<\sigma>$ with $\sigma(x,y,z)=(x, y,\xi_4 z)$
%and $\tau(x,y,z)=(x,\mu^{-1}z,\mu x)$ with $\mu^4=\alpha\neq 0$
(up to conjugation by $P\in PGL_3(K)$) and the curve $C$ satisfies
$F(X,Y,Z)=0$ (up to $P$, i.e. up to $K$-isomorphism) given by $Z^4
Y+ L_{5,Z}(X,Y)=0$ such that $L_{5,Z}'(X,\zeta_mY')\neq\zeta^r_m
                L_{5,Z}'(X,Y')$ where $(m,r)=(8,1),(16,1)$ or $(20,4)$.
%\item $Aut(C)=<\sigma>$ and $C$ is isomorphic to $X^5+Y^5+\alpha
%XZ^4+\beta_{2,0}X^3Z^2$ if $\beta_{2,0}\neq 0,$ and $\neq \pm 2 u$
%where $u$ is a square root of unity of $\frac{-\alpha}{3}$ and $\neq
%$
%\item $C$ is isomorphic to $X^5+Y^5+5
%XZ^4+10 X^3Z^2=0$ which is the quintic Fermat, with automorphism
%group given by...
%\item
\end{enumerate}
\end{prop}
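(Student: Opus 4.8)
The plan is to split along the admissible types left by the classification and, in each, to identify a geometric object fixed by $Aut(C)$. By the table of \S2 (equivalently by \cite{BaBacyc}), an element $\sigma$ of maximal order $4$ forces $C$, up to $K$-isomorphism, into one of the types $4,(1,3)$, $4,(1,2)$ or $4,(0,1)$. First I would discard type $4,(1,3)$: its defining polynomial $X^5+X(Z^4+\alpha Y^4+\beta_{4,2}Y^2Z^2)+\beta_{2,1}X^3YZ$ is divisible by $X$, so the curve contains the line $X=0$ and is singular, hence outside our scope. The two surviving types will yield situations (1) and (2). In both, I first record the common reductions: since $|Aut(C)|\le 150$ by \cite[Theorem 2.3]{Harui} and $A_5$ has no element of order $4$, the group $Aut(C)$ is not conjugate to $Hess_{216}$, $PSL(2,7)$, $A_6$ or $A_5$; and $C$ is not a descendant of $F_5$ or $K_5$, since $4\nmid|Aut(F_5)|=150$ and $4\nmid|Aut(K_5)|=39$.

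For type $4,(1,2)$ with $\sigma=[X;\xi_4Y;\xi_4^2Z]$, the polynomial is divisible by $X$ unless $\beta_{5,2}\neq0$, and the point $(0;1;0)$ is singular unless $\alpha\neq0$; thus $\alpha\beta_{5,2}\neq0$ is precisely the non-singularity hypothesis. To eliminate the last primitive possibility I would reproduce the reflection argument of Chapter 1: $Hess_{36}$ and $Hess_{72}$ contain reflections but no Klein four-group, so all their reflections are conjugate, and testing $P\sigma^2P^{-1}=\lambda[Z;Y;X]$ against the coefficients of $XY^4,Y^4Z$ and of $X^2Y^3,Y^3Z^2$ yields a contradiction. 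Hence $Aut(C)$ fixes a line and a point off it, and, as the two eigenvalue triples attached to $\sigma$ are each pairwise distinct, every automorphism has one of the shapes $[X;vY+wZ;sY+tZ]$, $[vX+wZ;Y;sX+tZ]$ or $[vX+wY;sX+tY;Z]$. For the first shape, the coefficients of $Y^5$ and $Z^5$ (absent from $F$, while $\beta_{5,2}\neq0$) force $w=s=0$, and analogous comparisons handle the other two shapes; so every automorphism is diagonal, and the relations $v^4=s^4=v^2s^3=1$ give $Aut(C)=\langle\sigma\rangle\cong C_4$, which is situation (1).

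For type $4,(0,1)$ the generator is the homology $\sigma=[X;Y;\xi_4Z]$ of period $4=d-1$ with center $(0;0;1)\in C$. Non-singularity forces $L_{1,Z}\neq0$ (otherwise $L_{5,Z}(X,Y)=0$ is a cone of lines through $(0;0;1)$), so after a linear change in $X,Y$ I normalize $L_{1,Z}=Y$ and write $Z^4Y+L_{5,Z}(X,Y)=0$. Because $\sigma$ is a homology of order $\ge4$, Mitchell \cite{Mit} \S5 excludes the primitive case, and the triangle case is excluded as above; hence $Aut(C)$ fixes a point and a line off it. By \cite[Lemma 3.7]{Harui} the center $(0;0;1)$ is an inner Galois point, unique by Yoshihara \cite{Yoshihara} \S2, so it is fixed by all of $Aut(C)$, whence $Aut(C)$ is cyclic by \cite[Lemma 11.44]{Book} and generated by a diagonal $[X;vY;tZ]$. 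Imposing invariance of $Z^4Y$ and of $L_{5,Z}$ and eliminating the $Z$-entry $t$ leaves a scaling constraint on $Y$ inside $L_{5,Z}$; the three larger cyclic orders realizable for a non-singular quintic of this shape are those of types $8,(1,4)$, $16,(1,12)$ and $20,(4,5)$, and excluding them amounts exactly to $L_{5,Z}(X,\zeta_mY)\neq\zeta_m^rL_{5,Z}(X,Y)$ for $(m,r)=(8,1),(16,1),(20,4)$, giving $Aut(C)\cong C_4$ and situation (2).

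The routine part is the coefficient bookkeeping collapsing the block forms to diagonal matrices. The step I expect to be the genuine obstacle is the translation in the last case: verifying that the three equalities $(m,r)=(8,1),(16,1),(20,4)$ are both necessary and sufficient for the order to stay equal to $4$. This requires checking that, after normalizing $L_{1,Z}=Y$ and eliminating $t$ via $t^4v=\lambda$, these are the only diagonal enhancements of $Z^4Y+L_{5,Z}(X,Y)$ compatible with non-singularity, and computing each exponent $r$ from the invariance of $Z^4Y$; the non-singularity constraints on $L_{5,Z}$ are what cut the a priori longer list down to exactly these three.
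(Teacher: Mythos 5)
Your proposal is correct and follows essentially the same route as the paper: the same reduction to types $4,(1,3)$, $4,(1,2)$ and $4,(0,1)$, the same exclusion of the primitive groups and of Fermat/Klein descendants by order divisibility and the reflection-conjugacy computation, the inner Galois point argument for type $4,(0,1)$, and the coefficient comparison forcing diagonality with $v^4=s^4=v^2s^3=1$ for type $4,(1,2)$. The only cosmetic difference is that for type $4,(1,2)$ you collapse to diagonal automorphisms directly via the block-form argument of Chapter 1, where the Chapter 3 proof phrases the same elimination through the short exact sequence and the nonexistence of an order-two $\tau$ with $\tau\sigma\tau=\sigma^{-1}$; both versions appear in the paper and are equivalent.
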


\begin{proof}
We consider the situations in \S2 concerning types $4, (a,b)$.

We observe first that
$C$ can not be a descendant of the Fermat curve $F_5$ or the Klein curve $K_5$ because $|Aut(F_5)|=150$ and
$|Aut(K_5)|=39$ and $4\nmid|Aut(F_5)|$ or $|Aut(K_5)|$ and $Aut(C)$ is not conjugate to $A_5$ since there are no elements of order 4.
Consequently, $Aut(C)$ is conjugate to $Hess_{36}, Hess_{72}$ or it should fix a line and a point off that line by the
result of Harui. Moreover, for the last case, we need to consider the situation of a short exact sequence of the form
  $$1\rightarrow N=1\rightarrow Aut(C)\rightarrow G'\rightarrow 1,$$
where $G'$ should contain an element of order $4$. That is, $G'$ is
conjugate to a cyclic group $C_4$ or a Dihedral group $D_{8}$ (by
use of the previous lemma \ref{noafive}).
\begin{enumerate}
        \item Type $4, (1,3)$;
here the automorphism group is bigger and also has a subgroup of
order 32 when we impose the coefficient of $X^3YZ$ to be trivial
which would give a contradiction on Harui result. But in Type
$4(1,3)$ the equation is $F(X,Y,Z)=X \cdot G(X,Y,Z)$ and therefore
not of degree 5 plane non-singular curve (which is connected). Thus
this situation can not happen.
        % Always admits a bigger automorphism group isomorphic to $D_8$ through the transformation
%        $[X;\mu^{-1}Z;\mu Y]$ where $\mu^4=\alpha$. Thus, $Aut(C)$ should fix the point $(1;0;0)$
%        (because it is the only point that is fixed by $\sigma:=[X;\zeta_4Y;\zeta_4^3Z]$ and does not belong to $C$)
%        therefore, the line $X=0$ should be fixed $Aut(C)$ (this is the only line not passing through $(1;0;0)$ and is fixed by $\sigma$). In particular, elements of $Aut(C)$ are of the form $[X;vY+wZ;sY+tZ]$. Now, if $G'$ is conjugate to $S_4$ then because all elements of order $4$ inside $S_4$ are conjugate,
%$\sigma$ can be considered as the permutation $(1234)$ on a set of
%four points
%\[\{Q_1:=(1;a;b),\,\,\,Q_2:=(1;\zeta_4a;\zeta_4^3b),\,\,\,Q_3:=(1;\zeta_4^2a;\zeta_4^2b),\,\,\,Q_4:=(1;\zeta_4^3a;\zeta_4b)\}\]
%            with $(a,b)\neq(0,0)$ (since $\sigma$ fixes all the reference points). Hence, by straight forward computations one obtain that if $\tau:=[X;vY+wZ;sY+tZ]\in Aut(C)\leftrightarrow (34)\in S_4$ then $\tau=[X;vY;tZ]$ such that $(v-1)a=(v-\zeta_4)a=(v-\zeta_4^3)a=0$ and $(t-1)b=(t-\zeta_4)b=(t-\zeta_4^3)b=0$ therefore, $a=0=b$ a contradiction. That is, $Aut(C)$ is conjugate to the dihedral group $D_8$ of order $8$.
\item Type $4, (0,1)$; This curve admits a homology of order $d-1$ with center $(0;0;1)$ then it follows by Harui \cite{Harui}
that this point is an inner Galois point of $C$ and moreover it is unique by Yoshihara \cite{Yoshihara}. Therefore, this point should be fixed
by $Aut(C)$ consequently, $Aut(C)$ is cyclic. It follows by the assumption that
                $C$ is not conjugate to any of the above (In particular, types $8, (1,4),\,\,16, (1,12)$ or $20, (4,5)$)
                then $Aut(C)$ is cyclic of order $4$. To be more precise, for the type $4(0,1)$ we can write it as
                $Z^4(aX+bY)+L_{5,Z}(X,Y)=0$ and with $Y':=aX+bY$ we
                can rewrite it with the same type as
                $Z^4Y'+L_{5,Z}'(X,Y')=0$. Now, it is necessary to
                impose the condition that $L_{5,Z}'(X,\zeta_mY')\neq\zeta^r_m
                L_{5,Z}'(X,Y')$ where $(m,r)=(8,1),(16,1)$ or $(20,4)$ (otherwise; we get a bigger automorphism group
                conjugate to those of types $8,(1,4),\,\,16, (1,12)$ or $20, (4,5)$).
            \item Type $4, (1,2)$; First from the same reason as type $4(1,3)$ we need to assume that $\beta_{5,2}\neq 0$.
            First, we'll show that $Aut(C)$ is not conjugate to any of the Hessian subgroups $Hess_{36}$ or $Hess_{72}.$ Both groups contains reflections but no four groups hence all reflections in the group will be conjugate (\cite{Mit} Theorem $11$). Therefore, it suffices to consider the case $P\,4, (1,2)^2\,P^{-1}=\lambda [Z;Y;X]$ this in turns gives solutions non of them transform $\tilde{\tilde{C}}$ to a smooth curve $P\tilde{\tilde{C}}$ with $\{[X;Z;Y],\,\,[Y;X;Z],\,\,[Z;Y;X]\}\subseteq Aut(P\tilde{\tilde{C}})$. Indeed, $P$ has one of the forms $[\alpha_1X+\alpha_2Y+\alpha_3Z;\beta_1X+\beta_3Z;\alpha_1X-\alpha_2Y+\alpha_3Z]$ or $[\alpha_1X+\alpha_2Y+\alpha_3Z;\beta_2Y;-\alpha_1X+\alpha_2Y-\alpha_3Z]$. For both cases, we must have $\alpha_1=\alpha_3$ (coefficients of $XY^4$ and $Y^4Z$) (in particular, the second case does not occur) moreover, from the coefficients of $X^3Y^2$ and $Y^2Z^3$, we get $\gamma_1=\gamma_2$ a contradiction. Consequently, the claim follows and $Aut(C)$ should fix a line and a point off that line.
             \par Now, if $C$ admits a bigger non-cyclic automorphism group then it should non-commutative by Harui and contain an element
            of order 2 with does not commute with the cyclic element given by the type and we can reduce to a subgroup conjugate to the dihedral
            group $D_8$ and moreover $Aut(C)$ fixes the point $(1;0;0)$ and the line $X=0$. In particular, automorphisms of $C$ are of the form
            $[X;vY+wZ;sY+tZ]$. Since there is no element $\tau\in Aut(C)$ of order 2 such that $\tau\sigma\tau=\sigma^{-1}$ then $Aut(C)$ is not conjugate to $D_8$ or $S_4$. In particular, it is cyclic of order $4$. To be more precise, if $Aut(C)$ is cyclic of order $4k>4$ then $k=2,4$ or $5$. If $k=5$ that is $C$ is projectively equivalent to type $20, (4,5)$ and hence $\sigma$ is projectively equivalent to a homology of order 4 a contradiction (Indeed, $Aut(C')$
                 contains exactly two elements of order $4$ and both are homologies) and similarly, if $k=2$ or $4$.
                  That is, $C$ is not isomorphic to any of the above with the full automorphism group is cyclic of order $4k.$

      \end{enumerate}

\end{proof}

Now, we deal with quintic curves with a cyclic element of order 5
as an element in $Aut(C)$ of highest order.
\begin{prop} Suppose that $C$ has an automorphism of type $5,(1,2)$ and we write $C$ as
 $X^5+Y^5+Z^5+\beta_{3,1}X^2YZ^2+\beta_{4,3}XY^3Z=0$ such that
$\beta_{3,1}\neq0$ or $\beta_{4,3}\neq0$. Suppose the highest order
of an element in $Aut(5)$ is 5, then $C$ is a descendent of the
degree 5 Fermat curve with $Aut(C)$ conjugate to $D_{10}$.
\end{prop}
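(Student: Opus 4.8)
The first assertion is immediate: the core of $F=X^5+Y^5+Z^5+\beta_{3,1}X^2YZ^2+\beta_{4,3}XY^3Z$ is $X^5+Y^5+Z^5$, which is a defining polynomial of the Fermat quintic $F_5$, and the type $5,(1,2)$ automorphism $\sigma:=[X;\xi_5 Y;\xi_5^2 Z]\in Aut(C)$ preserves $F_5$; hence $(C,\langle\sigma\rangle)$ is a descendant of $F_5$. I record at once that $\sigma$ has three distinct eigenvalue exponents $0,1,2$, so it is a non-homology whose fixed points are exactly the three reference points and whose invariant lines are the three reference lines; in particular the coordinate triangle is the unique $\sigma$-invariant triangle.

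Next I would produce the involution witnessing the dihedral structure. The plan is to check that $\tau:=[Z;Y;X]$ lies in $Aut(C)$: it fixes $Y^5$, interchanges $X^5$ and $Z^5$, and fixes each lower monomial because both $X^2YZ^2$ and $XY^3Z$ are symmetric under $X\leftrightarrow Z$. One then verifies $\tau^2=1$ and, conjugating the diagonal $\sigma$ by the transposition, $\tau\sigma\tau^{-1}=\sigma^{-1}$. Thus $\langle\sigma,\tau\rangle\cong D_{10}$ already sits inside $Aut(C)$, so $Aut(C)$ is non-abelian and contains an element of order $5$.

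The heart of the argument is the reverse inclusion $Aut(C)\subseteq D_{10}$. First I would exclude the primitive case of Theorem \ref{teoHarui}: by the Corollary for degree $5$, $Aut(C)$ is not conjugate to $Hess_{216}$, $PSL(2,7)$ or $A_6$; the groups $Hess_{36}$ and $Hess_{72}$ have orders $36$ and $72$, prime to $5$, so cannot contain $\sigma$; and $A_5$ (as well as $A_4,S_4$) is ruled out by Lemma \ref{noafive}, since it contains $C_2\times C_2$. Likewise $C$ cannot be a descendant of the Klein quintic, since $5\nmid|Aut(K_5)|=39$. Hence $Aut(C)$ must fix a triangle which, being $\sigma$-invariant, is the coordinate triangle; as the core is already $F_5$, this forces $Aut(C)\le Aut(F_5)=(\Z/5)^2\rtimes S_3$ in the given coordinates. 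It then remains to compute the stabiliser of the lower part of $F$ inside $Aut(F_5)$: a monomial transformation preserving $X^5+Y^5+Z^5$ preserves $C$ exactly when it preserves the set of monomials $\{X^2YZ^2,XY^3Z\}$, with exponent vectors $(2,1,2)$ and $(1,3,1)$. Running through $S_3$, the only coordinate permutations fixing this set are the identity and the transposition $X\leftrightarrow Z$; invariance then pins the diagonal factor by the congruences $a+2b\equiv0$ and $3a+b\equiv0\pmod 5$ (equivalently $a\equiv3b$), giving the five diagonal elements $\langle\sigma\rangle$ and five elements $[\xi_5^kZ;Y;\xi_5^{-k}X]$ of order $2$. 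These are exactly the ten elements of $D_{10}$, so $Aut(C)=D_{10}$.

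The step I expect to be the main obstacle is precisely the reduction $Aut(C)\le Aut(F_5)$, i.e. showing the full group cannot escape the monomial case. The cleanest alternative I see is to treat the Harui case where $Aut(C)$ fixes a point off $C$ and a line directly: since $\sigma$ is a non-homology the fixed point is a reference point, and a short coefficient comparison on the block forms $[vX+wZ;Y;sX+tZ]$ shows that the factorisations forced by $\beta_{3,1}X^2YZ^2$ and $\beta_{4,3}XY^3Z$ permit only the diagonal and anti-diagonal $5$th-root solutions; the reference points other than $(0;1;0)$ yield only $\langle\sigma\rangle\subsetneq D_{10}$, incompatible with the $D_{10}$ already exhibited. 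Here the hypothesis that the maximal order in $Aut(C)$ equals $5$ is what discards the degenerate specialisation $\beta_{3,1}=\beta_{4,3}=0$, where $C=F_5$ gains elements of order $10$ and $Aut(C)$ jumps to order $150$; for all admissible parameters the computation collapses to $D_{10}$.
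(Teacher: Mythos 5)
Your argument is correct and follows the same overall skeleton as the paper's proof: exhibit $\tau=[Z;Y;X]$ to get $D_{10}\leq Aut(C)$, discard the primitive groups and the Klein descendant by order considerations and Lemma \ref{noafive}, and then split according to the Mitchell/Harui alternative. Where you genuinely differ is in how you close the two remaining cases. The paper's primary case is the one where $Aut(C)$ fixes a point and a line: it pins the point to $(0;1;0)$, writes automorphisms in the block form $[\alpha_1X+\alpha_3Z;Y;\gamma_1X+\gamma_3Z]$ and compares coefficients; its Fermat-descendant case is handled by normalizing $P\sigma P^{-1}$ inside $Aut(F_5)$ and then asserting the verification. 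You invert the emphasis: your main computation is the triangle case, where you observe that the unique $\sigma$-invariant triangle is the coordinate triangle, so $Aut(C)$ is monomial, preserves the core $X^5+Y^5+Z^5$, hence lies in $Aut(F_5)=(\Z/5)^2\rtimes S_3$, and you then compute the stabiliser of the exponent vectors $(2,1,2)$ and $(1,3,1)$ explicitly — this makes the descendant case fully explicit where the paper leaves it as "straightforward to verify", which is a real gain. One organizational wrinkle: the sentence "hence $Aut(C)$ must fix a triangle" is, as written, a non sequitur, since at that point the point-and-line alternative of Mitchell's trichotomy has not been excluded; you do patch this in your final paragraph (the fixed point, being $\tau$-invariant, must be $(0;1;0)$, and the block-form coefficient comparison — using the vanishing of the $X^2Y^3$ and $Y^3Z^2$ coefficients when $\beta_{4,3}\neq0$, or the analogous ones from $X^2YZ^2$ when $\beta_{4,3}=0$ — forces the diagonal/anti-diagonal solutions), which is exactly the paper's case (1), so no substantive gap remains; but the two cases should be presented as parallel branches rather than as a main line plus an "alternative".
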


\begin{proof}
$Aut(C)$ is not conjugate to any of the Hessian groups $Hess_{36}$
or $Hess_{72}$ and is not conjugate to a subgroup of $Aut(K_5)$
since there are no elements of order $5$. On the other hand, always
$C$ admits a bigger automorphism group isomorphic to $D_{10}$
through the transformation $[Z;Y,X]$ (in particular, $Aut(C)$ is not
cyclic). Consequently, $C$ is a descendant of the Fermat quintic or
$Aut(C)$ fixes a line and a point off that line or $Aut(C)$ is
conjugate to $A_5$ (as a finite primitive subgroups of $PGL_3(K)$)
but this last situation is not possible by previous lemma
\ref{noafive}.
\begin{enumerate}
\item $Aut(C)$ fixes a line and a point off that line\\
This line should be $Y=0$ and the point is $(0;1;0)$ because
$<\sigma,\tau>\subset Aut(C)$ with $\sigma(x,y,z)=(x,\xi_5y,\xi_5^2
z)$ and $\tau(x,y,z)=(z,y,x)$.

Hence elements of $Aut(C)$ are of the form $[\alpha_1X+\alpha_3Z;Y;\gamma_1X+\gamma_3Z]$. From the coefficients of $Y^3Z^2$ and $Y^3X^2$ we get $\alpha_1=0=\gamma_3$ or $\alpha_3=0=\gamma_1$. Because $\beta_{3,1}\neq0$ or $\beta_{4,3}\neq 0$ (otherwise; $C$ is the Fermat quintic and hence admits automorphisms of order $10>5$ a contradiction) therefore, $\alpha_{3,1}^5=\gamma_{1,3}^5=1$ and $\alpha_{3,1}\gamma_{1,3}=1$ or $(\alpha_{3,1}\gamma_{1,3})^2=1$ thus $Aut(C)$ has order 10. In particular, $Aut(C)$ is conjugate to $D_{10}$.

\item If $C$ is a descendant of the Fermat curve $F_5$ and neither a line nor a point is leaved invariant.
 Then, through a transformation $P\,\,5,(1,2)\,\,P^{-1}=5, (1,2)$. Indeed, elements of order $5$ in $Aut(F_5)$ are of the form $5, (a,b)$ and if $P\,\,5,(1,2)\,\,P^{-1}=\lambda\,\,5, (a,b)$ then $(a,b)\in\{(1,2),(2,1),(3,4),(4,3),(1,4),(4,1)\}$ but all are conjugate in $Aut(F_5)$. Now, it is straightforward to verify that there are no more automorphisms in $Aut(F_5)\cap Aut(C)$ that is, in such case $Aut(C)$ is conjugate to $D_{10}$.
%\item If $Aut(C)$ is conjugate to $A_5$ as a finite primitive subgroup of $PGL_3(K)$ then there exists an element
\end{enumerate}
\end{proof}

\begin{prop} Suppose that $C$ has an automorphism of type $5,(0,1)$ and $C$ has an expression as $Z^5+L_{5,Z}$. Assume that the
highest order element of $Aut(C)$ is 5, then $C$ has $Aut(C)$ cyclic
of order 5.
%\footnote{F: Need to be completed, in the way that not more outer
%Galois point belongs, in such case will be isomorphic to Fermat
%curve!!! Need to add restrictions on $L_{5,Z}$ in such that no $P\in
%PGL_2(K)$ which transforms to $Y^5+Z^5$}
\end{prop}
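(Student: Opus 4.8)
The plan is to use the given homology to pin down a fixed point for the whole group, and then to force $Aut(C)$ to be a $5$-group which can only be $\langle\sigma\rangle$.

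First I would record the geometry of the type $5,(0,1)$ generator $\sigma=[X;Y;\xi_5Z]$. Since $ab=0$ it is a homology, and its period equals $\deg C=5$; its center $(0:0:1)$ does not lie on $C$ because the coefficient of $Z^5$ in $Z^5+L_{5,Z}$ is $1$, and its axis is $Z=0$. By \cite[Lemma 3.7]{Harui} the center $(0:0:1)$ is an outer Galois point of $C$. The hypothesis that the maximal order of an element of $Aut(C)$ equals $5$ excludes $C\cong F_5$, since $|Aut(F_5)|=150$ contains elements of order larger than $5$ (for instance order $10$, as exhibited in the proposition on the type $10$ case above). Hence $C$ is not the Fermat quintic and, by \cite{Yoshihara}, the outer Galois point $(0:0:1)$ is the unique one, so it is fixed by the whole group $Aut(C)$.

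Next, since $Aut(C)$ fixes the point $(0:0:1)\notin C$, Theorem \ref{teoHarui}(2) yields a short exact sequence $1\to N\to Aut(C)\to G'\to 1$ with $N$ cyclic of order dividing $d=5$ and $G'$ conjugate to one of $C_m$ ($m\le4$), $D_{2m}$ ($m\le4$), $A_4$, $A_5$ or $S_4$. As $N$ consists of the homologies centered at $(0:0:1)$ and $\sigma\in N$ has order $5$, we get $N=\langle\sigma\rangle\cong\mathbb{Z}/5$. The key step is to prove that $\sigma$ is central in $Aut(C)$: any $g\in Aut(C)$ normalizes $N$, so $g\sigma g^{-1}=\sigma^j$ with $j\not\equiv0\pmod 5$; comparing axes of $g\sigma g^{-1}$ and $\sigma^j$ forces $g$ to preserve $Z=0$, so together with $g(0:0:1)=(0:0:1)$ the matrix $g$ is block-diagonal and therefore commutes with $\sigma=\mathrm{diag}(1,1,\xi_5)$, i.e. $j=1$.

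Finally I would combine centrality with the order bound. For every $g\in Aut(C)$ the group $\langle\sigma,g\rangle$ is abelian with maximal element order $5$, hence of exponent $5$, so $g$ has order dividing $5$; passing to the quotient, every element of $G'$ has order dividing $5$, whence $G'$ is a finite $5$-group. Among the groups permitted by Theorem \ref{teoHarui}(2) the only $5$-group is the trivial one (note that $A_5$, although it contains elements of order $5$, is not a $5$-group). Thus $G'=1$ and $Aut(C)=N=\langle\sigma\rangle\cong\mathbb{Z}/5$. As an alternative to the exponent count, the single case $5\mid|G'|$, namely $G'\cong A_5$, can be excluded by taking the preimage of a Klein four-subgroup: it has order $20$ with a central $\mathbb{Z}/5$, hence splits as $\mathbb{Z}/5\times(C_2\times C_2)$, producing a subgroup $C_2\times C_2\le Aut(C)$ forbidden by Lemma \ref{noafive}. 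The main obstacle is precisely the centrality step, because without it one cannot rule out a Frobenius-type group such as $C_5\rtimes C_4$ of order $20$, whose maximal element order is still $5$ and is thus compatible with the hypothesis; the resolution is the observation that a homology of order $5$ is never conjugate in $PGL_3(K)$ to a nontrivial power of itself, so its normalizer coincides with its centralizer, which is what makes $\sigma$ central and collapses the group.
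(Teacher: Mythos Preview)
Your argument is correct and, in the second half, more streamlined than the paper's. Both proofs begin identically: $\sigma=[X;Y;\xi_5Z]$ is a homology of period $d=5$ with center $(0:0:1)\notin C$, so by Harui's Lemma~3.7 this is an outer Galois point; the maximal-order hypothesis rules out $C\cong F_5$, whence by Yoshihara the point is unique and therefore fixed by all of $Aut(C)$, putting every automorphism in the block-diagonal form $[\alpha_1X+\alpha_2Y;\beta_1X+\beta_2Y;Z]$.

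From here the paper and you diverge. The paper runs a case analysis on the quotient $G'$ in Harui's short exact sequence, separately excluding $D_{10}$, $C_5\rtimes C_4$, etc.\ by checking that the relevant relations (e.g.\ an order-$2$ element $\tau$ with $\tau\sigma\tau=\sigma^{-1}$, or an order-$4$ element with a prescribed commutation law) cannot be realised inside $Aut(C)$. You instead extract the single structural consequence of block-diagonality: every such element commutes with $\sigma=\mathrm{diag}(1,1,\xi_5)$, so $\sigma$ is central. Then for any $g$ the product $\sigma g$ has order $\mathrm{lcm}(5,|g|)$, and the bound $|g\sigma|\le 5$ forces $|g|\in\{1,5\}$; hence $Aut(C)$ is a $5$-group and $G'$ must be trivial among the permitted $PGL_2$-type quotients. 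This bypasses all of the paper's individual eliminations at once, and it also makes transparent why the dangerous Frobenius candidate $C_5\rtimes C_4$ cannot occur: the would-be order-$4$ element would have to move $\sigma$ to a nontrivial power, which block-diagonality forbids. Your alternative exclusion of $A_5$ via Lemma~\ref{noafive} is also valid but unnecessary once centrality is in hand.
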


\begin{proof}
This curve has a homology $\sigma$ of order $d$ with center $(0;0;1)$ and axis $Z=0$ then (by Harui) this point is an outer Galois point of $C$. Moreover, because automorphisms of $C$ has order $\leq5$ then it is not isomorphic to the Fermat curve thus (by Yoshihara) this Galois point is unique hence should be fixed by $Aut(C)$. In particular, $Aut(C)$ fixes a line ($Z=0$) and a point off that line $(0;0;1)$ and therefore, elements of $Aut(C)$ have the form $[\alpha_1X+\alpha_2Y;\beta_1X+\beta_2Y;Z]$. Furthermore, $Aut(C)$ satisfies a short exact sequence  $$1\rightarrow N\rightarrow Aut(C)\rightarrow G'\rightarrow 1,$$
with $N$ is cyclic of order dividing $5$ and $G'$ is conjugate to $C_m, D_{2m}, A_4, S_4$ or $A_5$ where $m\leq4$ and in case of $D_{2m}$
we have $m|3$ or $N$ is trivial.
\par If $N$ is trivial then $G'$ should be conjugate to $A_5$ (because non of the other groups contains elements of order 5) in particular, $D_{10}$ is a subgroup of $Aut(C)$ but one can easily verify that there exists no elements $\tau\in Aut(C)$ of order $2$ such that $\tau\sigma\tau=\sigma^{-1}$ hence $N$ can not be trivial.
\par If $N$ has order $5$ then for any value of $G'$ (except possibly the trivial group, $C_2$, $C_4$ or $A_4$ such that $Aut(C)$ is conjugate to $D_{10}$, $SmallGroup(20,3)$ or $A_5$) there are elements of order $>5$ in $Aut(C)$ a contradiction. As above $D_{10}$ is not conjugate to a subgroup of $Aut(C)$ therefore $G'$ is conjugate to $C_2$ or $A_4$. On the other hand, there are no elements $\tau\in Aut(C)$ of order $4$ such that $(\tau\sigma)^2=1$ $\sigma\tau\sigma^{-1}=\tau\sigma$ thus $G^{'}$ is not conjugate to $C_4$. Consequently, $Aut(C)$ is cyclic of order 5.

\end{proof}

Remains yet the study of curves where its large automorphism element
has order at most 3. In particular, it is not conjugate to $A_5, Hess_{36}$ or $Hess_{72}$ because each of them contain elements of order $>3$. Therefore, in such case $Aut(C)$ should fix a line and a point off that line or it is conjugate to a subgroup of $Aut(F_5)$ or $Aut(K_5)$.

\begin{prop} Consider a curve of type $3(1,2)$, and we can consider
that
$C:X^5+Y^4Z+YZ^4+\beta_{2,1}X^3YZ+X^2(\beta_{3,0}Z^3+\beta_{3,3}Y^3)+\beta_{4,2}XY^2Z^2=0$,
and assume that the highest order of an element in $Aut(C)$ is 3.
Then $Aut(C)$ is cyclic of order 3 (if $\beta_{3,0}\neq\beta_{3,3}$)
or conjugate to $S_3$ (otherwise).
\end{prop}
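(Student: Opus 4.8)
The plan is to first produce the automorphisms we expect and then show there are no others. Write $\omega$ for a primitive cube root of unity and set $\sigma:=[X;\omega Y;\omega^2 Z]$ and $\rho:=[X;Z;Y]$. A direct substitution shows that each monomial $X^5,\,Y^4Z,\,YZ^4,\,X^3YZ,\,X^2Z^3,\,X^2Y^3,\,XY^2Z^2$ is $\sigma$-invariant, so $\sigma\in Aut(C)$ has order $3$; since its eigenvalues $1,\omega,\omega^2$ are pairwise distinct, $\sigma$ is not a homology and fixes the reference triangle. Applying $\rho$ interchanges the two terms $X^2Z^3$ and $X^2Y^3$ and leaves every other term fixed, so $\rho\in Aut(C)$ \emph{if and only if} $\beta_{3,0}=\beta_{3,3}$; in that case $\rho^2=1$ and $\rho\sigma\rho=\sigma^{-1}$, so $\langle\sigma,\rho\rangle\cong S_3$. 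Thus $Aut(C)\supseteq\langle\sigma\rangle\cong C_3$ always, and $Aut(C)\supseteq S_3$ whenever $\beta_{3,0}=\beta_{3,3}$; the content of the statement is the reverse inclusion.

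As recorded just before the statement, the hypothesis that the maximal order of an element is $3$ excludes the primitive groups $A_5,\,Hess_{36},\,Hess_{72}$, and leaves only two possibilities by Theorem \ref{teoHarui} and Mitchell \cite{Mit}: either $Aut(C)$ fixes a line and a point off it, or it is conjugate to a subgroup of $Aut(F_5)$ or of $Aut(K_5)$, i.e.\ $(C,Aut(C))$ is a descendant of the Fermat quintic $F_5$ or the Klein quintic $K_5$. (The groups $A_4$ and $S_4$ cannot intervene: $S_4$ has an element of order $4$, and by Lemma \ref{noafive} no smooth plane quintic admits $C_2\times C_2\le Aut$, which also kills $A_4$; in any case neither has order dividing $150$ or $39$.)

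The crux is to eliminate the descendant case, and this is where I expect the main obstacle. Any $P$ realising such a descendant must send $\sigma$ to a scalar multiple of an order-$3$, non-homology element $M$ of $Aut(F_5)$ (resp.\ $Aut(K_5)$); up to conjugacy inside those groups there are only the diagonal type $[X;\omega Y;\omega^2 Z]$ and the cyclic type $[Y;Z;X]$, each twisted by roots of unity. Solving $P\sigma P^{-1}=\lambda M$ fixes the shape of $P$, and substituting into the equation one verifies that $PC$ must retain a monomial incompatible with the prescribed core: in the Klein case the term $X^5$ reappears, so the core cannot be $X^4Y+Y^4Z+Z^4X$, and in the Fermat case a surviving mixed term prevents the core from being $X^5+Y^5+Z^5$. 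A shortcut that trims the bookkeeping is available whenever the invariant triangle is the reference triangle: then $Aut(C)$ contains a monomial matrix inducing a $3$-cycle of the vertices, of the form $[\ast Y;\ast Z;\ast X]$ or $[\ast Z;\ast X;\ast Y]$, and applying it carries $X^5$ to a nonzero multiple of $Y^5$ (resp.\ $Z^5$), a monomial absent from $C$ — an immediate contradiction. The genuinely laborious subcase is the one in which $\sigma$ cyclically permutes the vertices of a non-reference invariant triangle, and there the explicit conjugation and coefficient matching, parallel to the Fermat/Klein elimination carried out earlier in the paper, seems unavoidable.

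Once the descendant case is discarded, $Aut(C)$ fixes a point and a line off it. If that point lies on $C$, i.e.\ it is $(0:1:0)$ or $(0:0:1)$, then $Aut(C)$ is cyclic by \cite[Lemma 11.44]{Book}, and being cyclic of maximal order $3$ it equals $C_3$. Otherwise the fixed point is $(1:0:0)$ with fixed line $X=0$; since $\sigma$ is a diagonal non-homology element, the fixed data must be a reference point together with the opposite reference line, so that every automorphism takes the form $[X;vY+wZ;sY+tZ]$. Imposing invariance of $C$ and comparing coefficients, the terms $Y^4Z$, $YZ^4$ and $X^3YZ$ force the off-diagonal entries to vanish in pairs, leaving exactly two families: the diagonal solutions, which form a cyclic group of order $3$ by the maximal-order hypothesis and hence equal $\langle\sigma\rangle$; and the anti-diagonal solutions $[X;wZ;sY]$, which preserve $C$ precisely when $\beta_{3,0}=\beta_{3,3}$ and then constitute the coset $\rho\langle\sigma\rangle$. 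Collecting the cases gives $Aut(C)=\langle\sigma\rangle\cong C_3$ when $\beta_{3,0}\neq\beta_{3,3}$ and $Aut(C)=\langle\sigma,\rho\rangle\cong S_3$ when $\beta_{3,0}=\beta_{3,3}$, as claimed.
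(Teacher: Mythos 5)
Your skeleton (Klein descendant / Fermat descendant / fixes a point and a line) matches the paper's, and your identification of $\sigma$ and of $\rho=[X;Z;Y]$ with the condition $\beta_{3,0}=\beta_{3,3}$ is correct. But the middle of your argument has a genuine gap: you try to prove that the descendant cases are \emph{impossible}, you do not actually carry out that elimination (you explicitly defer the ``non-reference triangle'' subcase as laborious), and the shortcut you offer for the reference-triangle situation rests on a false premise. A finite group preserving the reference triangle as a set need not contain an element inducing a $3$-cycle on its vertices: your own group $\langle\sigma,\rho\rangle\cong S_3$, with $\sigma$ diagonal and $\rho$ a transposition of $Y,Z$, preserves the reference triangle while acting on its vertices only through $C_2$. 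So the ``immediate contradiction'' via $X^5\mapsto Y^5$ is not available, and the case you label as unavoidable is exactly the one left open. The elimination is also unnecessary: the efficient route is not to exclude the descendant cases but to bound the group inside them. Since $|Aut(K_5)|=39=3\cdot 13$, any subgroup of exponent $\leq 3$ has order at most $3$; since $|Aut(F_5)|=150=2\cdot 3\cdot 5^2$ and $Aut(F_5)$ has no elements of order $6$, any subgroup of exponent $\leq 3$ is contained in a copy of $S_3$. One then checks directly which order-$2$ elements normalizing $\langle\sigma\rangle$ (necessarily of the form $[X;\beta Z;\beta^{-1}Y]$) preserve the equation, and this is precisely where the dichotomy $\beta_{3,0}=\beta_{3,3}$ versus $\beta_{3,0}\neq\beta_{3,3}$ appears. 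This is what the paper does.

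There is a second, smaller gap in your final paragraph. From the vanishing of the $Y^5$ and $Z^5$ coefficients of $\tau(C)$ with $\tau=[X;vY+wZ;sY+tZ]$ one obtains only $vs(v^3+s^3)=0$ and $wt(w^3+t^3)=0$, not that ``the off-diagonal entries vanish in pairs''; the possibility $v^3=-s^3$ with $vs\neq 0$ must be excluded by comparing further monomials (or, as the paper does, by parametrizing the order-$2$ elements $[X;\beta_2Y+\beta_3Z;\gamma_2Y-\beta_2Z]$ and explicitly ruling out an $A_4$ via the relation $(\sigma\tau)^3=1$). As written, your coefficient argument asserts the conclusion rather than deriving it.
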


\begin{proof}
\item If $C$ is a descendant of the Klein curve then $Aut(C)$ is conjugate to a subgroup of $Aut(K_5)$ and hence can not be of order $>3$.
Indeed, because $|Aut(K_5)|=3^2.7$ then any subgroup of order $>3$ inside $Aut(K_5)$ which does not contain elements of order $>3$ are isomorphic to $C_3\times C_3$ therefore there exist $\tau\in Aut(C)$ of order $3$ that commutes with $\sigma:=[X;\xi_3Y,\xi_3^2Z]$. This gives $\tau$ of the forms $[X;\alpha Y;\mu Z], [Y;\alpha Z;\mu X]$ or $[Z;\alpha X; \mu Y]$ but since the variable $X$ can not be permuted with $Y$ or $Z$ then $\tau=[X;\alpha Y;\mu Z]$ with $\alpha=\omega$ (resp. $\omega^2$) and $\mu=\omega^2$ (resp. $\omega$) a contradiction.
\item If $C$ is a descendant of the Fermat curve then $Aut(C)$ is cyclic of order $3$ or conjugate to $S_3$ inside $Aut(F_5)$. Indeed, $|Aut(F_5)|=2.3.5^2$ hence any subgroup of order $>3$ is conjugate to $S_3$ (note that $Aut(F_5)$ contains no elements of order $6$) or it contains elements of order $5>3$. Now, if $Aut(C)$ is conjugate to $S_3$ then there exists $\tau\in Aut(C)$ of order $2$ such that $\tau\,\,3,(1,2)\tau=3,(2,1)$ which in turns implies that $\tau$ has the form $[X;\beta Z;\beta^{-1}Y]$. But, $[X;\beta Z;\beta^{-1}Y]\in Aut(C)$ iff $\beta^3=1$ and $\beta_{3,0}=\beta_{3,3}.$
\item If $Aut(C)$ fix a point then this point should be one of the reference points $P_1:=(1;0;0),\,\,P_2:=(0;1;0)$ or $P_3:=(0;0;1)$ (because these are the only points which are fixed by $3,(1,2)$). If the fixed points is $P_2$ or $P_3$ then $Aut(C)$ is cyclic of order $3$ since both points belong to $C$. If the fixed point is $P_1$ then the line that is leaved invariant should be $X=0$ hence automorphisms of $C$ have the form $[X;\beta_2Y+\beta_3Z;\gamma_2Y+\gamma_3Z]$. Moreover, it follows by Harui and the assumption that there are no elements in $Aut(C)$ of order $>3$ that $Aut(C)$ satisfies a short exact sequence of the form $$1\rightarrow N\rightarrow Aut(C)\rightarrow G'\rightarrow 1,$$ where $N$ is trivial and $G'$ is conjugate to $C_3, S_3$ or $A_4$. Now, let $\tau:=[X;\beta_2Y+\beta_3Z;\gamma_2Y+\gamma_3Z]$ be an element of order $2$ then it has one of the following forms
    $$\left(
        \begin{array}{ccc}
          1 & 0 & 0 \\
          0 & 1 & 0 \\
          0 & 0 & -1 \\
        \end{array}
      \right),\,\,\,\left(
        \begin{array}{ccc}
          1 & 0 & 0 \\
          0 & -1 & 0 \\
          0 & 0 & 1 \\
        \end{array}
      \right),\,\,\,\left(
        \begin{array}{ccc}
          1 & 0 & 0 \\
          0 & -1 & 0 \\
          0 & 0 & -1 \\
        \end{array}
      \right),\,\,\,\left(
        \begin{array}{ccc}
          1 & 0 & 0 \\
          0 & \beta_2 & \beta_3 \\
          0 & \gamma_2 & -\beta_2 \\
        \end{array}
      \right)
    $$
where $\beta_2^2+\beta_3\gamma_2=1$ and  it suffices to consider the last case because non of the first three transformations retain $C$. If $Aut(C)$ is conjugate to $A_4$ then we can suppose that $(3,(1,2)\,\,\tau)^3=Id$  therefore,
$\left(
  \begin{array}{ccc}
    1 & 0 & 0 \\
    0 & \omega\beta_2 & \omega\beta_3 \\
    0 & \omega^2\gamma_2 & -\omega^2\beta_2 \\
  \end{array}
\right)$ has order 3. That is,
\begin{eqnarray*}
\left(
  \begin{array}{ccc}
    1 & 0 & 0 \\
    0 & \omega\beta_2 & \omega\beta_3 \\
    0 & \omega^2\gamma_2 & -\omega^2\beta_2 \\
  \end{array}
\right)\left(
  \begin{array}{ccc}
    1 & 0 & 0 \\
    0 & \omega\beta_2 & \omega\beta_3 \\
    0 & \omega^2\gamma_2 & -\omega^2\beta_2 \\
  \end{array}
\right)&=&\left(
  \begin{array}{ccc}
    1 & 0 & 0 \\
    0 & \omega^2\beta_2^2+\gamma_2\beta_3 & (\omega^2-1)\beta_2\beta_3 \\
    0 & (1-\omega)\gamma_2\beta_2 & \omega\beta_2^2+\gamma_2\beta_3 \\
  \end{array}
\right)\\
&=& \left(\begin{array}{ccc}
    1 & 0 & 0 \\
    0 & \omega^2+(1-\omega^2)\gamma_2\beta_3 & (\omega^2-1)\beta_2\beta_3 \\
    0 & (1-\omega)\gamma_2\beta_2 & \omega+(1-\omega)\gamma_2\beta_3 \\
  \end{array}
\right)=\left(
  \begin{array}{ccc}
    1 & 0 & 0 \\
    0 & \omega^2\beta_2 & \omega\beta_3 \\
    0 & \omega^2\gamma_2 & -\omega\beta_2 \\
  \end{array}
\right)
\end{eqnarray*}
From which we get $\beta_3=0$ or $\beta_2=\frac{\omega}{\omega^2-1}$. If $\beta_3=0$ then $\beta_2=-1$ and $\gamma_2=0$ (yields no automorphism) and the second possibility implies that $1=\gamma_2\beta_3=\frac{-2\omega}{3}$ a contradiction. Consequently, $Aut(C)$ can not be conjugate to $A_4$.\\
This proves the result.
\end{proof}

\begin{prop} Consider a curve of type $2, (0,1)$ and we can consider
that $C:Z^4L_{1,Z}+Z^2L_{3,Z}+L_{5,Z}=0$. Assume that the highest
order of any element of $Aut(C)$ is two. Then the automorphism group
of $C$ is cyclic of order 2.
\end{prop}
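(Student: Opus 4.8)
The plan is to reduce the statement to the exclusion lemma for $C_2\times C_2$ proved earlier. First I would record that the automorphism $\sigma:=[X;Y;-Z]$ of type $2,(0,1)$ retains the equation $C:Z^4L_{1,Z}+Z^2L_{3,Z}+L_{5,Z}=0$ and has order $2$, so that $Aut(C)$ is non-trivial and contains $\langle\sigma\rangle\cong\Z/2$. Thus it suffices to prove that $Aut(C)$ has no element other than $1$ and $\sigma$, equivalently that $|Aut(C)|=2$.

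Next I would exploit the standing hypothesis that the highest order of any element of $Aut(C)$ is two: every non-identity element of $Aut(C)$ has order exactly $2$. A finite group $G$ in which $g^2=1$ for every $g$ is necessarily abelian, since for $a,b\in G$ the relation $(ab)^2=1$ gives $ab=b^{-1}a^{-1}=ba$; being finite abelian with all elements of order dividing $2$, such a $G$ is an elementary abelian $2$-group, $G\cong(\Z/2)^k$ for some $k\geq 0$. Applied to $G=Aut(C)$, and using that $\sigma$ is a non-trivial element, this forces $k\geq 1$.

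The decisive step is then to rule out $k\geq 2$. If $k\geq 2$, then $(\Z/2)^k$ contains a subgroup isomorphic to $C_2\times C_2$, so $Aut(C)$ would contain a copy of $C_2\times C_2$. This is exactly what Lemma \ref{noafive} forbids for a non-singular plane curve of degree $5$. Hence $k=1$, i.e. $Aut(C)=\langle\sigma\rangle$ is cyclic of order $2$, as claimed.

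I do not expect any genuine obstacle here: the whole content is carried by Lemma \ref{noafive}, and the remaining ingredients — that a group of exponent $2$ is elementary abelian and that $(\Z/2)^k$ with $k\geq2$ contains $C_2\times C_2$ — are elementary. The only point to state carefully is that the hypothesis ``maximal order of an element equal to $2$'' is precisely the exponent-$2$ condition, which is what triggers the elementary abelian structure and thus the appeal to the exclusion lemma; no case analysis on the parameters of $L_{1,Z},L_{3,Z},L_{5,Z}$ is needed.
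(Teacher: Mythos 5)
Your proof is correct, and it takes a genuinely more economical route than the paper's. The paper runs through the standard trichotomy: it first excludes the Klein-curve descendant case via $2\nmid|Aut(K_5)|=63$, then handles the Fermat-descendant case by noting that any subgroup of $Aut(F_5)$ of order $>2$ contains an element of order $3$ or $5$, and finally, in the line-and-point case, uses Harui's short exact sequence (with $N$ trivial since $2\nmid 5$) to pin $Aut(C)$ down to $C_2$ or $C_2\times C_2$ before invoking Lemma \ref{noafive}. You observe instead that the hypothesis ``maximal element order equal to $2$'' already forces $Aut(C)$ to have exponent $2$, hence to be elementary abelian $(\Z/2)^k$ with $k\geq 1$ (since the type $2,(0,1)$ automorphism $[X;Y;-Z]$ visibly preserves $Z^4L_{1,Z}+Z^2L_{3,Z}+L_{5,Z}$), and then $k\geq 2$ is killed directly by Lemma \ref{noafive}. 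Since both arguments ultimately rest on that same lemma, yours shows the geometric case analysis (Klein, Fermat, fixed line and point) is redundant for this particular statement; the only caveat worth keeping in mind is that the Mitchell--Harui machinery is not truly avoided, merely relocated into the proof of Lemma \ref{noafive} itself. What the paper's longer route buys is uniformity with the neighbouring propositions (types $3,(1,2)$, $5,(0,1)$, etc.), where the exponent trick is unavailable and the trichotomy is genuinely needed.
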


\begin{proof}
$C$ is not a descendant of the Klein curve because $2\nmid |Aut(K_5)|(=63)$.
Also, if $C$ is a descendant of the Fermat curve then $Aut(C)$ can not be conjugate to a bigger subgroup of $Aut(F_5)$
because $|Aut(F_5)|=2.3.5^2$ thus subgroups of order $>2$ should contains elements of order $3$ or $5$ a contradiction.
Finally, if $Aut(C)$ fix a line and a point off that line then by Harui and our assumption that there are no automorphisms of order $>2$ we get that
 $Aut(C)$ satisfies a short exact sequence of the form  $$1\rightarrow N=1\rightarrow Aut(C)\rightarrow G'\rightarrow 1,$$  where $G'$ contain an
  element of order $2$ and no higher orders thus $Aut(C)$ should be conjugate to $C_2$ or $C_2\times C_2$, thus by lemma \ref{noafive} we conclude.
 % Suppose that the latter case occurs then $Aut(C)$ fixes a point not in $C$ and we can think that $Aut(C)$ leaves $X$ invariant and since there must be an automorphism
% $\tau\in Aut(C)$ of order 2 that commutes with $\sigma:=[X;Y;-Z]$ then the three reflections in $Aut(C)$ are $[X;Y;-Z],[X;-Y;Z]$ and $[-X;Y;Z]$ which is impossible.
\end{proof}
Now in the previous results we need to ensure that there is an
equation $C$ which the highest order automorphism is of exact order
$m$ in the cases of $m\leq 5$.
\begin{lem} Take $C$ a plane non-singular curves of degree 5 with
the equation given by Type $m,(a,b)$ with $m\leq 5$. Then exist an
equation $C$ of Type $m,(a,b)$ with certain specification value at
the parameters such that the element inside $Aut(C)$ of highest
order is $m$.
\end{lem}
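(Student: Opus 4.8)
The plan is to argue by genericity, treating all the finitely many types at once. For each type $m,(a,b)$ with $m\le 5$ appearing in the table of \S 2, the smooth curves of that type form a family depending on a set of free parameters $\beta_{i,j}$ (and $\alpha$, normalizable to $1$); I want to exhibit a single member of this family all of whose automorphisms have order at most $m$. I will obtain such a member by showing that the curves of type $m,(a,b)$ that carry an automorphism of order strictly larger than $m$ form a proper closed subfamily of the parameter space, so that a sufficiently general choice of the parameters avoids them while remaining non-singular.

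First I would record which larger orders can occur. By Theorem \ref{BB} every element of $Aut(C)$ for a smooth plane quintic has order dividing one of $d-1=4$, $d=5$, $d^2-3d+3=13$, $(d-1)^2=16$, $d(d-2)=15$ or $d(d-1)=20$; hence the element-orders that exceed a fixed $m\le 5$ and can actually appear lie in the finite set $\{8,10,13,15,16,20\}$. If a smooth curve $C$ of type $m,(a,b)$ possessed an automorphism $\phi$ of such an order $m'$, then $\langle\phi\rangle$ is a cyclic subgroup of order $m'$, and by Theorem \ref{BB} together with the classification of \S 2, $C$ would be $K$-isomorphic to the normal form attached to some type $m',(a',b')$. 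The key observation is then a parameter count read off directly from the table: each of the higher normal forms (types $20,(4,5)$, $16,(1,12)$, $15,(1,11)$, $13,(1,10)$, $10,(2,5)$ and $8,(1,4)$) carries at most one free parameter, whereas every type $m,(a,b)$ with $m\le 5$ carries at least two. Since two normal forms of a fixed type are $K$-isomorphic only through the group of coordinate changes preserving the shape of the equation—finite in the diagonal cases, and in the homology cases $m,(0,1)$ the same normalizing group $GL_2\times GL_1$ acts on both sides—the parameter-to-moduli map has controlled fibres, so the curves of type $m,(a,b)$ that are also $K$-isomorphic to one of the higher normal forms sweep out a locus of strictly smaller dimension.

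Taking the union of these finitely many loci over the admissible $m'>m$, and adjoining the vanishing locus of the discriminant (the singular members), produces a proper closed subset $Z$ of the affine parameter space of type $m,(a,b)$. That parameter space is irreducible and, by the table, its smooth locus is non-empty; hence the smooth locus minus $Z$ is a non-empty Zariski-open set. Any curve corresponding to a point of this set is non-singular and admits no automorphism of order exceeding $m$, so its highest-order automorphism has order exactly $m$, which is the assertion.

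The step I expect to be the main obstacle is precisely the strict dimension inequality between the type-$m$ family and the loci of curves admitting a larger cyclic automorphism: the whole argument rests on the fact that every admissible order $m'>m$ forces $C$ into a normal form with at most one parameter, against the at least two parameters of the types with $m\le 5$, and that the fibres of the parameter-to-moduli map do not destroy this inequality (the delicate point being the homology types $m,(0,1)$, where the normalizing group is positive-dimensional but acts compatibly on both sides). One may alternatively dispense with the dimension bookkeeping and verify the lemma type by type, specializing the parameters of each normal form in the table and invoking the explicit descriptions of $Aut(C)$ established in the preceding propositions of this section; the genericity argument above has the advantage of handling all cases uniformly.
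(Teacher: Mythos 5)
Your overall strategy --- show that the locus of type-$m,(a,b)$ equations carrying an automorphism of order larger than $m$ is a proper closed subset of the parameter space, and then specialize generically --- is the same genericity argument the paper uses (the paper phrases it as the observation that ``the plane equations with bigger automorphism group \ldots imposes restriction in the locus and does not cover all the specialization of the parameters'', works out type $5,(1,2)$ explicitly, and defers the remaining cases to the explicit determinations of $Aut(C)$ in the preceding propositions and to \cite{BaBa1} for $m=4$). However, your execution has a genuine gap. The assertion that the element-orders exceeding a fixed $m\leq 5$ which can occur ``lie in the finite set $\{8,10,13,15,16,20\}$'' is only correct for $m=5$: for $m=2,3,4$ you must also rule out automorphisms of orders $3$, $4$ and $5$ (each of these divides one of $4$, $5$, $15$, $20$ and genuinely occurs on smooth quintics). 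For these intermediate orders the inequality on which your entire dimension count rests is false: the normal forms of types $3,(1,2)$, $4,(0,1)$, $4,(1,2)$, $5,(0,1)$ and $5,(1,2)$ each carry at least two free parameters, not at most one. Concretely, your argument as written does not show that a generic curve of type $2,(0,1)$ admits no automorphism of order $3$, $4$ or $5$ --- and that is precisely the content of the lemma in that case (such curves do exist in the type-$2,(0,1)$ family: any type-$3,(1,2)$ curve with $\beta_{3,0}=\beta_{3,3}$ has $S_3\leq Aut(C)$ and hence a type-$2,(0,1)$ model).

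The conclusion remains true and is recoverable along your lines, but it needs a finer comparison than ``one parameter versus two'': one must bound the dimension of the locus of type-$m,(a,b)$ equations isomorphic to some type-$m',(a',b')$ normal form by the number of parameters of type $m'$ plus the dimension of the coset of projective transformations carrying the type-$m'$ presentation to a type-$m$ presentation, and verify that this is strictly smaller than the dimension of the type-$m$ parameter space; the positive-dimensional centralizers occurring in the homology cases $m,(0,1)$ make this a genuine bookkeeping exercise rather than a one-line count. The paper sidesteps this by invoking the case-by-case propositions of the section, which identify the exact parameter conditions under which extra automorphisms appear and show that they cut out proper subloci --- this is the alternative you mention in your closing sentence, and in this instance it is the route that actually closes the argument.
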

\begin{proof}
The proof is only a computation that the plane equations with bigger
automorphism group that the one obtained imposes restriction in the
locus and does not cover all the specialization of the parameters
which gives a non-singular plane curve. For example for $C$ of Type
5(1,2), they have at least $D_{10}$ in the automorphism group, and
the equation with parameters satisfies such situation have freedom
of two parameters, and the locus of curves with $D_{10}$ as a
strictly subgroup are giving only for a point, and there are
specialization of such parameters given non-singular plane curve
which does not corresponds the point corresponing to the Fermat of
degree 5,
 therefore
$D_{10}$ is realized. A similar computation can be done for the
other situations. The case $m=4$ one can read in \cite{BaBa1}.

\end{proof}

%Now in the previous result we need to ensure that there is an
%equation $C$ which the highest order automorphism is of exact order
%$m$ in the cases of $m\leq 5$. We argue in a particular case, the
%others the argumentation is similar.
%
%For example take $C$ has an element of order 5 as highest order in
%$Aut(C)$, suppose that this cyclic element corresponds to type
%$5(1,2)$ then the model of $C$ has dimension two corresponding to
%the free variables in the model of Type $5(1,2)$ because the
%geometrically non-irreducible situations for the equation has lower
%dimension. If a particular equation of type $5(1,2)$ has an element
%in the automorphism group of higher order,... is 10, 15 or 20, but
%then the dimension of equations of Type 10 (a,b), Type 15(a,b) and
%Type 20(a,b) does not have dimension 2, they have dimension 1,0 and
%0 respectively, thus there exist $C$ of Type 5(1,2) with highest
%order in $Aut(C)$ exactly 5.

Therefore we can conclude as follows, when we list the exact groups
that appears as $Aut(C)$ for $C$ non-plane singular curve of degree
5 (some of them we give in GAP notation), an equation (up to
$K$-isomorphism) with effective action by the group (and some of
them will have exact full automorphism group the predicted by the
group but remain to introduce the parameter restrictions in order to
be the equation geometrically irreducible and do not have a bigger
automorphism group), and the third column corresponds to the
presentation of the group inside $PGL_3(K)$ that the automorphism
group happens:

\begin{center}
\begin{table}[!th]
  \renewcommand{\arraystretch}{1.3}
  \caption{Full Automorphism of quintics}\label{table:Cyclic Auto55.}
  \vspace{4mm} % hack
  \centering
\begin{tabular}{|c|c|c|}
  \hline
  % after \\: \hline or \cline{col1-col2} \cline{col3-col4} ...
  $Aut(C)$ & $F(X;Y;Z)$ & Generators \\\hline\hline
  $(150,5)$& $X^5+Y^5+Z^5$  & $[\xi_{5}X;Y;Z],[X;\xi_{5}Y;Z]$ \\
           &   & $[X;Z;Y],\,[Y;Z;X]$ \\\hline
  $(39,1)$& $X^4Y+Y^4Z+Z^4X$  & $[X;\xi_{13} Y;\xi_{13}^{10} Z],[Y;Z;X]$ \\\hline
  $(30,1)$ & $X^5+Y^4Z+\alpha YZ^4$  & $[X;\xi_{15} Y;\xi_{15}^{11}Z],[X;\mu Z;\mu^{-1}Y]$   \\\hline
  $C_{20}$& $X^5+Y^5+\alpha XZ^4$ & $[X;\xi_{20}^4 Y;\xi_{20}^5 Z]$   \\\hline
  $C_{16}$& $X^5+Y^4Z+\alpha XZ^4$& $[X;\xi_{16} Y;\xi_{16}^{12} Z]$    \\\hline
$C_{10}$& $X^5+Y^5+\alpha XZ^4+\beta_{2,0}X^3Z^2$& $[X;\xi_{10}^2Y;\xi_{10}^5Z]$\\
& $\beta_{2,0}\neq0$ and $(\alpha,\beta_{2,0})\neq(5,10)$&
\\\hline

 \end{tabular}
\end{table}

%\newpage

\begin{table}[!th]
  \caption{Full Automorphism of quintics}
  %\vspace{4mm} % hack
  \centering

\begin{tabular}{|c|c|c|}

%\end{center}

\hline
$D_{10}$& $X^5+Y^5+Z^5+\beta_{3,1}X^2YZ^2+\beta_{4,3}XY^3Z$   & $[X;\xi_5 Y;\xi_5^2 Z],\, [Z,Y,X]$\\
& $(\beta_{3,1},\beta_{4,3})\neq(0,0)$  & \\\hline $C_8$&
$X^5+Y^4Z+\alpha XZ^4+\beta_{2,0}X^3Z^2$\,\,\,$(\beta_{2,0}\neq0)$
& $[X;\xi_{8} Y;\xi_{8}^{4} Z]$ \\\hline
 $S_3$& $X^5+Y^4Z+YZ^4+\beta_{2,1}X^3YZ+X^2\big(Z^3+Y^3\big)+$ & $[X;\xi_3Y;\xi_3^2Z]$ \\
  &$+\beta_{4,2}XY^2Z^2$ (not above) & $[X;Z;Y]$ \\\hline

$C_5$& $Z^5+L_{5,Z}$ (not above)  & $[X;Y;\xi_5Z]$\\\hline

$C_4$& $X^5+X\big(Z^4+\alpha Y^4\big)+\beta_{2,0}X^3Z^2+\beta_{3,2}X^2Y^2Z+\beta_{5,2}Y^2Z^3$ &  $[X;\xi_4Y;\xi_4^2Z]$   \\
& $(\beta_{5,2}\neq0)$\,  (not above) &   \\\hline $C_4$&
$Z^4L_{1,Z}+L_{5,Z}$\,\,(not above)   &  $[X;Y;\xi_4Z]$   \\\hline
 $C_3$& $X^5+Y^4Z+\alpha YZ^4+\beta_{2,1}X^3YZ+$ & $[X;\xi_3Y;\xi_3^2Z]$ \\
&
$+X^2\big(\beta_{3,0}Z^3+\beta_{3,3}Y^3\big)+\beta_{4,2}XY^2Z^2$\,\,$(\beta_{3,0}\neq\beta_{3,3})$
(not above) &  \\\hline
 $C_2$& $Z^4L_{1,Z}+Z^2L_{3,Z}+L_{5,Z}$\,\,(not above)   &  $[X;Y;\xi_2Z]$  \\\hline
  \end{tabular}
\end{table}
\end{center}
 where $\alpha\neq0$ which can take always 1 for a change of variables,
 and $\mu$ satisfies $\mu^3=\alpha$.

\end{document}